\documentclass[11pt,a4paper]{amsart}
\usepackage[utf8]{inputenc}
\usepackage[a4paper, left=30mm, right=30mm, top=25mm, bottom=25mm]{geometry}
\usepackage[T1]{fontenc}
\usepackage[french,english]{babel}
\usepackage{amsmath}
\usepackage{amsfonts}
\usepackage{amssymb}
\usepackage{graphicx}
\graphicspath{{../Figures/}}
\usepackage{lmodern}
\usepackage{kpfonts}
\usepackage{xcolor}
\usepackage{url}
\usepackage[pdftex]{hyperref}
\usepackage{tikz}
\usetikzlibrary{decorations.pathreplacing}

\newtheorem{theorem}{Theorem}[subsection]

\newtheorem*{theorem*}{Theorem}
\newtheorem{corollary}[theorem]{Corollary}
\newtheorem*{corollary*}{Corollary}
\newtheorem{lemma}[theorem]{Lemma}
\newtheorem{lemmadefn}[theorem]{Lemma and Definition}

\theoremstyle{definition}
\newtheorem{definition}[theorem]{Definition}

\theoremstyle{remark}

\theoremstyle{remark}
\newtheorem{remark}[theorem]{Remark}

\numberwithin{equation}{section}

\catcode`\@=11
\def\eqalign#1{\null\,\vcenter{\openup1\jot \m@th %
\ialign{\strut\hfil$\displaystyle{##}\quad$&$\displaystyle{{}##}$\hfil\crcr#1\crcr}}\,}
\def\triplealign#1{\null\,\vcenter{\openup1\jot \m@th %
\ialign{\strut\hfil$\displaystyle{##}\quad$&$\displaystyle{{}##}$\hfil&$\displaystyle{{}##}$\hfil\crcr#1\crcr}}\,}
\def\multiline#1{\null\,\vcenter{\openup1\jot \m@th %
\ialign{\strut$\displaystyle{##}$\hfil&$\displaystyle{{}##}$\hfil\crcr#1\crcr}}\,}
\catcode`\@=12

\title[On $(2,1)$-dimensional anti-de Sitter spacetimes...]{On $(2,1)$-dimensional anti-de Sitter spacetimes with convex boundary}

\author[R. Prosanov, J.-M. Schlenker, G.\,A. Smith]{Roman Prosanov, Jean-Marc Schlenker, Graham Andrew Smith}
\address{Roman Prosanov: Departament de Matem\`atiques, Universitat Aut\`onoma de Barcelona, Edifici C, Facultat de Ci\`encies, 080193 Bellaterra, Spain}
\email{roman.prosanov@univie.ac.at}
\address{Jean-Marc Schlenker: Department of Mathematics, University of Luxembourg, Maison du nombre, 6 avenue de la Fonte, L-4364 Esch-sur-Alzette, Luxembourg}
\email{jean-marc.schlenker@uni.lu}
\address{Graham Andrew Smith: Departamento de Matem\'atica, Pontif\'\i cia Universidade Cat\'olica do Rio de Janeiro (PUC-Rio), Rio de Janeiro, Brasil} \email{grahamandrewsmith@gmail.com}

\newcommand{\opO}{\mathrm{O}}
\newcommand{\R}{\mathbb{R}}
\newcommand{\Mat}{\mathcal{M}(2,\R)}
\newcommand{\opmat}{{\mathrm{mat}}}
\newcommand{\opTr}{{\mathrm{Tr}}}
\newcommand{\slalg}{\mathfrak{sl}(2,\R)}
\newcommand{\SL}{\mathrm{SL}(2,\R)}
\newcommand{\TSL}{\mathrm{TSL}(2,\R)}
\newcommand{\USL}{\mathrm{U}^+\mathrm{SL}(2,\R)}
\newcommand{\PSL}{\mathrm{P}\mathrm{SL}(2,\R)}
\newcommand{\hyp}{\mathbb{H}}
\newcommand{\AdS}{\mathrm{AdS}^{2,1}}
\newcommand{\dS}{\mathrm{dS}^{1,1}}
\newcommand{\opT}{\mathrm{T}}

\newcommand{\geodesics}{\mathcal{G}^+}
\newcommand{\ideal}{\partial_\infty}
\newcommand{\rank}{\mathrm{Rank}}
\newcommand{\RP}{\mathrm{P}\R^1}
\newcommand{\Ein}{\mathrm{Ein}^{1,1}}
\newcommand{\opGL}{\mathrm{GL}}
\newcommand{\opdVol}{\mathrm{dVol}}
\newcommand{\Inv}{\mathrm{Inv}}
\newcommand{\opDet}{\mathrm{Det}}
\newcommand{\Stab}{\mathrm{Stab}}
\newcommand{\stab}{\mathrm{stab}}
\newcommand{\Conv}{\mathrm{Conv}}
\newcommand{\Length}{\mathrm{Length}}
\newcommand{\opU}{\mathrm{U}}
\newcommand{\minn}{{m\in\mathbb{N}}}
\newcommand{\opId}{\mathrm{Id}}

\newcommand{\teich}{\mathcal{T}}
\newcommand{\oprep}{\mathrm{rep}}
\newcommand{\Lip}{{\mathrm{Lip}}}

\newcommand{\cyl}{{\mathrm{cyl}}}
\newcommand{\pa}{{\mathrm{pa}}}

\newcommand{\mc}[1]{\mathcal{#1}}

\newcommand{\mb}[1]{\mathbb{#1}}
\newcommand{\mr}[1]{\mathrm{#1}}

\renewcommand{\hat}{\widehat}
\renewcommand{\tilde}{\widetilde}
\newcommand{\fff}{\mathrm{I}}
\newcommand{\sff}{\mathrm{I\!I}}
\newcommand{\tff}{\mathrm{I\!I\!I}}

\newcommand{\Rho}{\mathrm{R}}

\newcommand{\cML}{\mathcal{ML}}

\newcounter{notes}%

\newif\ifshowcomments
\showcommentsfalse

\begin{document}

\begin{abstract}
Let $S$ be a closed oriented surface of genus at least $2$. Let $X$ be a $(2,1)$-dimen\-sional anti-de Sitter spacetime homeomorphic to $S\times[-1,1]$ with smooth, spacelike, strong\-ly convex boundary. By Gauss' formula, the induced metric on each of the two components of $\partial X$ has curvature $K<-1$. In this paper we address the inverse problem. We show that, for any two smooth metrics $h_-, h_+$ on $S$ with curvature $K<-1$, there exists a unique, smooth anti-de Sitter metric on $X:=S\times [-1,1]$ with respect to which the boundary is locally strongly convex and has induced metric isotopic to $h_-$ on its past component, and isotopic to $h_+$ on its future component. Existence has already been proven by Tamburelli.

We use this result to solve affirmatively a conjecture of Mess: given any two measured laminations $l_-, l_+$ that together fill $S$, there exists a unique $(2,1)$-dimensional, globally hyperbolic, maximal, Cauchy compact (GHMC) anti-de Sitter spacetime such that the measured bending laminations of the past and future boundary components of its convex core are respectively $l_-$ and $l_+$. Here, existence has already been proven by Bonsante--Schlenker.
\end{abstract}

\maketitle
\section{Introduction}

\subsection{Main results}
\label{sec:results}
A $(2,1)$-dimensional \emph{anti-de Sitter spacetime} is a smooth Lorentzian spacetime $X$ of constant curvature equal to $-1$, that is, one which is locally modelled on $(2,1)$-dimen\-sional anti-de Sitter space $\AdS$.

Such a spacetime is said to be \emph{globally hyperbolic and Cauchy compact (GHC)} whenever it is orientable and time-orientable, and contains a closed, spacelike surface $\Sigma$ which intersects every inextendible causal curve exactly once.

\begin{definition}
  A GHC anti-de Sitter spacetime is
  \begin{itemize}
  \item {\em globally hyperbolic, maximal, and Cauchy compact} (GHMC) whenever it is open and maximal under inclusion, that is, whenever it cannot be isometrically embedded into a strictly larger GHC anti-de Sitter spacetime;
  \item {\em globally hyperbolic, convex, and Cauchy compact} (GHCC) whenever it is compact, with boundary the disjoint union of two smooth, strongly convex, spacelike surfaces.
  \end{itemize}
\end{definition}
\noindent Here, {\em strongly convex} means that the second fundamental form is positive definite (with respect to the appropriate orientation).

Any GHCC anti-de Sitter spacetime embeds isometrically, with geodesically convex image, into a unique GHMC anti-de Sitter spacetime (see \cite{Mes,BS2}). Conversely, any closed, geodesically convex subset with smooth, strongly convex boundary of a GHMC anti-de Sitter spacetime is GHCC.

By Gauss' formula in mixed signature, the induced metric on the boundary of a GHCC anti-de Sitter spacetime has curvature $K<-1$. Our first main result is a complete solution of the inverse problem.

\begin{theorem}
\label{rigiditytheorem}
Let $(S_1,h_1)$ and $(S_2,h_2)$ be marked, closed, oriented Riemannian surfaces of genus $g$, and of curvature strictly less than $-1$. There exists a $(2,1)$-dimensional marked $GHCC$ anti-de Sitter spacetime $X$ into which $(S_1,h_1)$ and $(S_2,h_2)$ embed respectively as the past and future boundaries. Furthermore $X$ is unique up to isometries preserving the marking.
\end{theorem}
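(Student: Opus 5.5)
Existence is due to Tamburelli, so the content to supply is uniqueness. I would still set up existence and uniqueness in a single framework, a degree or continuity argument on a moduli space, and then prove uniqueness through infinitesimal rigidity plus properness. Let $\mathcal{G}$ be the space of marked GHCC anti-de Sitter spacetimes homeomorphic to $S\times[-1,1]$, up to marked isometry. By Mess's theorem, each such $X$ embeds in a unique GHMC spacetime, parametrised by a pair $(\rho_l,\rho_r)\in\teich(S)\times\teich(S)$. Inside that GHMC spacetime, $X$ is determined by a pair of past and future strongly convex spacelike surfaces. Hence $\mathcal{G}$ is an infinite-dimensional manifold (Fréchet, or Banach in a $C^{k,\alpha}$ setting). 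Let $\mathcal{H}$ be the space of pairs of isotopy classes of smooth metrics on $S$ with $K<-1$. The induced-metric map $\Phi:\mathcal{G}\to\mathcal{H}$ is smooth, and the theorem amounts to $\Phi$ being a bijection.

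\textbf{Step 1 (Fredholm, index zero).} Linearise $\Phi$. A deformation of $X$ consists of a deformation of the holonomy (dimension $12g-12$) together with normal variations of the two boundary surfaces. The linearised Gauss equation at a strongly convex surface is elliptic, because $\sff$ is positive definite. Counting dimensions modulo the infinitesimal isotopies shows that $d\Phi$ is Fredholm of index $0$.

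\textbf{Step 2 (infinitesimal rigidity).} Show that $d\Phi$ is injective: a first-order deformation of $X$ that preserves both boundary metrics up to isotopy is trivial. For this I would use the standard Pogorelov-type reduction. The $\mathrm{SL}(2,\R)\times\mathrm{SL}(2,\R)$ structure of $\AdS$ turns a Killing field along each boundary into a pair of infinitesimal deformations of hyperbolic or flat structures. One approach is the left and right projections of the surfaces, that is, the maps to $\hyp^2$ given by $\fff(\cdot,\cdot)$ modified by $J\mathcal{B}$, which identify the boundary data with the left and right hyperbolic metrics. An infinitesimally isometric deformation then produces a vector field whose induced Codazzi tensor on each boundary component must vanish by a maximum-principle or integration-by-parts argument. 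Positive definiteness of $\sff$ and $K<-1$ are what make this quadratic form definite. Matching the holonomy deformations of the two components then forces the global deformation to be trivial.

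\textbf{Step 3 (properness).} If $\Phi(X_n)$ converges in $\mathcal{H}$, then $X_n$ subconverges. First bound the holonomies: the boundary metrics control lengths of closed curves on the convex boundaries. Comparing these with the left and right hyperbolic metrics through the $1$-Lipschitz-type projections bounds $(\rho_l,\rho_r)$ in Teichmüller space. Then apply Labourie-type compactness for surfaces with $K<-1$ bounded away from $-1$: prescribed curvature $K<-1$ gives uniform $C^{k}$ bounds on the immersions, with no degeneration along a line, because $K<-1$ strictly.

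\textbf{Conclusion and main obstacle.} Step 1, Step 2 and the inverse function theorem make $\Phi$ a local diffeomorphism; with Step 3 it is a covering map. Since $\mathcal{H}$ is connected and simply connected (a product of contractible spaces of metrics modulo isotopy, via uniformisation), $\Phi$ is bijective, which gives existence and uniqueness at once. I expect Step 2 to be the main obstacle. A global infinitesimal rigidity statement across two boundary components linked only through the holonomy is delicate, and I would first try to encode it as a variational principle: a functional such as a renormalised or dual volume whose critical points, under fixed boundary metrics, are strictly convex in the deformation.
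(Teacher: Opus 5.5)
\textbf{The gap is Step 2.} That step is the entire content of uniqueness, and the mechanism you sketch cannot work as stated.

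You propose to show, by a maximum-principle or integration-by-parts argument, that the induced Codazzi tensor (the natural candidate being the variation of the shape operator) vanishes \emph{on each boundary component separately}, and only afterwards to match holonomies. But a single strongly convex equivariant surface with prescribed induced metric is not infinitesimally rigid. For a fixed $\sigma$-invariant metric $g[u]$ of curvature $-\phi<-1$, the $\sigma$-invariant $(\phi-1)$-Labourie fields form a $(6g-6)$-dimensional manifold. Each of them gives an isometric immersion into $\AdS$ with a different holonomy. Hence each component carries non-trivial first order isometric deformations whose shape-operator variation is a non-zero Codazzi tensor, while $\sff>0$ and $K<-1$ hold on each component individually. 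Any argument that forces triviality component by component therefore proves too much.

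Rigidity can only come from the coupling of the two components through their common holonomy. Concretely, the images of the two $(6g-6)$-dimensional holonomy differentials must meet trivially in the $(12g-12)$-dimensional tangent space of $\teich\times\teich$. For this you give no identity or quadratic form. The claim that positivity of $\sff$ and $K<-1$ make it definite is an assertion, not an argument. The fallback via strict convexity of a renormalised or dual volume is likewise unestablished.

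\textbf{What the paper actually has to handle.} The real difficulty sits at the limit set $\Lambda$. Any global integral identity in the Kleinian model produces terms along $\Lambda$, where the boundary normal becomes lightlike and the variation field is only expected to be $C^{0,1-k}$ for $k>\kappa_\rho$. The paper's proof proceeds as follows.
\begin{enumerate}
\item It shows that every automorphic variation field extends continuously to $\Lambda$. This rests on controlling the tilt of the normal against the translation field along a timelike geodesic at rate $k\,d(p_0,p)$, via the ``H'' Lemma and $\kappa_\rho<1$.
\item It applies the Pogorelov transforms to obtain a Euclidean first order isometric variation of the convex topological sphere $\partial M\cup\Lambda$.
\item It uses Vekua to show that the tangential projection has isolated zeros of positive index on $\partial M$.
\item It subtracts a Killing field so that this projection vanishes at three points of one component while being nowhere zero on $\Lambda$. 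The non-vanishing on $\Lambda$ comes from Zygmund regularity and a measure-zero argument.
\item Three zeros of positive index contradict Poincar\'e--Hopf, since the total index is $2$.
\end{enumerate}
None of this appears in your plan.

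\textbf{Framework.} The paper also avoids your infinite-dimensional Fredholm/covering setup. It fixes the conformal class and curvature function, so the unknowns are Labourie fields and openness is the finite-dimensional implicit function theorem. It then runs a continuity method anchored at the Bonsante--Mondello--Schlenker case $(\phi_1,\phi_2)=(k+1,1/k+1)$. If you keep the covering argument, you must also prove that $\mathcal{G}$ is connected. A covering of the simply connected $\mathcal{H}$ is a bijection only on each connected component of $\mathcal{G}$.
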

\noindent Theorem \ref{rigiditytheorem} is proven in Section \ref{ss:GlobalRigidity}.
\begin{remark}Existence has already been proven by Tamburelli in \cite{Tam}.
\end{remark}

This statement has a simple ``physical'' interpretation: one can uniquely prescribe the induced metrics on two Cauchy surfaces in a $(2,1)$-dimensional GHMC AdS spacetime, one in the past and one in the future, under the condition that both metrics have curvature $K<-1$. In this realization, the past surface is future-convex, while the future surface is past-convex, and the entire ambient spacetime is prescribed by the two metrics. We note the analogy to the construction of solutions to the linear wave equation having prescribed initial and final configurations. From this point of view, Theorem \ref{rigiditytheorem} states that, subject to the above geometric restrictions, any final metric can be arrived at from any initial metric via an anti-de Sitter spacetime.

Our main result may be given globally in terms of moduli spaces as follows. For $g \geq 2$, let $\mathcal{S}:=\mathcal{S}_{g,-1}$ denote the space of closed, oriented, marked Riemannian surfaces of genus $g$ and of curvature strictly less than $-1$, up to isometries preserving the marking. Likewise, let $\mathcal{X}:=\mathcal{X}_g$ denote the space of oriented, marked $(2,1)$-dimensional GHCC anti-de Sitter spacetimes of genus $g$, up to isometries preserving the time orientation and the marking.
We furnish both these spaces with the quotient of $C^\infty$ topology. The restriction of the metric to the boundary yields a continuous map $\mathcal{I}:\mathcal{X}\rightarrow\mathcal{S}\times\mathcal{S}$.
Theorem \ref{rigiditytheorem} may now be given in the following, slightly stronger terms.
\begin{theorem} \label{diffeotheorem}
The map $\mathcal{I}$ is a homeomorphism.
\end{theorem}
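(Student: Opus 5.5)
Theorem \ref{rigiditytheorem} already gives that $\mathcal{I}$ is a bijection: existence (Tamburelli) gives surjectivity and the uniqueness statement gives injectivity. What remains is that $\mathcal{I}^{-1}$ is continuous. We already know $\mathcal{I}$ is continuous. I would prove continuity of the inverse by a properness argument in the $C^\infty$ quotient topology. If $\mathcal{I}$ is proper and continuous and the spaces are Hausdorff and first countable, then a continuous bijection that maps convergent sequences with convergent images to convergent sequences is a homeomorphism. Concretely, take $X_m\in\mathcal{X}$ with $\mathcal{I}(X_m)=(h_{1,m},h_{2,m})\to(h_1,h_2)$ in $\mathcal{S}\times\mathcal{S}$. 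It suffices to show that every subsequence of $(X_m)$ has a further subsequence converging in $\mathcal{X}$. The limit then has boundary metrics $(h_1,h_2)$ by continuity of $\mathcal{I}$, so by uniqueness it equals $\mathcal{I}^{-1}(h_1,h_2)$, and the whole sequence converges.

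\textbf{Step 1: compactness of holonomies.} Each $X_m$ embeds in a GHMC spacetime, which by Mess is described by a pair of points $(\rho_{l,m},\rho_{r,m})\in\teich\times\teich$. I would show these stay in a compact subset of $\teich\times\teich$. Each boundary component is a convex spacelike surface with induced metric $h_{i,m}$, which lies in a compact set of metrics with $K\le -1-\epsilon$. The Gauss map/Mess construction gives maps from the surface to the left and right hyperbolic surfaces that are Lipschitz with constants controlled by the principal curvatures. Alternatively, one can use that the lengths of closed curves for $\rho_{l},\rho_r$ are bounded by a constant times their $h_{i,m}$-lengths, since projections to the hyperbolic factors are $C$-Lipschitz on convex surfaces. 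Bounded length spectra give precompactness in Teichm\"uller space, by Mumford compactness plus the marking.

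\textbf{Step 2: $C^\infty$ convergence of the boundary surfaces.} After passing to a subsequence with converging holonomies, the boundary surfaces are convex spacelike surfaces in converging GHMC spacetimes. Their induced metrics converge smoothly and have curvature bounded away from $-1$. By Gauss' formula, $\det \sff = -1-K$ is bounded away from $0$ and from $\infty$. So the embeddings solve a Monge--Amp\`ere-type equation whose right-hand side is controlled in $C^\infty$. Interior a priori estimates for the isometric embedding problem, of Pogorelov/Heinz type, then give $C^{k}$ bounds on the embeddings for all $k$, once a $C^0$ bound and a curvature bound are known. The $C^0$ bound comes from the fact that convex surfaces lie between the convex core and the boundary of the domain of dependence, with uniform control. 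A uniform bound on the principal curvatures would follow from a Pogorelov-type maximum principle argument. Arzel\`a--Ascoli then yields a subsequence of boundary immersions converging smoothly to strongly convex spacelike surfaces. The region between them is a GHCC spacetime $X_\infty$, and $X_m\to X_\infty$ in the $C^\infty$ quotient topology.

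I expect the main obstacle to be Step 2, and in particular the uniform $C^2$ bound on the embeddings. This requires showing that the surfaces do not degenerate toward the boundary of the domain of dependence, where the principal curvatures could blow up even though their product stays bounded. I would first try the classical Pogorelov maximum-principle estimate for the largest principal curvature, adapted to Lorentzian signature. This is where the strict inequality $K<-1$ holding uniformly along the sequence is used.
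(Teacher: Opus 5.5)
Your reduction is sound. Bijectivity comes from Theorem \ref{rigiditytheorem}, and continuity of $\mathcal{I}^{-1}$ follows from subsequential compactness plus uniqueness; this is essentially the closedness half of the continuity method of Section \ref{ss:GlobalRigidity}. The genuine gap is Step~1, whose key claim is false.

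The holonomy cannot be controlled one boundary surface at a time. By \cite{Tam} (see also \cite{CS}), a fixed metric $h$ with $K<-1$ embeds as a convex spacelike surface in a GHMC spacetime with \emph{any} prescribed left holonomy $\rho_l\in\teich$. So no bound on the translation lengths $l_{\rho_l}(\gamma)$ in terms of $h$ alone can hold, and the projections to the hyperbolic factors have no uniform Lipschitz constant. Those projections have differentials of the form $\opId\pm JB$, with $J$ the complex structure and $B$ the shape operator. Their Lipschitz constants therefore depend on the individual principal curvatures, whereas only $\det B=-1-K$ is controlled. A principal curvature bound is exactly what you postpone to Step~2, and Step~2 begins by assuming converging holonomies, so the argument as written is circular.

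The bound must come from the two boundary components jointly. This is what Lemma \ref{lemma:CompactnessOfRepresentations} does. The hyperbolic metrics on the boundary components of the convex core dominate, in length, the metrics of the corresponding boundary surfaces (\cite[Lemma B.1]{Pro}). Hence these convex core metrics subconverge by \cite[Lemma 6.1]{Tam}, and the holonomies then subconverge by \cite[Lemma A.2]{BDMS}.

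Step~2 also leaves its crucial estimate unproved, namely the uniform bound on the principal curvatures. You invoke a Lorentzian Pogorelov-type maximum principle for this equivariant problem but do not supply it. The paper obtains the bound by a different mechanism (Lemma \ref{lemma:GlobalCompactness}):
\begin{itemize}
\item Precompactness of the holonomies bounds $d_\Lip(\rho_{m,r},\rho_{m,l})$ uniformly.
\item Hence the limit sets are uniformly quasisymmetric.
\item This lets the argument of \cite[Lemma 7.11]{BDMS} be adapted to give $\|A_{i,m}\|\leq B_2$.
\item Higher regularity then follows from \cite[Lemma 4.1]{Smith2021Weyl}.
\end{itemize}

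Since you are allowed to use the uniqueness in Theorem \ref{rigiditytheorem}, you could instead get continuity of $\mathcal{I}^{-1}$ from the openness step rather than from compactness. Theorem \ref{infinitesimaltheorem1} supplies the transversality used in Lemma \ref{lemma:ContinuityMethodOpenness}. Arguing as there with the implicit function theorem produces, near $X$, solutions depending continuously on data near $\mathcal{I}(X)$. By uniqueness these solutions are the values of $\mathcal{I}^{-1}$.
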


This theorem answers a question that has been studied since the early 2010's, when the analogy between anti-de Sitter GHMC spacetimes and quasi-Fuchsian hyperbolic manifolds became an active area of research. It appears for instance as \cite[Question 3.5]{BBD+}. The main novelty of Theorems \ref{rigiditytheorem} and \ref{diffeotheorem} is injectivity, surjectivity having already been established by Tamburelli in~\cite{Tam} using a different approach. Attempts at proving injectivity, in particular using the approach developed for the hyperbolic case by the second author in \cite{Sch}, have to date proven unsuccessful because key estimates and regularity properties used in the hyperbolic case fail in the anti-de Sitter setting. In the present paper, we adopt a different, novel approach, which cleanly bypasses these difficulties, and hence applies in the anti-de Sitter case (more details are given in Section \ref{sec:rigidity}).

The key new element in the proofs of Theorems \ref{rigiditytheorem} and \ref{diffeotheorem} is the following infinitesimal rigidity result. Note first that $\mathcal{I}$ is trivially smooth in the Frechet sense.
\begin{theorem}
\label{infinitesimaltheorem1}
Every marked $(2,1)$-dimensional GHCC anti-de Sitter spacetime $X$ is infinitesimally rigid with respect to the induced metric of its boundary. That is, $d_X\mc I$ is injective.
\end{theorem}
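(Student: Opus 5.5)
We reduce infinitesimal rigidity to a statement about pairs of surfaces in $\AdS$ and then argue by a duality/holonomy trick. A first-order deformation of $X$ is determined by a first-order deformation of its holonomy $\rho=(\rho_l,\rho_r)$ into $\PSL\times\PSL$. It also requires first-order deformations of the two convex boundary surfaces $\Sigma_\pm$ inside the GHMC spacetime $M$ containing $X$. Concretely, $d_X\mc I(\dot X)=0$ means we have a cocycle $\dot\rho\in H^1(\pi_1 S,\mathfrak{so}(2,2))$ together with $\dot\rho$-equivariant vector fields $V_\pm$ along the lifts $\tilde\Sigma_\pm\subset\AdS$ that are Killing for the induced metric: $\mathcal{L}_{V_\pm}\fff_\pm=0$. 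The goal is to show $\dot\rho$ is a coboundary, after which $V_\pm$ become genuine infinitesimal isometries and $\dot X$ is trivial.

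Step one is local. Strong convexity and the Gauss formula give $K<-1$, and we use this to show that for an isometric deformation $V$ of a strongly convex spacelike surface, the variation $\dot\sff$ of the second fundamental form is a Codazzi tensor (w.r.t. $\fff$) with $\opTr_{\sff}\dot\sff$ controlled. More precisely, the linearized Gauss equation forces $\det_{\fff}$-variation $\langle \dot\sff, \sff^{*}\rangle=0$, where $\sff^*$ is the cofactor of $\sff$. Equivalently, $\dot\sff$ is traceless with respect to the positive definite form $\sff$. This is the Pogorelov/Blaschke-type infinitesimal picture, and the mixed signature only changes signs.

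Step two is global. We use the Mess parametrization: the Gauss maps of $\Sigma_\pm$ give, via the left and right projections, pairs of hyperbolic metrics on $S$, whose holonomies are $\rho_l,\rho_r$. Equivalently, $\fff_\pm(E\cdot,E\cdot)$ and $\fff_\pm((E\pm J)\cdot,(E\pm J)\cdot)$ are hyperbolic, where $E$ is the shape operator. The variation $\dot\rho=(\dot\rho_l,\dot\rho_r)$ then appears as infinitesimal variations of these hyperbolic metrics, computed from $\dot E$. Since $\fff_\pm$ is fixed, both boundary components produce the same $(\dot\rho_l,\dot\rho_r)$ from two different Codazzi tensors $\dot E_\pm$.

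The core is an integral identity. We pair the Teichmüller variations coming from $\Sigma_+$ and $\Sigma_-$ through the Weil–Petersson (Goldman) symplectic form on $H^1$, or alternatively through a Schläfli-type formula for the volume of $X$. In our case this reads $\delta\mathrm{Vol}(X)=\frac12\int_{\partial X}\langle\dot\fff, \sff-H\fff\rangle$-type terms, plus a second relation obtained by swapping roles via the dual (polar) surfaces. Combined, these express $\omega(\dot\rho_l,\cdot)$-type quantities as $\int_{\Sigma_\pm}\langle\dot\sff_\pm,J\dot\sff_\pm\rangle$. The sign is definite because $\dot\sff$ is $\sff$-traceless and $\sff>0$. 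Future convexity of $\Sigma_-$ versus past convexity of $\Sigma_+$ makes the two contributions have the same sign. The left-hand side vanishes because both surfaces yield the same cocycle, forcing $\dot\sff_\pm=0$. Then the fundamental theorem of surfaces shows that $V_\pm$ are restrictions of a Killing field, so $\dot\rho$ is trivial and $d_X\mc I$ is injective.

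The main obstacle is step two: finding the right quadratic identity with a definite sign. The hyperbolic approach of \cite{Sch} relies on estimates that fail here. My first attempt would be to use the symplectic form on the $\PSL\times\PSL$ character variety, together with the fact that the Gauss-map construction makes $\dot\rho_l$ and $\dot\rho_r$ correspond to $\dot E$ and $J\dot E$-like terms. Failing that, I would compute with the dual volume and its Schläfli formula.
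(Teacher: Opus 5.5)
Your proposal has a genuine gap, and it is exactly the step you flag as the main obstacle. Step one is correct but carries none of the content of the theorem, because each boundary component is individually flexible. With $\fff_\pm$ fixed, the admissible $\dot\sff_\pm$ (Codazzi and $\sff$-traceless) form the $(6g-6)$-dimensional tangent space of the space of Labourie fields of $\fff_\pm$. Their holonomy variations sweep out the tangent spaces of two $(6g-6)$-dimensional submanifolds $L_+,L_-\subset\mathcal{T}\times\mathcal{T}$. Injectivity of $d_X\mc I$ amounts to showing that $TL_+\cap TL_-=0$ at $\rho$, with both holonomy maps immersions; this is exactly the transversality used in Lemma \ref{lemma:ContinuityMethodOpenness}. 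So everything rests on step two, which you do not supply.

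The tools you suggest cannot supply it as stated.

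\emph{The proposed sign-definite quantity is zero.} $\int\langle\dot\sff,J\dot\sff\rangle$ vanishes identically: for $\fff$-self-adjoint $\dot B$ and skew $J$, $\mathrm{tr}(\dot B J\dot B)=\mathrm{tr}(J\dot B^2)=0$. This is structural, not accidental. Any pairing built from the Goldman or Weil--Petersson symplectic form vanishes on $(v,v)$, so symplectic arguments can at best show that $L_\pm$ are Lagrangian, and two Lagrangian submanifolds can be tangent.

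\emph{The needed quadratic form is missing.} You would need a form on $T_\rho(\mathcal{T}\times\mathcal{T})$ that is definite with opposite signs on $TL_+$ and $TL_-$, and nothing in your plan produces one. Your claims that future versus past convexity gives matching signs, and that ``the left-hand side vanishes'', are asserted rather than derived. Note also that $\dot\rho_l$ and $\dot\rho_r$ live at different points of Teichm\"uller space. The identification supplied by the left and right metrics $\fff((E\pm JB)\cdot,(E\pm JB)\cdot)$ differs on $\Sigma_+$ and $\Sigma_-$; here $E$ is the identity and $B$ the shape operator, and your version of this formula is incorrect.

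\emph{The Schl\"afli route does not help either.} The dual Schl\"afli formula is first order: with $\dot\fff_\pm=0$ it only says the first variation of the dual volume vanishes. Without a definite second variation, it gives no information on the kernel of $d\mc I$.

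The sign-definite object that does exist is $\det\dot B\leq 0$ for $\sff$-traceless $\dot B$. It can only be exploited on a closed convex surface in an affine chart, and that is the missing idea. The paper works in the Klein model, where $\partial M\cup\Lambda$ is a convex sphere. It converts $\xi$ into a Euclidean isometric variation by the Pogorelov transforms, and concludes with Vekua's local holomorphicity plus Poincar\'e--Hopf: three forced zeros of positive index against total index $2$.

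The hard analytic work, which has no counterpart in your plan, is twofold:
\begin{enumerate}
\item showing that automorphic fields extend continuously to $\Lambda$, via the tilt estimate controlled by $\kappa_\rho<1$ and the ``H'' Lemma;
\item showing that they can be made non-vanishing on $\Lambda$ by subtracting a small Killing field.
\end{enumerate}
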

\noindent A precise version of Theorem \ref{infinitesimaltheorem1} is stated in Theorem \ref{thm:InfinitesimalRigidity} and proven in Section \ref{ss:InfinitesimalRigidity}.

Theorem \ref{diffeotheorem} has the following direct corollary. Let $\mc J: \mc X\to \mc S\times \mc S$ denote the map which sends each GHCC anti-de Sitter structure to the third fundamental forms of its boundary components.
\begin{corollary} \label{cr:III}
The map $\mc J$ is a homeomorphism.
\end{corollary}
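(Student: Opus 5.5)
We deduce Corollary \ref{cr:III} from Theorem \ref{diffeotheorem} by a duality that exchanges induced metric and third fundamental form. Concretely, we construct a homeomorphism $\mc D:\mc X\to\mc X$ with
\begin{equation*}
\mc J=\sigma\circ\mc I\circ\mc D,
\end{equation*}
where $\sigma:\mc S\times\mc S\to\mc S\times\mc S$ is the homeomorphism exchanging the two factors. This identity gives the corollary immediately, since $\mc J$ is then a composition of homeomorphisms.

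The construction uses the projective duality of $\AdS$, which sends points to totally geodesic spacelike planes and vice versa. For a smooth, spacelike, strongly convex surface $\Sigma$ in a GHMC spacetime $M$, lift to the universal cover $\tilde\Sigma\subset\AdS$. The dual surface $\tilde\Sigma^*$ is the set of points dual to the tangent planes of $\tilde\Sigma$; equivalently, it is obtained by following the unit normal geodesic for time $\pi/2$. The standard computations, the mixed-signature analogue of the hyperbolic--de Sitter duality, give three facts.
\begin{itemize}
\item $\tilde\Sigma^*$ is again smooth, spacelike and strongly convex. Strong convexity ($\sff>0$) is exactly what makes the normal map an immersion.
\item Its induced metric is the third fundamental form $\tff$ of $\tilde\Sigma$, and its third fundamental form is the induced metric $\fff$ of $\tilde\Sigma$.
\item The curvatures satisfy $K^*=K/(K+1)$. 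This value is again $<-1$, so $\tff\in\mc S$ and $\mc J$ indeed lands in $\mc S\times\mc S$.
\end{itemize}
Moreover, $\tilde\Sigma^*$ is invariant under the same holonomy representation $\rho=(\rho_l,\rho_r)$, because the duality is $\mr{Isom}(\AdS)$-equivariant.

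Given $X\in\mc X$ with boundary components $\Sigma_-$ (past, future-convex) and $\Sigma_+$ (future, past-convex), we apply the duality to both boundary components inside the ambient GHMC spacetime $M$. The claim is that $\Sigma_+^*$ lies in the past of $\Sigma_-^*$, and that both lie in $M$. One then defines $\mc D(X)$ as the compact region between them, with $\Sigma_+^*$ as its past boundary and $\Sigma_-^*$ as its future boundary. This is where the factor swap $\sigma$ comes from. The region is a GHCC spacetime whose past and future boundary metrics are $\tff_+$ and $\tff_-$ respectively.

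The main obstacle is precisely this global step. We must show that the dual surfaces are properly embedded Cauchy surfaces of the same GHMC spacetime $M$, correctly time-ordered and disjoint, with the region between them convex. The plan is to work in $\tilde M\subset\AdS$, the domain of dependence of the limit curve $\Lambda_\rho$.
\begin{enumerate}
\item Since $\Sigma_\pm$ are strongly convex and $\rho$-invariant, their support planes meet $\partial_\infty\AdS$ in curves avoiding $\Lambda_\rho$ in a controlled way. Hence the dual points lie in $\tilde M$, with $\Sigma_-^*$ on the future side of the convex core and $\Sigma_+^*$ on the past side.
\item Their duals bound convex sets containing the convex core. This follows from the fact that duality reverses inclusion of convex sets and exchanges the convex core with itself.
\item The sets $\Sigma_\pm^*$ are graphs over a Cauchy surface by a compactness/properness argument using the cocompact action of $\rho$.
\end{enumerate}
Finally, duality is an involution, so $\mc D\circ\mc D=\opId$. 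The map $\mc D$ is continuous in the $C^\infty$ quotient topology, since the dual surface depends smoothly on the $2$-jet of $\Sigma$, and it respects markings and time orientation. Hence $\mc D$ is a homeomorphism, and the corollary follows from Theorem \ref{diffeotheorem}.
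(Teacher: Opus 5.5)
Your proposal follows the paper's route. The paper also deduces the corollary from Theorem~\ref{diffeotheorem} using the point--plane duality of $\AdS$. That duality sends each strongly convex boundary component to a strongly convex surface of opposite convexity, with $\fff$ and $\tff$ exchanged, and the paper takes the global behaviour of the dual surfaces as known. Your identity $\mc J=\sigma\circ\mc I\circ\mc D$, with the factor swap and $\mc D\circ\mc D=\opId$, is an explicit version of the same argument.

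Two supporting claims need correcting, though neither affects the structure of the argument.

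First, the curvature formula has the wrong sign. In $\AdS$ the Gauss equation gives $\det B=-1-K$, so $K^*=K/\det B=K/(|K|-1)$, as in \eqref{eq:Kstar}. Your formula $K/(K+1)$ is the hyperbolic--de Sitter one, and it is positive when $K<-1$, contradicting your assertion that $K^*<-1$. In any case, $K^*<-1$ follows directly from Gauss's equation applied to the strongly convex spacelike dual.

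Second, duality does not fix the convex core. It exchanges $\Conv(\Lambda)$ with $\overline{\Inv(\Lambda)}$. Step (2) should therefore read as follows. Let $\tilde X$ be the convex region bounded by the lifts of $\Sigma_\pm$. Then $\Conv(\Lambda)\subseteq\tilde X\subseteq\overline{\Inv(\Lambda)}$, and inclusion reversal gives $\Conv(\Lambda)\subseteq\tilde X^*\subseteq\overline{\Inv(\Lambda)}$.
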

\noindent This result follows from Theorem \ref{diffeotheorem} through a well-known duality for strongly convex surfaces in GHMC anti-de Sitter spacetimes, which we now briefly recall (see, for example, \cite{BS2} and \cite{FS4} for more detailed discussions). We view $\AdS$ as the projectivized quadric
\begin{equation*}
\AdS := \big\{\mathbf{x}\in\mathbb{R}^{2,2}\ \big|\ \langle\mathbf{x},\mathbf{x}\rangle=-1\big\}/\{\pm 1\}\ .
\end{equation*}
Every spacelike, totally geodesic plane $P$ in $\AdS$ is the intersection of $\AdS$ with a hyperplane $\bar P$ of signature $(2,1)$ in $\R^{2,2}$. The unit normal of $\bar P$ in $\R^{2,2}$ is a point of $\AdS$, which we denote by $P^*$ and which we call the \emph{dual} of $P$. In a similar manner, we associate to every point $p\in\AdS$, a spacelike plane $p^*$ in $\AdS$, and we readily show that $(p^*)^*=p$.

This duality commutes with the action of isometries on $\AdS$ and thus also applies within every GHMC anti-de Sitter spacetime $X$. Given a smooth, strongly future-convex, spacelike surface $S\subset X$, its {\em dual}, defined to be the set of points dual to its support planes, is a smooth, strongly past-convex, spacelike surface $S^*\subset X$. Furthermore, this duality maps the induced metric on $S$ to the third fundamental form of $S^*$ and vice-versa. Corollary \ref{cr:III} now follows directly from Theorem \ref{diffeotheorem} using this duality.

Mess \cite{Mes} observed that GHMC anti-de Sitter spacetimes have remarkable similarities to quasi-fuchsian hyperbolic manifolds. In particular, every such spacetime has a well-defined convex core, given by the intersection of all geodesically convex subsets that are deformation retracts of the ambient space. Furthermore, in the non-Fuchsian case, the boundary of this convex core is the disjoint union of two pleated surfaces, each of which is equipped with an induced hyperbolic metric $m_\pm\in \mc T$ and is bent along a measured geodesic lamination $l_\pm\in \mc{ML}$. Here $\mc T:=\mathcal{T}_g$ denotes the Teichm\"uller space of surfaces of genus $g$ and $\mc{ML}:=\mathcal{ML}_g$ denotes the space of measured geodesic laminations over such surfaces. Mess conjectured that every pair of hyperbolic metrics $(m_-, m_+)\in \mc T \times \mc T $ may be uniquely obtained in this manner, and likewise that any pair $(l_-, l_+)\in \mc{ML} \times \mc{ML} $ of measured geodesic laminations that together fill the surface may be uniquely obtained in this manner \cite{Mes}. These are anti-de Sitter versions of Thurston's conjectures concerning quasi-Fuchsian hyperbolic 3-manifolds. We solve affirmatively the second conjecture of Mess, which may be viewed as a limiting case of Corollary \ref{cr:III}.

\begin{theorem} \label{tm:mess}
Let $(l_-, l_+)\in \mc{ML} \times \mc{ML} $ be measured geodesic laminations that together fill $S$. There exists a unique (non-Fuchsian) $(2,1)$-dimensional GHMC anti-de Sitter spacetime $X$ such that the measured bending laminations of the boundary components of its convex core are $l_-$ and $l_+$.
\end{theorem}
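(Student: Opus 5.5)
Existence is due to Bonsante--Schlenker, so the task is uniqueness. I would obtain it as a limit of Corollary \ref{cr:III}: the third fundamental forms of smooth convex surfaces degenerate to the bending data of the convex core boundary. The plan is to approximate a given spacetime $X$, whose convex core has bending laminations $(l_-,l_+)$, by GHCC subsets whose boundary third fundamental forms converge in a controlled way to the "dual" data of $(l_-,l_+)$. Then injectivity of $\mc J$, together with a properness/compactness statement, should force $X$ to be unique.

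\textbf{Step 1 (dual picture).} Under the duality $p\leftrightarrow p^*$, the future boundary of the convex core of $X$, a pleated surface bent along $l_+$, is dual to a past-convex set. For the convex core this dual set is a spacelike $\R$-tree dual to $l_+$: the "third fundamental form" of a pleated surface is the degenerate pseudo-metric given by the intersection form with $l_+$. So the datum $l_\pm$ can be viewed as a degenerate element on the boundary of $\mc S$, namely the length functions $\gamma\mapsto i(\gamma,l_\pm)$.

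\textbf{Step 2 (approximation).} Given two spacetimes $X,X'$ with the same data $(l_-,l_+)$, consider in each the equidistant surfaces at distance $\epsilon$ from the convex core boundary. These are $C^{1,1}$, and after smoothing they are strongly convex with curvature $K<-1$. Their third fundamental forms are approximately hyperbolic metrics obtained by "grafting-like" rescaling: as $\epsilon\to 0$, the length spectrum of $\tff_\epsilon$ converges to $i(\cdot,l_\pm)$, and $\tff_\epsilon$ depends, to leading order, only on $l_\pm$ and on $\epsilon$.

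Given this, I would use Theorem \ref{diffeotheorem} and Corollary \ref{cr:III} to choose, for the fixed target data, GHCC spacetimes $Y_\epsilon=\mc J^{-1}(h^\epsilon_-,h^\epsilon_+)$, where $h^\epsilon_\pm$ is a canonical family of metrics determined by $l_\pm$ alone (for instance, the metrics dual to the $\epsilon$-equidistant surfaces of a reference model). The aim is to show that both $X$ and $X'$ arise as limits of the GHMC extensions of $Y_\epsilon$. For $X$, this requires showing that the data of its own $\epsilon$-surfaces are close to $h^\epsilon_\pm$, and that $\mc J^{-1}$ is uniformly continuous in a suitable sense near the degenerate boundary. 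Uniqueness of the limit then gives $X=X'$.

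\textbf{Main obstacle.} The difficulty is the degenerate limit. Corollary \ref{cr:III} only gives a homeomorphism on the interior, whereas we need a \emph{properness up to the boundary}: if third fundamental forms converge, in the sense of length spectra, to the data of $(l_-,l_+)$, then the GHMC extensions converge in the character variety $\teich\times\teich$ (via Mess's parametrization by left and right holonomies). I would try first to prove compactness using the filling hypothesis. Filling gives a uniform lower bound on $i(\gamma,l_-)+i(\gamma,l_+)$, which bounds the holonomies and hence forces subsequential convergence of the Mess parameters. I would then identify the limit's bending laminations with $(l_-,l_+)$ by continuity of bending under convergence of convex cores. The remaining, and I expect hardest, point is to transfer injectivity to the limit. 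Here I would use infinitesimal rigidity in the form of Theorem \ref{infinitesimaltheorem1}, which should give a local degree or uniform local injectivity argument. Alternatively, I would show that the map from spacetimes to bending data is a proper local homeomorphism from a connected space onto the simply connected space of filling pairs. Then degree one, computed at a Fuchsian-adjacent configuration, gives uniqueness.
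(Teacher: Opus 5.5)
Your setup uses the right ingredients: existence from Bonsante--Schlenker, properness from the filling hypothesis, and the fact that the third fundamental forms of convex surfaces approaching the convex core degenerate in length spectrum to $i(\cdot,l_\pm)$. But the decisive step, passing injectivity of $\mc J$ to the degenerate limit, is not supplied, and the tools you propose cannot supply it.

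Agreement of $\tff_\epsilon(X)$ and $\tff_\epsilon(X')$ ``to leading order'' only means that their length spectra converge to the same point of $\cML$. Both families degenerate as $\epsilon\to 0$, nothing forces them to approach each other, and Corollary \ref{cr:III} gives no uniform continuity of $\mc J^{-1}$ at infinity. If $h^\epsilon_\pm$ comes from a reference spacetime $X_0$, then $\mc J^{-1}(h^\epsilon_-,h^\epsilon_+)$ is just a collar of $X_0$, so showing that $X$ is a limit of these is the original claim.

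Theorem \ref{infinitesimaltheorem1} does not help either. It concerns smooth, strongly convex boundaries and gives no estimate that survives degeneration to pleated surfaces. Infinitesimal rigidity with respect to bending data is exactly what is unavailable in the anti-de Sitter setting, since there is no analogue of Hodgson--Kerckhoff cone-manifold rigidity. So your ``proper local homeomorphism onto $\mc{FML}$'' alternative presupposes the missing result. It would also need simple connectivity of $\mc{FML}$ and connectedness of $\mc L^{-1}(\mc{FML})$, which you do not justify. In short, a uniform limit of injective maps need not be injective, and your argument has no mechanism that excludes non-trivial fibres.

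The paper closes this gap topologically rather than analytically. It sets $\Psi_k(X):=\big(u_{k^*}(\tff_k^-(X)),u_{k^*}(\tff_k^+(X))\big)$, using the constant curvature $k$ leaves (so no smoothing is needed) and an earthquake-based identification $u_{k^*}$ of $\mc T$ with $\cML$.
\begin{itemize}
\item Corollary \ref{cr:III} and invariance of domain make each $\Psi_k$ an open embedding.
\item Belraouti's convergence of length spectra gives $\Psi_k\to\mc L$ uniformly on compact sets as $k\to-1$.
\item Finney's theorem then shows only that the compact fibre $F:=\mc L^{-1}(l_-,l_+)$ is cellular.
\end{itemize}
The ingredient your proposal lacks is real analyticity. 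By Mess's relations $\rho_l=E_l^{2l_+}(\rho_r)=E_r^{2l_-}(\rho_r)$ and Kerckhoff's analyticity of earthquakes, $F$ is a compact real-analytic set, hence an ANR. Being cellular, it is therefore contractible. A compact real-analytic set of dimension $d$ has $H_d(F,\mathbb{Z}_2)\neq 0$, so $d=0$, and since $F$ is connected it is a single point.
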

\noindent Existence has already been proven in \cite{BS3} by Bonsante--Schlenker.
We prove uniqueness in Section \ref{sc:mess}, following the strategy of the work \cite{DS} of Dular--Schlenker, with the key new ingredient being Corollary \ref{cr:III}.

Here too, we encounter a key difference with the hyperbolic setting, where Bonahon--Otal \cite{BO} were able to prove infinitesimal rigidity of quasi-Fuchsian hyperbolic 3-manifolds, with respect to the bending angles of the convex core, in the ``rational'' case, that is, in the case where the bending lamination is supported along the union of a finite family of disjoint simple closed curves (so-called ``multicurves''). There, using a doubling argument, they reduced the problem of rigidity of hyperbolic convex cores to that of certain hyperbolic cone-manifolds, as studied by Hodgson--Kerckhoff in  \cite{HK}. This technique does not apply in the present case, as there is no corresponding rigidity result for anti-de Sitter cone manifolds that we are aware of.

Theorem \ref{tm:mess} may also be stated in terms of boundary data for GHMC anti-de Sitter spacetimes in their entirety, rather than just for their convex cores. Indeed, GHMC anti-de Sitter spacetimes are non-complete, being bounded by a ``past singularity'' and a ``future singularity'', physically interpreted respectively as a ``big bang'' and a ``big crunch''. Each singularity is the quotient of a real tree by an action of the fundamental group, and is thus dual to a measured geodesic lamination. With the above described duality, the past singularity of a GHMC anti-de Sitter spacetime is dual to the measured bending lamination of the future boundary component of its convex core, whilst its future singularity is dual to the measured bending lamination of the past boundary component of its convex core. Denoting by $l^*$ the real tree with group action dual to the measured geodesic lamination $l$, we obtain the following reformulation of Theorem \ref{tm:mess}.

\begin{corollary} \label{cr:mess*}
Let $(l_-, l_+)\in \mc{ML} \times \mc{ML} $ be measured geodesic laminations that together fill $S$. There exists a unique $(2,1)$-dimensional GHMC anti-de Sitter spacetime $X$ with past singularity $l_+^*$ and future singularity $l_-^*$.
\end{corollary}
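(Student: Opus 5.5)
The plan is to derive Corollary \ref{cr:mess*} directly from Theorem \ref{tm:mess}, using the duality between the convex core boundary and the initial and final singularities. The key input is the statement recalled above. In any non-Fuchsian GHMC anti-de Sitter spacetime $X$:
\begin{itemize}
\item the past singularity is the real tree, with $\pi_1(S)$-action, dual to the measured bending lamination of the \emph{future} boundary component of the convex core;
\item the future singularity is dual to the bending lamination of the \emph{past} boundary component.
\end{itemize}
So "past singularity $l_+^*$ and future singularity $l_-^*$" should be equivalent to "bending laminations $(l_-,l_+)$ on the past and future convex core boundaries". The corollary then follows by applying existence and uniqueness in Theorem \ref{tm:mess}.

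First, I make the duality precise. Lift to the universal cover $\tilde X\subset\AdS$. The future boundary $\partial_+C$ of the convex core is a pleated surface. Its support planes $P$ have duals $P^*$, and these lie on the past boundary of the domain of dependence. This is the past singularity: the set of points where the cosmological time vanishes, following Mess and Benedetti--Bonsante.

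The support planes along a single flat piece (a stratum of the complement of the lamination) all coincide, so each stratum maps to a single point. Two strata separated by bending of total measure $\theta$ have duals at timelike-orthogonal distance, namely spacelike distance $\theta$ in the singular set. Hence the singularity, with its induced path metric, is isometric to the real tree dual to $l_+$, equivariantly for $\pi_1(S)$. The time-reversed argument applied to $\partial_-C$ gives the future singularity $l_-^*$.

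Second, I need injectivity of $l\mapsto l^*$ on $\mc{ML}$. A measured lamination is recovered from its dual tree by Skora's theorem, or more simply via translation length functions: the length of $\gamma$ in $l^*$ equals the intersection number $i(l,\gamma)$, and these intersection numbers determine $l$. Thus prescribing the singularities as $(l_+^*,l_-^*)$ is the same as prescribing the bending laminations $(l_-,l_+)$.

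Third, I rule out the Fuchsian case. Fuchsian spacetimes have a totally geodesic convex core and degenerate singularities, namely points, dual to the zero lamination. Since $l_\pm$ fill, they are nonzero, so any $X$ with the prescribed singularities is non-Fuchsian. Existence and uniqueness are then exactly those of Theorem \ref{tm:mess}.

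The main obstacle is the first step. One must check that the path-metric structure on the singularity coincides with the dual tree, including for non-rational laminations, where the strata are not discrete. I would handle this by approximating $l_\pm$ with weighted multicurves, where the computation is explicit, and passing to the limit using continuity of the bending lamination and of the singularity in the equivariant Gromov--Hausdorff topology.
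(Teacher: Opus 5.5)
Your proposal is correct and follows essentially the same route as the paper, which obtains this corollary as a direct reformulation of Theorem~\ref{tm:mess}: the past singularity is dual to the future bending lamination and the future singularity to the past one. The paper takes that duality as a known background fact, so your approximation sketch for non-rational laminations is not needed; your other two checks are correct details the paper leaves implicit, namely injectivity of $l\mapsto l^*$ via translation lengths and the exclusion of Fuchsian spacetimes because filling laminations are nonzero.
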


Theorem \ref{tm:mess} can also be stated purely in terms of earthquakes.
\begin{corollary} \label{cr:mess}
Let $l_-, l_+\in \mc{ML} $ be measured geodesic laminations that together fill $S$. The composition $E(l_-)\circ E(l_+): \mc T \to \mc T $ of the left earthquake along $l_+$ with the left earthquake along $l_-$ has a unique fixed point on $\mc T $.
\end{corollary}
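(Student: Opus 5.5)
This follows from Theorem \ref{tm:mess} via Mess's description of GHMC anti-de Sitter spacetimes by pairs of hyperbolic metrics. The plan is to set up a bijection between fixed points of $E(l_-)\circ E(l_+)$ and GHMC spacetimes whose convex-core bending laminations are $(l_-,l_+)$.

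First recall Mess's parametrization. A GHMC anti-de Sitter spacetime $X$ has holonomy $(\rho_L,\rho_R)$ in $\PSL\times\PSL$, and each factor is Fuchsian. This gives a bijection between GHMC spacetimes and $\mc T\times\mc T$, sending $X$ to its left and right metrics $(\mu_L,\mu_R)$. Mess also relates these metrics to the convex core. If $m_\pm$ is the induced hyperbolic metric on the boundary component $\partial_\pm$ of the convex core, and $l_\pm$ its bending lamination, then
\begin{equation*}
\mu_L=E_L(l_+)(m_+)=E_R(l_-)(m_-)\ ,\qquad \mu_R=E_R(l_+)(m_+)=E_L(l_-)(m_-)\ ,
\end{equation*}
up to the paper's sign and orientation conventions for left and right earthquakes. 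Consequently $\mu_R=E(l_+)^{-2}$-type relations hold between the two factors; more usefully, $m_-=E_L(l_-)\,E_L(l_+)(m_+)$ after composing, and symmetrically $m_+=E_L(l_+)E_L(l_-)(m_-)$ by the other pair of identities. Eliminating $\mu_L,\mu_R$ in the correct order then yields a fixed-point equation of the form $\mu=E(l_-)\circ E(l_+)(\mu)$. I would carry out this bookkeeping carefully, working with one of $\mu_L$, $m_+$ as the unknown, and normalising so that the composition matches the one in the statement.

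Next prove the converse. Suppose $\mu$ is a fixed point of $E(l_-)\circ E(l_+)$. Define $m_+$ and $m_-$ by the earthquake relations above, and let $X$ be the GHMC spacetime with left and right metrics $(E_L(l_+)m_+,\,E_R(l_+)m_+)$. Take the pleated surface in $X$ with induced metric $m_+$ and bending $l_+$, together with the analogous pleated surface for $(m_-,l_-)$. The goal is to show that these two surfaces are exactly the boundary of the convex core, so that the bending data of $X$ is $(l_-,l_+)$. The tool is Mess's result that a pleated surface bent the correct way, whose holonomy is that of $X$, coincides with a convex-core boundary component. Hence fixed points correspond bijectively to GHMC spacetimes with bending laminations $(l_-,l_+)$.

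Theorem \ref{tm:mess} says there is exactly one such spacetime when $l_\pm$ fill. Hence there is exactly one fixed point. Existence could also be taken directly from \cite{BS3}.

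The main obstacle is the converse step, and I expect it to be the hardest part of the argument. The question is whether an arbitrary solution of the earthquake equation really produces a convex core with those bending laminations, or whether it could yield the laminations with the roles of past and future reversed. I would settle this with Mess's characterization of the convex-core boundary via the left and right earthquake identities. Filling of $(l_-,l_+)$ ensures the spacetime is non-Fuchsian, so the two pleated surfaces are distinct and correctly time-ordered.
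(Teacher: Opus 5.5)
Your strategy matches the one the paper intends. It combines Mess's relations \eqref{eq:messearthquakes} with the converse \cite[Thm 1.5]{BS3}, which together give \eqref{eq:fiber} and \eqref{eq:coincidence}, and then applies Theorem \ref{tm:mess}. However, the bookkeeping step fails as you state it, and so does the bijection you claim.

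Eliminating $m_\pm$ from your own displayed identities gives $\mu_L=E_L(l_+)^2(\mu_R)=E_L(2l_+)(\mu_R)$ and $\mu_L=E_R(2l_-)(\mu_R)=E_L(2l_-)^{-1}(\mu_R)$. Hence
\begin{equation*}
E_L(2l_-)\circ E_L(2l_+)(\mu_R)=\mu_R\ .
\end{equation*}
So a spacetime with bending data $(l_-,l_+)$ produces a fixed point of $E(2l_-)\circ E(2l_+)$, not of $E(l_-)\circ E(l_+)$.

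No choice of unknown removes the factor $2$. With $\mu_L$ as the unknown you get the conjugate map $E(2l_+)\circ E(2l_-)$. With $m_+$, your intermediate identities $m_-=E_L(l_-)E_L(l_+)(m_+)$ and $m_+=E_L(l_+)E_L(l_-)(m_-)$ are correct. But substituting one into the other gives $m_+=E_L(l_+)E_L(2l_-)E_L(l_+)(m_+)$, which is again conjugate to the same map. The correct statement is therefore that $\mu\mapsto\big(E_L(l_+)(\mu),\mu\big)$ is a bijection from the fixed points of $E(l_-)\circ E(l_+)$ onto the GHMC spacetimes whose bending laminations are $(l_-/2,l_+/2)$, not $(l_-,l_+)$.

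The repair is immediate. Filling depends only on the supports, so $(l_-/2,l_+/2)$ also fills $S$, and Theorem \ref{tm:mess} applied to this pair gives exactly one fixed point. Given a fixed point $\mu$, the pair $(\mu_L,\mu_R)=(E_L(l_+)(\mu),\mu)$ satisfies \eqref{eq:messrelations} for $(l_-/2,l_+/2)$.

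Your converse step identifies the pleated surfaces with data $\big(E_L(l_+/2)(\mu),\,l_+/2\big)$ and $\big(E_R(l_-/2)(\mu),\,l_-/2\big)$ with the two convex-core boundary components. This is exactly the converse statement \cite[Thm 1.5]{BS3} that the paper invokes. The past/future ambiguity you worry about is settled there by which of $E_L$, $E_R$ appears in the holonomy of a future-convex versus a past-convex pleated surface, so no further argument is needed.
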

\noindent This corollary is proven in Section \ref{sc:mess}, following the strategy of \cite{BS3}.

Finally, in Section~\ref{sec:landslides} we apply our results to solve in the affirmative a conjecture of Bonsante--Mondello--Schlenker concerning \emph{landslides} (see \cite{cyclic}, \cite{cyclic2}). This is the analogue in the landslide setting of Thurston's Earthquake theorem (see \cite{ThuE}).

\begin{corollary}\label{cr:landslides}
For all $\theta_-,\theta_+\in(0,\pi)$ and all $h_-,h_+\in\mc T $, the composition
\begin{equation*}
\mathcal S_{e^{i\theta_+},h_+}\circ\mathcal S_{e^{i\theta_-},h_-}:\mc T \longrightarrow\mc T\ .
\end{equation*}
has a unique fixed point.
\end{corollary}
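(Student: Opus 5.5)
The plan is to translate the fixed-point problem into a geometric existence-and-uniqueness statement for GHCC anti-de Sitter spacetimes, and then invoke Corollary~\ref{cr:III} (or Theorem~\ref{diffeotheorem}). This follows the pattern by which Corollary~\ref{cr:mess} follows from Theorem~\ref{tm:mess}.

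Recall the construction of landslides from \cite{cyclic}. Given $h\in\mc T$ and $\theta\in(0,\pi)$, the map $\mathcal S_{e^{i\theta},h}$ sends $m\in\mc T$ to the hyperbolic metric $m'$ obtained as follows. Let $b$ be the unique self-adjoint, positive, Codazzi operator for $h$ with $\det b=1$ and $h(b\cdot,b\cdot)=m$ (Labourie's theorem). Then
\begin{equation*}
m'=h\big((\cos\theta\, E+\sin\theta\, Jb)\cdot,(\cos\theta\, E+\sin\theta\, Jb)\cdot\big)
\end{equation*}
up to the appropriate normalization. Geometrically, $(h,m)$ determine a GHMC AdS spacetime via the left and right holonomies, and $h$ together with $\theta$ singles out a strongly convex spacelike surface of constant curvature. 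More precisely, consider the surface of constant curvature $K_\theta=-1/\cos^2(\theta/2)$ in a GHMC spacetime $M$. Its induced metric, suitably rescaled, has the conformal class of $h$. Its third fundamental form, also rescaled, is related likewise. The left and right hyperbolic metrics of $M$ are then obtained from this surface by the formulas $\fff((E\pm J_\fff B)\cdot,(E\pm J_\fff B)\cdot)$ (Krasnov--Schlenker). The first step is therefore to establish this dictionary carefully: $\mathcal S_{e^{i\theta},h}(m_L)=m_R$ (or the reverse, depending on orientation conventions) exactly when the GHMC spacetime with holonomy $(m_L,m_R)$ contains a past-convex (respectively future-convex) Cauchy surface of constant curvature $K_\theta$ whose induced metric is $h/\cos^2(\theta/2)$, or a fixed constant multiple of it.

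With this dictionary in hand, $m$ is a fixed point of $\mathcal S_{e^{i\theta_+},h_+}\circ\mathcal S_{e^{i\theta_-},h_-}$ if and only if the following holds. Set $m_1=\mathcal S_{e^{i\theta_-},h_-}(m)$. The spacetime with holonomy $(m,m_1)$ must contain a surface of constant curvature $K_{\theta_-}$ with metric a multiple of $h_-$. The spacetime with holonomy $(m_1,m)$, after reversing time orientation, must contain one with metric a multiple of $h_+$. I would arrange the conventions so that both conditions concern the same GHMC spacetime $M$ with holonomy $(m,m_1)$: one surface lies on the past side and is future-convex, the other on the future side and is past-convex. These two surfaces bound a GHCC region $X$. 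Conversely, any GHCC spacetime whose boundary components have these constant-curvature metrics yields a fixed point, namely its left metric. Theorem~\ref{rigiditytheorem} (existence and uniqueness for the prescribed metrics $c_-h_-$ and $c_+h_+$, both of constant curvature $<-1$) therefore gives existence and uniqueness of $X$. Since the fixed point $m$ is recovered from $X$ as the left metric of its maximal extension, the fixed point exists and is unique. If the natural data turn out to be third fundamental forms rather than induced metrics, Corollary~\ref{cr:III} replaces Theorem~\ref{rigiditytheorem} in this step.

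The main obstacle is the first step: verifying the exact form of the dictionary, with correct orientations and time orientations. In particular, composing the two landslides must correspond to a single spacetime, with one surface past-convex and the other future-convex, rather than to two unrelated spacetimes. Here I would use the identity $\mathcal S_{e^{i\theta},h}^{-1}=\mathcal S_{e^{-i\theta},h'}$ for the appropriate $h'$, together with time reversal, which swaps left and right holonomies. It also needs to be checked that the GHCC region is nondegenerate. The two surfaces must be disjoint with the convex core between them. This holds because constant-curvature surfaces with $K<-1$ lie in the complement of the convex core, on the past and future sides respectively, by Barbot--B\'eguin--Zeghib.
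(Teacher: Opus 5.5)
Your proposal is correct and follows essentially the paper's route: the landslide dictionary of \cite{cyclic,cyclic2} identifies fixed points of $\mathcal S_{e^{i\theta_+},h_+}\circ\mathcal S_{e^{i\theta_-},h_-}$ with GHMC spacetimes whose constant curvature $K_\pm=-1/\cos^2(\theta_\pm/2)$ surfaces in the past (resp.\ future) of the convex core have induced metrics homothetic to $h_-$ (resp.\ $h_+$), and Theorem~\ref{rigiditytheorem}, applied to the GHCC region these surfaces bound, then gives existence and uniqueness. The slips you make do not affect the argument: the $K_\theta$-surface has induced metric $\cos^2(\theta/2)\,h$, not $h/\cos^2(\theta/2)$, which would have curvature $>-1$; and your explicit landslide formula should use $\theta/2$ together with the auxiliary metric $h^*$ defined by $\mathcal L^1_{e^{i\theta}}(h,h^*)=m$, rather than a Codazzi operator carrying $h$ to $m$.
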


\subsection{Context and related work}
Theorems \ref{rigiditytheorem} and \ref{tm:mess} are motivated by the analogies between, on the one hand, quasi-Fuchsian hyperbolic $3$-manifolds and hyperbolic ends, studied extensively since the work of Thurston, and, on the other, $(2,1)$-dimensional GHMC spacetimes, whose significance for Teichm\"uller theory has been understood since the work \cite{Mes} of Mess. There are $3$ types of GHMC spacetimes of interest to us, namely \emph{de Sitter spacetimes}, which are equivalent to hyperbolic ends via the duality described above (see \cite{FS4}), \emph{Minkowski spacetimes}, and \emph{anti-de Sitter spacetimes}.

The quasi-Fuchsian/de Sitter analogue of Theorem \ref{tm:mess} was first conjectured by Thur\-ston, with existence proven by Bonahon--Otal in \cite{BO}. Uniqueness was proven  by Bonahon in \cite{bonahon} for sufficiently small measured geodesic laminations, by Series in \cite{series:thurston} for the product of an interval and a punctured torus, and in the general case by Dular--Schlenker in \cite{DS}.

Thurston also stated a dual conjecture, namely that a quasi-Fuchsian manifold is uni\-que\-ly determined by the marked hyperbolic metrics of the two boundary components of its convex core. This led Labourie to study the quasi-Fuchsian/de Sitter analogue of Theorem \ref{rigiditytheorem} in \cite{Lab}, where he proved existence, with uniqueness being shown later by Schlenker in \cite{Sch}. Existence in Thurston's conjecture then follows by a limiting argument, whilst uniqueness remains open in general (although the first author proved uniqueness for an open, dense subset of the respective moduli space in \cite{Pro2}).

In the Minkowski case, the analogue of Theorem \ref{rigiditytheorem} was addressed by the third author in \cite{Smith2021Weyl}, building on ideas developed in collaboration with Fillastre in \cite{FillastreSmith2020}. A similar result for polyhedral surfaces was addressed by Fillastre and the first author in~\cite{FP}.

In all the above cases, the proof of existence has generally followed a similar strategy, whilst that of uniqueness has consistently posed a greater challenge, generally requiring distinct arguments in each setting.

Existence in Theorem \ref{rigiditytheorem} was shown by Tamburelli in \cite{Tam}. However, to date, only partial uniqueness results had been shown, with the general case remaining open. In \cite{cyclic2}, Bonsante--Mondello--Schlenker proved uniqueness in the case where the metrics $h_1$ and $h_2$ have constant curvatures $k_1$ and $k_2$ satisfying
\begin{equation*}
k_2 = \frac{k_1}{\left|k_1\right|-1}\ .
\end{equation*}
Uniqueness for Fuchsian spacetimes was shown by Labourie--Schlenker in \cite{LS}, and for spacetimes close to the Fuchsian locus by Prosanov--Schlenker in \cite{PS}. Similarly, uniqueness in Theorem \ref{tm:mess} was proven for sufficiently small measured geodesic laminations by Bonsante--Schlenker in \cite{BS3}. Theorems \ref{rigiditytheorem} and \ref{tm:mess} completely solve these problems.

Our results can also be viewed as a continuation of an ongoing tradition of extending the classical Weyl problem of differential geometry to locally homogeneous $3$-manifolds with non-trivial topology (see, for example, \cite{Sch5} for a recent survey). Among various questions in this vein, the closest related to our work is that of existence of a convex isometric embedding of a Riemannian surface with curvature $<-1$ into $(2,1)$-dimensional GHMC anti-de Sitter spacetimes with prescribed left or right part of the holonomy. Here, existence was addressed by Tamburelli in \cite{Tam} and uniqueness by Chen--Schlenker in \cite{CS}.

Finally, an analogue of Theorem~\ref{rigiditytheorem} for polyhedral surfaces was considered by the first author in~\cite{Pro}, where existence was established, while uniqueness was shown only under an additional ``smallness'' assumption on the boundary metrics. Additionally, in \cite{Lab2}, Labeni proved an existence result for metrics of curvature $\leq -1$ in the sense of Alexandrov (known as CAT($-1$)-metrics) in the case where $h_1=h_2$ and the spacetime has Fuchsian holonomy.

\subsection{Techniques}

The main novelty of this paper is the infinitesimal rigidity result of Theorem \ref{infinitesimaltheorem1}. Following the overall strategy of \cite{Sch}, we use the Pogorelov transform to reduce the problem to one of infinitesimal rigidity for convex subsets in Euclidean space. However, the semi-riemannian nature of the present setting presents us with two significant challenges.

The first challenge lies in proving that any anti-de Sitter first order variation field $\xi$ extends continuously up to the boundary in the Kleinian model. The argument of \cite{Sch}, which implicitly relies on the strict convexity of the ideal boundary of hyperbolic space, does not apply in the anti-de Sitter case. Instead, continuity must be proven via a careful study of the properties of timelike geodesics in anti-de Sitter space. Indeed, in this manner, the asymptotic behaviour of $\xi$ is described in terms of the action of the holonomy on the space of timelike geodesics, which is itself a riemannian manifold isometric to $\hyp^2\times\hyp^2$. This is carried out in Sections \ref{s:TimelikeGeodesics}, \ref{s:RepresentationsAndInvariantSets}, and \ref{s:AutomorphicVectorFields}.

The second challenge is more conceptual, and lies in establishing sufficient regularity of the first order variation field $\xi$ in order for the infinitesimal rigidity result proven by Vekua in \cite{Vek} to be applied. Continuity of $\xi$ is not, on the face of it, sufficient: Vekua's argument requires $W^{1,\infty}$-regularity, whilst it seems unlikely that we can hope for better than $C^{0,1-k}$-regularity, for $k>\kappa_\rho$, where $\kappa_\rho$ is the holonomy invariant defined in \eqref{eqn:DefinitionOfKappa}.

We address this in Section \ref{sec:rigidity} by showing how Vekua's argument may in fact be expressed in purely topological terms. Indeed, first, we show (c.f. Lemma \ref{lemma:LocalGuageEquivalence}) that this argument reduces to an application of the Poincar\'e--Hopf theorem. Next, we show, using Lemma \ref{lemma:NowhereVanishingProperty}, that the restriction of $\xi$ to the boundary curve $\Lambda$ is generically non-vanishing, and, in particular, has zero contribution to the Poincar\'e--Hopf degree. Since it is only along $\Lambda$ that regularity is a concern, infinitesimal rigidity is then proven upon applying these properties in the correct order.

\subsection{Acknowledgements}
A significant portion of this work was carried out whilst the first and third author were together visiting the Department of Mathematics ``Giuseppe Peano'' of the University of Turin, hosted by Andrea Seppi, and we are very grateful for the warm welcome and excellent working conditions provided during that stay. Parts of this work were also carried out whilst the first author was visiting the University of Luxembourg and the Mathematics Department of the Pontif\'\i cia Universidade Cat\'olica do Rio de Janeiro (PUC-Rio), and he is very grateful for the excellent working conditions provided by these institutions.

R. Prosanov's work was funded in whole by the Austrian Science Fund (FWF) \url{https://doi.org/10.55776/J4955}.

The authors acknowledge funding by the ERC-consol\-idator grant 101124349 GENERATE (GeomEtry and aNalysis for $(G,X)-$struc\-turEs and their defoRmATion spacEs). Views and opinions expressed are however those of the authors only and do not necessarily reflect those of the European Union or the European Research Council Executive Agency. Neither the European Union nor the granting authority can be held responsible for them.

For open access purposes, the authors have applied a CC BY public copyright license to any author-accepted manuscript version arising from this submission.

\subsection{GenAI statement}

GenAI was used to assist with creating figures and with proofreading earlier drafts of this paper. All mathematical arguments were developed \emph{without} the assistance of GenAI. 
\section{Preliminaries}
The main spaces used in this paper will be $(2,1)$-dimensional anti-de Sitter space, $1$-dimensional real projective space, and $(1,1)$-dimensional Einstein space. We will describe these spaces using the terminology of \cite[Section 2]{diaf2026weilpetersson}, and we refer the reader to that paper for a deeper discussion of the different models used.

\subsection{The Kleinian model}
\label{ss:PreliminariesKleinianModel}

\subsubsection{Anti-de Sitter and Einstein spaces}
\label{sss:AntideSitterandEinstein}
For all $(p,q)$, let $\mathbb{R}^{p,q}$ denote the vector space $\mathbb{R}^{p+q}$ endowed with the quadratic form
\begin{equation}
\label{eqn:MinkowskiForm}
\langle\mathbf{x},\mathbf{y}\rangle_{p,q} := -x_1y_1-\cdots-x_qy_q+x_{q+1}y_{q+1}+\cdots+x_{q+p}y_{q+p}\ .
\end{equation}
We define $(2,1)$-\emph{dimensional anti-de Sitter space} to be the projectivized quadric of vectors in $\mathbb{R}^{2,2}$ of norm-squared equal to $-1$, that is
\begin{equation}
\label{eqn:DefnAdSSpace}
\AdS := \big\{\mathbf{x}\in\mathbb{R}^{2,2}\ \big|\ \langle\mathbf{x},\mathbf{x}\rangle=-1\big\}/\{\pm 1\}\ .
\end{equation}
This is a Lorentzian manifold of constant sectional curvature equal to $-1$. Its geodesics are given by intersections with planes passing through the origin.

We define $(1,1)$-\emph{dimensional Einstein space} to be the projectivized light cone in $\mathbb{R}^{2,2}$, that is
\begin{equation}
\label{eqn:DefnEinsteinSpace}
\Ein := \big\{\mathbf{x}\in\mathbb{R}^{2,2}\ \big|\ \langle\mathbf{x},\mathbf{x}\rangle=0\big\}/\mathbb{R}\ .
\end{equation}
This is a conformal Lorentzian manifold. It is the topological boundary of $\AdS$ in projective space, and thus serves as a model for the ideal boundary $\ideal\AdS$ of $\AdS$.

\subsubsection{The Kleinian model}
\label{sss:KleinianModelDefinitions}
We will mainly work in Kleinian charts. The \emph{Kleinian projection} $\mathcal{K}:\mathbb{R}^{2,2}\rightarrow\mathbb{R}^{1,2}$ is defined by
\begin{equation}
\label{eqn:DefnKleinianProjection}
\mathcal{K}\big(x_0,\cdots,x_3\big) := \frac{1}{x_0}\big(x_1,x_2,x_3\big)\ .
\end{equation}
Its domain is
\begin{equation}
\mathrm{Dom}(\mathcal{K}) = \mathbb{R}^{2,2}\setminus\{x_0=0\}\ .
\end{equation}

The Kleinian projection yields maps from $\AdS$ and $\Ein$ into $\mathbb{R}^{2,1}$, which we also denote by $\mathcal{K}$. The domain of $\mathcal{K}$ in $\AdS$ is the complement of a totally geodesic copy of $\hyp^2$, and its image is
\begin{equation*}
\Omega := \big\{ \mathbf{y}\in\mathbb{R}^{2,1}\ \big|\ y_1^2+y_2^2-y_0^2 < 1\big\}\ .
\end{equation*}
Likewise, the domain of $\mathcal{K}$ in $\Ein$ is the complement of a spacelike copy of $\mathbb{S}^1$, and its image is
\begin{equation*}
\partial\Omega := \big\{ \mathbf{y}\in\mathbb{R}^{2,1}\ \big|\ y_1^2+y_2^2-y_0^2 = 1\big\}\ .
\end{equation*}
Note that $\partial\Omega$ identifies with $(1,1)$-dimensional de Sitter space.

We denote respectively by $\langle\cdot,\cdot\rangle_a$, $\langle\cdot,\cdot\rangle_m$, and $\langle\cdot,\cdot\rangle_e$ the anti-de Sitter, Min\-kowski, and Euclidian metrics over $\Omega$, we denote by $\|\cdot\|^2_a$, $\|\cdot\|^2_m$, and $\|\cdot\|^2_e$ their respective norms-squared, and we denote by $\times^a$, $\times^m$, and $\times^e$ their respective vector products.

We denote by $\Omega^t\subseteq\mathbb{R}^{2,1}$ the \emph{open light cone} in $\mathbb{R}^{2,1}$, that is, the set of all points which are in \emph{timelike relation} to the origin. We define \emph{hyperbolic polar coordinates} of this domain by
\begin{equation}
\label{eqn:defn:HyperbolicPolarCoordinates}
\Phi^t(r,\theta,t) := \big(r\cosh(t),r\cos(\theta)\sinh(t),r\sin(\theta)\sinh(t)\big)\ .
\end{equation}
The Minkowski and anti-de Sitter metrics are respectively given with respect to these coordinates over $\Omega^t$ by
\begin{equation}
\label{eqn:MetricsInHyperbolicCoordinates}
\begin{aligned}
\langle\cdot,\cdot\rangle_m &= -dr^2 + r^2d\Theta^2\ ,\ \text{and}\vphantom{\frac{1}{2}}\\
\langle\cdot,\cdot\rangle_a &= \frac{-1}{(1+r^2)^2}dr^2 + \frac{r^2}{(1+r^2)}d\Theta^2\ ,
\end{aligned}
\end{equation}
where
\begin{equation*}
d\Theta^2 = \sinh^2(t)d\theta^2 + dt^2
\end{equation*}
denotes the metric of $\hyp^2$ in polar coordinates about a point.

We denote by $\Omega^s\subseteq\mathbb{R}^{2,1}$ the complement of the closed light cone in $\mathbb{R}^{2,1}$, that is, the set of all points in $\mathbb{R}^{2,1}$ which are in \emph{spacelike relation} to the origin. We define \emph{pseudospherical polar coordinates} of this domain by
\begin{equation}
\label{eqn:defn:PseudoSphericalCoordinates}
\Phi^s(r,\theta,t):=\big(r\sinh(t),r\cos(\theta)\cosh(t),r\sin(\theta)\cosh(t)\big)\ .
\end{equation}
In these coordinates, the Minkowski and anti-de Sitter metrics are given respectively by
\begin{equation}
\label{eqn:MetricsInPseudosphericalCoordinates}
\begin{aligned}
\langle\cdot,\cdot\rangle_m &= dr^2 + r^2d\Theta^2\ ,\ \text{and}\vphantom{\frac{1}{2}}\\
\langle\cdot,\cdot\rangle_a &= \frac{1}{(1-r^2)^2}dr^2 + \frac{r^2}{(1-r^2)}d\Theta^2\ ,
\end{aligned}
\end{equation}
where here
\begin{equation*}
d\Theta^2 = \cosh^2(t)d\theta^2 - dt^2\ ,
\end{equation*}
denotes the metric of $\dS$ in Fermi coordinates about a totally geodesic spacelike circle.

We denote by $r:=r(\mathbf{y})$ the \emph{Minkowski distance} to the origin in $\mathbb{R}^{2,1}$, that is
\begin{equation}
\label{eqn:MinkowskiDistance}
r(\mathbf{y}) := \sqrt{\left|y_2^2+y_1^2-y_0^2\right|}\ .
\end{equation}
We likewise denote by $r_a$ the \emph{anti-de Sitter distance} to the origin in $\Omega$. Over $\Omega^s$,
\begin{equation}
\label{eqn:AntiDeSitterDistance}
r_a(\mathbf{y})=\mathrm{atanh}\big(r(\mathbf{y})\big)\ .
\end{equation}
Finally, in what follows, we will make use of the fundamental quantity
\begin{equation}
\label{eqn:DefinitioOfDelta}
\delta := \delta(\mathbf{y}) := (1-r^2) = (1-r)(1+r)\ .
\end{equation}
Note that, over $\Omega^s$, $\delta$ is a measure of the distance to $\partial\Omega$ in $\mathbb{R}^{2,1}$.

\subsection{The matrix model}
\label{ss:TheMatrixModel}
\subsubsection{The matrix model}
We will also make considerable use of the matrix models of anti-de Sitter space and Einstein space which we now describe. Let $\Mat$ denote the space of real $2\times 2$ matrices. This space carries a natural bilinear form given by the polarization of $-1$ times the determinant and which is given explicitly by
\begin{equation}
\label{eqn:defn:MatrixInnerProduct}
\langle\mathbf{M},\mathbf{N}\rangle_\opmat := \frac{1}{2}\opTr\big(\mathbf{M}\cdot\mathbf{J}_0\cdot\mathbf{N}^t\cdot\mathbf{J}_0\big)\ ,
\end{equation}
where
\begin{equation}
\label{eqn:defn:StandardComplexStructure}
\mathbf{J}_0 := \begin{pmatrix} 0 & -1\\ 1 & 0\end{pmatrix}
\end{equation}
denotes the standard complex structure over $\R^2$. Indeed, this is a consequence of the useful and readily verified formula
\begin{equation}
\label{eqn:DeterminantFormula}
\mathbf{J}_0\cdot\mathbf{M}\cdot\mathbf{J}_0\cdot\mathbf{M}^t = -\opDet(\mathbf{M})\cdot\opId\ ,
\end{equation}
valid for all $2\times 2$ matrices $\mathbf{M}$.

The bilinear form \eqref{eqn:defn:MatrixInnerProduct} is non-degenerate and of signature $(2,2)$, with a natural orthonormal basis given by the \emph{skew Pauli matrices}
\begin{equation}
\label{eqn:defn:TwistedPauliMatrices}
\sigma_0 := \begin{pmatrix} 1 & 0\\ 0 & 1\end{pmatrix}\ ,\qquad
\sigma_1 := \begin{pmatrix} 0 & -1\\ 1 & 0\end{pmatrix}\ ,\qquad
\sigma_2 := \begin{pmatrix} 1 & 0\\ 0 & -1\end{pmatrix}\ ,\qquad\text{and}\qquad
\sigma_3 := \begin{pmatrix} 0 & 1\\ 1 & 0\end{pmatrix}\ .
\end{equation}
The matrices $\sigma_0$ and $\sigma_1$ are timelike, and $\sigma_2$ and $\sigma_3$ are spacelike. The skew Pauli matrices are particularly useful owing to their similarity to the standard basis of the quaternion algebra. Indeed,
\begin{equation}
\label{eqn:BasicPauliRelationsI}
\sigma_1^2 = -1\qquad\text{and}\qquad\sigma_2^2 = \sigma_3^2 = 1\ .
\end{equation}
Furthermore, $\sigma_1$, $\sigma_2$, and $\sigma_3$ anticommute with one another, and
\begin{equation}
\label{eqn:BasicPauliRelationsII}
\sigma_1\cdot\sigma_2 = \sigma_3\ ,\qquad \sigma_1\cdot\sigma_3 = -\sigma_2\ ,\qquad\text{and}\qquad \sigma_2\cdot\sigma_3 = -\sigma_1\ .
\end{equation}

The Lie subalgebra $\slalg$ is generated as a vector space by $\sigma_1$, $\sigma_2$, and $\sigma_3$. We orient $\slalg$ such that the basis $(\sigma_1,\sigma_2,\sigma_3)$ is positive. This yields in particular a simple geometric visualization of the vector product. Recall first that, given a $3$-dimensional inner-product space with volume form, its \emph{vector product} is defined such that, for all $x$, $y$, and $z$,
\begin{equation}
\langle x\times y, z\rangle = \opdVol(x,y,z)\ .
\end{equation}
We verify by inspection that, for all $x,y\in\slalg$,
\begin{equation*}
[x,y] = 2\cdot x\times y\ .
\end{equation*}
The geometry of the vector product in mixed signature differs from that of the standard case. First, since $\sigma_1$ is timelike, $\sigma_1\times$ rotates the plane $\langle\sigma_2,\sigma_3\rangle$ by $\pi/2$, and we adopt the convention that this rotation acts in the positive (i.e. anticlockwise) direction. Since $\sigma_2$ and $\sigma_3$ are spacelike, $\sigma_2\times$ and $\sigma_3\times$ act respectively on the planes $\langle\sigma_1,\sigma_3\rangle$ and $\langle\sigma_1,\sigma_2\rangle$ by reflections which preserve the null spaces and which exchange spacelike and timelike vectors.

\subsubsection{Anti-de Sitter space}
\label{sss:AntideSitterSpaceInMatrixModel}
We denote by $\SL$ the \emph{special linear group}
\begin{equation*}
\SL := \big\{\mathbf{M}\ \big|\ \opDet(\mathbf{M})=1\big\}\ ,
\end{equation*}
and we denote by $\PSL$ its projectivization,
\begin{equation*}
\PSL := \SL/\{\pm 1\}\ .
\end{equation*}
The canonical projection
\begin{equation*}
\pi:\SL\rightarrow\PSL
\end{equation*}
is trivially a double cover.

We identify $(2,1)$-dimensional anti-de Sitter space $\AdS$ with $\PSL$, that is
\begin{equation}
\label{eqn:defn:AntideSitterSpace}
\AdS = \big\{ \mathbf{M}\ \big|\ \|\mathbf{M}\|^2_\opmat = -1\big\}/\{\pm 1\} = \PSL\ .
\end{equation}
We note however that it will be useful for many calculations to work with the double cover $\SL$.

We identify $2$-\emph{dimensional hyperbolic space} with the submanifold
\begin{equation}
\label{eqn:defn:TwoDimensionalHyperbolicSpace}
\hyp^2 := \big\{ \mathbf{M}\in\slalg\ \big|\ \|\mathbf{M}\|^2_\opmat<0\big\}/\R^*\ .
\end{equation}
It will likewise be useful for many calculations to work with the quadric
\begin{equation}
\tilde{\hyp}^2 := \big\{ \mathbf{M}\in\slalg\ \big|\ \|\mathbf{M}\|^2_\opmat = -1\ ,\ \langle\mathbf{M},\mathbf{J}_0\rangle_\opmat < 0 \big\}\ .
\end{equation}
Finally, note that, for all $\mathbf{M}\in\tilde{\hyp}^2$, and for any $\mathbf{N}$ tangent to $\tilde{\hyp}^2$ at this point
\begin{equation}
\label{eqn:CommutatorIsCrossProduct}
\mathbf{M}\cdot\mathbf{N} = \frac{1}{2}[\mathbf{M},\mathbf{N}] = \mathbf{M}\times\mathbf{N}\ ,
\end{equation}
which, by convention, coincides with rotation by $\pi/2$ in the positive direction.

We view the \emph{tangent bundle} of $\SL$ as the submanifold of $\Mat\times\Mat$ given by
\begin{equation*}
\TSL = \big\{ (\mathbf{M},\mathbf{N})\ \big|\ \|\mathbf{M}\|^2_\opmat = -1\ ,\ \langle\mathbf{M},\mathbf{N}\rangle_\opmat = 0\big\}\ ,
\end{equation*}
and we view the future oriented \emph{unit timelike tangent bundle} as the submanifold
\begin{equation*}
\USL := \big\{ (\mathbf{M},\mathbf{N})\ \big|\ \|\mathbf{M}\|^2_\opmat = \|\mathbf{N}\|^2_\opmat = -1\ ,\ \langle\mathbf{M},\mathbf{N}\rangle_\opmat = 0\ ,\ \text{and}\ \opTr(\mathbf{M}^{-1}\cdot\mathbf{N}\cdot\mathbf{J}_0) < 0 \big\}\ .
\end{equation*}
The \emph{geodesic flow} over $\USL$ is
\begin{equation*}
\Phi_\theta(\mathbf{M},\mathbf{N}) := \big(\cos(\theta)\mathbf{M} + \sin(\theta)\mathbf{N},-\sin(\theta)\mathbf{M} + \cos(\theta)\mathbf{N}\big)\ .
\end{equation*}

We denote by $\geodesics$ the space of future oriented timelike geodesics in $\AdS$. Since every element $\zeta$ of $\geodesics$ lifts to a unique timelike geodesic $\tilde{\zeta}$ in $\SL$, which is furthermore a double cover of $\zeta$, we may equally view $\geodesics$ as the space of future oriented timelike geodesics in $\SL$.

We define the \emph{right} and \emph{left translations} $\tau^r,\tau^l:\TSL\rightarrow\slalg$ by
\begin{equation}
\label{eqn:defn:LeftAndRightTranslationsI}
\tau^r(\mathbf{M},\mathbf{N}) := \mathbf{N}\cdot\mathbf{M}^{-1}\qquad\text{and}\qquad
\tau^l(\mathbf{M},\mathbf{N}) := \mathbf{M}^{-1}\cdot\mathbf{N}\ ,
\end{equation}
and we define the \emph{full translation} $\tau:\TSL\rightarrow\slalg\times\slalg$ by
\begin{equation}
\label{eqn:defn:LeftAndRightTranslationsII}
\tau := \big(\tau^r,\tau^l\big)\ .
\end{equation}
We verify by inspection that $\tau$ is invariant under the geodesic flow, that is, for all $\theta$,
\begin{equation*}
\tau\circ\Phi_\theta = \tau\ ,
\end{equation*}
so that $\tau$ descends to a map from $\geodesics$ to $\hyp^2\times\hyp^2$ which we also denote by $\tau$. This map is a smooth diffeomorphism (see, for example, \cite{barbot2018lorentzian}), and thus defines a smooth parametrization of $\mathcal{G}^+$ by $\hyp^2\times\hyp^2$.

The natural action of $\SL\times\SL$ on $\Mat$ is
\begin{equation}
\label{eqn:defn:ActionOfSLTimesSL}
\hat{\alpha}(\mathbf{X},\mathbf{Y})\cdot\mathbf{M} := \mathbf{X}\cdot\mathbf{M}\cdot\mathbf{Y}^{-1}\ ,
\end{equation}
and its action on $\USL$ is
\begin{equation}
\label{eqn:defn:ActionOfSLTimesSL}
\hat{\alpha}(\mathbf{X},\mathbf{Y})\cdot(\mathbf{M},\mathbf{N}) := \big(\mathbf{X}\cdot\mathbf{M}\cdot\mathbf{Y}^{-1},\mathbf{X}\cdot\mathbf{N}\cdot\mathbf{Y}^{-1}\big)\ .
\end{equation}
The \emph{adjoint action} of $\SL$ on $\slalg$ is
\begin{equation}
\label{eqn:defn:AdjointAction}
\alpha(\mathbf{X})\cdot\mathbf{M} = \mathbf{X}\cdot\mathbf{M}\cdot\mathbf{X}^{-1}\ .
\end{equation}

\begin{lemma}
The full translation $\tau$ intertwines $\hat{\alpha}$ and $\alpha\times\alpha$, that is, for all $\mathbf{X},\mathbf{Y}\in\SL$, and for all $(\mathbf{M},\mathbf{N})\in\USL$,
\begin{equation}
\tau\big(\hat{\alpha}(\mathbf{X},\mathbf{Y})\cdot(\mathbf{M},\mathbf{N})\big)
=\big(\alpha(\mathbf{X})\cdot\tau^r(\mathbf{M},\mathbf{N}),\alpha(\mathbf{Y})\cdot\tau^l(\mathbf{M},\mathbf{N})\big)\ .
\end{equation}
\end{lemma}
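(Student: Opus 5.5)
The plan is a direct computation from the definitions \eqref{eqn:defn:LeftAndRightTranslationsI} and \eqref{eqn:defn:ActionOfSLTimesSL}. Fix $\mathbf{X},\mathbf{Y}\in\SL$ and $(\mathbf{M},\mathbf{N})\in\USL$, and set $\mathbf{M}':=\mathbf{X}\cdot\mathbf{M}\cdot\mathbf{Y}^{-1}$ and $\mathbf{N}':=\mathbf{X}\cdot\mathbf{N}\cdot\mathbf{Y}^{-1}$.

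First I will check that $(\mathbf{M}',\mathbf{N}')$ lies in $\USL$, so that $\tau$ may be applied to it.
\begin{itemize}
\item By \eqref{eqn:defn:MatrixInnerProduct}, the form $\langle\cdot,\cdot\rangle_\opmat$ is the polarization of $-\opDet$. Since $\opDet$ is multiplicative and $\opDet(\mathbf{X})=\opDet(\mathbf{Y})=1$, the map $\hat{\alpha}(\mathbf{X},\mathbf{Y})$ is a linear isometry of $\Mat$. Hence the norm and orthogonality conditions are preserved.
\item The time orientation condition $\opTr(\mathbf{M}^{-1}\cdot\mathbf{N}\cdot\mathbf{J}_0)<0$ is preserved as well, because $\SL\times\SL$ is connected and acts continuously. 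The sign cannot jump, since the quantity never vanishes on the unit timelike vectors.
\end{itemize}
This step is only needed so that the statement makes sense on $\USL$; the identity itself holds at the level of $\TSL$.

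The core computation uses $(\mathbf{M}')^{-1}=\mathbf{Y}\cdot\mathbf{M}^{-1}\cdot\mathbf{X}^{-1}$. For the right translation,
\begin{equation*}
\tau^r(\mathbf{M}',\mathbf{N}') = \mathbf{X}\cdot\mathbf{N}\cdot\mathbf{Y}^{-1}\cdot\mathbf{Y}\cdot\mathbf{M}^{-1}\cdot\mathbf{X}^{-1} = \mathbf{X}\cdot\big(\mathbf{N}\cdot\mathbf{M}^{-1}\big)\cdot\mathbf{X}^{-1} = \alpha(\mathbf{X})\cdot\tau^r(\mathbf{M},\mathbf{N})\ .
\end{equation*}
For the left translation,
\begin{equation*}
\tau^l(\mathbf{M}',\mathbf{N}') = \mathbf{Y}\cdot\mathbf{M}^{-1}\cdot\mathbf{X}^{-1}\cdot\mathbf{X}\cdot\mathbf{N}\cdot\mathbf{Y}^{-1} = \alpha(\mathbf{Y})\cdot\tau^l(\mathbf{M},\mathbf{N})\ .
\end{equation*}
Combining the two components via \eqref{eqn:defn:LeftAndRightTranslationsII} gives the claimed identity.

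There is no genuine obstacle here. The only point requiring care is the preservation of the time orientation condition defining $\USL$, which I handle by the connectedness argument above. Alternatively, one can avoid it entirely by stating the identity on $\TSL$, where no orientation condition appears.
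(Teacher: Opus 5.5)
Your proposal is correct and is essentially the paper's proof: a direct computation of $\tau^r$ and $\tau^l$ using $(\mathbf{X}\cdot\mathbf{M}\cdot\mathbf{Y}^{-1})^{-1}=\mathbf{Y}\cdot\mathbf{M}^{-1}\cdot\mathbf{X}^{-1}$. Your preliminary check that $\hat{\alpha}(\mathbf{X},\mathbf{Y})$ preserves $\USL$ is valid; it combines the isometry property with the connectedness argument for the sign of the time-orientation quantity, a point the paper leaves implicit.
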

\begin{proof}
Indeed,
\begin{align*}
\tau^r\big(\hat{\alpha}(\mathbf{X},\mathbf{Y})\cdot(\mathbf{M},\mathbf{N})\big)
&=\tau^r\big(\mathbf{X}\cdot\mathbf{M}\cdot\mathbf{Y}^{-1},\mathbf{X}\cdot\mathbf{N}\cdot\mathbf{Y}^{-1}\big)\\
&=\mathbf{X}\cdot\mathbf{N}\cdot\mathbf{Y}^{-1}\cdot\mathbf{Y}\cdot\mathbf{M}^{-1}\cdot\mathbf{X}^{-1}\\
&=\mathbf{X}\cdot\mathbf{N}\cdot\mathbf{M}^{-1}\cdot\mathbf{X}^{-1}\\
&=\alpha(\mathbf{X})\cdot\tau^r\big(\mathbf{M},\mathbf{N}\big)\ .
\end{align*}
The left translation is addressed similarly, and this completes the proof.
\end{proof}

\subsubsection{Einstein space}
\label{sss:EinsteinSpaceI}
We identify $(1,1)$-dimensional Einstein space with the projectivized null quadric
\begin{equation}
\label{eqn:defn:IdealBoundaryOfAdS}
\Ein = \big\{ \mathbf{M}\ \big|\ \opDet(\mathbf{M}) = 0\ ,\ \mathbf{M}\neq 0 \big\}/\R^* = \big\{ \mathbf{M}\ \big|\ \rank(\mathbf{M}) = 1 \big\}/\mathbf{R}^*\ .
\end{equation}
Observe that the projective class of any $2\times 2$ rank $1$ matrix is determined entirely by its kernel and cokernel. We thus define $\hat{\mathcal{N}}:\R^2\times\R^2\rightarrow\Mat$ by
\begin{equation*}
\hat{\mathcal{N}}(x,y) := x\cdot(J_0\cdot y)^t\ ,
\end{equation*}
where here we use the formalism that the matrix product of a vertical vector followed by a horizontal vector yields a matrix. This map quotients to a diffeomorphism
\begin{equation*}
\mathcal{N}:\RP\times\RP\rightarrow\ideal\AdS\ .
\end{equation*}

We define the action $\alpha$ of $\SL$ on $\RP$ by
\begin{equation*}
\label{eqn:defn:ActionOnRP}
\alpha(\mathbf{X})\cdot[x] = [\mathbf{X}\cdot x]\ .
\end{equation*}

\begin{lemma}
The function $\mathcal{N}$ intertwines $\alpha\times\alpha$ and $\hat{\alpha}$, that is, for all $\mathbf{X},\mathbf{Y}\in\SL$, and for all $([x],[y])\in\RP\times\RP$,
\begin{equation}
\mathcal{N}\big(\alpha(\mathbf{X})\cdot[x],\alpha(\mathbf{Y})\cdot[y]\big) = \hat{\alpha}(\mathbf{X},\mathbf{Y})\cdot\mathcal{N}\big([x],[y]\big)\ .
\end{equation}
\end{lemma}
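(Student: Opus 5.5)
The plan is to work at the level of representatives in $\R^2\times\R^2$ and then pass to the quotient. We need to check that, for all $x,y\in\R^2$ and all $\mathbf{X},\mathbf{Y}\in\SL$,
\begin{equation*}
\hat{\mathcal{N}}(\mathbf{X}\cdot x,\mathbf{Y}\cdot y) = \mathbf{X}\cdot\hat{\mathcal{N}}(x,y)\cdot\mathbf{Y}^{-1}\ .
\end{equation*}
The lemma then follows by projectivizing. Indeed, $\alpha(\mathbf{X})\cdot[x]=[\mathbf{X}\cdot x]$, and $\hat{\alpha}(\mathbf{X},\mathbf{Y})$ acts linearly on $\Mat$ and so descends to the projectivized null quadric.

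By definition, $\hat{\mathcal{N}}(\mathbf{X}x,\mathbf{Y}y)=\mathbf{X}\cdot x\cdot(\mathbf{J}_0\cdot\mathbf{Y}\cdot y)^t$. The one substantive step is to rewrite the transpose factor. We want to show
\begin{equation*}
(\mathbf{J}_0\cdot\mathbf{Y}\cdot y)^t=(\mathbf{J}_0\cdot y)^t\cdot\mathbf{Y}^{-1}\ ,
\end{equation*}
which is equivalent to the identity $\mathbf{Y}^t\mathbf{J}_0^t = \mathbf{J}_0^t\mathbf{Y}^{-1}$, that is, $\mathbf{J}_0\mathbf{Y}\mathbf{J}_0^{-1}=(\mathbf{Y}^t)^{-1}$. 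This in turn is equivalent to $\mathbf{Y}^t\mathbf{J}_0\mathbf{Y}=\mathbf{J}_0$, which holds for all $\mathbf{Y}\in\SL$. The reason is that $\mathbf{J}_0$ represents the standard area form $\omega(u,v)=u^t\mathbf{J}_0v$, and $\omega(\mathbf{Y}u,\mathbf{Y}v)=\opDet(\mathbf{Y})\,\omega(u,v)$.

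Alternatively, this identity can be read off from formula \eqref{eqn:DeterminantFormula} with $\opDet(\mathbf{Y})=1$. That formula gives $\mathbf{J}_0\mathbf{Y}\mathbf{J}_0\mathbf{Y}^t=-\opId$. Since $\mathbf{J}_0^{-1}=-\mathbf{J}_0$, this yields $\mathbf{J}_0\mathbf{Y}\mathbf{J}_0^{-1}=(\mathbf{Y}^t)^{-1}$, as required.

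Substituting the rewritten transpose factor gives
\begin{equation*}
\hat{\mathcal{N}}(\mathbf{X}x,\mathbf{Y}y)=\mathbf{X}\cdot x\cdot(\mathbf{J}_0y)^t\cdot\mathbf{Y}^{-1}=\hat{\alpha}(\mathbf{X},\mathbf{Y})\cdot\hat{\mathcal{N}}(x,y)\ .
\end{equation*}
There is no real obstacle here. The only point requiring care is the sign and transpose bookkeeping in the symplectic identity. Beyond that, one only needs that $\hat{\alpha}$ preserves rank-one matrices, so that both sides genuinely lie in $\ideal\AdS$. This is immediate, since $\hat{\alpha}(\mathbf{X},\mathbf{Y})$ is left and right multiplication by invertible matrices.
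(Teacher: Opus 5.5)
Your proof is correct and is essentially the paper's argument: both rest on the identity $\mathbf{J}_0\cdot\mathbf{Y}=(\mathbf{Y}^{-1})^t\cdot\mathbf{J}_0$, obtained from \eqref{eqn:DeterminantFormula} with $\opDet(\mathbf{Y})=1$. This identity lets you rewrite the transpose factor and pull $\mathbf{Y}^{-1}$ out on the right. Your explicit check at the level of representatives, followed by descent to projective classes, is harmless extra care that the paper leaves implicit.
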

\begin{proof} Indeed,
\begin{equation*}
\mathcal{N}\big(\alpha(\mathbf{X})\cdot[x],\alpha(\mathbf{Y})\cdot[y]\big)
=\mathcal{N}\big([\mathbf{X}\cdot x],[\mathbf{Y}\cdot y]\big) = [(\mathbf{X}\cdot x)\cdot(\mathbf{J}_0\cdot\mathbf{Y}\cdot y)^t]\ .
\end{equation*}
However, since $\opDet(\mathbf{Y})=1$, by \eqref{eqn:DeterminantFormula},
\begin{equation*}
\mathbf{J}_0\cdot\mathbf{Y} = (\mathbf{Y}^{-1})^t\cdot\mathbf{J}_0\ .
\end{equation*}
Hence
\begin{align*}
\mathcal{N}\big(\alpha(\mathbf{X})\cdot[x],\alpha(\mathbf{Y})\cdot[y]\big)
&= [(\mathbf{X}\cdot x)\cdot((\mathbf{Y}^{-1})^t\cdot\mathbf{J}_0\cdot y)^t]\\
&= [(\mathbf{X}\cdot x)\cdot(y^t\cdot\mathbf{J}_0^t\cdot\mathbf{Y}^{-1})]\\
&= [\mathbf{X}\cdot(x\cdot(\mathbf{J}_0\cdot y)^t)\cdot\mathbf{Y}^{-1}]\\
&= [\mathbf{X}\cdot\hat{\mathcal{N}}(x,y)\cdot\mathbf{Y}^{-1}]\\
&= \hat{\alpha}(\mathbf{X},\mathbf{Y})\cdot\hat{\mathcal{N}}(x,y)\ ,
\end{align*}
as desired.
\end{proof}

We identify the ideal boundary of $\hyp^2$ with the projective submanifold
\begin{equation}
\label{eqn:defn:IdealBoundaryOfHyperbolicSpace}
\ideal\hyp^2 := \big\{ \mathbf{M}\in\slalg\ \big|\ \opDet(\mathbf{M})=0\ ,\ \mathbf{M}\neq 0\big\}/\R^*\ .
\end{equation}
There is a unique map $\tau_\infty:\Ein\rightarrow\ideal\hyp^2\times\ideal\hyp^2$ which intertwines $\hat{\alpha}$ and $\alpha\times\alpha$. This map extends $\tau$ in the sense that, for any hyperbolic element $\gamma:=\big(\gamma^r,\gamma^l\big)\in\SL\times\SL$,
\begin{equation}
\label{eqn:defn:EquivarianceOfLimitI}
\tau_\infty(\gamma^+) = \big(\gamma^{r,+},\gamma^{l,+}\big)\ ,
\end{equation}
where $\gamma^+$, $\gamma^{l,+}$, and $\gamma^{r,+}$ here denote the respective attractive fixed points of $\gamma$, $\gamma^l$, and $\gamma^r$ in $\Ein$, $\ideal\mathbf{H}^2$ and $\ideal\mathbf{H}^2$. Likewise, the composition $(\tau_\infty\circ\mathcal{N})$ is $(\alpha\times\alpha)$-equivariant so that, for all such $\gamma$,
\begin{equation}
\label{eqn:defn:EquivarianceOfLimitII}
(\tau_\infty\circ\mathcal{N})\big(\gamma_{\mathrm{rp}}^{r,+},\gamma_{\mathrm{rp}}^{l,+}\big) = \big(\gamma^{r,+},\gamma^{l,+}\big)\ ,
\end{equation}
where $\gamma_{\mathrm{rp}}^{r,+}$ and $\gamma_{\mathrm{rp}}^{r,+}$ here denote respectively the attractive fixed points of $\gamma^r$ and $\gamma^l$ in $\RP$.

\subsection{Homogeneous spaces}
We will make considerable use of Killing vector fields and automorphic vector fields in what follows. We thus now recall the general framework of homogeneous spaces.

\subsubsection{Killing fields and automorphic vector fields}
Let $G$ be a Lie group with Lie algebra $\mathfrak{g}$, let $H$ be a Lie subgroup with Lie algebra $\mathfrak{h}$, let $M:=G/H$ denote the quotient space, and let $\pi:G\rightarrow M$ denote the canonical projection. We will call $G$ and $H$ respectively the \emph{structure group} and \emph{factor group} of the homogeneous space $M$.

It is helpful to view $G$ as a subgroup of $\opGL(N)$ and to use the formalism of matrix multiplication. Thus, for $\mathbf{M}\in G$ and $\mathbf{A}\in\mathfrak{g}$, we denote the \emph{left} and \emph{right translations} of $\mathbf{A}$ at $\mathbf{M}$ respectively by
\begin{equation*}
\mathbf{A}^l(\mathbf{M}):=\mathbf{M}\cdot\mathbf{A}\qquad\text{and}\qquad\mathbf{A}^r(\mathbf{M}):=\mathbf{A}\cdot\mathbf{M}\ .
\end{equation*}

Killing vector fields over $M$ are given by the infinitesimal action of $\mathfrak{g}$ on $M$. For every element $\mathbf{A}$ of $\mathfrak{g}$, its \emph{Killing vector field} $\xi_{\mathbf{A}}$ is defined by
\begin{equation}
\label{eqn:DefnKillingField}
\xi_{\mathbf{A}}\circ\pi(\mathbf{M}) := \frac{d}{dt}\pi\big(e^{t\mathbf{A}}\cdot\mathbf{M}\big)\bigg|_{t=0} = D\pi(x)\cdot\big(\mathbf{A}\cdot\mathbf{M}\big)\ .
\end{equation}

Let $\Pi$ be a compact surface group, and let $\rho:\Pi\rightarrow G$ be a representation. A $\rho$-\emph{cocycle} is a function $\alpha:\Pi\rightarrow\mathfrak{g}$ which satisfies, for all $\gamma,\eta\in\Pi$,
\begin{equation}
\label{eqn:DefnCocycle}
\alpha(\gamma\cdot\eta) = \alpha(\gamma) + \rho(\gamma)\cdot\alpha(\eta)\cdot\rho(\gamma)^{-1}\ .
\end{equation}
Cocycles are infinitesimal variations of representations. In particular, we associate a $\rho$-cocycle to every element $\mathbf{A}$ of $\mathfrak{g}$ by
\begin{equation}
\label{eqn:DefnCoboundary}
\alpha_{\mathbf{A}}(\gamma) := \frac{d}{dt}e^{t\mathbf{A}}\cdot\rho(\gamma)\cdot e^{-t\mathbf{A}}\bigg|_{t=0}\cdot\rho(\gamma)^{-1} = \mathbf{A} - \rho(\gamma)\cdot\mathbf{A}\cdot\rho(\gamma)^{-1}\ .
\end{equation}
We call $\alpha_{\mathbf{A}}$ the $\rho$-\emph{coboundary} of $\mathbf{A}$.

Given a representation $\rho:\Pi\rightarrow G$ and a $\rho$-cocycle $\alpha$, we define a $(\rho,\alpha)$-\emph{automorphic vector field} over $M$ to be a vector field $\xi$ satisfying, for all $\gamma\in\Pi$ and for all $p\in M$,
\begin{equation}
\label{eqn:DefnAutomorphic}
\xi\big(\rho(\gamma)\cdot p\big) = D\rho(\gamma)(p)\cdot\xi(p) + \xi_{\alpha(\gamma)}\big(\rho(\gamma)\cdot p\big)\ ,
\end{equation}
where $\xi_{\alpha(\gamma)}$ is as in \eqref{eqn:DefnKillingField}.

\begin{lemma}
For all $\mathbf{A}\in\mathfrak{g}$, the Killing vector field $\xi_{\mathbf{A}}$ is $(\rho,\alpha_{\mathbf{A}})$-automorphic.
\end{lemma}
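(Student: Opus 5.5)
We verify the automorphy identity \eqref{eqn:DefnAutomorphic} directly from the definitions, working upstairs in $G$ and pushing down by $\pi$. Fix $\gamma\in\Pi$ and a point $p=\pi(\mathbf{M})\in M$, and write $\mathbf{R}:=\rho(\gamma)$. The action of $\mathbf{R}$ on $M$ is induced by left multiplication on $G$, that is, $\mathbf{R}\cdot\pi(\mathbf{N})=\pi(\mathbf{R}\cdot\mathbf{N})$. Differentiating this along curves gives
\begin{equation*}
D\rho(\gamma)(p)\cdot D\pi(\mathbf{M})\cdot\mathbf{V} = D\pi(\mathbf{R}\cdot\mathbf{M})\cdot(\mathbf{R}\cdot\mathbf{V})
\end{equation*}
for all $\mathbf{V}$ tangent to $G$ at $\mathbf{M}$.

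First compute the left-hand side. By \eqref{eqn:DefnKillingField} evaluated at $\mathbf{R}\cdot\mathbf{M}$,
\begin{equation*}
\xi_{\mathbf{A}}(\mathbf{R}\cdot p)=D\pi(\mathbf{R}\cdot\mathbf{M})\cdot(\mathbf{A}\cdot\mathbf{R}\cdot\mathbf{M})\ .
\end{equation*}
Next compute the transported term. Using the differentiated equivariance above,
\begin{equation*}
D\rho(\gamma)(p)\cdot\xi_{\mathbf{A}}(p)=D\pi(\mathbf{R}\cdot\mathbf{M})\cdot(\mathbf{R}\cdot\mathbf{A}\cdot\mathbf{M}) = D\pi(\mathbf{R}\cdot\mathbf{M})\cdot\big((\mathbf{R}\mathbf{A}\mathbf{R}^{-1})\cdot\mathbf{R}\cdot\mathbf{M}\big)\ .
\end{equation*}
Finally, by \eqref{eqn:DefnCoboundary} and \eqref{eqn:DefnKillingField},
\begin{equation*}
\xi_{\alpha_{\mathbf{A}}(\gamma)}(\mathbf{R}\cdot p)=D\pi(\mathbf{R}\cdot\mathbf{M})\cdot\big((\mathbf{A}-\mathbf{R}\mathbf{A}\mathbf{R}^{-1})\cdot\mathbf{R}\cdot\mathbf{M}\big)\ .
\end{equation*}
Adding the last two expressions, the terms in $\mathbf{R}\mathbf{A}\mathbf{R}^{-1}$ cancel by linearity of $D\pi(\mathbf{R}\cdot\mathbf{M})$, and what remains is exactly the left-hand side. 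This establishes \eqref{eqn:DefnAutomorphic}.

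The only point needing care is well-definedness. The Killing field itself is well defined on $M$, independently of the choice of lift $\mathbf{M}$, because its definition is $\frac{d}{dt}\pi(e^{t\mathbf{A}}\cdot\mathbf{M})$ and $\pi(e^{t\mathbf{A}}\mathbf{M}\mathbf{h})=\pi(e^{t\mathbf{A}}\mathbf{M})$ for $\mathbf{h}\in H$. The differentiated equivariance formula holds for any lift, so every step above is legitimate.

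As a sanity check, one can also argue conceptually. The flow of $\xi_{\mathbf{A}}$ is left multiplication by $e^{t\mathbf{A}}$, and conjugating this flow by $\mathbf{R}$ gives the flow of $\xi_{\mathbf{R}\mathbf{A}\mathbf{R}^{-1}}$. Hence $D\rho(\gamma)\cdot\xi_{\mathbf{A}}=\xi_{\mathrm{Ad}(\mathbf{R})\mathbf{A}}\circ\rho(\gamma)$, and the identity follows from linearity of $\mathbf{A}\mapsto\xi_{\mathbf{A}}$. I expect no real obstacle: the argument is purely definitional.
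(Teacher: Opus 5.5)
Your proof is correct and takes essentially the paper's route. Both arguments write $\xi_{\mathbf{A}}(\rho(\gamma)\cdot p)=D\pi(\mathbf{R}\cdot\mathbf{M})\cdot(\mathbf{A}\cdot\mathbf{R}\cdot\mathbf{M})$, split $\mathbf{A}\mathbf{R}\mathbf{M}=(\mathbf{A}-\mathbf{R}\mathbf{A}\mathbf{R}^{-1})\mathbf{R}\mathbf{M}+\mathbf{R}\mathbf{A}\mathbf{M}$, and identify the second piece via the differentiated identity $\pi\circ\rho(\gamma)=\rho(\gamma)\circ\pi$. The only difference is cosmetic: the paper obtains the split from the product rule applied to $(e^{t\mathbf{A}}\mathbf{R}e^{-t\mathbf{A}})\cdot(e^{t\mathbf{A}}\mathbf{M})$, whereas you compute the two right-hand terms separately and add them.
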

\begin{proof} Choose $\mathbf{M}\in G$ and denote $p:=\pi(\mathbf{M})$. For all $\gamma\in\Pi$,
\begin{equation*}
\begin{aligned}
\xi_{\mathbf{A}}\big(\rho(\gamma)\cdot p\big)
&=\frac{d}{dt}\pi\big(e^{t\mathbf{A}}\cdot\rho(\gamma)\cdot\mathbf{M}\big)\bigg|_{t=0}\\
&=\frac{d}{dt}\big(\big(e^{t\mathbf{A}}\cdot\rho(\gamma)\cdot e^{-t\mathbf{A}}\big)\cdot\big(e^{t\mathbf{A}}\cdot\mathbf{M}\big)\big)\bigg|_{t=0}\\
&=D\pi\big(\rho(\gamma)\cdot\mathbf{M}\big)\cdot\big([\mathbf{A}-\rho(\gamma)\cdot\mathbf{A}\cdot\rho(\gamma)^{-1}]\cdot\big(\rho(\gamma)\cdot\mathbf{M}\big)\big)\\
&\qquad\qquad +D\pi\big(\rho(\gamma)\cdot\mathbf{M}\big)\cdot\big(\rho(\gamma)\cdot\mathbf{A}\cdot\mathbf{M}\big)\vphantom{\frac{1}{2}}\ .
\end{aligned}
\end{equation*}
This first term is equal to $\xi_{\alpha_{\mathbf{A}}(\gamma)}(\rho(\gamma)\cdot p)$. For the second term, differentiating the identity $\pi\circ\rho(\gamma)=\rho(\gamma)\circ\pi$ yields
\begin{equation*}
D\pi\big(\rho(\gamma)\cdot\mathbf{M}\big)\circ D\rho(\gamma)(\mathbf{M}) = D\rho(\gamma)\big(\pi(\mathbf{M})\big)\circ D\pi(\mathbf{M})\ .
\end{equation*}
Hence
\begin{equation*}
\begin{aligned}
D\pi\big(\rho(\gamma)\cdot\mathbf{M}\big)\cdot\big(\rho(\gamma)\cdot\mathbf{A}\cdot\mathbf{M}\big)
&= D\pi\big(\rho(\gamma)\cdot\mathbf{M}\big)\cdot D\rho(\gamma)(\mathbf{M})\cdot(\mathbf{A}\cdot\mathbf{M})\\
&= D\rho(\gamma)\big(\pi(\mathbf{M})\big)\cdot D\pi(\mathbf{M})\cdot(\mathbf{A}\cdot\mathbf{M})\\
&= D\rho(\gamma)(p)\cdot\xi_{\mathbf{A}}(p)\ ,
\end{aligned}
\end{equation*}
and the result follows.
\end{proof}

\subsubsection{Real projective space} \label{sec:homspacerps} We now apply the preceding formalism to the case where $M$ is $1$-dimensional real projective space. Identifying $\RP$ with the extended real line $\mathbb{R}\cup\{\infty\}$, the structure group is $G=\PSL$ acting on this space by fractional linear transformations. We may take the factor group to be the stabilizer of $\infty$, which identifies with the space of affine transformations of the form
\begin{equation*}
\phi(x) = ax + b\ ,
\end{equation*}
for $a,b\in\mathbb{R}$ and $a>0$.

We will require the following elementary vanishing result for Killing vector fields.
\begin{lemma}
\label{lemma:VanishingConditionOfKillingFieldsI}
Let $p_1,p_2,p_3\in\RP$ be distinct points. Let $\xi=\xi_{\mathbf{A}}$ be a Killing vector field. If $\xi(p_i)=0$ for all $i$, then $\xi=0$.
\end{lemma}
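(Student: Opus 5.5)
Killing vector fields on $\RP$ are infinitesimal fractional linear transformations, and the plan is to show that each one is a polynomial of degree at most $2$ in the affine coordinate. Such a polynomial cannot have three distinct zeros unless it is identically zero.

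We identify $\RP$ with $\R\cup\{\infty\}$ and write $\mathbf{A}=\begin{pmatrix} a & b\\ c & -a\end{pmatrix}\in\slalg$. By \eqref{eqn:DefnKillingField}, $\xi_{\mathbf{A}}$ is the derivative at $t=0$ of the fractional linear transformation induced by $e^{t\mathbf{A}}$. Differentiating $x\mapsto (\alpha x+\beta)/(\gamma x+\delta)$ at the identity gives, in the affine coordinate $x$,
\begin{equation*}
\xi_{\mathbf{A}}(x) = \big(b + 2a x - c x^2\big)\,\partial_x\ .
\end{equation*}
The sign conventions are irrelevant, since only the vanishing locus matters.

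Next we handle the point at infinity. In the chart $u=-1/x$ near $\infty$ we have $\partial_x = u^2\partial_u$, so the field becomes $(bu^2 - 2au - c)\,\partial_u$. Hence $\xi_{\mathbf{A}}(\infty)=0$ if and only if $c=0$, that is, exactly when the quadratic $P(x):=b+2ax-cx^2$ has degree at most $1$. It is therefore natural to read $\infty$ as a "root at infinity" of $P$, viewed as a homogeneous quadratic $bY^2+2aXY-cX^2$ on $\R^2$.

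With this in place the counting argument goes as follows. The zeros of $\xi_{\mathbf{A}}$ in $\RP$ are exactly the projective zeros $[X:Y]$ of this binary quadratic form. A nonzero binary quadratic form has at most two distinct projective zeros, so three distinct zeros force $a=b=c=0$, whence $\mathbf{A}=0$ and $\xi=0$.

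Alternatively, to avoid the chart at infinity, we can apply an element of $\PSL$ that moves $p_1,p_2,p_3$ to finite points. Conjugation sends Killing fields to Killing fields, as in the cocycle computation \eqref{eqn:DefnCoboundary}, so this is harmless. A polynomial of degree at most $2$ with three distinct real roots then vanishes identically.

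The main obstacle is bookkeeping rather than substance: one must correctly account for the point $\infty$ and check that $\mathbf{A}\mapsto\xi_{\mathbf{A}}$ takes values in quadratics. The remaining step is the trivial fact that a nonzero quadratic has at most two roots.
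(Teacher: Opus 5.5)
Your proof is correct and is essentially the paper's argument. The paper moves $p_3$ to $\infty$, notes that a Killing field vanishing there lies in the stabilizer algebra and so has the affine form $ax+b$ (your case $c=0$), and then kills it using the two remaining zeros; your identification of all Killing fields with binary quadratic forms does the same root count without first normalizing a zero to $\infty$.
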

\begin{proof} Upon applying an element of $\PSL$ if necessary, we may suppose that $p_3=\infty$. Since $\xi(\infty)=0$,
\begin{equation*}
\xi\in\mathfrak{h}=\stab(\infty)\ ,
\end{equation*}
so that
\begin{equation*}
\xi(x) = ax + b\ ,
\end{equation*}
for some $a,b\in\mathbb{R}$. Since $\xi(p_1)=\xi(p_2)=0$, it follows that $a=b=0$, and hence $\xi=0$, as desired.
\end{proof}

Let $\Pi$ be the fundamental group of a closed surface $S$. Recall that a representation $\rho:\Pi\rightarrow\PSL$ is said to be \emph{Fuchsian} whenever it is discrete and faithful. Recalling also that $\PSL$ naturally acts on $\hyp^2$, for any Fuchsian representation $\rho:\Pi\rightarrow\PSL$, we define the \emph{translation length} $l_\rho:\Pi\rightarrow[0,\infty[$ by
\begin{equation}
\label{eqn:defn:LengthFunction}
l_\rho(\gamma) := \inf_{p\in\hyp^2}d\big(\rho(\gamma)\cdot p,p\big)\ .
\end{equation}
Recall that, for all $\gamma$,
\begin{equation*}
\cosh\big(l_\rho(\gamma)/2\big) = \frac{1}{2}\left|\opTr(\rho(\gamma))\right|\ .
\end{equation*}

\begin{lemma}
Let $\rho:\Pi\rightarrow\PSL$ be Fuchsian, let $\alpha$ be a $\rho$-cocycle, and let $\xi$ be a $(\rho,\alpha)$-automorphic vector field. For all $\gamma\in\Pi\setminus\{\opId\}$, denoting by $p:=\rho(\gamma)^+$ the attractive fixed point of $\rho(\gamma)$ in $\RP$,
\begin{equation}
\label{eqn:FormulaForAutomorphicVectorField}
\xi(p) = \frac{1}{\big(1-e^{-l_\rho(\gamma)}\big)}\xi_{\alpha(\gamma)}(p)\ .
\end{equation}
\end{lemma}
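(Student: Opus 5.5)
The plan is to evaluate the automorphic identity \eqref{eqn:DefnAutomorphic} at the fixed point $p=\rho(\gamma)^+$ itself. Since $\rho(\gamma)\cdot p=p$, this gives
\begin{equation*}
\xi(p) = D\rho(\gamma)(p)\cdot\xi(p) + \xi_{\alpha(\gamma)}(p)\ .
\end{equation*}
Because $\RP$ is one-dimensional, $D\rho(\gamma)(p)$ is multiplication by a real scalar $\lambda$. Hence
\begin{equation*}
(1-\lambda)\,\xi(p)=\xi_{\alpha(\gamma)}(p)\ .
\end{equation*}
It therefore suffices to show that $\lambda=e^{-l_\rho(\gamma)}$. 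Note that $\lambda\neq1$, since $\rho(\gamma)$ is hyperbolic: $\rho$ is Fuchsian and $\Pi$ is a closed surface group, so every nontrivial element is hyperbolic. Once this is established, we divide by $1-\lambda$ and obtain \eqref{eqn:FormulaForAutomorphicVectorField}.

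To compute $\lambda$, we conjugate $\rho(\gamma)$ within $\PSL$ to a diagonal matrix. Conjugation changes neither the derivative at the corresponding fixed point nor the translation length. We may thus assume
\begin{equation*}
\rho(\gamma)=\pm\begin{pmatrix} e^{l/2} & 0\\ 0 & e^{-l/2}\end{pmatrix}\ ,\qquad l=l_\rho(\gamma)>0\ ,
\end{equation*}
which is consistent with $\cosh(l/2)=\frac12|\opTr\rho(\gamma)|$.

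On the extended real line, this matrix acts by $x\mapsto e^{l}x$. Its attractive fixed point is $\infty$ and its repulsive fixed point is $0$. In the chart $u=1/x$ near $\infty$, the map becomes $u\mapsto e^{-l}u$, so its derivative at the attractive fixed point is $e^{-l}$. One should check that this derivative is chart-independent at a fixed point, which holds because it is the eigenvalue of a linear map of a one-dimensional space to itself. This gives $\lambda=e^{-l_\rho(\gamma)}$.

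The only point needing care is the identification of $D\rho(\gamma)(p)$ with a scalar and its value. Concretely, one must make sure the attractive fixed point is taken in the sense consistent with the action $\alpha(\mathbf{X})\cdot[x]=[\mathbf{X}\cdot x]$, so that the derivative is $e^{-l}$ rather than $e^{l}$. With that settled, the result follows.
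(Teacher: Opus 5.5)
Your proposal is correct and follows essentially the same argument as the paper: evaluate the automorphic identity at the fixed point $p$, observe that $D\rho(\gamma)(p)$ is multiplication by $e^{-l_\rho(\gamma)}$ after conjugating to diagonal form, and divide by $1-e^{-l_\rho(\gamma)}\neq 0$. The only difference is cosmetic. The paper normalizes so that the attractive fixed point is $0$ and $\rho(\gamma)\cdot z=e^{-l_\rho(\gamma)}z$, whereas you place it at $\infty$ and pass to the chart $u=1/x$.
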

\begin{proof}Since $\rho$ is Fuchsian, $l_\rho(\gamma)>0$ for all non-trivial $\gamma$, and hence \eqref{eqn:FormulaForAutomorphicVectorField} is always finite. Upon conjugating with a suitable element of $\PSL$, we may suppose that the repulsive and attractive fixed points of $\rho(\gamma)$ are $\infty$ and $0$ respectively, and so
\begin{equation*}
\rho(\gamma)\cdot z = e^{-l_\rho(\gamma)}z\ .
\end{equation*}
Hence, for any $\xi$ tangent to $\RP$ at $p=0$,
\begin{equation*}
D\rho(\gamma)(p)\cdot\xi = e^{-l_\rho(\gamma)}\xi\ .
\end{equation*}
Applying the cocycle condition \eqref{eqn:DefnCocycle} now yields
\begin{equation*}
\begin{aligned}
\xi(p) &= \xi\big(\rho(\gamma)\cdot p\big)\\
&= D\rho(\gamma)(p)\cdot\xi(p) + \xi_{\alpha(\gamma)}\big(\rho(\gamma)\cdot p)\big)\\
&= e^{-l_\rho(\gamma)}\cdot\xi(p) + \xi_{\alpha(\gamma)}(p)\ ,
\end{aligned}
\end{equation*}
and \eqref{eqn:FormulaForAutomorphicVectorField} follows.
\end{proof}

\begin{lemmadefn}
\label{lemma:ExistenceAndUniquenessOfAutomorphicFieldsI}
Let $\rho:\Pi\rightarrow\PSL$ be Fuchsian. For every cocycle $\alpha$, there exists a unique continuous $(\rho,\alpha)$-automorphic vector field $\xi_\alpha$, which we call the \emph{automorphic vector field} of $\alpha$. This vector field is of Zygmund type. In particular, it is of H\"older type $C^{0,\eta}$ for all $\eta<1$.
\end{lemmadefn}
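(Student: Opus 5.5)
Uniqueness reduces to the vanishing statement. If $\xi$ and $\xi'$ are both continuous and $(\rho,\alpha)$-automorphic, then $\zeta:=\xi-\xi'$ is continuous and $(\rho,0)$-automorphic, that is, $\rho$-equivariant: $\zeta(\rho(\gamma)\cdot p)=D\rho(\gamma)(p)\cdot\zeta(p)$. By \eqref{eqn:FormulaForAutomorphicVectorField} applied with $\alpha=0$, $\zeta$ vanishes at every attractive fixed point $\rho(\gamma)^+$, $\gamma\neq\opId$. Since $\rho$ is Fuchsian and $S$ is closed, these points are dense in $\RP$. Continuity then gives $\zeta\equiv 0$. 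In particular, \eqref{eqn:FormulaForAutomorphicVectorField} already prescribes the values of $\xi_\alpha$ on a dense set, so the task is to show that this prescription extends continuously, with the stated regularity.

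For existence I would construct $\xi_\alpha$ directly and check continuity afterwards. Fix a smooth vector field $\xi_0$ on $\RP$, and write any candidate as $\xi_0+\zeta$. The automorphy condition becomes a twisted equivariance equation for $\zeta$ with a smooth error term $c(\gamma):=\xi_{\alpha(\gamma)}\circ\rho(\gamma)+D\rho(\gamma)\cdot\xi_0-\xi_0\circ\rho(\gamma)$.

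The cleanest route is to work in $\hyp^2$. Extend the construction to an automorphic vector field $X$ on $\hyp^2$: choose a $\rho$-equivariant partition of unity subordinate to translates of a compact fundamental domain, and glue Killing fields to obtain a smooth $(\rho,\alpha)$-automorphic field $X$. Such an $X$ has bounded covariant derivative of its trace-free symmetric part, i.e.\ bounded $\bar\partial X$, because the defect from being Killing is periodic and hence uniformly bounded. The boundary values are then given by the classical Ahlfors--Reimann integral representation. Concretely, define $\xi_\alpha$ on $\RP$ by the integral of $\bar\partial X$ against the Cauchy-type kernel that reproduces vector fields modulo $\slalg$, plus the appropriate Killing correction.

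A bounded Beltrami differential produces, through this kernel, a continuous vector field on $\RP$ of Zygmund class: $|\xi(x+h)+\xi(x-h)-2\xi(x)|\le C|h|$. This is the standard Reimann estimate, and the Zygmund class embeds in $C^{0,\eta}$ for every $\eta<1$.

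The remaining check is that the field so obtained is $(\rho,\alpha)$-automorphic. This follows from the $\rho$-equivariance of $\bar\partial X$ together with the $\PSL$-invariance of the kernel modulo Killing fields. The $\PSL$-invariance shows that $\xi_\alpha\circ\rho(\gamma)-D\rho(\gamma)\cdot\xi_\alpha$ is a Killing field, and comparing with $X$ in the interior identifies it as $\xi_{\alpha(\gamma)}$.

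The main obstacle is this last regularity and extension step. One has to justify that the integral converges and is Zygmund uniformly, using the boundedness of $\bar\partial X$ in the hyperbolic metric. One also has to check that the Killing corrections match exactly, rather than merely up to an element of $\slalg$ depending on $\gamma$. If this proves awkward, an alternative is a contraction argument: iterate the boundary values along the Markov coding of $\RP$ given by the Fuchsian group, using the exponential contraction $e^{-l_\rho(\gamma)}$ visible in \eqref{eqn:FormulaForAutomorphicVectorField} to get a uniformly Cauchy sequence of continuous fields, and then extract the H\"older estimate from the geometric decay of the corrections.
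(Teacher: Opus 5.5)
You have a genuine gap at exactly the step you flag as the main obstacle, and the justification you give for it does not work. Your uniqueness argument is the paper's: the difference of two continuous $(\rho,\alpha)$-automorphic fields is $(\rho,0)$-automorphic, vanishes at every attractive fixed point by \eqref{eqn:FormulaForAutomorphicVectorField}, and hence vanishes everywhere by density.

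For existence and the Zygmund estimate, the paper gives no argument of its own and simply cites \cite[Section 16.6]{GL}. Your sketch (equivariant smooth extension $X$, bounded $\bar\partial X$, Ahlfors' normalized Cauchy transform, Reimann's estimate) is the classical route behind that reference. The normalized solution $V$ of $\bar\partial V=\hat\mu$, with $\hat\mu$ the symmetric extension of $\bar\partial X$, is indeed $(\rho,\beta)$-automorphic for \emph{some} real cocycle $\beta$. But ``comparing with $X$ in the interior'' only shows that $H:=V-X$ is a holomorphic, $(\rho,\beta-\alpha)$-automorphic vector field on $\hyp^2$. You cannot reflect $H$ to a holomorphic field on the whole sphere and conclude that it is Killing. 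That would need $X$ to extend continuously to $\RP$, which is essentially the existence statement itself. Moreover, non-Killing holomorphic automorphic fields do exist, namely Eichler integrals of nonzero $\rho$-invariant holomorphic quadratic differentials. What must be excluded is that such a field has a \emph{real} cocycle. This is Eichler--Shimura-type input, not a formal consequence of your construction.

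A clean way to supply it, which also lets you drop $X$ altogether, goes as follows. Let $T$ send a $\rho$-invariant Beltrami differential $\mu\in L^\infty$ to the class $[\beta_\mu]\in H^1(\Pi,\slalg)$ of the cocycle of its normalized solution.
\begin{itemize}
\item Suppose $[\beta_\mu]=0$. After subtracting a Killing field, the solution $w\,\partial_z$ is $\rho$-equivariant.
\item For every holomorphic quadratic differential $\phi$, the $1$-form $w\phi\,dz$ then descends to the closed surface, and Stokes gives $\int\mu\phi=0$.
\item Hence $\ker T$ lies in the annihilator of $Q$. This annihilator has real codimension $\dim_{\R}Q=6g-6=\dim H^1(\Pi,\slalg)$, with nondegeneracy of the pairing checked via $\mu=\bar\phi\,(\mathrm{Im}\,z)^2$.
\item So $T$ is onto. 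Choose $\mu$ with $\beta_\mu=\alpha+\alpha_{\mathbf{A}}$; then $V_\mu|_{\RP}-\xi_{\mathbf{A}}$ is the required continuous, Zygmund, $(\rho,\alpha)$-automorphic field.
\end{itemize}
Your Markov-coding fallback is too unspecific to count as an alternative.
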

\begin{proof} Since the set of points of $\RP$ which are attractive fixed points of $\rho(\gamma)$ for some non-trivial $\gamma\in\Pi$ is dense in $\RP$, uniqueness follows from \eqref{eqn:FormulaForAutomorphicVectorField}. Existence is proven, for example, in \cite[Section 16.6]{GL}. Likewise, by \cite[Section 16.6]{GL}, $\xi_\alpha$ is of Zygmund type, and this completes the proof.
\end{proof}

\subsubsection{Einstein space}
\label{ss:EinsteinSpace}
We now consider the case where $M$ is $(1,1)$-dimensional Einstein space. Recall from Section \ref{sss:EinsteinSpaceI} that $\Ein$ may be viewed as the Cartesian product $\RP\times\RP$, with structure group $G=\PSL\times\PSL$ and factor group $H=\Stab(\infty)\times\Stab(\infty)$.

We recall that a representation $\rho=(\rho^r,\rho^l):\Pi\rightarrow\PSL\times\PSL$ is \emph{quasi-Fuchsian} whenever each of its two components is Fuchsian. In particular, every $\rho$-cocycle $\alpha$ takes the form $\alpha=(\alpha_r,\alpha_l)$, where $\alpha_r$ (resp. $\alpha_l$) is a $\rho_r$ (resp. $\rho_l$) cocycle.
 
\begin{lemmadefn}
\label{def:limset}
Let $\rho=(\rho^r,\rho^l):\Pi\rightarrow\PSL\times\PSL$ be quasi-Fuchsian. There exists a unique closed, minimal, $\rho$-invariant subset $\Lambda_\rho$ of $\Ein$. We call $\Lambda_\rho$ the \emph{limit set} of $\rho$.
\end{lemmadefn}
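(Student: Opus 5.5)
We construct $\Lambda_\rho$ explicitly as a graph and then show that it lies in every closed nonempty $\rho$-invariant set. Since $\rho^r$ and $\rho^l$ are Fuchsian representations of the same closed surface group $\Pi$, they are conjugate by a unique orientation-preserving homeomorphism $\phi:\RP\rightarrow\RP$. This map is characterized by
\begin{equation*}
\phi\circ\rho^r(\gamma)=\rho^l(\gamma)\circ\phi\quad\text{for all }\gamma\in\Pi\ .
\end{equation*}
It is obtained by extending to the ideal boundary a $\Pi$-equivariant quasi-isometry of $\hyp^2$ (any equivariant map between the two cocompact actions). On attractive fixed points it is determined by $\phi(\gamma_{\mathrm{rp}}^{r,+})=\gamma_{\mathrm{rp}}^{l,+}$. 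We set
\begin{equation*}
\Lambda_\rho:=\mathcal{N}\big(\mathrm{Graph}(\phi)\big)=\big\{\mathcal{N}(x,\phi(x))\ \big|\ x\in\RP\big\}\ .
\end{equation*}
This set is closed because $\phi$ is continuous. It is $\rho$-invariant by equivariance of $\phi$ together with the intertwining property of $\mathcal{N}$ proven above.

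Next we show that every closed nonempty $\rho$-invariant set $F\subseteq\Ein$ contains $\Lambda_\rho$. This gives uniqueness and minimality simultaneously: $\Lambda_\rho$ is then contained in every such $F$, so it is the unique minimal one. Its own minimality follows because every orbit of a point of $\Lambda_\rho$ is dense in $\Lambda_\rho$. Indeed, $\Pi$ acts on $\mathrm{Graph}(\phi)\cong\RP$ via $\rho^r$, and every orbit of a Fuchsian group of a closed surface is dense in $\RP$.

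Now let $F$ be given and pick $q=\mathcal{N}(a,b)\in F$. Fix a nontrivial $\gamma\in\Pi$, with attracting and repelling fixed points $\gamma^{r,\pm}$ for $\rho^r(\gamma)$ and $\gamma^{l,\pm}$ for $\rho^l(\gamma)$ in $\RP$. If $a\neq\gamma^{r,-}$ and $b\neq\gamma^{l,-}$, then under iteration
\begin{equation*}
\rho(\gamma)^n\cdot q\longrightarrow\mathcal{N}\big(\gamma^{r,+},\gamma^{l,+}\big)\ ,
\end{equation*}
coordinatewise north--south dynamics. Since $F$ is closed, this limit point lies in $\Lambda_\rho\cap F$.

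The main (minor) obstacle is the degenerate case. Suppose $a$ is the repelling fixed point of $\rho^r(\gamma)$, or $b$ that of $\rho^l(\gamma)$, for the chosen $\gamma$. We then simply choose a different $\gamma$. Repelling fixed points of distinct primitive elements are distinct, so it suffices to find $\gamma$ with $\gamma^{r,-}\neq a$ and $\gamma^{l,-}\neq b$. Each of $a$ and $b$ can be the repelling fixed point of at most one cyclic subgroup, so all but two maximal cyclic subgroups work.

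Having found one point of $\Lambda_\rho$ in $F$, invariance and closedness of $F$ together with density of orbits on $\Lambda_\rho$ give $\Lambda_\rho\subseteq F$. This completes the argument.
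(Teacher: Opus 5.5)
Your proposal is correct and follows the paper's approach. The paper's sketch likewise identifies $\Lambda_\rho$ with the graph of the equivariant homeomorphism $\phi$ of Theorem~\ref{thm:homeoconj}, and delegates minimality and uniqueness to \cite[Theorem 10.13]{Bar3}; your north--south dynamics argument (choosing $\gamma$ outside the at most two bad maximal cyclic subgroups), combined with density of $\rho^r(\Pi)$-orbits in $\RP$, supplies exactly those omitted details.
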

\noindent Lemma \ref{def:limset} follows readily from the topological conjugacy of the action of any two Fuchsian representations over $\RP$ (see \cite[Chapter 16]{GL}).
\begin{theorem}
\label{thm:homeoconj}
For all Fuchsian $\rho_1,\rho_2:\Pi\rightarrow\PSL$, there exists a unique orientation-preserving homeomorphism $\phi:\RP\rightarrow\RP$ such that, for all $\gamma\in\Pi$,
\begin{equation*}
\rho_1(\gamma)=\phi\circ\rho_2(\gamma)\circ\phi^{-1}\ .
\end{equation*}
\end{theorem}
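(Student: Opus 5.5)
Since $\rho_1$ and $\rho_2$ are both Fuchsian representations of the same closed surface group $\Pi$, I would build the conjugacy through the hyperbolic plane. The quotients $\hyp^2/\rho_1(\Pi)$ and $\hyp^2/\rho_2(\Pi)$ are closed hyperbolic surfaces, each homotopy equivalent to $S$. Because the representations are defined on the same group $\Pi$, there is a homotopy equivalence between the quotients inducing $\rho_1\circ\rho_2^{-1}$ on fundamental groups. This homotopy equivalence may be taken smooth, hence Lipschitz. Lift it to a $(\rho_2,\rho_1)$-equivariant map $F:\hyp^2\rightarrow\hyp^2$, meaning $F\circ\rho_2(\gamma)=\rho_1(\gamma)\circ F$. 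Applying the same construction to a homotopy inverse gives an equivariant $G$ with $G\circ F$ and $F\circ G$ at bounded distance from the identity, by equivariance and compactness of a fundamental domain. Hence $F$ is a quasi-isometry of $\hyp^2$.

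The key step is the boundary extension. By the Morse lemma, quasi-geodesics in $\hyp^2$ remain within bounded distance of geodesics. Consequently $F$ extends to a homeomorphism $\phi:\ideal\hyp^2\rightarrow\ideal\hyp^2$, where $\ideal\hyp^2$ is identified with $\RP$; the extension of $G$ provides the inverse. Equivariance of $F$ passes to the boundary, giving $\phi\circ\rho_2(\gamma)=\rho_1(\gamma)\circ\phi$, which is the required identity. To arrange orientation preservation, choose the homotopy equivalence to be orientation preserving. This is possible because both quotients are oriented compatibly with $S$ through the markings, and the boundary map of an orientation-preserving quasi-isometry preserves the cyclic order on $\RP$. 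The main obstacle is carrying out the Morse-lemma extension with the required continuity. I would rely on the standard Gromov-hyperbolic argument, which is the content of \cite[Chapter 16]{GL}, rather than reproving it.

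For uniqueness, suppose $\phi$ and $\phi'$ both satisfy the identity. Then $\psi:=\phi^{-1}\circ\phi'$ commutes with $\rho_2(\gamma)$ for every $\gamma\in\Pi$. For non-trivial $\gamma$, the element $\rho_2(\gamma)$ is hyperbolic with exactly two fixed points, one attracting and one repelling. The homeomorphism $\psi$ permutes these two fixed points and must carry the attracting point to the attracting point, since conjugation preserves dynamics. Thus $\psi$ fixes every attractive fixed point $\rho_2(\gamma)^+$. As noted in the proof of Lemma \ref{lemma:ExistenceAndUniquenessOfAutomorphicFieldsI}, these points form a dense subset of $\RP$. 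By continuity, $\psi=\opId$, and therefore $\phi=\phi'$.
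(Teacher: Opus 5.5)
The paper gives no proof of its own here. It quotes the statement as standard, pointing to \cite[Chapter 16]{GL}. Your existence argument (equivariant lift of a homotopy equivalence, quasi-isometry, Morse-lemma extension to $\RP$) is the standard one and is fine. So is your uniqueness argument: any $\psi$ commuting with $\rho_2(\Pi)$ fixes every attracting fixed point, and these are dense, the same fact used in Lemma~\ref{lemma:ExistenceAndUniquenessOfAutomorphicFieldsI}.

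The gap is the orientation step. You cannot \emph{choose} the homotopy equivalence to be orientation preserving. The quotients are aspherical, so a map inducing $\rho_1\circ\rho_2^{-1}$ on $\pi_1$ is determined up to homotopy, and hence so is its degree. That degree can be $-1$. Take $R(z)=-z$ on $\RP$ and set $\rho_1:=R\circ\rho_2\circ R^{-1}$. This is again discrete and faithful into $\PSL$, since conjugation by $\mathrm{diag}(1,-1)$ preserves $\PSL$. Then $\phi=R$ conjugates $\rho_2$ to $\rho_1$. Your own uniqueness argument, which works for arbitrary homeomorphisms, shows it is the only conjugacy, and it reverses orientation.

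So if ``Fuchsian'' means only discrete and faithful, the statement as written is false. It needs the hypothesis, implicit in the paper's identification of Fuchsian classes with $\teich_{\oprep}(S)$, that $\rho_1,\rho_2$ lie in the same component (equal Euler class). Concretely, both should be holonomies of hyperbolic structures on the oriented surface $S$ via orientation-preserving markings $f_i:S\to\hyp^2/\rho_i(\Pi)$. Your appeal to ``markings'' is exactly this hypothesis, but it has to be stated, not presented as a choice.

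Granting it, take $f_1\circ\bar f_2$ with $\bar f_2$ a homotopy inverse of $f_2$; this has degree $+1$. You still owe an argument that the boundary map then preserves cyclic order. For instance, homotope $f_1\circ\bar f_2$ to an orientation-preserving diffeomorphism (Dehn--Nielsen--Baer). Its equivariant lift is a homeomorphism of $\hyp^2$ at bounded distance from $F$, so it has the same boundary values. It extends to a homeomorphism of the closed disk, which is orientation preserving on the boundary circle.
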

\begin{proof}[Sketch of proof of Lemma~\ref{def:limset}.]
Indeed, with $\phi$ is an Theorem \ref{thm:homeoconj}, $\Lambda_\rho$ is the graph of $\phi$. We refer the reader to \cite[Theorem 10.13]{Bar3} for details.
\end{proof}

Every Killing vector field $\xi$ over $\Ein$ takes the form $\xi:=(\xi_r,\xi_l)$ where $\xi_r$ and $\xi_l$ are Killing vector fields over $\RP$. Lemma \ref{lemma:VanishingConditionOfKillingFieldsI} yields the following elementary vanishing result for Killing vector fields over Einstein space.
\begin{lemma}
\label{lemma:VanishingConditionOfKillingFieldsII}
Let $\rho:\Pi\rightarrow\PSL\times\PSL$ be quasi-Fuchsian. Let $p_1,p_2,p_3\in\Lambda_\rho$ be distinct points, and let $\xi$ be a Killing vector field. If $\xi(p_i)=0$ for all $i$, then $\xi=0$.
\end{lemma}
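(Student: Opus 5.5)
Write $\xi=(\xi_r,\xi_l)$, where $\xi_r$ and $\xi_l$ are Killing vector fields over $\RP$, under the identification $\Ein\cong\RP\times\RP$ given by $\mathcal{N}$. Write each point as $p_i=(x_i,y_i)\in\RP\times\RP$. The hypothesis $\xi(p_i)=0$ then says that $\xi_r(x_i)=0$ and $\xi_l(y_i)=0$ for $i=1,2,3$. By Lemma \ref{lemma:VanishingConditionOfKillingFieldsI}, it therefore suffices to show that the three points $x_1,x_2,x_3$ are pairwise distinct, and likewise $y_1,y_2,y_3$. It then follows that $\xi_r=0$ and $\xi_l=0$, hence $\xi=0$.

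The key step is the distinctness of the projections, which uses the structure of the limit set. By the sketch of proof of Lemma \ref{def:limset}, $\Lambda_\rho$ is the graph $\{(\phi(y),y)\}$ of a homeomorphism $\phi:\RP\rightarrow\RP$ conjugating $\rho^l$ to $\rho^r$ (up to the ordering convention of the factors, which is immaterial here). Hence both coordinate projections restricted to $\Lambda_\rho$ are bijective. Suppose two distinct points $p_i\neq p_j$ of $\Lambda_\rho$ had the same second coordinate $y_i=y_j$. Their first coordinates would then be $\phi(y_i)=\phi(y_j)$, so $p_i=p_j$, a contradiction. Symmetrically, since $\phi$ is injective, equal first coordinates force equal second coordinates. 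Thus the $x_i$ are pairwise distinct and the $y_i$ are pairwise distinct.

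The only point requiring care is the identification of Killing fields of $\Ein$ with pairs of Killing fields of $\RP$. This follows because $\mathcal{N}$ intertwines $\alpha\times\alpha$ with $\hat{\alpha}$, by the lemma of Section \ref{sss:EinsteinSpaceI}. Consequently, the infinitesimal action of $\slalg\times\slalg$ on $\Ein$ corresponds to the product of the infinitesimal actions on each $\RP$ factor. I expect no genuine obstacle beyond this bookkeeping. The substantive input is that $\Lambda_\rho$ is a graph of a bijection, which is exactly Theorem \ref{thm:homeoconj}.
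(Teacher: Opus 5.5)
Your proposal is correct and follows the paper's proof essentially verbatim: split $\xi=(\xi_r,\xi_l)$, use that $\Lambda_\rho$ is the graph of a homeomorphism so both coordinate projections are injective on $\Lambda_\rho$, and apply Lemma \ref{lemma:VanishingConditionOfKillingFieldsI} in each factor. Your extra justification of the splitting of Killing fields via the equivariance of $\mathcal{N}$ is a harmless addition that the paper takes for granted.
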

\begin{proof}Denote $\xi:=(\xi_r,\xi_l)$ and, for each $i$, denote $p_i:=(p_{i,r},p_{i,l})$. The set $\Lambda_\rho$ is the graph of a homeomorphism $\phi_\rho:\RP\rightarrow\RP$. In particular, it projects bijectively onto each copy of $\RP$. It follows that $(p_{1,r},p_{2,r},p_{3,r})$ and $(p_{1,l},p_{2,l},p_{3,l})$ are triples of distinct points in $\RP$. For each $i$,
\begin{equation*}
\xi_r(p_{i,r}),\xi_l(p_{i,l})=0\ ,
\end{equation*}
so that, by Lemma \ref{lemma:VanishingConditionOfKillingFieldsI},
\begin{equation*}
\xi_r,\xi_l = 0\ ,
\end{equation*}
as desired.
\end{proof}

\begin{lemmadefn}
\label{lemma:ExistenceAndUniquenessOfAutomorphicFieldsII}
Let $\rho:\Pi\rightarrow\PSL\times\PSL$ be quasi-Fuchsian. For every $\rho$-cocycle $\alpha$, there exists a unique continuous $(\rho,\alpha)$-automorphic vector field $\xi_\alpha$, which we call the \emph{automorphic vector field} of $\alpha$. The restriction of this vector field to $\Lambda_\rho$ is of Zygmund type. In particular, it is of H\"older type $C^{0,\eta}$ for all $\eta<1$.
\end{lemmadefn}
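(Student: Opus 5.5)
The plan is to reduce the statement to the one-dimensional Lemma and Definition \ref{lemma:ExistenceAndUniquenessOfAutomorphicFieldsI}, using the product structure $\Ein=\RP\times\RP$ of Section \ref{ss:EinsteinSpace}. Write $\rho=(\rho^r,\rho^l)$ and $\alpha=(\alpha_r,\alpha_l)$, where $\alpha_r$ is a $\rho^r$-cocycle and $\alpha_l$ is a $\rho^l$-cocycle. With respect to the splitting $T\Ein=T\RP\oplus T\RP$:
\begin{itemize}
\item every vector field on $\Ein$ has the form $\xi=(\xi^r,\xi^l)$, where $\xi^r(x,y)\in T_x\RP$ and $\xi^l(x,y)\in T_y\RP$;
\item the Killing field $\xi_{\alpha(\gamma)}$ is $(\xi_{\alpha_r(\gamma)},\xi_{\alpha_l(\gamma)})$;
\item the differential $D\rho(\gamma)$ is diagonal, equal to $D\rho^r(\gamma)\oplus D\rho^l(\gamma)$.
\end{itemize}
Hence the automorphy condition \eqref{eqn:DefnAutomorphic} splits into two independent conditions, one on each component. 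Note that the statement asks for a continuous automorphic vector field, and uniqueness can only be meant on $\Lambda_\rho$. Off $\Lambda_\rho$, one can add to any automorphic field a $\rho$-equivariant field, and these exist in abundance on the domain of discontinuity, for example via a bump function and a partition of unity. So I read the statement as concerning fields on $\Lambda_\rho$, or fields on $\Ein$ considered modulo their values off $\Lambda_\rho$, and I prove existence on $\Ein$ and uniqueness on $\Lambda_\rho$.

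\textbf{Existence.} Let $\xi_{\alpha_r}$ and $\xi_{\alpha_l}$ be the continuous automorphic fields on $\RP$ given by Lemma and Definition \ref{lemma:ExistenceAndUniquenessOfAutomorphicFieldsI}. Define
\begin{equation*}
\xi_\alpha(x,y):=\big(\xi_{\alpha_r}(x),\xi_{\alpha_l}(y)\big)\ .
\end{equation*}
This field is continuous on all of $\Ein$. It is $(\rho,\alpha)$-automorphic componentwise, because for instance
\begin{equation*}
\xi_{\alpha_r}\big(\rho^r(\gamma)x\big)=D\rho^r(\gamma)\,\xi_{\alpha_r}(x)+\xi_{\alpha_r(\gamma)}\big(\rho^r(\gamma)x\big)\ .
\end{equation*}

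\textbf{Uniqueness.} Let $\xi$ be any continuous automorphic field, restricted to $\Lambda_\rho$. For non-trivial $\gamma\in\Pi$, the point $p=\rho(\gamma)^+=(\rho^r(\gamma)^+,\rho^l(\gamma)^+)$ lies on $\Lambda_\rho$ and is fixed by $\rho(\gamma)$. The differential $D\rho(\gamma)(p)$ is diagonal with eigenvalues $e^{-l_{\rho^r}(\gamma)}$ and $e^{-l_{\rho^l}(\gamma)}$, both less than $1$. Repeating the computation in the proof of \eqref{eqn:FormulaForAutomorphicVectorField} componentwise therefore determines $\xi(p)$ uniquely from $\alpha(\gamma)$. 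These attracting fixed points are dense in $\Lambda_\rho$: their first coordinates are dense in $\RP$, and $\Lambda_\rho$ is the graph of the homeomorphism $\phi$ of Theorem \ref{thm:homeoconj}. By continuity, $\xi|_{\Lambda_\rho}=\xi_\alpha|_{\Lambda_\rho}$.

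\textbf{Regularity.} Parametrize $\Lambda_\rho$ by $x\mapsto(x,\phi(x))$. The first component $\xi_{\alpha_r}$ is Zygmund in $x$. The second component $\xi_{\alpha_l}(\phi(x))$ is Zygmund in the parameter $y=\phi(x)$. This is the main obstacle: Zygmund regularity with respect to which parametrization of $\Lambda_\rho$? A composition with the merely quasisymmetric map $\phi$ is not Zygmund in $x$. The natural meaning is that the restriction is Zygmund in each factor separately, i.e.\ each component is Zygmund as a function of its own $\RP$-coordinate. I would state it that way, since this is also what the Hölder claim needs. Indeed, for any $\eta<1$, each component is $C^{0,\eta}$ in its own coordinate. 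Since $\Lambda_\rho$ is a graph, the Einstein (product) metric restricted to $\Lambda_\rho$ dominates each coordinate distance. Hence $\xi_\alpha|_{\Lambda_\rho}$ is $C^{0,\eta}$ with respect to the induced metric of $\Lambda_\rho\subset\Ein$, which completes the proof.
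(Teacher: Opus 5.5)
\textbf{There is a gap in your uniqueness step.} Your existence argument, your uniqueness argument on $\Lambda_\rho$ via attracting fixed points, and your regularity discussion all match the paper. On regularity, you read the Zygmund claim componentwise and get the H\"older bound from the fact that the coordinate projections of $\Lambda_\rho$ are $1$-Lipschitz. The paper's proof is the same reduction $\xi=(\xi_r\circ\pi_r,\xi_l\circ\pi_l)$ to Lemma \ref{lemma:ExistenceAndUniquenessOfAutomorphicFieldsI}.

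The gap is your assertion that uniqueness can only hold on $\Lambda_\rho$. That assertion is false. As a result you prove a strictly weaker statement than the one claimed, which concerns continuous automorphic fields on all of $\Ein$.

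There is no domain of discontinuity here. The map $(x,y)\mapsto(x,\phi^{-1}(y))$ conjugates the action of $\rho(\Pi)$ on $\Ein\setminus\Lambda_\rho$ to the diagonal action of $\rho^r(\Pi)$ on ordered pairs of distinct points of $\RP$, i.e.\ on oriented geodesics of $\hyp^2$. That action has dense orbits, since the geodesic flow of a closed hyperbolic surface is topologically transitive.

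More decisively, every continuous $\rho$-equivariant vector field $\zeta=(\zeta^r,\zeta^l)$ on $\Ein$ vanishes. Take $x=\rho^r(\gamma)^+$ and $y\in\RP$ \emph{arbitrary}. Equivariance under $\gamma^{-n}$ gives
\begin{equation*}
\zeta^r\big(x,\rho^l(\gamma)^{-n}y\big)=e^{n\,l_{\rho^r}(\gamma)}\,\zeta^r(x,y)\ .
\end{equation*}
Since $\zeta^r$ is bounded on the compact space $\Ein$, this forces $\zeta^r(x,y)=0$. Such $x$ are dense in $\RP$, so continuity gives $\zeta^r\equiv0$, and likewise $\zeta^l\equiv0$.

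Apply this to $\zeta:=\xi-\xi_\alpha$. It shows that any continuous $(\rho,\alpha)$-automorphic field on $\Ein$ coincides with your product field everywhere. In particular, its components automatically depend only on their own coordinates. This is exactly what the paper uses, without comment, when it writes every automorphic field as $(\xi_r,\xi_l)$. So you were right that the splitting needs justification, but the correct conclusion is that it holds.

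The idea missing from your uniqueness step is to place only the \emph{first} coordinate at an attracting fixed point and leave the second free. The expansion of $D\rho^r(\gamma)^{-n}$ at $x$ then determines $\xi^r$ on the whole circle $\{x\}\times\RP$, not just at the single point $\rho(\gamma)^+\in\Lambda_\rho$.
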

\begin{proof}Since every $(\rho,\alpha)$-automorphic vector field takes the form $\xi:=(\xi_r,\xi_l)$, existence and uniqueness follow by Lemma \ref{lemma:ExistenceAndUniquenessOfAutomorphicFieldsI}. To prove the Zygmund property note that, since $\Lambda_\rho$ is the graph of a homeomorphism $\phi_\rho:\RP\rightarrow\RP$, in particular, it is rectifiable, and each of the two projections $\pi_r,\pi_l:\Lambda_\rho\rightarrow\RP$ restricts to a Lipschitz function over $\Lambda_\rho$. It follows by Lemma \ref{lemma:ExistenceAndUniquenessOfAutomorphicFieldsI} that $\xi=(\xi_r\circ\pi_r,\xi_l\circ\pi_l)$ is of Zygmund type, and this completes the proof.
\end{proof}

A key property of automorphic vector fields over $\Ein$ is that they are generically nowhere vanishing over the limit set $\Lambda_\rho$ of the representation. This is formalized as follows.
\begin{lemma}
\label{lemma:NowhereVanishingProperty}
Let $\rho:\Pi\rightarrow\PSL\times\PSL$ be quasi-Fuchsian, let $\alpha$ be a $\rho$-cocycle, and let $\xi_\alpha$ denote the automorphic vector field of $\alpha$. For every neighbourhood $\Omega$ of $(0,0)$ in $\slalg\times\slalg$, there exists $\mathbf{A}\in\Omega$ such that $\xi_\alpha-\xi_{\mathbf{A}}$ is nowhere vanishing over $\Lambda_\rho$.
\end{lemma}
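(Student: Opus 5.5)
Here is the approach. Since $\xi_{\mathbf{A}}$ is $(\rho,\alpha_{\mathbf{A}})$-automorphic, the field $\xi_\alpha-\xi_{\mathbf{A}}$ is the automorphic field of the cocycle $\alpha-\alpha_{\mathbf{A}}$. It is therefore continuous, and its restriction to $\Lambda_\rho$ is Zygmund, hence $C^{0,\eta}$ for every $\eta<1$ by Lemma \ref{lemma:ExistenceAndUniquenessOfAutomorphicFieldsII}. This makes the statement a transversality, or dimension-counting, problem. Consider the map
\begin{equation*}
F:\Lambda_\rho\times(\slalg\times\slalg)\longrightarrow \opT\Ein\ ,\qquad F(p,\mathbf{A}) := \xi_\alpha(p)-\xi_{\mathbf{A}}(p)\ .
\end{equation*}
We want $\mathbf{A}$ close to $0$ such that $F(\cdot,\mathbf{A})$ avoids the zero section. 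Equivalently, $\mathbf{A}$ should avoid the set
\begin{equation*}
B:=\big\{\mathbf{A}\ \big|\ \exists p\in\Lambda_\rho,\ \xi_{\mathbf{A}}(p)=\xi_\alpha(p)\big\}\ ,
\end{equation*}
so it suffices to show that $B$ has empty interior in the $6$-dimensional space $\slalg\times\slalg$.

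\textbf{Step 1: fibrewise structure.} For fixed $p$, the map $\mathbf{A}\mapsto\xi_{\mathbf{A}}(p)\in\opT_p\Ein\cong\R^2$ is linear. It is surjective, since $\PSL\times\PSL$ acts transitively on $\Ein$. Hence the fibre
\begin{equation*}
B_p:=\big\{\mathbf{A}\ \big|\ \xi_{\mathbf{A}}(p)=\xi_\alpha(p)\big\}
\end{equation*}
is an affine subspace of codimension $2$, and $B=\bigcup_{p\in\Lambda_\rho}B_p$. Trivialise the family by choosing, continuously in $p$, a complement $V_p\cong\R^2$ to the kernel $W_p$. For instance, $V_p$ can be the span of two fixed elements whose Killing fields are independent at $p$; this works locally, so cover the compact curve $\Lambda_\rho$ by finitely many arcs. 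Over each arc $I$, every $\mathbf{A}\in B$ with witness $p\in I$ decomposes as $\mathbf{A}=w+s(p)$. Here $w$ ranges over a $4$-dimensional space, via a smooth trivialisation $W_p\cong\R^4$ coming from the smooth dependence of $W_p$ on $p\in\Ein$. The term $s(p)\in V_p$ is the unique solution of $\xi_{s(p)}(p)=\xi_\alpha(p)$, so it depends on $p$ exactly with the regularity of $\xi_\alpha|_{\Lambda_\rho}$, namely $C^{0,\eta}$.

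\textbf{Step 2: Hausdorff dimension count.} Over each arc, $B$ is thus the image of $I\times\R^4$ under a map of the form $(p,w)\mapsto \Psi(p,w)+s(p)$. This map is smooth in $w$, Lipschitz in $p$ with respect to arclength, and $C^{0,\eta}$ in $p$ through $s$; recall that $\Lambda_\rho$ is rectifiable, being the graph of a homeomorphism, as noted in the proof of Lemma \ref{lemma:ExistenceAndUniquenessOfAutomorphicFieldsII}. Such a map, H\"older of exponent $\eta$ in one real parameter (parametrised by arclength) and Lipschitz in the remaining four, sends sets of dimension $1+4$ to sets of Hausdorff dimension at most $1/\eta+4$. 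Choosing $\eta>1/2$ gives dimension strictly less than $6$. Hence $B$ is a finite union of Lebesgue-null sets, and it is closed by compactness of $\Lambda_\rho$. So $B$ has empty interior, and every neighbourhood $\Omega$ of $(0,0)$ contains some $\mathbf{A}\notin B$, as required.

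\textbf{Main obstacle.} The delicate step is the regularity bookkeeping in Step 2. The H\"older estimate for $\xi_\alpha|_{\Lambda_\rho}$ must be taken with respect to arclength on $\Lambda_\rho$, which is guaranteed by the Lipschitz projections $\pi_r,\pi_l$. The local trivialisations $W_p,V_p$ must also be smooth in $p\in\Ein$, so that composing them with $p\in\Lambda_\rho$ loses nothing beyond the Lipschitz parametrisation. Once these points are checked, the covering estimate is elementary. Cover $I$ by $N$ intervals of length $1/N$ and a ball of $\R^4$ by $N^{4}$ cubes of side $1/N$. Each image piece lies in a ball of radius $\lesssim N^{-\eta}$, so the images are covered by $N^{5}$ balls of radius $N^{-\eta}$. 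This gives $5$-dimensional-type measure of order $N^{5-6\eta}\to0$ for $\eta>5/6$, which also yields Lebesgue-nullity directly. Since every $\eta<1$ is available, the argument closes.
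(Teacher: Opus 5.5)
Your proof is correct, but it takes a somewhat different route from the paper's.

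The paper never trivialises anything locally. It fixes a timelike geodesic $\zeta$ and uses the fact that $\Stab(\zeta)\cong\mathbb{S}^1\times\mathbb{S}^1$ acts simply transitively on $\Ein$. Hence $(p,\mathbf{A})\mapsto\xi_{\mathbf{A}}(p)$ is a global bundle isomorphism $\Ein\times\stab(\zeta)\rightarrow\opT\Ein$. Equivalently, the $2$-dimensional subalgebra $\stab(\zeta)$ is a complement to every one of your kernels $W_p$ at once. The bad set intersected with $\stab(\zeta)$ is then the image of $\Lambda_\rho$ under a single Zygmund map into the plane $\stab(\zeta)$. This image has Hausdorff dimension at most $1$, hence is null, so any $\mathbf{A}\in\Omega\cap\stab(\zeta)$ off it works.

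You instead keep all six parameters and fibre $B$ over $\Lambda_\rho$ with $4$-dimensional affine fibres. This costs you the cover by arcs, the local complements $V_p$, smooth frames for $W_p$, and a mixed H\"older/Lipschitz covering estimate. The underlying numerology is the same in both proofs: a H\"older exponent $\eta$ with $1/\eta$ less than the codimension $2$ of $W_p$. In exchange, your argument uses only transitivity of the action rather than a simply transitive subgroup. It also shows that $B$ is Lebesgue-null in the whole of $\slalg\times\slalg$, so almost every small $\mathbf{A}$ works. The paper's argument is shorter and produces $\mathbf{A}$ in a fixed $2$-plane.

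Two cosmetic points. First, in your final estimate the quantity $N^{5}\cdot N^{-6\eta}$ bounds $6$-dimensional Lebesgue measure, not a $5$-dimensional one. Second, since the fibres are unbounded you need countably many balls in $w$, so $B$ is a countable rather than finite union of null sets. Neither point affects the conclusion.
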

\begin{remark}
The proof is essentially a standard transversality argument, adapted to the low regularity of the present case.
\end{remark}
\begin{proof}
We view $\Ein$ as the boundary of $\AdS$. Let $\zeta$ be a timelike geodesic in $\AdS$, let $\Stab(\zeta)$ denote its stabilizer in $\PSL\times\PSL$, and let $\stab(\zeta)$ denote the Lie algebra of this subgroup. Since $\Stab(\zeta)$ is isomorphic to $\mathbb{S}^1\times\mathbb{S}^1$, and acts transitively and without fixed points on $\Ein$, the function
\begin{equation*}
\phi:\Ein\times\stab(\zeta)\rightarrow\opT\Ein;(p,\mathbf{A})\mapsto\xi_{\mathbf{A}}(p)
\end{equation*}
is a bundle isomorphism. Let $\pi:\Ein\times\stab(\zeta)\rightarrow\stab(\zeta)$ denote the projection onto the fibre.

By Lemma \ref{lemma:ExistenceAndUniquenessOfAutomorphicFieldsII}, the composition $(\pi\circ\phi^{-1}\circ\xi)$ restricts to a function of Zygmund type over $\Lambda_\rho$. Since $\Lambda_\rho$ is rectifiable, the image $(\pi\circ\phi^{-1}\circ\xi)(\Lambda_\rho)$ has Hausdorff dimension $1$, and thus Lebesgue measure $0$. The set $(\Omega\cap\stab(\zeta))\setminus(\pi\circ\phi^{-1}\circ\xi)(\Lambda_\rho)$ is thus non-empty, and for any element $\mathbf{A}$ of this set, the vector field $\xi-\xi_{\mathbf{A}}$ is nowhere vanishing over $\Lambda_\rho$. The result follows.
\end{proof}

\subsubsection{Anti-de Sitter space}
Finally, we consider the case where $M=\AdS$. Here the structure group is $G=\PSL\times\PSL$ and the factor group is $H=\PSL$. It will actually be convenient in what follows to work instead over the double cover $\SL$, which we view as a homogeneous space with structure group $G=\SL\times\SL$ and factor group $H=\SL$.

Since the action of $G$ on $\SL$ is given by \eqref{eqn:defn:ActionOfSLTimesSL}, Killing vector fields over $\SL$ are precisely those vector fields $\xi$ of the form,
\begin{equation*}
\xi = \mathbf{X}^r - \mathbf{Y}^l\ ,
\end{equation*}
for some $\mathbf{X},\mathbf{Y}\in\slalg$. We thus introduce the trivial bundle
\begin{equation}
\label{eqn:defn:TrivialBundleE}
E:=\SL\times\slalg\times\slalg\ ,
\end{equation}
and we define the bundle morphism $\sigma:E\rightarrow\TSL\oplus\TSL$ by
\begin{equation}
\label{eqn:defn:BundleMorphism}
\sigma\big((\mathbf{X},\mathbf{Y})_{\mathbf{M}}\big) := \big(\mathbf{X}^r(\mathbf{M})-\mathbf{Y}^l(\mathbf{M}),\mathbf{X}^r(\mathbf{M})+\mathbf{Y}^l(\mathbf{M})\big)\ .
\end{equation}
\begin{remark}
In the terminology of \cite{Sch}, the first component of $\sigma$ is the pure translation component of the Killing vector field, and its second component is the pure rotation component. The latter is extracted from the Killing vector field using the curl operator, as we will see presently.
\end{remark}
\begin{lemma}
\label{lemma:SigmaIsABundleIsomorphism}
$\sigma$ is a bundle isomorphism.
\end{lemma}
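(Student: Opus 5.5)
Since $E$ and $\TSL\oplus\TSL$ are both vector bundles over $\SL$ of rank $6$, and $\sigma$ covers the identity and is smooth (it is polynomial in the matrix entries of $\mathbf{M}$, $\mathbf{X}$ and $\mathbf{Y}$), it suffices to show two things. First, $\sigma$ takes values in $\TSL\oplus\TSL$. Second, for each fixed $\mathbf{M}\in\SL$, the linear map $\sigma_{\mathbf{M}}:\slalg\times\slalg\rightarrow\opT_{\mathbf{M}}\SL\oplus\opT_{\mathbf{M}}\SL$ is injective. Dimension count then gives fibrewise bijectivity. The inverse is then smooth by the inverse function theorem, or by writing it down explicitly, so $\sigma$ is a bundle isomorphism.

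For the first point, recall that $\opT_{\mathbf{M}}\SL=\{\mathbf{N}\ |\ \langle\mathbf{M},\mathbf{N}\rangle_\opmat=0\}$. Equivalently, $\mathbf{M}^{-1}\mathbf{N}$ (or $\mathbf{N}\mathbf{M}^{-1}$) is traceless, since this is the derivative of the determinant. For $\mathbf{X},\mathbf{Y}\in\slalg$ we have
\begin{equation*}
\mathbf{X}^r(\mathbf{M})\cdot\mathbf{M}^{-1}=\mathbf{X}\qquad\text{and}\qquad\mathbf{M}^{-1}\cdot\mathbf{Y}^l(\mathbf{M})=\mathbf{Y}\ ,
\end{equation*}
and both are traceless. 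Hence $\mathbf{X}\cdot\mathbf{M}$ and $\mathbf{M}\cdot\mathbf{Y}$ are tangent at $\mathbf{M}$, and so are their sum and difference.

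For injectivity, suppose $\sigma_{\mathbf{M}}(\mathbf{X},\mathbf{Y})=0$. Then $\mathbf{X}\mathbf{M}-\mathbf{M}\mathbf{Y}=0$ and $\mathbf{X}\mathbf{M}+\mathbf{M}\mathbf{Y}=0$. Adding the two equations gives $2\mathbf{X}\mathbf{M}=0$, and subtracting them gives $2\mathbf{M}\mathbf{Y}=0$. Since $\mathbf{M}$ is invertible, $\mathbf{X}=\mathbf{Y}=0$. Explicitly, the inverse is
\begin{equation*}
(\mathbf{U},\mathbf{V})_{\mathbf{M}}\mapsto\Big(\tfrac12(\mathbf{U}+\mathbf{V})\mathbf{M}^{-1},\ \tfrac12\mathbf{M}^{-1}(\mathbf{V}-\mathbf{U})\Big)\ .
\end{equation*}
We check that this lands in $\slalg\times\slalg$: tangency of $\mathbf{U}$ and $\mathbf{V}$ means that $\mathbf{U}\mathbf{M}^{-1}$ and $\mathbf{M}^{-1}\mathbf{V}$ are traceless, and the same then holds for $\mathbf{M}^{-1}\mathbf{U}=\mathbf{M}^{-1}(\mathbf{U}\mathbf{M}^{-1})\mathbf{M}$ and for $\mathbf{V}\mathbf{M}^{-1}$. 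Since this inverse is visibly smooth in $\mathbf{M}$, it completes the argument without appealing to the inverse function theorem.

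There is no real obstacle here. The only point needing care is the identification of the tangent space of $\SL$ with matrices $\mathbf{N}$ for which $\mathbf{M}^{-1}\mathbf{N}$ is traceless, which is precisely the condition $\langle\mathbf{M},\mathbf{N}\rangle_\opmat=0$ appearing in the definition of $\TSL$. This follows from polarizing $-\opDet$ at $\mathbf{M}$, using Jacobi's formula $d\opDet_{\mathbf{M}}(\mathbf{N})=\opDet(\mathbf{M})\,\opTr(\mathbf{M}^{-1}\mathbf{N})$.
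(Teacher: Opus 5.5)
Your proof is correct and follows the paper's argument: you add and subtract the two components to get $\mathbf{X}\cdot\mathbf{M}=\mathbf{M}\cdot\mathbf{Y}=0$, conclude $\mathbf{X}=\mathbf{Y}=0$ by invertibility of $\mathbf{M}$, and then use a rank count. Your additional checks, namely that $\sigma$ lands in $\TSL\oplus\TSL$ and the explicit smooth inverse, are details the paper leaves implicit.
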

\begin{proof}By the rank-nullity theorem, it suffices to show that $\sigma$ is everywhere injective. However, if $\sigma\big((\mathbf{X},\mathbf{Y})_{\mathbf{M}}\big)=0$ for some $\mathbf{M}\in\SL$ and $\mathbf{X},\mathbf{Y}\in\slalg$, then
\begin{equation*}
\mathbf{X}^r(\mathbf{M}) = \mathbf{Y}^l(\mathbf{M}) = 0\ ,
\end{equation*}
so that, by invertibility of $\mathbf{M}$,
\begin{equation*}
\mathbf{X} = \mathbf{Y} = 0\ ,
\end{equation*}
as desired.
\end{proof}

Let $D$ denote the standard flat derivative of $E$. We compare this to the Levi--Civita covariant derivative of $TSL\oplus TSL$.
\begin{lemma}
\label{lemma:LCCovariantDerivativeOfAdS}
For any vector field $\xi$ and for all $\mathbf{X}\in\slalg$,
\begin{equation}
\label{eqn:LCCovariantDerivativeOfAdS}
\nabla_\xi\mathbf{X}^l = \xi\times\mathbf{X}^l\qquad\text{and}\qquad
\nabla_\xi\mathbf{X}^r = -\xi\times\mathbf{X}^r\ .
\end{equation}
\end{lemma}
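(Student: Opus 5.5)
We compute directly in the ambient matrix space, viewing $\SL$ as the quadric $\{\|\mathbf{M}\|^2_\opmat=-1\}$ in $\Mat\cong\mathbb{R}^{2,2}$. For a pseudo-Riemannian hypersurface of a flat space, the Levi--Civita connection is the tangential projection of the flat derivative. The unit normal to the quadric at $\mathbf{M}$ is $\mathbf{M}$ itself, with $\|\mathbf{M}\|^2_\opmat=-1$. For a tangent field $Z$ this gives
\begin{equation*}
\nabla_\xi Z = D_\xi Z - \langle D_\xi Z,\mathbf{M}\rangle_\opmat\cdot\mathbf{M}\ ,
\end{equation*}
where the sign comes from the normal being timelike. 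The plan is to apply this formula to $Z=\mathbf{X}^l$ and $Z=\mathbf{X}^r$.

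\emph{Flat derivative and its projection.} The field $\mathbf{X}^l(\mathbf{M})=\mathbf{M}\cdot\mathbf{X}$ is linear in $\mathbf{M}$, so $D_\xi\mathbf{X}^l=\xi\cdot\mathbf{X}$, where $\xi=\xi(\mathbf{M})\in\opT_{\mathbf{M}}\SL$. Writing $\xi=\mathbf{M}\cdot\mathbf{B}$ with $\mathbf{B}=\mathbf{M}^{-1}\xi\in\slalg$, we get $D_\xi\mathbf{X}^l=\mathbf{M}\mathbf{B}\mathbf{X}$. The bilinear form is bi-invariant under $\SL\times\SL$, since it is the polarization of $-\opDet$. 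Hence
\begin{equation*}
\langle \mathbf{M}\mathbf{B}\mathbf{X},\mathbf{M}\rangle_\opmat=\langle \mathbf{B}\mathbf{X},\sigma_0\rangle_\opmat=\tfrac12\opTr(\mathbf{B}\mathbf{X})\ ,
\end{equation*}
up to the sign fixed by \eqref{eqn:defn:MatrixInnerProduct}. For traceless $\mathbf{B},\mathbf{X}$ we have the decomposition
\begin{equation*}
\mathbf{B}\mathbf{X}=\tfrac12[\mathbf{B},\mathbf{X}]+\tfrac12\opTr(\mathbf{B}\mathbf{X})\,\opId\ ,
\end{equation*}
which follows from the Pauli relations \eqref{eqn:BasicPauliRelationsI} and \eqref{eqn:BasicPauliRelationsII}. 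So removing the normal component leaves exactly $\tfrac12\mathbf{M}[\mathbf{B},\mathbf{X}]=\mathbf{M}(\mathbf{B}\times\mathbf{X})$, using $[x,y]=2x\times y$.

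\emph{Identification with the cross product.} It remains to interpret $\mathbf{M}(\mathbf{B}\times\mathbf{X})$ as $\xi\times\mathbf{X}^l$, that is, as the vector product in $\opT_{\mathbf{M}}\SL$. We take this cross product to be transported by left translation from $\slalg$ with its orientation. Left translation is an isometry, and with the chosen orientation conventions it is orientation-preserving, so it commutes with the vector product. Therefore
\begin{equation*}
\xi\times\mathbf{X}^l=\mathbf{M}(\mathbf{B}\times\mathbf{X})\ ,
\end{equation*}
which is the first formula.

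\emph{Right-invariant fields.} For $\mathbf{X}^r$, write $\xi=\mathbf{C}\mathbf{M}$ with $\mathbf{C}=\xi\mathbf{M}^{-1}$. Then $D_\xi\mathbf{X}^r=\mathbf{X}\mathbf{C}\mathbf{M}$, and its tangential part is $\tfrac12[\mathbf{X},\mathbf{C}]\mathbf{M}=-(\mathbf{C}\times\mathbf{X})\mathbf{M}$. Right translation is also an isometry. If it carries the same orientation as left translation, then $\xi\times\mathbf{X}^r=(\mathbf{C}\times\mathbf{X})\mathbf{M}$, and the second formula follows with the minus sign.

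\emph{Main obstacle.} The substantive point is consistency of orientations: left and right translations from $\slalg$ must induce the same orientation on $\opT_{\mathbf{M}}\SL$. The two translations differ by $\mathrm{Ad}(\mathbf{M})$, which lies in the identity component of $\opO(2,1)$ because $\SL$ is connected, so they agree. I would also double-check the sign of the normal correction and the trace identity against \eqref{eqn:defn:MatrixInnerProduct} at $\mathbf{M}=\opId$.
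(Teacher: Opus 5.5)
Your argument is correct, but it takes a different route from the paper.

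\textbf{The paper's route.} The paper quotes Milnor's formulas for a bi-invariant metric, $\nabla_{\mathbf{X}^l}\mathbf{Y}^l=\frac{1}{2}[\mathbf{X},\mathbf{Y}]^l$ and $\nabla_{\mathbf{X}^r}\mathbf{Y}^r=-\frac{1}{2}[\mathbf{X},\mathbf{Y}]^r$. It rewrites the brackets as vector products using $[x,y]=2x\times y$ together with the left/right invariance of $\times$. It then passes to an arbitrary $\xi$ by tensoriality in the lower slot.

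\textbf{Your route.} You derive those formulas extrinsically, from the Gauss formula for the quadric $\SL\subset\Mat$. Since $\mathbf{X}^l$ and $\mathbf{X}^r$ are restrictions of linear maps, their flat derivatives are $\xi\cdot\mathbf{X}$ and $\mathbf{X}\cdot\xi$. The identity $\mathbf{B}\mathbf{X}=\frac{1}{2}[\mathbf{B},\mathbf{X}]+\frac{1}{2}\opTr(\mathbf{B}\mathbf{X})\opId$ on $\slalg$ then splits these into tangential and normal parts.

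\textbf{What each buys.} Your argument is self-contained. The relative sign between the left and right cases comes out of the order of multiplication, so you never need the bracket convention for right-invariant fields. The paper's argument is shorter and applies to any Lie group with a bi-invariant metric.

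Both proofs rely on the same convention: left and right translations preserve the orientation that defines $\times$ on $\opT\SL$. The paper uses this tacitly; it is only stated in the proof of the next lemma. Your connectedness argument via $\mathrm{Ad}$ actually justifies it.

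\textbf{One slip to fix.} Since $\langle\mathbf{M},\mathbf{M}\rangle_\opmat=-1$, the tangential projection is $V\mapsto V+\langle V,\mathbf{M}\rangle_\opmat\mathbf{M}$, not $V-\langle V,\mathbf{M}\rangle_\opmat\mathbf{M}$. The latter is the formula for a spacelike unit normal. Also, $\langle\mathbf{B}\mathbf{X},\sigma_0\rangle_\opmat=-\frac{1}{2}\opTr(\mathbf{B}\mathbf{X})$, because $\mathbf{J}_0^2=-\opId$. These two sign errors cancel each other.

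In any case your conclusion does not depend on them. The decomposition $\mathbf{M}\mathbf{B}\mathbf{X}=\frac{1}{2}\mathbf{M}[\mathbf{B},\mathbf{X}]+\frac{1}{2}\opTr(\mathbf{B}\mathbf{X})\mathbf{M}$ already exhibits the tangential part directly.
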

\begin{proof}Indeed (see, for example, \cite{milnor1976curvatures}), the Levi--Civita covariant derivatives of left and right invariant vector fields satisfy
\begin{equation*}
\nabla_{\mathbf{X}^l}\mathbf{Y}^l = \frac{1}{2}[\mathbf{X},\mathbf{Y}]^l\qquad\text{and}\qquad
\nabla_{\mathbf{X}^r}\mathbf{Y}^r = -\frac{1}{2}[\mathbf{X},\mathbf{Y}]^r\ ,
\end{equation*}
so that, by \eqref{eqn:CommutatorIsCrossProduct},
\begin{equation*}
\nabla_{\mathbf{X}^l}\mathbf{Y}^l = \mathbf{X}^l\times\mathbf{Y}^l\qquad\text{and}\qquad
\nabla_{\mathbf{X}^r}\mathbf{Y}^r = -\mathbf{X}^r\times\mathbf{Y}^r\ .
\end{equation*}
The result follows by tensoriality of the covariant derivative with respect to the first component.
\end{proof}

Define the section
\begin{equation*}
\Omega\in\Gamma\big(\opT^*\SL\otimes\mathrm{End}(\opT\SL\oplus\opT\SL)\big)
\end{equation*}
by
\begin{equation}
\label{eqn:defn:ConnectionFormOfE}
\Omega_\xi(\mathbf{X},\mathbf{Y}) := \nabla_\xi(\mathbf{X},\mathbf{Y}) - (\sigma_*D)_\xi(\mathbf{X},\mathbf{Y})\ .
\end{equation}
\begin{lemma}
\label{lemma:FormulaForConnectionForm}
For every vector field $\xi$, and for every section $(\mathbf{X},\mathbf{Y})$ of $E$,
\begin{equation}
\label{eqn:FormulaForConnectionFormI}
(\sigma^*\Omega)_\xi(\mathbf{X},\mathbf{Y}) = \big(-\tau^r(\xi)\times\mathbf{X},\tau^l(\xi)\times\mathbf{Y}\big)\ .
\end{equation}
Equivalently, for all vector fields $\xi$, $\eta_1$, and $\eta_2$,
\begin{equation}
\label{eqn:FormulaForConnectionFormII}
\Omega_\xi(\eta_1,\eta_2) = (-\xi\times\eta_2,-\xi\times\eta_1)\ .
\end{equation}
\end{lemma}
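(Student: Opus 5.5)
The plan is a direct computation: apply the Leibniz rule to the two components of $\sigma(\mathbf{X},\mathbf{Y})$, use Lemma \ref{lemma:LCCovariantDerivativeOfAdS} for the invariant part, and then translate back through $\sigma$. Throughout, a section of $E$ is a pair of functions $\mathbf{X},\mathbf{Y}:\SL\rightarrow\slalg$. The flat derivative acts componentwise, so
\begin{equation*}
(\sigma_*D)_\xi\,\sigma(\mathbf{X},\mathbf{Y}) = \sigma\big(D_\xi\mathbf{X},D_\xi\mathbf{Y}\big) = \big((D_\xi\mathbf{X})^r-(D_\xi\mathbf{Y})^l,(D_\xi\mathbf{X})^r+(D_\xi\mathbf{Y})^l\big)\ .
\end{equation*}

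\emph{Step 1: Levi--Civita derivatives.} Write $\mathbf{X}=\sum_i f_i\sigma_i$ with $f_i$ scalar functions, so that $\mathbf{X}^r=\sum_i f_i\sigma_i^r$ is a combination of right invariant fields. The Leibniz rule together with Lemma \ref{lemma:LCCovariantDerivativeOfAdS} gives
\begin{equation*}
\nabla_\xi\mathbf{X}^r=(D_\xi\mathbf{X})^r-\xi\times\mathbf{X}^r\ ,\qquad\nabla_\xi\mathbf{Y}^l=(D_\xi\mathbf{Y})^l+\xi\times\mathbf{Y}^l\ .
\end{equation*}
Subtracting the flat part, the derivative terms cancel and we obtain
\begin{equation*}
\Omega_\xi\big(\sigma(\mathbf{X},\mathbf{Y})\big)=\big(-\xi\times\mathbf{X}^r-\xi\times\mathbf{Y}^l,\ -\xi\times\mathbf{X}^r+\xi\times\mathbf{Y}^l\big)\ .
\end{equation*}

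\emph{Step 2: the second formula.} Set $\eta_1:=\mathbf{X}^r-\mathbf{Y}^l$ and $\eta_2:=\mathbf{X}^r+\mathbf{Y}^l$. By bilinearity of the vector product, the display above reads $\Omega_\xi(\eta_1,\eta_2)=(-\xi\times\eta_2,-\xi\times\eta_1)$. This is \eqref{eqn:FormulaForConnectionFormII}, valid for all $\eta_1,\eta_2$ since $\sigma$ is a bundle isomorphism by Lemma \ref{lemma:SigmaIsABundleIsomorphism}.

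\emph{Step 3: the pulled-back formula.} To obtain \eqref{eqn:FormulaForConnectionFormI}, I will express $\xi\times\mathbf{X}^r$ and $\xi\times\mathbf{Y}^l$ as translates of vector products in $\slalg$. At $\mathbf{M}$, right multiplication by $\mathbf{M}^{-1}$ maps $\opT_{\mathbf{M}}\SL$ to $\slalg$. It sends $\xi$ to $\tau^r(\xi)$ and $\mathbf{X}^r(\mathbf{M})$ to $\mathbf{X}(\mathbf{M})$. This map is an isometry, being the restriction of the action $\hat{\alpha}$, and it is orientation preserving because $\SL$ is connected. It therefore commutes with vector products, so that
\begin{equation*}
\xi\times\mathbf{X}^r=\big(\tau^r(\xi)\times\mathbf{X}\big)^r\ ,\qquad\text{and likewise}\qquad\xi\times\mathbf{Y}^l=\big(\tau^l(\xi)\times\mathbf{Y}\big)^l\ .
\end{equation*}
Substituting into Step 1 gives
\begin{equation*}
\Omega_\xi\,\sigma(\mathbf{X},\mathbf{Y})=\sigma\big(-\tau^r(\xi)\times\mathbf{X},\ \tau^l(\xi)\times\mathbf{Y}\big)\ ,
\end{equation*}
which is \eqref{eqn:FormulaForConnectionFormI}.

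The only delicate point is this last identification of vector products under left and right translation. It requires that the translations preserve both the metric and the orientation used to define $\times$. The orientation needs care in mixed signature, but it follows by connectedness of $\SL$ together with the chosen orientation of $\slalg$ at the identity.
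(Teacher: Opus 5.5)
Your proposal is correct and follows essentially the same computation as the paper: you apply Lemma \ref{lemma:LCCovariantDerivativeOfAdS} to the components $\mathbf{X}^r\mp\mathbf{Y}^l$ of $\sigma(\mathbf{X},\mathbf{Y})$, then use invariance of the vector product under orientation-preserving translations to rewrite $\xi\times\mathbf{X}^r$ and $\xi\times\mathbf{Y}^l$ as translates and pull back through $\sigma$. The differences are cosmetic. The paper restricts to constant sections, so that $D_\xi(\mathbf{X},\mathbf{Y})=0$ and tensoriality covers the rest, whereas you use the Leibniz rule for general sections; and you derive \eqref{eqn:FormulaForConnectionFormII} before \eqref{eqn:FormulaForConnectionFormI} rather than after.
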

\begin{proof}Indeed, choose $\mathbf{X},\mathbf{Y}\in\slalg$, and identify the pair $(\mathbf{X},\mathbf{Y})$ with a constant section of $E$. In particular, for any $\xi$,
\begin{equation*}
D_\xi(\mathbf{X},\mathbf{Y}) = 0\ ,
\end{equation*}
so that
\begin{equation*}
(\sigma^*\Omega)_\xi(\mathbf{X},\mathbf{Y}) = (\sigma^*\nabla)_\xi(\mathbf{X},\mathbf{Y})\ .
\end{equation*}
By \eqref{eqn:LCCovariantDerivativeOfAdS}, for all $\xi$,
\begin{equation*}
\begin{aligned}
\nabla_\xi\sigma_*(\mathbf{X},\mathbf{Y})
&=\nabla_\xi\big(\mathbf{X}^r-\mathbf{Y}^l,\mathbf{X}^r+\mathbf{Y}^l\big)\\
&=\big(-\xi\times\mathbf{X}^r-\xi\times\mathbf{Y}^l,-\xi\times\mathbf{X}^r+\xi\times\mathbf{Y}^r\big)
\end{aligned}
\end{equation*}
Note now that, since the vector product is invariant under isometries of $\SL$, for all $\mathbf{U},\mathbf{V}\in\slalg$,
\begin{equation*}
(\mathbf{U}\times\mathbf{V})^r = \mathbf{U}^r\times\mathbf{V}^r\qquad\text{and}\qquad
(\mathbf{U}\times\mathbf{V})^l = \mathbf{U}^l\times\mathbf{V}^l\ .
\end{equation*}
Hence,
\begin{equation*}
\begin{aligned}
\nabla_\xi\sigma_*(\mathbf{X},\mathbf{Y})
&=\big(-(\tau^r(\xi)\times\mathbf{X})^r-(\tau^l(\xi)\times\mathbf{Y})^l,-(\tau^r(\xi)\times\mathbf{X})^r+(\tau^l(\xi)\times\mathbf{Y})^l\big)\\
&=\sigma_*\big(-\tau^r(\xi)\times\mathbf{X},\tau^l(\xi)\times\mathbf{Y}\big)\ ,
\end{aligned}
\end{equation*}
so that
\begin{equation*}
(\sigma^*\Omega)_\xi(\mathbf{X},\mathbf{Y}) = \big(-\tau^r(\xi)\times\mathbf{X},\tau^l(\xi)\times\mathbf{Y}\big)\ .
\end{equation*}
The first assertion follows, and the second assertion follows upon setting $\eta_1:=\mathbf{X}^r-\mathbf{Y}^l$ and $\eta_2:=\mathbf{X}^r+\mathbf{Y}^l$.
\end{proof}

We now introduce the curl operator which may be used to characterize Killing vector fields over $\AdS$. We first recall that, for any semi-riemannian $3$-manifold $M$ with metric $g$ and Levi--Civita covariant derivative $\nabla$, its \emph{curl operator} is defined by
\begin{equation}
\label{eqn:defn:CurlOperatorI}
\nabla\times\xi := g^{ij}\partial_i\times\nabla_{\partial_j}\xi\ ,
\end{equation}
where $(\partial_i)_{1\leq i\leq 3}$ is any local frame, and Einstein's summation convention is assumed. In particular, for any orthonormal frame $(e_i)_{1\leq i\leq 3}$,
\begin{equation}
\label{eqn:defn:CurlOperatorII}
\nabla\times\xi := \sum_i g(e_i,e_i)e_i\times\nabla_{e_i}\xi\ ,
\end{equation}
so that, in contrast to the Riemannian case, the sign of $e_i$ contributes non-trivially to the curl operator.
\begin{lemma}
\label{lemma:CharacterizationOfKillingFields}
A vector field $\xi$ is a Killing vector field if and only if $\sigma^*(\xi,-(1/2)\nabla\times\xi)$ is a flat section of $E$.
\end{lemma}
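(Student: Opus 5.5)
The plan is to reduce both directions to a single identity: the curl of right- and left-invariant fields. By \eqref{eqn:defn:BundleMorphism}, saying that $\sigma^*(\xi,-\tfrac12\nabla\times\xi)$ equals a section $(\mathbf{X},\mathbf{Y})$ of $E$ means exactly
\begin{equation*}
\xi=\mathbf{X}^r-\mathbf{Y}^l\qquad\text{and}\qquad -\tfrac12\nabla\times\xi=\mathbf{X}^r+\mathbf{Y}^l\ ,
\end{equation*}
where $\mathbf{X},\mathbf{Y}$ are functions with values in $\slalg$. Such a section is flat for $D$ precisely when $\mathbf{X}$ and $\mathbf{Y}$ are constant (on the connected manifold $\SL$). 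The converse direction is then immediate. If the section is flat, the first component gives $\xi=\mathbf{X}^r-\mathbf{Y}^l$ with $\mathbf{X},\mathbf{Y}$ constant, and this is a Killing vector field by the description of Killing fields over $\SL$ given just before \eqref{eqn:defn:TrivialBundleE}.

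For the forward direction, take $\xi$ Killing, so that $\xi=\mathbf{X}^r-\mathbf{Y}^l$ for some constant $\mathbf{X},\mathbf{Y}\in\slalg$. It then suffices to prove the key identity
\begin{equation*}
\nabla\times\mathbf{X}^r=-2\mathbf{X}^r\qquad\text{and}\qquad\nabla\times\mathbf{Y}^l=2\mathbf{Y}^l\ .
\end{equation*}
These give $-\tfrac12\nabla\times\xi=\mathbf{X}^r+\mathbf{Y}^l$. Hence $\sigma^*(\xi,-\tfrac12\nabla\times\xi)=(\mathbf{X},\mathbf{Y})$ is constant, and therefore flat. (Injectivity of $\sigma$, Lemma \ref{lemma:SigmaIsABundleIsomorphism}, ensures this pullback is well defined.)

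To prove the identity, I would use Lemma \ref{lemma:LCCovariantDerivativeOfAdS}, which gives $\nabla_{e_i}\mathbf{X}^r=-e_i\times\mathbf{X}^r$ and $\nabla_{e_i}\mathbf{Y}^l=e_i\times\mathbf{Y}^l$. Substituting into \eqref{eqn:defn:CurlOperatorII} yields
\begin{equation*}
\nabla\times\mathbf{X}^r=-\sum_i g(e_i,e_i)\,e_i\times(e_i\times\mathbf{X}^r)\ ,
\end{equation*}
and the same expression with the opposite sign for $\mathbf{Y}^l$. It therefore remains to show that, in a $3$-dimensional Lorentzian space with the given orientation,
\begin{equation*}
\sum_i g(e_i,e_i)\,e_i\times(e_i\times V)=2V\ .
\end{equation*}
This can be checked pointwise, and by left invariance it suffices to check it at the identity on the orthonormal basis $(\sigma_1,\sigma_2,\sigma_3)$ of $\slalg$.

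\textbf{Main obstacle: sign bookkeeping.} The step needing care is the signs in mixed signature. In Lorentzian signature the double cross product reads $a\times(b\times c)=\epsilon\big(\langle a,b\rangle c-\langle a,c\rangle b\big)$ for a global sign $\epsilon$ fixed by the paper's orientation convention. The Riemannian trace $\sum_i e_i\times(e_i\times V)=-2V$ may thus acquire an overall sign. I would fix $\epsilon$ directly from the Pauli relations, using $[x,y]=2x\times y$ together with \eqref{eqn:BasicPauliRelationsII}. For instance, $\sigma_1\times(\sigma_1\times\sigma_2)=\tfrac14[\sigma_1,[\sigma_1,\sigma_2]]$ is computed explicitly. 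I would then sum with the weights $g(\sigma_1,\sigma_1)=-1$ and $g(\sigma_2,\sigma_2)=g(\sigma_3,\sigma_3)=1$. Should the resulting constant turn out to be $-2$ rather than $2$, the signs in the identity above flip accordingly, and the normalization $-\tfrac12$ in the statement is precisely the one compatible with $\sigma$. The structure of the argument is unchanged either way.
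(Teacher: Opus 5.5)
Your proposal is correct and is essentially the paper's proof. Both reduce the lemma, via Lemma~\ref{lemma:LCCovariantDerivativeOfAdS}, to $\nabla\times\mathbf{X}^r=-2\mathbf{X}^r$ and $\nabla\times\mathbf{Y}^l=2\mathbf{Y}^l$. The paper settles your sign question with $e_i\times(e_i\times V)=g(e_i,e_i)\,\pi_i(V)$, where $\pi_i$ is the orthogonal projection onto $e_i^\perp$. This gives $\sum_i g(e_i,e_i)\,e_i\times(e_i\times V)=\sum_i\pi_i(V)=2V$, exactly as you need, and your Pauli check agrees: $-\sigma_1\times(\sigma_1\times\sigma_2)+\sigma_3\times(\sigma_3\times\sigma_2)=2\sigma_2$. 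Your direct treatment of the converse is a cleaner substitute for the paper's dimension count.

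You should, however, drop the fallback remark, because the sign is decisive rather than cosmetic. A constant of $-2$ would give $\sigma^*(\xi,-\tfrac12\nabla\times\xi)=(-\mathbf{M}\mathbf{Y}\mathbf{M}^{-1},-\mathbf{M}^{-1}\mathbf{X}\mathbf{M})$ at $\mathbf{M}$, which is not flat, so the lemma as stated would be false. Note also that this sign depends only on the signature, not on the orientation convention, since the double cross product is unchanged when the orientation is reversed.
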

\begin{proof} Since the spaces of Killing vector fields and of flat sections of $E$ are both $6$-dimen\-sional, it suffices to show that, for all $\mathbf{X},\mathbf{Y}\in\slalg$,
\begin{equation*}
-\frac{1}{2}\nabla\times(\mathbf{X}^r-\mathbf{Y}^l) = \mathbf{X}^r + \mathbf{Y}^l\ .
\end{equation*}
To this end, let $(e_i)_{1\leq i\leq 3}$ be an orthonormal frame of $\SL$. Note that, for each $i$, and for every tangent vector $\xi$,
\begin{equation*}
e_i\times(e_i\times\xi) = \langle e_i,e_i\rangle\pi_i(\xi)\ ,
\end{equation*}
where $\pi_i$ denotes the orthogonal projection onto $\langle e_i\rangle^\perp$. Thus, by \eqref{eqn:LCCovariantDerivativeOfAdS}, for all $\mathbf{X}\in\slalg$,
\begin{equation*}
\begin{aligned}
\nabla\times\mathbf{X}^r
&=\sum_i\langle e_i,e_i\rangle e_i\times\big(\nabla_{e_i}\mathbf{X}^r\big)\\
&=\sum_i-\langle e_i,e_i\rangle e_i\times\big(e_i\times\mathbf{X}^r\big)\\
&=\sum_i-\langle e_i,e_i\rangle^2\pi_i\big(\mathbf{X}^r\big)\\
&=-2\mathbf{X}^r\ .
\end{aligned}
\end{equation*}
Likewise, for all $\mathbf{Y}\in\slalg$,
\begin{equation*}
\nabla\times\mathbf{Y}^l = 2\mathbf{Y}^l\ .
\end{equation*}
Thus,
\begin{equation*}
\nabla\times\big(\mathbf{X}^r-\mathbf{Y}^l) = -2\big(\mathbf{X}^r + \mathbf{Y}^l\big)\ ,
\end{equation*}
and the result follows.
\end{proof}

As in Section \ref{ss:EinsteinSpace}, a representation $\rho:=(\rho^r,\rho^l):\Pi\rightarrow\SL\times\SL$ is quasi-Fuchsian whenever each of its two components is Fuchsian. Likewise, every $\rho$-cocycle $\alpha$ has the form $\alpha=(\alpha^r,\alpha^l)$, where $\alpha^r$ (resp. $\alpha^l$) is a $\rho^r$ (resp. $\rho^l$) cocycle. Given such a representation and cocycle, we see that a $\rho$-\emph{automorphic vector field} over a $\rho$-invariant subset $M\subset \SL$ is a vector field $\xi$ such that, for all $\mathbf{M}\in M$, and for all $\gamma\in\Pi$,
\begin{equation}
\label{eqn:defn:AutomorphicVectorField}
\xi\big(\rho(\gamma)\cdot\mathbf{M}\big) = D\rho(\gamma)(\mathbf{M})\cdot\xi(\mathbf{M}) + \alpha^r(\gamma)^r\big(\rho(\gamma)\cdot\mathbf{M}\big) - \alpha^l(\gamma)^l\big(\rho(\gamma)\cdot\mathbf{M}\big)\ .
\end{equation}

Given an automorphic vector field $\xi$, we denote
\begin{equation*}
\eta := -\frac{1}{2}\nabla\times\xi\ ,
\end{equation*}
and we observe that $\sigma^*(\xi,\eta)$ may be viewed as an automorphic $\slalg\times\slalg$-valued function. We define the $\slalg\times\slalg$-valued $1$-form $\omega^\xi$ by
\begin{equation}
\label{eqn:defn:LieAlgebraValuedOneForm}
\omega^\xi := d\big(\sigma^*(\xi,\eta)\big)\ .
\end{equation}
\begin{lemma}
For any automorphic vector field $\xi$, the $1$-form $\omega^\xi$ is $\rho$-equivariant.
\end{lemma}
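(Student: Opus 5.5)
The plan is to show that $\sigma^*(\xi,\eta)$ is itself an automorphic $\slalg\times\slalg$-valued function, in the sense that for all $\gamma\in\Pi$ and all $\mathbf{M}$,
\begin{equation*}
\sigma^*(\xi,\eta)\big(\rho(\gamma)\cdot\mathbf{M}\big) = \mathrm{Ad}(\rho(\gamma))\cdot\sigma^*(\xi,\eta)(\mathbf{M}) + \alpha(\gamma)\ ,
\end{equation*}
where $\mathrm{Ad}(\rho(\gamma))=(\alpha(\rho^r(\gamma)),\alpha(\rho^l(\gamma)))$ is the adjoint action on each factor, with appropriate sign conventions. Differentiating this identity kills the constant term $\alpha(\gamma)$. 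It yields $(\rho(\gamma)^*\omega^\xi) = \mathrm{Ad}(\rho(\gamma))\cdot\omega^\xi$, which is the $\rho$-equivariance of $\omega^\xi$.

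To establish the displayed identity, I would split $\xi$ via \eqref{eqn:defn:AutomorphicVectorField} as $\xi\circ\rho(\gamma) = \rho(\gamma)_*\xi + \kappa_\gamma$, where $\kappa_\gamma:=\alpha^r(\gamma)^r-\alpha^l(\gamma)^l$ is a Killing field. I would then treat the two pieces separately, using linearity of the curl and of $\sigma^{-1}$.

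First, the pushforward piece. The isometry $\rho(\gamma)$ preserves the orientation and the Levi-Civita connection, so it preserves the vector product and hence the curl. Therefore $-\tfrac12\nabla\times(\rho(\gamma)_*\xi)=\rho(\gamma)_*\eta$. It remains to check how $\sigma$ transforms under $\hat\alpha(\mathbf{X},\mathbf{Y})$. The differential of $\mathbf{M}\mapsto\mathbf{X}\mathbf{M}\mathbf{Y}^{-1}$ sends $\mathbf{A}\cdot\mathbf{M}$ to $(\mathbf{X}\mathbf{A}\mathbf{X}^{-1})\cdot(\mathbf{X}\mathbf{M}\mathbf{Y}^{-1})$, and it sends $\mathbf{M}\cdot\mathbf{B}$ to $(\mathbf{X}\mathbf{M}\mathbf{Y}^{-1})\cdot(\mathbf{Y}\mathbf{B}\mathbf{Y}^{-1})$. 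This is exactly the intertwining computation of the lemma on $\tau$. Hence $\sigma^{-1}$ of the pushed-forward pair equals $(\alpha(\mathbf{X})\times\alpha(\mathbf{Y}))$ applied to $\sigma^{-1}$ of the original pair.

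Second, the Killing piece. By Lemma \ref{lemma:CharacterizationOfKillingFields}, $\sigma^*(\kappa_\gamma,-\tfrac12\nabla\times\kappa_\gamma)$ is the constant section $(\alpha^r(\gamma),\alpha^l(\gamma))$.

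Adding the two pieces gives the displayed identity. Taking $d$ then gives the claim, since $d$ commutes with pullback and the adjoint factor is constant.

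The main obstacle is bookkeeping rather than any conceptual difficulty. One has to check that orientation is preserved, since $\Pi$ acts through $\SL\times\SL$, which is connected, so the curl is preserved with no sign change. One also has to fix the precise meaning of ``$\rho$-equivariant'' for a Lie-algebra-valued $1$-form, namely $\gamma^*\omega=\mathrm{Ad}\,\rho(\gamma)\circ\omega$. Finally, the argument needs enough regularity of $\xi$ for $\nabla\times\xi$ and $d$ to make sense. I would assume $\xi$ is smooth on the interior, as implicitly intended.
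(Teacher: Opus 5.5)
Your proposal is correct and takes essentially the same route as the paper. The paper's one-line proof also uses Lemma \ref{lemma:CharacterizationOfKillingFields} to show that the Killing correction term in \eqref{eqn:defn:AutomorphicVectorField} only adds the constant $(\alpha^r(\gamma),\alpha^l(\gamma))$ to $\sigma^*(\xi,\eta)$, which $d$ then kills; the paper leaves implicit the naturality of $\sigma$ and of the curl under the orientation-preserving isometries $\hat{\alpha}(\mathbf{X},\mathbf{Y})$, which you check explicitly.
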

\begin{proof} Indeed, by Lemma \ref{lemma:CharacterizationOfKillingFields}, for any Killing field $\tilde{\xi}$, the $\slalg\times\slalg$-valued function $\sigma^*(\tilde{\xi},-(1/2)\nabla\times\tilde{\xi})$ is constant, and the result now follows by \eqref{eqn:defn:AutomorphicVectorField}.
\end{proof} 
\section{Timelike geodesics in anti-de Sitter space}
\label{s:TimelikeGeodesics}
A number of estimates of interest to us will be obtained using the geometry of timelike geodesics in anti-de Sitter space, which we now study. Throughout this section, we will work with the matrix model described in Section \ref{ss:TheMatrixModel}.

\subsection{Elementary properties}
Let $\zeta\in\geodesics$ be a future oriented timelike geodesic in $\SL$. Recall that $\zeta$ is the intersection of $\SL$ with a plane $P:=P_\zeta$ of signature $(0,2)$, and that $\zeta$ identifies with the unit circle in this plane. Let $\pi_\zeta:\Mat\rightarrow P$ and $\pi^\perp_\zeta:\Mat\rightarrow P^\perp$ denote the orthogonal projections, and note that $\pi_\zeta$ \emph{increases} the length of any timelike vector.

Let $\tilde{\zeta}\in\geodesics$ be another future oriented timelike geodesic with plane $\tilde{P}:=P_{\tilde{\zeta}}$. Since $\pi_\zeta$ increases length, $\pi_\zeta(\tilde{\zeta})$ is an ellipse in $P$ with minor and major axes both of radius at least $1$. The radius of the minor axis is equal to $1$ if and only if $\tilde{\zeta}$ meets $\zeta$ at some point. When this holds, and when $\zeta$ and $\tilde{\zeta}$ do not coincide, the intersection consists of exactly two points in $\SL$, corresponding to exactly one point in the projectivization. This is illustrated in Figure \ref{fig:TwoLightlikeGeodesics}.

\begin{figure}[h!]
\includegraphics[width=8cm]{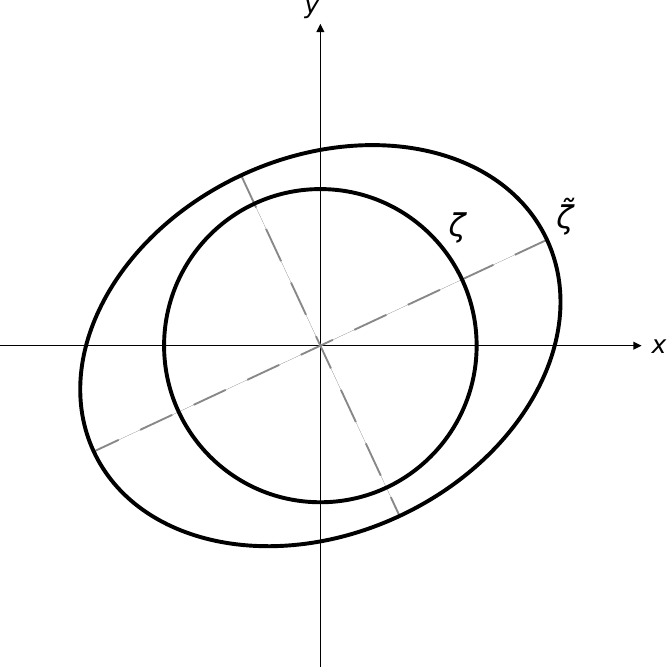}
\caption{The timelike geodesic $\zeta$ is the unit circle in the plane $P$. The orthogonal projection onto this plane maps the second timelike geodesic $\tilde{\zeta}$ to an ellipse, also centred on the origin, with minor and major axes of radius at least $1$.}
\label{fig:TwoLightlikeGeodesics}
\end{figure}

\begin{lemma}
\label{lemma:GeometryOfTimelikeGeodesics}
Let $\zeta$ and $\tilde{\zeta}$ be distinct timelike geodesics in $\SL$.
\begin{itemize}
\item[(1)] Two points $\mathbf{M}\in\zeta$ and $\tilde{\mathbf{M}}\in\tilde{\zeta}$ are in spacelike relation if and only if
\begin{equation}
\left|\langle\mathbf{M},\pi_\zeta(\tilde{\mathbf{M}})\rangle_\opmat\right| > 1\ .
\end{equation}
Furthermore, when this holds, the spacelike distance between these points satisfies
\begin{equation}
\label{eqn:SpacelikeDistanceI}
\cosh\big(d(\mathbf{M},\tilde{\mathbf{M}})\big) = \left|\langle\mathbf{M},\pi_\zeta(\tilde{\mathbf{M}})\rangle_\opmat\right|\ .
\end{equation}
\item[(2)] For all $\tilde{\mathbf{M}}\in\tilde{\zeta}$, the radial projection of $\pi_\zeta(\tilde{\mathbf{M}})$ is, up to sign, the unique point of $\zeta$ maximizing spacelike distance to $\tilde{\mathbf{M}}$. Furthermore, this distance satisfies
\begin{equation}
\label{eqn:SpacelikeDistanceII}
\cosh\big(d(\zeta,\tilde{\mathbf{M}})\big) = \|\pi_\zeta(\tilde{\mathbf{M}})\|_\opmat\ .
\end{equation}
\item[(3)] If the spacelike distance $d_\zeta$ to $\zeta$ is non-constant over $\tilde{\zeta}$, then there exists, up to sign, a unique point $\tilde{\mathbf{M}}$ of $\tilde{\zeta}$ maximizing spacelike distance to $\zeta$. Furthermore, $\pi_\zeta(\tilde{\mathbf{M}})$ is an end-point of the major axis of $\pi_\zeta(\tilde{\zeta})$.
\end{itemize}
\end{lemma}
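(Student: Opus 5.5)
Everything here reduces to linear algebra in $\Mat\cong\R^{2,2}$, using the decomposition $\Mat=P\oplus P^\perp$, where $P$ has signature $(0,2)$ and $P^\perp$ has signature $(2,0)$. Write $\tilde{\mathbf{M}}=\pi_\zeta(\tilde{\mathbf{M}})+\pi^\perp_\zeta(\tilde{\mathbf{M}})$. Since $\mathbf{M}\in P$, we have $\langle\mathbf{M},\tilde{\mathbf{M}}\rangle_\opmat=\langle\mathbf{M},\pi_\zeta(\tilde{\mathbf{M}})\rangle_\opmat$. The first step is therefore the standard classification of pairs of points on the quadric $\|\mathbf{x}\|^2_\opmat=-1$ in $\R^{2,2}$. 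The segment $\mathbf{M}+t(\tilde{\mathbf{M}}-\mathbf{M})$, or equivalently the plane spanned by $\mathbf{M}$ and $\tilde{\mathbf{M}}$, has Gram determinant $1-\langle\mathbf{M},\tilde{\mathbf{M}}\rangle^2$. The plane thus has signature $(1,1)$, that is, the points are joined by a spacelike geodesic, precisely when $|\langle\mathbf{M},\tilde{\mathbf{M}}\rangle|>1$. In that case the geodesic is $\cosh(s)\mathbf{M}+\sinh(s)\mathbf{V}$ with $\mathbf{V}$ a unit spacelike vector, and one reads off $\cosh d=|\langle\mathbf{M},\tilde{\mathbf{M}}\rangle|$. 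This gives (1); the absolute value accounts for the sign ambiguity of the lift to $\SL$.

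For (2), fix $\tilde{\mathbf{M}}$ and let $\mathbf{M}$ range over $\zeta$, the unit circle in the negative definite plane $P$. Writing $\mathbf{w}:=\pi_\zeta(\tilde{\mathbf{M}})$, the quantity $|\langle\mathbf{M},\mathbf{w}\rangle|$ is, up to the overall sign of the form, a Euclidean inner product on $P$ with a unit vector. By Cauchy--Schwarz it is maximized exactly when $\mathbf{M}=\pm\mathbf{w}/\|\mathbf{w}\|_\opmat$, where $\|\mathbf{w}\|_\opmat:=\sqrt{-\langle\mathbf{w},\mathbf{w}\rangle}$, and the maximum value is $\|\mathbf{w}\|_\opmat$. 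Note that $\|\mathbf{w}\|_\opmat\ge1$, since $-1=\|\tilde{\mathbf{M}}\|^2=\|\mathbf{w}\|^2+\|\pi^\perp\tilde{\mathbf{M}}\|^2$ with the last term nonnegative. When $\|\mathbf{w}\|_\opmat>1$, part (1) converts this maximum into the maximal spacelike distance, giving \eqref{eqn:SpacelikeDistanceII}, and the maximizer is unique up to sign. When $\|\mathbf{w}\|_\opmat=1$, we have $\pi^\perp\tilde{\mathbf{M}}=0$, so $\tilde{\mathbf{M}}\in\zeta$ and the distance is $0$. The formula then still holds with the convention $d=0$, and I would remark on this case separately.

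For (3), $\pi_\zeta$ restricted to the plane $\tilde P$ is linear and injective, because it increases timelike lengths, as noted before the lemma. Hence $\pi_\zeta(\tilde\zeta)$ is a centred ellipse $E$ in $P$. By (2), $\cosh d_\zeta(\tilde{\mathbf{M}})=\|\pi_\zeta(\tilde{\mathbf{M}})\|$, so maximizing $d_\zeta$ over $\tilde\zeta$ amounts to maximizing the Euclidean norm over $E$. If $d_\zeta$ is non-constant, then $E$ is not a circle. The norm is then maximized exactly at the two endpoints of the major axis, and these correspond to a unique pair $\pm\tilde{\mathbf{M}}$ by injectivity of the projection.

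The only delicate point is bookkeeping: the signs arising from the negative definiteness of $P$, the $\pm$ ambiguity in $\SL$, and the degenerate case $\|\mathbf{w}\|_\opmat=1$. I expect no real obstacle beyond confirming that "spacelike relation" in (1) matches the signature criterion of the Gram matrix.
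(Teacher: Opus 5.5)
Your proposal is correct and follows essentially the same route as the paper: the Gram-determinant signature criterion for (1), maximization of $|\langle\mathbf{M},\pi_\zeta(\tilde{\mathbf{M}})\rangle_\opmat|$ over the unit circle $\zeta$ for (2), and maximization of the norm over the ellipse $\pi_\zeta(\tilde{\zeta})$ for (3). You also make explicit several details the paper leaves implicit: Cauchy--Schwarz in the negative definite plane $P$, the bound $\|\pi_\zeta(\tilde{\mathbf{M}})\|_\opmat\geq 1$, injectivity of $\pi_\zeta$ on $\tilde P$, and the degenerate case $\tilde{\mathbf{M}}\in\zeta$.
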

\begin{proof}To prove $(1)$, note that $\mathbf{M}$ and $\tilde{\mathbf{M}}$ are in spacelike relation if and only if the plane they span has signature $(1,1)$. This in turn holds if and only if
\begin{equation*}
\left|\begin{matrix}\langle\mathbf{M},\mathbf{M}\rangle_\opmat&\langle\mathbf{M},\tilde{\mathbf{M}}\rangle_\opmat\\
\langle\mathbf{M},\tilde{\mathbf{M}}\rangle_\opmat&\langle\tilde{\mathbf{M}},\tilde{\mathbf{M}}\rangle_\opmat\end{matrix}\right| < 0\ .
\end{equation*}
Calculating this determinant, and bearing in mind that
\begin{equation*}
\|\mathbf{M}\|_\opmat^2 = \|\tilde{\mathbf{M}}\|_\opmat^2 = -1\ ,
\end{equation*}
we find that this holds if and only if
\begin{equation*}
\left|\langle\mathbf{M},\pi_\zeta(\tilde{\mathbf{M}})\rangle_\opmat\right| = \left|\langle\mathbf{M},\tilde{\mathbf{M}}\rangle_\opmat\right| > 1\ ,
\end{equation*}
as asserted. In addition, when this holds, the spacelike geodesic in $\AdS$ from $\mathbf{M}$ to $\tilde{\mathbf{M}}$ lies in the span of these two vectors, and \eqref{eqn:SpacelikeDistanceI} follows by classical hyperbolic geometry. This proves $(1)$.

By \eqref{eqn:SpacelikeDistanceI}, the spacelike distance to any fixed $\tilde{\mathbf{M}}$ is maximized whenever the absolute value of the inner product
\begin{equation*}
\langle\mathbf{M},\tilde{\mathbf{M}}\rangle_\opmat = \langle\mathbf{M},\pi_\zeta(\tilde{\mathbf{M}})\rangle_\opmat
\end{equation*}
is maximized. This proves $(2)$. The final assertion follows upon maximizing \eqref{eqn:SpacelikeDistanceII}, and this completes the proof.
\end{proof}
Projectivization this lemma yields the following result.
\begin{corollary}
Let $\zeta,\tilde{\zeta}\in\geodesics$ be distinct oriented timelike geodesics in $\AdS$ and let $d_\zeta$ denote the spacelike distance to $\zeta$. If $d_\zeta$ is non-constant over $\tilde{\zeta}$, then there exists a unique spacelike geodesic $\eta$ joining their furthest points. Furthermore, $\eta$ meets both $\zeta$ and $\tilde{\zeta}$ orthogonally at these points.
\end{corollary}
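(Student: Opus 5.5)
The plan is to lift everything to $\SL$, apply Lemma \ref{lemma:GeometryOfTimelikeGeodesics}, and then projectivize. Fix lifts $\zeta$ and $\tilde{\zeta}$ in $\SL$; each is the unit circle of a negative definite plane, $P$ and $\tilde P$ respectively. The spacelike distance from a point $\tilde{\mathbf{M}}\in\tilde\zeta$ to $\zeta$ is given by \eqref{eqn:SpacelikeDistanceII} as $\cosh d_\zeta(\tilde{\mathbf{M}}) = \|\pi_\zeta(\tilde{\mathbf{M}})\|_\opmat$. As $\tilde{\mathbf{M}}$ runs over $\tilde\zeta$, its projection $\pi_\zeta(\tilde{\mathbf{M}})$ traces the ellipse $\pi_\zeta(\tilde\zeta)$.

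\emph{Step 1: existence and uniqueness of the furthest points.} Suppose $d_\zeta$ is non-constant on $\tilde\zeta$. Then the ellipse is not a circle, so its major axis is well defined and meets the ellipse in exactly two antipodal points $\pm\pi_\zeta(\tilde{\mathbf{M}})$. Part (3) of Lemma \ref{lemma:GeometryOfTimelikeGeodesics} then gives a point $\tilde{\mathbf{M}}\in\tilde\zeta$, unique up to sign, maximizing the distance to $\zeta$. Part (2) gives the point $\mathbf{M}\in\zeta$, unique up to sign, that realizes this distance, namely the radial projection of $\pi_\zeta(\tilde{\mathbf{M}})$. Signs are irrelevant in $\PSL$, so the pair of furthest points is unique in $\AdS$. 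The maximal distance is positive because the major radius exceeds the minor radius, which is at least $1$. Hence $|\langle \mathbf{M},\tilde{\mathbf{M}}\rangle|>1$, and by part (1) the points are in spacelike relation. The geodesic $\eta$ is then $\mathrm{span}(\mathbf{M},\tilde{\mathbf{M}})\cap\SL$, a plane of signature $(1,1)$. Its branch through $\mathbf{M}$ and $\tilde{\mathbf{M}}$ is unique, since two points in spacelike relation are joined by a unique geodesic in $\AdS$, once again after quotienting by $\pm1$.

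\emph{Step 2: orthogonality.} Orthogonality at $\mathbf{M}$ follows from first-order criticality. The function $\mathbf{N}\mapsto \langle \mathbf{N},\tilde{\mathbf{M}}\rangle$ on the circle $\zeta$ is critical at $\mathbf{M}$, so $\langle \mathbf{T},\tilde{\mathbf{M}}\rangle=0$, where $\mathbf{T}$ is the unit tangent to $\zeta$ at $\mathbf{M}$. We also have $\langle \mathbf{T},\mathbf{M}\rangle=0$. The tangent to $\eta$ at $\mathbf{M}$ is the component of $\tilde{\mathbf{M}}$ orthogonal to $\mathbf{M}$, that is $\tilde{\mathbf{M}}+\langle\mathbf{M},\tilde{\mathbf{M}}\rangle\mathbf{M}$, and it is therefore orthogonal to $\mathbf{T}$.

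Orthogonality at $\tilde{\mathbf{M}}$ is symmetric. Let $\tilde{\mathbf{T}}$ be the tangent to $\tilde\zeta$ there. Because $\tilde{\mathbf{M}}$ maximizes $\|\pi_\zeta\|^2$ along $\tilde\zeta$, we get $\langle \pi_\zeta\tilde{\mathbf{T}},\pi_\zeta\tilde{\mathbf{M}}\rangle=0$. Since $\mathbf{M}$ is a scalar multiple of $\pi_\zeta\tilde{\mathbf{M}}$ and $\pi_\zeta$ is self-adjoint, this gives $\langle\tilde{\mathbf{T}},\mathbf{M}\rangle=\langle\pi_\zeta\tilde{\mathbf{T}},\mathbf{M}\rangle=0$. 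Together with $\langle\tilde{\mathbf{T}},\tilde{\mathbf{M}}\rangle=0$, this shows $\tilde{\mathbf{T}}$ is orthogonal to $\mathrm{span}(\mathbf{M},\tilde{\mathbf{M}})$, which contains the tangent to $\eta$.

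The only delicate point is Step 1. Uniqueness of the maximizer genuinely needs the non-constancy hypothesis, which forces the ellipse to be non-circular. One must also check that non-constancy of $d_\zeta$ is exactly non-circularity of the ellipse, so that part (3) applies. This is immediate from \eqref{eqn:SpacelikeDistanceII}. Beyond that, the argument is bookkeeping of signs under projectivization.
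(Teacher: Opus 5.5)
Your proposal is correct and takes the paper's route: the paper obtains this corollary simply by projectivizing Lemma \ref{lemma:GeometryOfTimelikeGeodesics}, which is what your Step 1 does. Your first-order criticality argument in Step 2 supplies the orthogonality claim, which the paper states without explicit justification.
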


\subsection{The ``H'' Lemma}
\label{ss:TimelikeGeodesicsInTheMatrixModel}
A more quantitative picture is obtained using the full translation map $\tau$, introduced in Section \ref{sss:AntideSitterSpaceInMatrixModel}. This will yield two technical results concerning the geometry of timelike geodesics in $\AdS$ which we believe to be of independent interest.

Upon applying a suitable isometry, we may suppose that
\begin{equation*}
\tau^r(\zeta) = \tau^l(\zeta) = \sigma_1\ .
\end{equation*}
We verify by inspection that, up to reparametrization
\begin{equation}
\label{eqn:ParametrizationOfFirstGeodesic}
\zeta(s) = \cos(s)\sigma_0 + \sin(s)\sigma_1\ .
\end{equation}
Upon applying suitable elliptic isometries, we may also suppose that
\begin{equation}
\label{eqn:NormalizedRightAndLeftProjectionsOfZeta}
\begin{aligned}
\tau^r(\tilde{\zeta}) &= \cosh(2\lambda)\sigma_1 + \sinh(2\lambda)\sigma_2\ \text{and}\\
\tau^l(\tilde{\zeta}) &= \cosh(2\mu)\sigma_1 + \sinh(2\mu)\sigma_2\ ,
\end{aligned}
\end{equation}
where
\begin{equation}
\label{eqn:DefnsOfLambdaAndMu}
\lambda := \frac{1}{2}d\big(\tau^r(\tilde{\zeta}),\tau^r(\zeta)\big)\qquad\text{and}\qquad
\mu := \frac{1}{2}d\big(\tau^l(\tilde{\zeta}),\tau^l(\zeta)\big)\ .
\end{equation}

\begin{lemma}
\label{lemma:ParametrizationOfSecondGeodesic}
With the above normalizations, up to reparametrization,
\begin{equation}
\label{eqn:ParametrizationOfSecondGeodesic}
\begin{aligned}
\tilde{\zeta}(s) &= \cosh(\lambda-\mu)\cos(s)\sigma_0 + \cosh(\lambda+\mu)\sin(s)\sigma_1\\
&\qquad + \sinh(\lambda+\mu)\sin(s)\sigma_2 + \sinh(\lambda-\mu)\cos(s)\sigma_3\ .
\end{aligned}
\end{equation}
In particular
\begin{equation}
\label{eqn:ParametrizationOfSecondGeodesicII}
\pi_\zeta\big(\tilde{\zeta}(s)\big) = \cosh(\lambda-\mu)\cos(s)\sigma_0 + \cosh(\lambda+\mu)\sin(s)\sigma_1\ .
\end{equation}
Furthermore, spacelike distance from $\zeta$ is maximized when $s=\pi/2$, where it is equal to $(\lambda+\mu)$.
\end{lemma}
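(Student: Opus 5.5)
The plan is to write down an explicit candidate curve, check that it is a future oriented timelike geodesic with the prescribed full translation, and then invoke injectivity of $\tau$. Recall that every future oriented timelike geodesic in $\SL$ has the form $s\mapsto\cos(s)\mathbf{M}+\sin(s)\mathbf{N}$ with $(\mathbf{M},\mathbf{N})\in\USL$. Its full translation is $(\mathbf{N}\mathbf{M}^{-1},\mathbf{M}^{-1}\mathbf{N})$. Since $\tau:\geodesics\rightarrow\hyp^2\times\hyp^2$ is a diffeomorphism, it suffices to exhibit one pair $(\mathbf{M},\mathbf{N})\in\USL$ with $\mathbf{N}\mathbf{M}^{-1}=\tau^r(\tilde\zeta)$ and $\mathbf{M}^{-1}\mathbf{N}=\tau^l(\tilde\zeta)$ as given in \eqref{eqn:NormalizedRightAndLeftProjectionsOfZeta}. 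Uniqueness up to reparametrization then gives \eqref{eqn:ParametrizationOfSecondGeodesic}.

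To build the pair I would use the boosts $\mathbf{B}(x):=\cosh(x)\sigma_0+\sinh(x)\sigma_3$. They satisfy $\mathbf{B}(x)\mathbf{B}(y)=\mathbf{B}(x+y)$ because $\sigma_3^2=1$. Since $\sigma_3$ anticommutes with $\sigma_1$, we have $\sigma_1\mathbf{B}(-x)=\mathbf{B}(x)\sigma_1$. Together with $\sigma_3\sigma_1=\sigma_2$ from \eqref{eqn:BasicPauliRelationsII}, this gives
\begin{equation*}
\mathbf{B}(x)\sigma_1\mathbf{B}(x)^{-1}=\mathbf{B}(2x)\sigma_1=\cosh(2x)\sigma_1+\sinh(2x)\sigma_2\ .
\end{equation*}
Hence $\tau^r(\tilde\zeta)=\mathbf{B}(\lambda)\sigma_1\mathbf{B}(-\lambda)$ and $\tau^l(\tilde\zeta)=\mathbf{B}(\mu)\sigma_1\mathbf{B}(-\mu)$.

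I would then set $\mathbf{M}:=\mathbf{B}(\lambda-\mu)$ and $\mathbf{N}:=\mathbf{B}(\lambda)\sigma_1\mathbf{B}(-\mu)$. Expanding $\mathbf{N}$ gives $\mathbf{N}=\mathbf{B}(\lambda+\mu)\sigma_1=\cosh(\lambda+\mu)\sigma_1+\sinh(\lambda+\mu)\sigma_2$. The right translation is $\mathbf{N}\mathbf{M}^{-1}=\mathbf{B}(\lambda)\sigma_1\mathbf{B}(-\lambda)$. The left translation is $\mathbf{M}^{-1}\mathbf{N}=\mathbf{B}(\mu)\sigma_1\mathbf{B}(-\mu)$, because $\mathbf{M}^{-1}\mathbf{B}(\lambda)=\mathbf{B}(\mu)$. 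It remains to check membership of $\USL$:
\begin{itemize}
\item $\|\mathbf{M}\|^2_\opmat=\|\mathbf{N}\|^2_\opmat=-1$ follows from the orthonormality of the skew Pauli basis;
\item $\langle\mathbf{M},\mathbf{N}\rangle_\opmat=0$ holds since $\mathbf{M}$ lies in the span of $\sigma_0,\sigma_3$ and $\mathbf{N}$ in the span of $\sigma_1,\sigma_2$;
\item the future-orientation sign condition on $\opTr(\mathbf{M}^{-1}\mathbf{N}\mathbf{J}_0)$ holds for $\lambda=\mu=0$, where the pair is $(\sigma_0,\sigma_1)$ and parametrizes $\zeta$ itself, and then for all $\lambda,\mu$ by continuity, since this trace never vanishes on unit timelike pairs.
\end{itemize}
Writing out $\cos(s)\mathbf{M}+\sin(s)\mathbf{N}$ yields \eqref{eqn:ParametrizationOfSecondGeodesic}. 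Since $P_\zeta=\langle\sigma_0,\sigma_1\rangle$ by \eqref{eqn:ParametrizationOfFirstGeodesic}, deleting the $\sigma_2,\sigma_3$ components gives \eqref{eqn:ParametrizationOfSecondGeodesicII}.

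For the final claim I would apply Lemma \ref{lemma:GeometryOfTimelikeGeodesics}(2), which gives
\begin{equation*}
\cosh^2\big(d(\zeta,\tilde\zeta(s))\big)=\cosh^2(\lambda-\mu)\cos^2(s)+\cosh^2(\lambda+\mu)\sin^2(s)\ .
\end{equation*}
Since $\lambda,\mu\geq0$, we have $\lambda+\mu\geq|\lambda-\mu|$, with strict inequality unless one of $\lambda,\mu$ vanishes. So this quantity is maximized at $s=\pi/2$, where it equals $\cosh^2(\lambda+\mu)$; this is the major-axis endpoint described in part (3). In the degenerate case where one of $\lambda,\mu$ vanishes, the distance is constant in $s$ and the statement holds trivially, with $s=\pi/2$ still a maximizer.

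The main obstacle is only bookkeeping: getting the signs right in the Pauli relations and in the orientation conventions of $\USL$. I would double-check these by testing the case $\mu=0$ directly.
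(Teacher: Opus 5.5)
Your proposal is correct and is essentially the paper's argument. Your pair $(\mathbf{B}(\lambda-\mu),\mathbf{B}(\lambda)\sigma_1\mathbf{B}(-\mu))$ produces exactly the paper's curve $\mathbf{M}_0\cdot\zeta(s)\cdot\mathbf{N}_0^{-1}$ with $\mathbf{M}_0=\mathbf{B}(\lambda)$ and $\mathbf{N}_0=\mathbf{B}(\mu)$. Your use of injectivity of $\tau$ plays the role of the paper's description of $\tilde\zeta$ as the set of $\mathbf{M}$ with $\mathbf{M}\cdot\tau^l(\tilde\zeta)=\tau^r(\tilde\zeta)\cdot\mathbf{M}$, and your maximization via Lemma~\ref{lemma:GeometryOfTimelikeGeodesics}(2) is what the paper's appeal to Item (3) amounts to. Your explicit check of membership in $\USL$ fills in a detail the paper leaves implicit; the future-orientation condition can even be computed directly as $\opTr(\mathbf{M}^{-1}\mathbf{N}\mathbf{J}_0)=-2\cosh(2\mu)<0$, so no continuity argument is needed.
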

\begin{remark} In particular, the spacelike distance between $\zeta$ and $\tilde{\zeta}$ is constant whenever one of $\lambda$ or $\mu$ is equal to $0$, that is, whenever either $\tau^r(\tilde{\zeta})=\tau^r(\zeta)$ or $\tau^l(\tilde{\zeta})=\tau^l(\zeta)$.
\end{remark}
\begin{proof} Indeed, viewed as a set, $\tilde{\zeta}$ consists of all $\mathbf{M}$ such that
\begin{equation*}
\mathbf{M}\cdot\tau^l(\tilde{\zeta}) = \tau^r(\tilde{\zeta})\cdot\mathbf{M}\ .
\end{equation*}
Define $\mathbf{M}_0$ and $\mathbf{N}_0$ by
\begin{equation*}
\begin{aligned}
\mathbf{M}_0 &:= \cosh(\lambda)\sigma_0 + \sinh(\lambda)\sigma_3\ \text{and}\\
\mathbf{N}_0 &:= \cosh(\mu)\sigma_0 + \sinh(\mu)\sigma_3\ .
\end{aligned}
\end{equation*}
By \eqref{eqn:BasicPauliRelationsI} and \eqref{eqn:BasicPauliRelationsII},
\begin{equation*}
\mathbf{M}_0\cdot\sigma_1\cdot\mathbf{M}_0^{-1} = \tau^r(\tilde{\zeta})\qquad\text{and}\qquad\mathbf{N}_0\cdot\sigma_1\cdot\mathbf{N}_0^{-1} = \tau^l(\tilde{\zeta})\ .
\end{equation*}
Hence, up to reparametrization
\begin{equation*}
\tilde{\zeta}(s) = \mathbf{M}_0\cdot\zeta(s)\cdot\mathbf{N}_0^{-1}\ ,
\end{equation*}
where $\zeta$ is as in \eqref{eqn:ParametrizationOfFirstGeodesic}, and \eqref{eqn:ParametrizationOfSecondGeodesic} now follows by explicit calculation. Since the plane of $\zeta$ is $P:=\langle\sigma_0,\sigma_1\rangle$, \eqref{eqn:ParametrizationOfSecondGeodesicII} follows immediately. Finally, the minor and major axes of $\pi_\zeta(\tilde{\zeta})$ are respectively $\sigma_0$ and $\sigma_1$, and it follows by Lemma \ref{lemma:GeometryOfTimelikeGeodesics}, Item $(3)$ that spacelike distance to $\zeta$ is maximized over $\tilde{\zeta}$ at $s=\pi/2$, where it is equal to $(\lambda+\mu)$. This completes the proof.
\end{proof}

Our first technical result concerns the spacelike distance betwen points of distinct timelike geodesics. A characteristic of negative curvature in mixed signature is that timelike geodesics may traverse arbitrarily large expanses of space in uniformly bounded time. Intuitively, a particle moving sufficiently close to the speed of light will reach us within a certain fixed time, no matter how distant it initially was. Nevertheless, negative curvature also ensures that timelike geodesics which start distant from one another remain distant from one another \emph{for most of their existence}. This key property, illustrated in Figure \ref{fig:DistanceBetweenTimelikeGeodesics}, is quantified as follows.

\begin{figure}[h!]
\includegraphics[width=8cm]{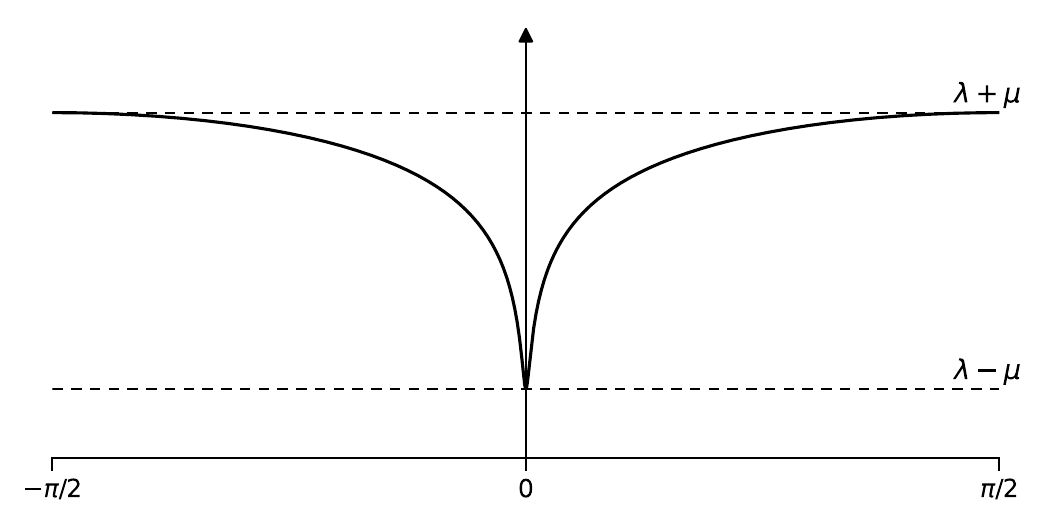}
\caption{Let $\zeta$ and $\tilde{\zeta}$ be timelike geodesics, and consider the restriction to $\tilde{\zeta}$ of the spacelike distance $d_\zeta$ to $\zeta$ in $\AdS$. Owing to the properties of the logarithm, whose geometric origin here is negativity of curvature, the function $d_\zeta$ remains close to its maximal value over most of its domain.}
\label{fig:DistanceBetweenTimelikeGeodesics}
\end{figure}

\begin{lemma}
\label{lemma:DistanceBetweenPointsOfTimelikeGeodesics}
For all $\epsilon>0$, there exists $C_1,C_2>0$ such that, if $\lambda,\mu>C_1$, and if $s,\tilde{s}\in[\epsilon,\pi-\epsilon]$, then $\zeta_0(s)$ and $\tilde{\zeta}(\tilde{s})$ are in spacelike relation, and
\begin{equation}
\label{eqn:DistanceBetweenPointsOfTimelikeGeodesics}
d\big(\zeta(s),\tilde{\zeta}(\tilde{s})\big) \geq (\lambda+\mu) - C_2\ .
\end{equation}
\end{lemma}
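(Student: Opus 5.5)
Our plan is a direct computation using the explicit parametrizations \eqref{eqn:ParametrizationOfFirstGeodesic} and \eqref{eqn:ParametrizationOfSecondGeodesic} of Lemma \ref{lemma:ParametrizationOfSecondGeodesic}, combined with the distance formula \eqref{eqn:SpacelikeDistanceI} of Lemma \ref{lemma:GeometryOfTimelikeGeodesics}, Item $(1)$. (Here $\zeta_0$ in the statement denotes $\zeta$.) Since $\zeta(s)\in P=\langle\sigma_0,\sigma_1\rangle$ and $\sigma_0,\sigma_1$ are orthonormal timelike vectors, so that $\langle\sigma_i,\sigma_i\rangle_\opmat=-1$ for $i=0,1$, we obtain from \eqref{eqn:ParametrizationOfSecondGeodesicII}
\begin{equation*}
\langle\zeta(s),\tilde{\zeta}(\tilde{s})\rangle_\opmat = -\cosh(\lambda-\mu)\cos(s)\cos(\tilde{s}) - \cosh(\lambda+\mu)\sin(s)\sin(\tilde{s})\ .
\end{equation*}
By Item $(1)$, spacelike relation holds exactly when the absolute value of this quantity exceeds $1$, and in that case $\cosh d$ equals this absolute value.

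The key step is a lower bound on this absolute value, and the only real care needed is with the sign of $\cos(s)\cos(\tilde{s})$, which may be negative. For $s,\tilde{s}\in[\epsilon,\pi-\epsilon]$ we have $\sin(s)\sin(\tilde{s})\geq\sin^2\epsilon$ and $|\cos(s)\cos(\tilde{s})|\leq\cos^2\epsilon$. Hence
\begin{equation*}
\left|\langle\zeta(s),\tilde{\zeta}(\tilde{s})\rangle_\opmat\right| \geq \sin^2(\epsilon)\cosh(\lambda+\mu) - \cos^2(\epsilon)\cosh(\lambda-\mu)\ .
\end{equation*}
The danger is that $\cosh(\lambda-\mu)$ could be comparable to $\cosh(\lambda+\mu)$. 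However, since $\lambda,\mu>C_1$, we have $|\lambda-\mu|\leq\lambda+\mu-2C_1$. It follows that $\cosh(\lambda-\mu)\leq e^{\lambda+\mu-2C_1}$, while $\cosh(\lambda+\mu)\geq\frac12 e^{\lambda+\mu}$. The right-hand side is therefore at least
\begin{equation*}
e^{\lambda+\mu}\big(\tfrac12\sin^2\epsilon - \cos^2(\epsilon)e^{-2C_1}\big)\ .
\end{equation*}

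Choose $C_1$ large enough, depending only on $\epsilon$, that $\cos^2(\epsilon)e^{-2C_1}\leq\frac14\sin^2\epsilon$, and also that $\frac14\sin^2(\epsilon)e^{2C_1}>1$. Then the quantity above is at least $\frac14\sin^2(\epsilon)e^{\lambda+\mu}>1$. This gives spacelike relation by Item $(1)$. Moreover, using $\cosh d\leq e^{d}$, it gives
\begin{equation*}
d\geq(\lambda+\mu)+\log\big(\tfrac14\sin^2\epsilon\big)\ ,
\end{equation*}
so \eqref{eqn:DistanceBetweenPointsOfTimelikeGeodesics} holds with $C_2:=-\log(\frac14\sin^2\epsilon)$. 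Projecting to $\AdS$ changes nothing, since absolute values were taken throughout.
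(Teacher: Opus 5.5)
Your proof is correct and follows the same route as the paper: compute $\langle\zeta(s),\tilde{\zeta}(\tilde{s})\rangle_\opmat$ from the projection formula, bound its absolute value below by $\sin^2(\epsilon)\cosh(\lambda+\mu)-\cos^2(\epsilon)\cosh(\lambda-\mu)$, and take logarithms. Your explicit use of $|\lambda-\mu|\leq\lambda+\mu-2C_1$ to absorb the $\cosh(\lambda-\mu)$ term fills in a step the paper leaves implicit, and you correctly cite Item $(1)$, rather than Item $(2)$, for the distance formula.
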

\begin{proof}Indeed, by \eqref{eqn:ParametrizationOfSecondGeodesicII}, for all $s,\tilde{s}\in[0,\pi]$,
\begin{equation*}
\left|\langle\zeta(s),\tilde{\zeta}(\tilde{s})\rangle_\opmat\right| = \cosh(\lambda-\mu)\cos(s)\cos(\tilde{s}) + \cosh(\lambda+\mu)\sin(s)\sin(\tilde{s})\ .
\end{equation*}
We minimize this quantity over $[\epsilon,\pi-\epsilon]^2$ by choosing $s$ and $\tilde{s}$ such that the coefficient of $\cosh(\lambda+\mu)$ is as small as possible and the coefficient of $\cosh(\lambda-\mu)$ is negative and as large as possible. This happens when $s=\epsilon$ and $\tilde{s}=\pi-\epsilon$. Hence, for all $s,s'\in[\epsilon,\pi-\epsilon]$,
\begin{equation*}
\left|\langle\zeta(s),\tilde{\zeta}(\tilde{s})\rangle_\opmat\right| \geq \cosh(\lambda+\mu)\sin^2(\epsilon) - \cosh(\lambda-\mu)\cos^2(\epsilon)\ .
\end{equation*}
For $C_1$ sufficiently large, this quantity is greater than $1$ for all such $s$ and $\tilde{s}$. Hence, by Lemma \ref{lemma:GeometryOfTimelikeGeodesics}, Item $(2)$, for $C_1$ sufficiently large, and for all such $s$ and $\tilde{s}$, $\zeta(s)$ and $\tilde{\zeta}(\tilde{s})$ are in spacelike relation, and
\begin{equation*}
e^{d(\zeta(s),\tilde{\zeta}(\tilde{s}))} \geq\cosh\big(d(\zeta(s),\tilde{\zeta}(\tilde{s}))\big) \geq \frac{1}{2}\cosh(\lambda+\mu)\sin^2(\epsilon) \geq \frac{1}{4}e^{\lambda+\mu}\sin^2(\epsilon)\ .
\end{equation*}
The result follows upon taking logarithms.
\end{proof}

Our second technical result is an explicit formula for the geodesic joining the furthest points of $\zeta$ and $\tilde{\zeta}$ in terms of $\tau(\zeta)$ and $\tau(\tilde{\zeta})$. This is provided by the following ``H'' Lemma, illustrated by Figure \ref{fig:HLemma}. We recall that the ideal boundary of $\AdS$ identifies with $\Ein$, which then identifies with $\RP\times\RP$. Furthermore, each of these copies of $\RP$ may be viewed as the boundary of $\hyp^2$.

\begin{figure}[h!]
\includegraphics[width=7cm]{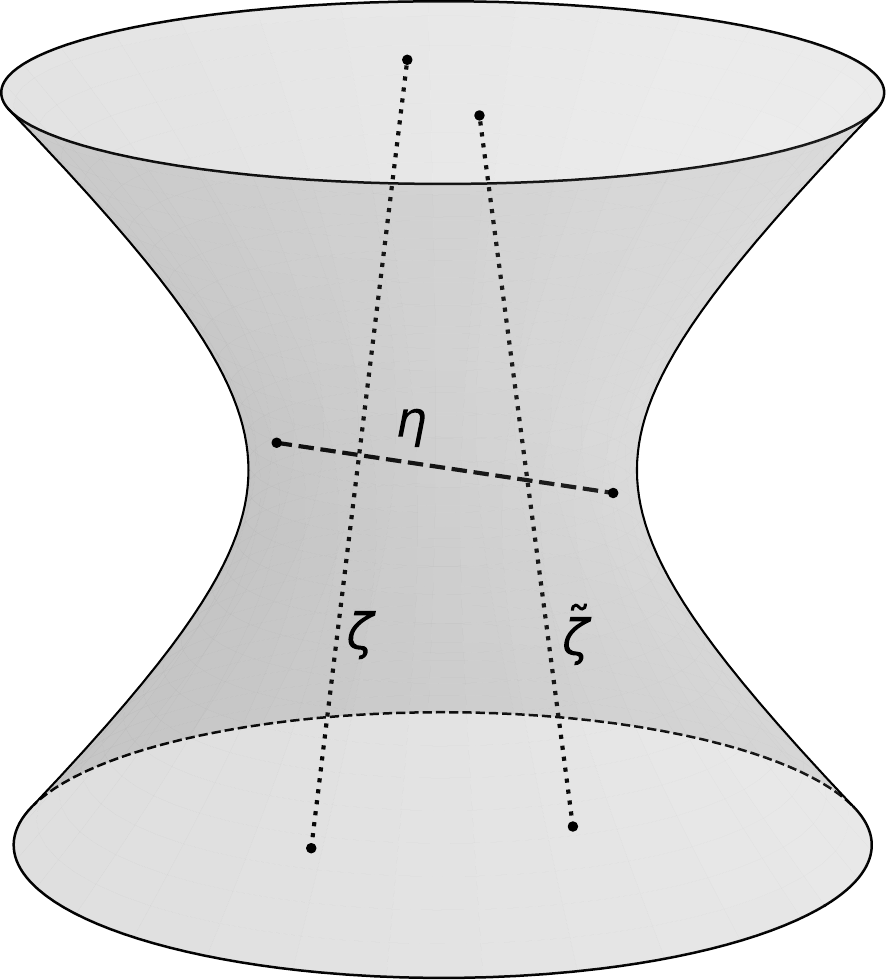}
\caption{The spacelike geodesic $\eta$ joins the furthest points of the timelike geodesics $\zeta$ and $\tilde{\zeta}$, thus forming a figure ``H''. The ``H'' Lemma provides a formula for $\eta$ in terms of the images under the full translation map of $\zeta$ and $\tilde{\zeta}$.}
\label{fig:HLemma}
\end{figure}

\begin{lemma}[``H'' Lemma]
\label{lemma:HLemma}
Let $\zeta$ and $\tilde{\zeta}$ be future oriented timelike geodesics in $\AdS$ such that the spacelike distance between them is non-constant. Let $\eta$ denote the unique oriented spacelike geodesic in $\AdS$ joining their furthest points, and let $\eta^r$ (resp. $\eta^l$) denote the oriented geodesic in $\hyp^2$ from $\tau^r(\zeta)$ to $\tau^r(\tilde{\zeta})$ (resp. from $\tau^l(\zeta)$ to $\tau^l(\tilde{\zeta})$). The end-points of $\eta$ are related to those of $\eta^r$ and $\eta^l$ by
\begin{equation}
\label{eqn:HLemma}
\tau_\infty(\eta^\pm) = \big(\eta^{r,\pm},\eta^{l,\pm}\big)\ .
\end{equation}
\end{lemma}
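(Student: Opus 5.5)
We reduce to the normal form already used in Lemma \ref{lemma:ParametrizationOfSecondGeodesic} and then read off the end-points of $\eta$ as the attractive and repulsive fixed points of a hyperbolic isometry translating along it. Both sides of \eqref{eqn:HLemma} behave naturally under isometries of $\AdS$. Indeed, $\tau$ intertwines $\hat\alpha$ and $\alpha\times\alpha$, $\tau_\infty$ does likewise, and isometries carry furthest-point geodesics to furthest-point geodesics and hyperbolic geodesics to hyperbolic geodesics. We may therefore assume $\tau^r(\zeta)=\tau^l(\zeta)=\sigma_1$, with $\tau^r(\tilde\zeta)$ and $\tau^l(\tilde\zeta)$ as in \eqref{eqn:NormalizedRightAndLeftProjectionsOfZeta} and $\lambda,\mu>0$; the latter holds since the distance is non-constant, by the Remark following Lemma \ref{lemma:ParametrizationOfSecondGeodesic}. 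Then $\eta^r$ and $\eta^l$ coincide: both are the geodesic $t\mapsto\cosh(t)\sigma_1+\sinh(t)\sigma_2$ of $\hyp^2$, oriented towards increasing $t$. Their end-points are $\eta^{r,\pm}=\eta^{l,\pm}=[\sigma_1\pm\sigma_2]\in\ideal\hyp^2$.

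Next we identify $\eta$. By Lemma \ref{lemma:ParametrizationOfSecondGeodesic}, the furthest points are $\zeta(\pi/2)=\sigma_1$ and $\tilde\zeta(\pi/2)=\cosh(\lambda+\mu)\sigma_1+\sinh(\lambda+\mu)\sigma_2$. Hence $\eta$ is the intersection of $\AdS$ with the plane $\langle\sigma_1,\sigma_2\rangle$, parametrized by $t\mapsto\cosh(t)\sigma_1+\sinh(t)\sigma_2$ and oriented towards increasing $t$. Using \eqref{eqn:BasicPauliRelationsII}, this rewrites as
\begin{equation*}
\cosh(t)\sigma_1+\sinh(t)\sigma_2=\sigma_1\cdot e^{-t\sigma_3}\ .
\end{equation*}

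Now take $\gamma:=(e^{b\sigma_3},e^{b\sigma_3})\in\SL\times\SL$ with $b\neq0$. Since $\sigma_1$ anticommutes with $\sigma_3$, we get $\hat\alpha(\gamma)\cdot\sigma_1\cdot e^{-t\sigma_3}=\sigma_1\cdot e^{-(t+2b)\sigma_3}$, so $\gamma$ translates along $\eta$. For a suitable sign of $b$, its attractive fixed point in $\Ein$ is $\eta^+$ and its repulsive one is $\eta^-$. At the same time, each component acts on $\slalg$ by $\alpha(e^{b\sigma_3})$, which maps $\sigma_1\mapsto\sigma_1 e^{-2b\sigma_3}$. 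Thus each component is a hyperbolic translation along the common geodesic $\eta^r=\eta^l$, in the same direction as $\gamma$ translates along $\eta$. Both parametrizations use the same $t$, so the attractive fixed points are $\gamma^{r,+}=\gamma^{l,+}=\eta^{r,+}$. Applying \eqref{eqn:defn:EquivarianceOfLimitI} to $\gamma$ and to $\gamma^{-1}$ then yields $\tau_\infty(\eta^\pm)=(\eta^{r,\pm},\eta^{l,\pm})$.

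The main obstacle is the orientation bookkeeping. We must check that the orientation of $\eta$, from the furthest point of $\zeta$ to that of $\tilde\zeta$, agrees with the orientation of $\eta^r$ and $\eta^l$, from $\tau(\zeta)$ to $\tau(\tilde\zeta)$. In the normal form, all three are oriented towards a positive $\sigma_2$-coefficient, because $\lambda,\mu,\lambda+\mu>0$. Finally, the convention in \eqref{eqn:defn:EquivarianceOfLimitI}, which matches the attractive fixed point of $\gamma$ with those of its components, must be applied with $\gamma$ and not its inverse.
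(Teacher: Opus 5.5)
Your proposal is correct and follows essentially the same route as the paper. After the same normalization, the paper identifies $\eta=\eta^r=\eta^l$ with $t\mapsto\cosh(t)\sigma_1+\sinh(t)\sigma_2=\mathbf{M}(t)\cdot\sigma_1\cdot\mathbf{M}(t)^{-1}$ and applies \eqref{eqn:defn:EquivarianceOfLimitI} to $(\mathbf{M}_1,\mathbf{M}_1)$ with $\mathbf{M}_1=e^{\sigma_3/2}$, which is your $\gamma$ with $b=1/2$; your explicit isometry-invariance reduction and orientation check simply make precise what the paper leaves implicit.
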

\begin{proof} Indeed, by Lemma \ref{lemma:ParametrizationOfSecondGeodesic}, with the above normalizations,
\begin{equation*}
\eta(t) = \eta^r(t) = \eta^l(t) = \cosh(t)\sigma_1 + \sinh(t)\sigma_2 = \mathbf{M}(t)\cdot\sigma_1\cdot\mathbf{M}(t)^{-1}\ ,
\end{equation*}
where, for all $t$,
\begin{equation*}
\mathbf{M}(t) := \cosh(t/2)\sigma_0 + \sinh(t/2)\sigma_3\ .
\end{equation*}
Denote $\mathbf{M}_1:=\mathbf{M}(1)$. Let $\hat{\alpha}\big(\mathbf{M}_1,\mathbf{M}_1\big)^\pm$ denote the attractive and repulsive fixed points of $\hat{\alpha}\big(\mathbf{M}_1,\mathbf{M}_1\big)$ in $\Ein$, and let $\alpha\big(\mathbf{M}_1\big)^\pm$ denote the attractive and repulsive fixed points of $\alpha\big(\mathbf{M}_1\big)$ in $\ideal\mathbf{H}^2$. Bearing in mind \eqref{eqn:defn:EquivarianceOfLimitI},
\begin{equation*}
\tau_\infty(\eta^{\pm}) = \tau_\infty\big(\hat{\alpha}\big(\mathbf{M}_1,\mathbf{M}_1\big)^\pm\big) = \big(\alpha\big(\mathbf{M}_1\big)^\pm,\alpha\big(\mathbf{M}_1\big)^\pm\big) = \big(\eta^{r,\pm},\eta^{l,\pm}\big)\ ,
\end{equation*}
as desired.
\end{proof}

\subsection{Cylindrical polar vector fields}
\label{ss:VectorFieldsAroundTimelikeGeodesics}
We conclude this section by studying vector fields over $\SL$ naturally defined by the timelike geodesic $\zeta$ and considering their restrictions to the second timelike geodesic $\tilde{\zeta}$. The estimates obtained here will allow us to later describe the tangent vector field of $\tilde{\zeta}$ in terms of the basis vector fields of cylindrical polar coordinates about $\zeta$. We continue to use the notation of the preceding section.

Consider first the flow $\Phi_\zeta$ defined over $\SL$ by
\begin{equation*}
\label{eqn:defn:TranslationFlow}
\Phi_{\zeta,s}(\mathbf{M}) := \zeta(s/2)\cdot\mathbf{M}\cdot\zeta(s/2)\ .
\end{equation*}
This is the flow of translations of $\SL$ along the axis $\zeta$, and its vector field is
\begin{equation}
\label{eqn:TranslationVectorField}
X_\zeta(\mathbf{M}) := \partial_s\Phi_{\zeta,s}(\mathbf{M})|_{s=0} = \frac{1}{2}\big(\sigma_1\cdot\mathbf{M}+\mathbf{M}\cdot\sigma_1\big)\ .
\end{equation}
This vector field is timelike, well-defined, and non-zero over the whole of $\SL$. Since $\sigma_1$, $\sigma_2$, and $\sigma_3$ anticommute, \eqref{eqn:ParametrizationOfSecondGeodesic} and \eqref{eqn:TranslationVectorField} yield, for all $s\in[0,\pi]$,
\begin{equation}
\label{eqn:RestrictionOfTranslationToGeodesic}
X_\zeta\big(\tilde{\zeta}(s)\big) = \cosh(\lambda-\mu)\cos(s)\sigma_1 - \cosh(\lambda+\mu)\sin(s)\sigma_0\ .
\end{equation}
\begin{lemma}
\label{lemma:EstimateOfTranslationField}
For all $\epsilon>0$, there exists $C>0$ such that, for all $s\in[\epsilon,\pi-\epsilon]$,
\begin{equation}
\label{eqn:EstimateOfTranslationField}
-\|X_\zeta(\tilde{\zeta}(s))\|^2_\opmat \geq C_2\cosh^2(\lambda+\mu)\ .
\end{equation}
\end{lemma}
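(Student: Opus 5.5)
The estimate should follow by computing the norm directly from the explicit formula \eqref{eqn:RestrictionOfTranslationToGeodesic}. The skew Pauli matrices $\sigma_0,\sigma_1$ are orthonormal and both timelike, so $\|\sigma_0\|^2_\opmat=\|\sigma_1\|^2_\opmat=-1$ and $\langle\sigma_0,\sigma_1\rangle_\opmat=0$. Substituting gives, for all $s$,
\begin{equation*}
-\|X_\zeta(\tilde{\zeta}(s))\|^2_\opmat = \cosh^2(\lambda-\mu)\cos^2(s) + \cosh^2(\lambda+\mu)\sin^2(s)\ .
\end{equation*}
Both terms are non-negative, so we may discard the first. For $s\in[\epsilon,\pi-\epsilon]$ we have $\sin^2(s)\geq\sin^2(\epsilon)$. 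This yields \eqref{eqn:EstimateOfTranslationField} with constant $C:=\sin^2(\epsilon)$; the $C_2$ in the displayed statement is evidently meant to be this $C$. The bound is uniform in $\lambda,\mu\geq 0$, and no largeness hypothesis on $\lambda,\mu$ is needed.

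The only step needing care is confirming \eqref{eqn:RestrictionOfTranslationToGeodesic} itself, which the paper asserts. To do so, expand $\frac{1}{2}(\sigma_1\tilde{\zeta}(s)+\tilde{\zeta}(s)\sigma_1)$ using \eqref{eqn:ParametrizationOfSecondGeodesic}. Since $\sigma_1$ anticommutes with $\sigma_2$ and $\sigma_3$, the $\sigma_2$ and $\sigma_3$ components cancel. The $\sigma_0$ term contributes $\cosh(\lambda-\mu)\cos(s)\sigma_1$. The $\sigma_1$ term contributes $\cosh(\lambda+\mu)\sin(s)\sigma_1^2=-\cosh(\lambda+\mu)\sin(s)\sigma_0$. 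This matches the displayed formula.

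There is no real obstacle here: once the formula is checked, the orthonormality of the skew Pauli basis makes the estimate immediate.
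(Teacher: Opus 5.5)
Your proof is correct and follows the paper's argument exactly: expand $-\|X_\zeta(\tilde{\zeta}(s))\|^2_\opmat$ using that $\sigma_0,\sigma_1$ are orthonormal and timelike, discard the $\cos^2(s)$ term, and use $\sin^2(s)\geq\sin^2(\epsilon)$ to take $C:=\sin^2(\epsilon)$. Your extra verification of \eqref{eqn:RestrictionOfTranslationToGeodesic} via $\sigma_1^2=-1$ and the anticommutation relations is also correct.
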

\begin{proof}Indeed, since $\sigma_0$ and $\sigma_1$ are orthonormal and timelike, by \eqref{eqn:RestrictionOfTranslationToGeodesic},
\begin{equation*}
-\|X_\zeta(\tilde{\zeta}(s))\|_\opmat^2 = \cosh^2(\lambda-\mu)\cos^2(s) + \cosh^2(\lambda+\mu)\sin^2(s)\geq \cosh^2(\lambda+\mu)\sin^2(\epsilon)\ .
\end{equation*}
The result follows upon setting $C:=\sin^2(\epsilon)$.
\end{proof}

\begin{lemma}
\label{lemma:TangentComponentOfTranslationField}
For all $s\in[0,\pi]$,
\begin{equation}
\label{eqn:TangentComponentOfTranslationField}
\langle X_\zeta(\tilde{\zeta}(s)),\partial_s\tilde{\zeta}(s)\rangle_\opmat = -\cosh(\lambda+\mu)\cosh(\lambda-\mu)\ .
\end{equation}
\end{lemma}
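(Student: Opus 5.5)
This is a direct computation from the explicit formulas already established. Differentiating the parametrization \eqref{eqn:ParametrizationOfSecondGeodesic} of Lemma \ref{lemma:ParametrizationOfSecondGeodesic} in $s$ gives
\begin{equation*}
\begin{aligned}
\partial_s\tilde{\zeta}(s) &= -\cosh(\lambda-\mu)\sin(s)\sigma_0 + \cosh(\lambda+\mu)\cos(s)\sigma_1\\
&\qquad + \sinh(\lambda+\mu)\cos(s)\sigma_2 - \sinh(\lambda-\mu)\sin(s)\sigma_3\ .
\end{aligned}
\end{equation*}
On the other side, \eqref{eqn:RestrictionOfTranslationToGeodesic} expresses $X_\zeta(\tilde{\zeta}(s))$ as a combination of $\sigma_0$ and $\sigma_1$ only. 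I will first re-verify that formula. By \eqref{eqn:TranslationVectorField}, $X_\zeta(\mathbf{M})=\frac12(\sigma_1\mathbf{M}+\mathbf{M}\sigma_1)$. Since $\sigma_1$ anticommutes with $\sigma_2,\sigma_3$, the $\sigma_2,\sigma_3$ components of $\tilde{\zeta}(s)$ drop out. The relations $\sigma_1\sigma_0=\sigma_1$ and $\sigma_1^2=-\sigma_0$ from \eqref{eqn:BasicPauliRelationsI} then give exactly \eqref{eqn:RestrictionOfTranslationToGeodesic}.

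Next, I will use that the skew Pauli matrices form an orthonormal basis of $\langle\cdot,\cdot\rangle_\opmat$, with $\sigma_0,\sigma_1$ of norm $-1$. Only the $\sigma_0$ and $\sigma_1$ coefficients of $\partial_s\tilde{\zeta}$ therefore contribute, and
\begin{equation*}
\begin{aligned}
\langle X_\zeta(\tilde{\zeta}(s)),\partial_s\tilde{\zeta}(s)\rangle_\opmat
&= -\big[\cosh(\lambda-\mu)\cos(s)\cosh(\lambda+\mu)\cos(s)\big]\\
&\qquad - \big[(-\cosh(\lambda+\mu)\sin(s))(-\cosh(\lambda-\mu)\sin(s))\big]\\
&= -\cosh(\lambda+\mu)\cosh(\lambda-\mu)\big(\cos^2(s)+\sin^2(s)\big)\ ,
\end{aligned}
\end{equation*}
which is \eqref{eqn:TangentComponentOfTranslationField}.

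The only step needing care is sign bookkeeping: the signs of the $\sigma_1\cdot\sigma_0$ and $\sigma_1^2$ products in $X_\zeta$, and the negative norms of the timelike basis vectors. I will cross-check with a sanity case. For $\lambda=\mu=0$ we have $\tilde{\zeta}=\zeta$, and $X_\zeta$ restricted to $\zeta$ is $\partial_s\zeta$, which has norm $-1$, in agreement with the formula.
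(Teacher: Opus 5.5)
Your computation is correct and is essentially the paper's proof. The paper differentiates the projection $\pi_\zeta(\tilde\zeta(s))$ from \eqref{eqn:ParametrizationOfSecondGeodesicII} and pairs it with \eqref{eqn:RestrictionOfTranslationToGeodesic}; this is the same as your observation that only the $\sigma_0,\sigma_1$ components of $\partial_s\tilde\zeta(s)$ contribute, because $X_\zeta(\tilde\zeta(s))$ lies in the plane $\langle\sigma_0,\sigma_1\rangle$.
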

\begin{proof}Indeed, differentiating \eqref{eqn:ParametrizationOfSecondGeodesicII} yields, for all $s$,
\begin{equation*}
\partial_s\pi_\zeta\big(\tilde\zeta(s)\big) = -\cosh(\lambda-\mu)\sin(s)\sigma_0 + \cosh(\lambda+\mu)\cos(s)\sigma_1\ .
\end{equation*}
The result follows upon taking the inner product with \eqref{eqn:RestrictionOfTranslationToGeodesic}.
\end{proof}

For completeness, we consider also the flow $\Psi_\zeta$ defined over $\SL$ by
\begin{equation*}
\label{eqn:defn:RotationFlow}
\Psi_\zeta(\mathbf{M}) := \zeta(s/2)\cdot\mathbf{M}\cdot\zeta(-s/2)\ .
\end{equation*}
This is the flow of rotations of $\SL$ about the axis $\zeta$, and its vector field is
\begin{equation}
\label{eqn:RotationVectorField}
Y_\zeta(\mathbf{M}) := \partial_s\Psi_{\zeta,s}(\mathbf{M})|_{s=0} = \frac{1}{2}\big(\sigma_1\cdot\mathbf{M} - \mathbf{M}\cdot\sigma_1\big)\ .
\end{equation}
This vector field is well-defined and spacelike over the whole of $\SL$. Since $\sigma_1$, $\sigma_2$, and $\sigma_3$ anticommute, \eqref{eqn:ParametrizationOfSecondGeodesic} yields
\begin{equation}
\label{eqn:RotationVectorFieldOverGeodesic}
Y_\zeta\big(\tilde{\zeta}(s)\big) = -\sinh(\lambda-\mu)\cos(s)\sigma_2 + \sinh(\lambda+\mu)\sin(s)\sigma_3\ .
\end{equation}
\begin{lemma}
For all $\epsilon>0$, there exists $C>0$ such that, for all $s\in[\epsilon,\pi-\epsilon]$,
\begin{equation}
\label{eqn:NormOfRestrictionOfRotationVectorField}
\|Y_\zeta(\tilde{\zeta}(s))\|^2_\opmat \geq C_2\sinh^2(\lambda+\mu)\ .
\end{equation}
\end{lemma}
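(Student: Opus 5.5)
This follows the same route as Lemma \ref{lemma:EstimateOfTranslationField}: read off the norm directly from the explicit formula \eqref{eqn:RotationVectorFieldOverGeodesic}. That formula expresses $Y_\zeta(\tilde{\zeta}(s))$ in the skew Pauli basis. Only the spacelike elements $\sigma_2$ and $\sigma_3$ appear, and these are orthonormal with $\|\sigma_2\|^2_\opmat=\|\sigma_3\|^2_\opmat=1$ and $\langle\sigma_2,\sigma_3\rangle_\opmat=0$. Hence there are no cross terms, and
\begin{equation*}
\|Y_\zeta(\tilde{\zeta}(s))\|^2_\opmat = \sinh^2(\lambda-\mu)\cos^2(s) + \sinh^2(\lambda+\mu)\sin^2(s)\ .
\end{equation*}

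Both terms are non-negative, so we may discard the first. For $s\in[\epsilon,\pi-\epsilon]$ we have $\sin^2(s)\geq\sin^2(\epsilon)$, since $\sin$ is positive on this interval and attains its minimum there at the endpoints. It follows that
\begin{equation*}
\|Y_\zeta(\tilde{\zeta}(s))\|^2_\opmat \geq \sin^2(\epsilon)\sinh^2(\lambda+\mu)\ ,
\end{equation*}
and the estimate \eqref{eqn:NormOfRestrictionOfRotationVectorField} holds with constant $\sin^2(\epsilon)$. This constant depends only on $\epsilon$, and not on $\lambda$ or $\mu$. The statement's $C_2$ should be read as this $C$, as in the preceding lemma.

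There is no real obstacle. The only point to confirm is formula \eqref{eqn:RotationVectorFieldOverGeodesic} itself, which the paper states just before the lemma. One can check it directly by computing $\tfrac12(\sigma_1\tilde{\zeta}(s)-\tilde{\zeta}(s)\sigma_1)$ from \eqref{eqn:ParametrizationOfSecondGeodesic}. In this commutator the $\sigma_0$ and $\sigma_1$ components cancel. By \eqref{eqn:BasicPauliRelationsII} and anticommutation, the $\sigma_2$ term yields $\sigma_3$ with coefficient $\sinh(\lambda+\mu)\sin(s)$, and the $\sigma_3$ term yields $-\sigma_2$ with coefficient $\sinh(\lambda-\mu)\cos(s)$.
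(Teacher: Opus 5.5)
Your proof is correct and matches the paper's argument: expand the norm from \eqref{eqn:RotationVectorFieldOverGeodesic} using orthonormality of the spacelike $\sigma_2,\sigma_3$, drop the $\cos^2(s)$ term, and use $\sin^2(s)\geq\sin^2(\epsilon)$. Your constant $\sin^2(\epsilon)$ is slightly cleaner than the paper's, whose display carries a superfluous factor $\tfrac{1}{2}$ before it sets $C_2:=\sin^2(\epsilon)$.
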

\begin{proof} Indeed, since $\sigma_2$ and $\sigma_3$ are orthonormal and spacelike, by \eqref{eqn:RotationVectorFieldOverGeodesic},
\begin{equation*}
\|Y_\zeta(\tilde{\zeta}(s))\|^2_\opmat = \sinh^2(\lambda-\mu)\cos^2(s) + \sinh^2(\lambda+\mu)\sin^2(s) \geq \frac{1}{2}\sin^2(\epsilon)\sinh^2(\lambda+\mu)\ .
\end{equation*}
The result follows upon setting $C_2:=\sin^2(\epsilon)$.
\end{proof}
\begin{lemma}
For all $s\in[0,\pi]$,
\begin{equation*}
\langle Y_\zeta(\tilde{\zeta}(s)),\partial_s\tilde{\zeta}(s)\rangle = -\sinh(\lambda+\mu)\sinh(\lambda-\mu)\ .
\end{equation*}
\end{lemma}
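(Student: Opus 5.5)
The identity follows by direct computation from the two explicit formulas already in hand, exactly as in the proof of Lemma \ref{lemma:TangentComponentOfTranslationField}, where $X_\zeta$ is now replaced by $Y_\zeta$. The plan has three steps.

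First, differentiate the parametrization \eqref{eqn:ParametrizationOfSecondGeodesic} of $\tilde{\zeta}$ in $s$. This gives
\begin{equation*}
\partial_s\tilde{\zeta}(s) = -\cosh(\lambda-\mu)\sin(s)\sigma_0 + \cosh(\lambda+\mu)\cos(s)\sigma_1 + \sinh(\lambda+\mu)\cos(s)\sigma_2 - \sinh(\lambda-\mu)\sin(s)\sigma_3\ .
\end{equation*}
Second, take the inner product of this vector with the expression \eqref{eqn:RotationVectorFieldOverGeodesic} for $Y_\zeta(\tilde{\zeta}(s))$. Since $Y_\zeta(\tilde{\zeta}(s))$ lies in the span of $\sigma_2$ and $\sigma_3$, only the $\sigma_2$ and $\sigma_3$ components of $\partial_s\tilde{\zeta}(s)$ contribute. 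The skew Pauli matrices are orthonormal with $\|\sigma_2\|^2_\opmat=\|\sigma_3\|^2_\opmat=+1$, so the inner product is
\begin{equation*}
-\sinh(\lambda-\mu)\sinh(\lambda+\mu)\cos^2(s) - \sinh(\lambda+\mu)\sinh(\lambda-\mu)\sin^2(s)\ .
\end{equation*}
Third, apply $\cos^2(s)+\sin^2(s)=1$, which yields the asserted constant $-\sinh(\lambda+\mu)\sinh(\lambda-\mu)$, independent of $s$.

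The only point needing care is sign bookkeeping. Specifically, one should check the signs of the $\sigma_2,\sigma_3$ coefficients in \eqref{eqn:RotationVectorFieldOverGeodesic}, which come from the anticommutation relations \eqref{eqn:BasicPauliRelationsII}, and the sign conventions of \eqref{eqn:MinkowskiForm} for the spacelike basis vectors. As a sanity check, one can verify independently that $\tfrac{1}{2}(\sigma_1\tilde{\zeta}-\tilde{\zeta}\sigma_1)$ indeed kills the $\sigma_0,\sigma_1$ terms and maps $\sigma_2\mapsto\sigma_3$ and $\sigma_3\mapsto-\sigma_2$. With this, the stated coefficients follow, and no further obstacle is expected.
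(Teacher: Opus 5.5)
Your proposal is correct and takes the same route as the paper. It differentiates the parametrization of $\tilde{\zeta}$ and takes the inner product with the formula for $Y_\zeta(\tilde{\zeta}(s))$, where only the spacelike $\sigma_2,\sigma_3$ components contribute and $\cos^2(s)+\sin^2(s)=1$ gives the constant; your anticommutation check of the coefficients of $Y_\zeta(\tilde{\zeta}(s))$ is also correct.
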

\begin{proof} Indeed, differentiating \eqref{eqn:ParametrizationOfSecondGeodesic} yields, for all $s$,
\begin{equation*}
\begin{aligned}
\partial_s\tilde{\zeta}(s) &= -\cosh(\lambda-\mu)\sin(s)\sigma_0 + \cosh(\lambda+\mu)\cos(s)\sigma_1\\
&\qquad +\sinh(\lambda+\mu)\cos(s)\sigma_2 - \sinh(\lambda-\mu)\sin(s)\sigma_3\ .
\end{aligned}
\end{equation*}
The result follows upon taking the inner product with \eqref{eqn:RotationVectorFieldOverGeodesic}.
\end{proof} 
\section{Representations and invariant sets}
\label{s:RepresentationsAndInvariantSets}
We now study globally hyperbolic, convex, Cauchy compact (GHCC) anti-de Sitter $(2,1)$-spacetimes from the point of view of the work \cite{Mes} of Mess, which was inspired by works of Thurston on hyperbolic 3-manifolds. From this perspective, whenever any such spacetime has convex, spacelike boundary, its universal cover embeds as a convex subset $M$ of $\AdS$ which is equivariant with respect to some quasi-Fuchsian representation $\rho$, referred to as its \emph{holonomy}. In this section, we consider the case where the boundary $\partial M$ is smooth and strongly convex and we study the elementary geometry of this set. Of particular interest to us will be the asymptotic behaviour of the unit normal vector field $N$. Its asymptotics in the radial direction are relatively straightforward to determine and are addressed in Lemma \ref{lemma:BoundsOfRadialComponentI}. Its asymptotics in the pseudoangular direction are, however, more challenging. It is here that our study diverges most significantly from the Riemannian case addressed in \cite{Sch}, as it is now necessary to control the rate at which $N$ becomes lightlike. This is precisely the content of Lemma \ref{lemma:GrowthRateOfTilt}, which will be proven using the ``H'' Lemma (Lemma \ref{lemma:HLemma}) of Section \ref{ss:TimelikeGeodesicsInTheMatrixModel}.

\subsection{Fuchsian representations}
\label{ss:FuchsianRepresentations}
Let $\Pi$ be the fundamental group of a compact surface $S$. We recall that a representation $\rho:\Pi\rightarrow\PSL$ is said to be \emph{Fuchsian} whenever it is discrete, faithful and torsion free. The set of conjugacy classes of Fuchsian representations identifies with the \emph{Teichm\"uller space} $\teich_\oprep(S)$ of $S$. Given two Fuchsian representations $\rho,\tilde{\rho}\in\teich_\oprep(S)$, we define their \emph{Lipschitz distance} $d_\Lip(\rho,\tilde{\rho})$ to be the infimal value of $\ln(D)$ for which there exists a $D$-bilipschitz $(\rho,\tilde{\rho})$-equivariant map $\phi:\hyp^2\rightarrow\hyp^2$. By the Arzela--Ascoli theorem and lower semi-continuity of the Lipschitz seminorm, this infimum is always attained, and it follows that $d_\Lip$ defines a metric over $\teich_\oprep(S)$.

The Lipschitz distance is related to the Thurston's asymmetric distance (see \cite{Thu2}), the latter being defined similarly, with the infimum being taken instead over $D$-Lipschitz homeomorphisms which are not necessarily bilipschitz. The above defined Lipschitz distance thus bounds Thurston's distance from above. Thurston also characterized his distance in terms of translation lengths, which yields in the present case the following useful estimate. Recall from Section~\ref{sec:homspacerps} that, for any Fuchsian representation $\rho:\Pi\rightarrow\PSL$, its \emph{translation length} $l_\rho:\Pi\rightarrow[0,\infty[$ is defined by
\begin{equation*}
l_\rho(\gamma) := \inf_{p\in\hyp^2}d\big(\rho(\gamma)\cdot p,p\big)\ .
\end{equation*}

\begin{lemma}
Let $\rho,\tilde{\rho}:\Pi\rightarrow\PSL$ be Fuchsian representations. Then
\begin{equation}
\label{eqn:thurstonlipschitz}
\sup_{\gamma\in\Pi\setminus\{e\}}\left|\ln\big(l_\rho(\gamma)\big) - \ln\big(l_{\tilde{\rho}}(\gamma)\big)\right| \leq d_\Lip(\rho,\tilde{\rho})\ .
\end{equation}
\end{lemma}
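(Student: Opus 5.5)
The plan is to compare translation lengths directly through an optimal bilipschitz equivariant map. Let $D:=e^{d_\Lip(\rho,\tilde{\rho})}$. As noted above, the infimum in the definition of $d_\Lip$ is attained, so there is a $D$-bilipschitz $(\rho,\tilde{\rho})$-equivariant map $\phi:\hyp^2\rightarrow\hyp^2$. That is, for all $\gamma\in\Pi$ and all $p\in\hyp^2$,
\begin{equation*}
\phi\big(\rho(\gamma)\cdot p\big)=\tilde{\rho}(\gamma)\cdot\phi(p)\ ,
\end{equation*}
and $D^{-1}d(p,q)\leq d(\phi(p),\phi(q))\leq D\,d(p,q)$. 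If one prefers not to rely on attainment, one can work with $D=e^{d_\Lip+\epsilon}$ and let $\epsilon\to 0$ at the end.

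Fix $\gamma\neq e$. Since $\rho(\gamma)$ is hyperbolic, $l_\rho(\gamma)=\lim_{n\rightarrow\infty}\frac{1}{n}d\big(\rho(\gamma)^n\cdot p,p\big)$ for any $p$, and likewise for $\tilde{\rho}$. This stable-length characterisation holds because the displacement of $\rho(\gamma)^n$ differs from $n\,l_\rho(\gamma)$ by at most $2d(p,\mathrm{axis})$. Equivariance gives
\begin{equation*}
d\big(\tilde{\rho}(\gamma)^n\cdot\phi(p),\phi(p)\big)=d\big(\phi(\rho(\gamma)^n\cdot p),\phi(p)\big)\ .
\end{equation*}
The bilipschitz bounds place this quantity between $D^{-1}$ and $D$ times $d(\rho(\gamma)^n\cdot p,p)$. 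Dividing by $n$ and letting $n\rightarrow\infty$ yields
\begin{equation*}
D^{-1}l_\rho(\gamma)\leq l_{\tilde{\rho}}(\gamma)\leq D\,l_\rho(\gamma)\ .
\end{equation*}

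A cheaper argument suffices for the upper bound alone. Take $p$ on the axis of $\rho(\gamma)$; then $l_{\tilde{\rho}}(\gamma)\leq d(\tilde{\rho}(\gamma)\phi(p),\phi(p))\leq D\,l_\rho(\gamma)$. Applying the same reasoning to $\phi^{-1}$, which is $D$-bilipschitz and $(\tilde{\rho},\rho)$-equivariant, gives the reverse inequality. This version avoids limits altogether.

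Both translation lengths are positive because the representations are Fuchsian, so taking logarithms gives $|\ln l_\rho(\gamma)-\ln l_{\tilde{\rho}}(\gamma)|\leq\ln D=d_\Lip(\rho,\tilde{\rho})$. Taking the supremum over $\gamma$ completes the argument. There is no real obstacle: the only points needing care are that $\phi^{-1}$ is again equivariant and bilipschitz (immediate, since $\phi$ is a bilipschitz bijection), and the attainment of the infimum, which the $\epsilon$ argument sidesteps in any case.
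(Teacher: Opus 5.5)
Your proposal is correct, and its ``cheaper'' version is the paper's argument lifted to $\hyp^2$: the paper projects $\phi$ to the quotient surfaces and pushes forward the closed $\rho$-geodesic of $\gamma$, obtaining a curve freely homotopic to the $\tilde{\rho}$-geodesic and of length at most $D\,l_\rho(\gamma)$, with the reverse inequality coming from symmetry, which is exactly your displacement estimate at a point on the axis followed by passing to $\phi^{-1}$. Your stable-length variant is a valid alternative that uses both bilipschitz bounds directly, so it does not need $\phi$ to be invertible.
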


\begin{remark}
By \cite{Thu2}, the left-hand side of \eqref{eqn:thurstonlipschitz} is equal to the Thurston distance between $\rho$ and $\tilde\rho$.
\end{remark}

\begin{proof} Indeed, denote $d:=d_\Lip(\rho,\tilde{\rho})$ and $D:=e^d$. Denote $S:=\hyp^2/\rho(\Pi)$ and $\tilde{S}:=\hyp^2/\tilde{\rho}(\Pi)$. For all non-trivial $\gamma\in\Pi$, let $c(\gamma)$ (resp. $\tilde{c}(\gamma)$) denote the unique closed geodesic freely homotopic to $\rho(\gamma)$ (resp. $\tilde{\rho}(\gamma)$) in $S$ (resp. $\tilde{S}$). Let $\hat{\phi}:\hyp^2\rightarrow\hyp^2$ be a $(\rho,\tilde{\rho})$-equivariant $D$-bilipschitz map, and let $\phi$ denote its projection to $S$. For all non-trivial $\gamma\in\Pi$,
\begin{equation*}
l_\rho(\gamma) = \Length\big(c(\gamma)\big) \geq \frac{1}{D}\Length\big((\phi\circ c)(\gamma)\big)\ .
\end{equation*}
Since $\tilde{c}(\gamma)$ minimizes length in the free homotopy class of $\tilde{\rho}(\gamma)$, it follows that
\begin{equation*}
l_\rho(\gamma) \geq \frac{1}{D}\Length\big(\tilde c(\gamma)\big) = \frac{1}{D} l_{\tilde \rho}(\gamma)\ .
\end{equation*}
Taking logarithms yields
\begin{equation*}
\ln\big(l_{\tilde{\rho}}(\gamma)\big) - \ln\big(l_{\rho}(\gamma)\big) \leq \ln(D) = d\ .
\end{equation*}
Likewise, by symmetry,
\begin{equation*}
\ln\big(l_\rho(\gamma)\big) - \ln\big(l_{\tilde{\rho}}(\gamma)\big) \leq \ln(D) = d\ .
\end{equation*}
The result follows upon combining these estimates.
\end{proof}

For any Fuchsian representation $\rho:\Pi\rightarrow\PSL$, and for any $p\in\hyp^2$, we define the function $l_{\rho,p}:\Pi\rightarrow[0,\infty[$ by
\begin{equation}
\label{eqn:defn:PointedLengthFunction}
l_{\rho,p}(\gamma) := d\big(\rho(\gamma)\cdot p,p\big)\ .
\end{equation}
Trivially, for all $\gamma\in\Pi$,
\begin{equation*}
l_\rho(\gamma) = \inf_{p\in\hyp^2}l_{\rho,p}(\gamma)\ .
\end{equation*}
\begin{lemma}
\label{lemma:HowKappaIsReallyUsed}
Let $\rho,\tilde{\rho}:\Pi\rightarrow\PSL$ be Fuchsian representations, and let $d:=d_\Lip(\rho,\rho')$ denote the Lipschitz distance between the two. For every point $p\in\hyp^2$,
\begin{equation}
\label{eqn:HowKappaIsReallyUsed}
\limsup_{\left|\gamma\right|\rightarrow\infty}\frac{\left|l_{\rho,p}(\gamma)-l_{\tilde{\rho},p}(\gamma)\right|}{l_{\rho,p}(\gamma)+l_{\tilde{\rho},p}(\gamma)} \leq \tanh(d/2)\ ,
\end{equation}
where $|\gamma|$ here denotes the word length of $\gamma$ for an arbitrary finite set of generators of $\Pi$.
\end{lemma}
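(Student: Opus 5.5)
We take $\hat\phi:\hyp^2\to\hyp^2$ to be a $(\rho,\tilde\rho)$-equivariant $D$-bilipschitz map with $D:=e^d$; by the discussion preceding the previous lemma, such a map realising the infimum exists. We then compare $l_{\tilde\rho,p}(\gamma)$ with $l_{\rho,p}(\gamma)$ through $\hat\phi$. Set $q:=\hat\phi(p)$ and $c:=d(p,q)$. By equivariance, $\hat\phi(\rho(\gamma)\cdot p)=\tilde\rho(\gamma)\cdot q$. The bilipschitz property then gives
\begin{equation*}
D^{-1}l_{\rho,p}(\gamma)\leq d\big(\tilde\rho(\gamma)\cdot q,q\big)\leq D\,l_{\rho,p}(\gamma)\ .
\end{equation*}
By the triangle inequality, $d(\tilde\rho(\gamma)\cdot q,q)$ differs from $l_{\tilde\rho,p}(\gamma)$ by at most $2c$, since $\tilde\rho(\gamma)$ is an isometry. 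We therefore obtain
\begin{equation*}
D^{-1}l_{\rho,p}(\gamma)-2c\leq l_{\tilde\rho,p}(\gamma)\leq D\,l_{\rho,p}(\gamma)+2c\ .
\end{equation*}

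Next we show that $l_{\rho,p}(\gamma)\to\infty$ as $|\gamma|\to\infty$. This holds because $\rho$ is discrete and faithful, so only finitely many $\gamma$ satisfy $l_{\rho,p}(\gamma)\leq R$. More quantitatively, the \v{S}varc--Milnor lemma makes the orbit map a quasi-isometry, so $l_{\rho,p}(\gamma)\geq a|\gamma|-b$ for some $a>0$. In either form, the additive error $2c$ is negligible relative to $l_{\rho,p}(\gamma)+l_{\tilde\rho,p}(\gamma)$ as $|\gamma|\to\infty$.

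It remains to treat the elementary function $f(t):=|1-t|/(1+t)$ for $t\in[D^{-1},D]$. Write $t:=l_{\tilde\rho,p}(\gamma)/l_{\rho,p}(\gamma)$. The quotient in \eqref{eqn:HowKappaIsReallyUsed} equals $f(t)$, and by the previous steps $t$ lies in $[D^{-1}-o(1),D+o(1)]$. The function $f$ is symmetric under $t\mapsto 1/t$ and increasing in $|\ln t|$. Its maximum over this interval is therefore attained at $t=D$, giving
\begin{equation*}
f(D)=\frac{D-1}{D+1}=\frac{e^d-1}{e^d+1}=\tanh(d/2)\ .
\end{equation*}
Taking the $\limsup$ and using continuity of $f$ yields \eqref{eqn:HowKappaIsReallyUsed}.

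None of these steps is a serious obstacle. The only point needing care is justifying that word length tending to infinity forces $l_{\rho,p}(\gamma)\to\infty$, which follows from properness of the action of a Fuchsian group. Existence of the optimal bilipschitz map was already asserted in the text. Even if it were not, using a $D'$-bilipschitz map with $D'$ slightly larger than $e^d$ and letting $D'\downarrow e^d$ gives the same bound.
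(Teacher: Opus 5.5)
Your proof is correct and follows essentially the same route as the paper: an optimal equivariant $D$-bilipschitz map gives $l_{\tilde\rho,p}(\gamma)\le D\,l_{\rho,p}(\gamma)+C$ and the reverse inequality, so the ratio $l_{\tilde\rho,p}(\gamma)/l_{\rho,p}(\gamma)$ accumulates in $[D^{-1},D]$, which the map $x\mapsto(1-x)/(1+x)$ sends to $[-\tanh(d/2),\tanh(d/2)]$. The differences are minor. You get the reverse inequality from the lower bilipschitz bound rather than ``by symmetry''. You also explicitly justify $l_{\rho,p}(\gamma)\to\infty$ as $|\gamma|\to\infty$ via properness, a step the paper uses only implicitly when passing to the $\liminf$ and $\limsup$ of the ratio.
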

\begin{proof}Denote $d:=d_\Lip(\rho,\tilde{\rho})$ and denote $D:=e^d$. Let $\phi:\hyp^2\rightarrow\hyp^2$ be a $(\rho,\tilde{\rho})$-equivariant $D$-bilipschitz map. For all non-trivial $\gamma\in\Pi$,
\begin{equation*}
\begin{aligned}
l_{\tilde{\rho},p}(\gamma) &= d\big(\tilde{\rho}(\gamma)\cdot p,p\big)\\
&\leq d\big(\tilde{\rho}(\gamma)\cdot p,\tilde{\rho}(\gamma)\cdot\phi(p)\big) + d\big(\tilde{\rho}(\gamma)\cdot\phi(p),\phi(p)\big) + d\big(\phi(p),p\big)\\
&\leq 2d\big(\phi(p),p\big) + d\big(\phi(\rho(\gamma)\cdot p),\phi(p)\big)\\
&\leq 2d\big(\phi(p),p\big) + D\cdot d\big(\rho(\gamma)\cdot p,p\big)\\
&\leq C + D\cdot l_{\rho,p}(\gamma)\ ,
\end{aligned}
\end{equation*}
for some $C$ independent of $\gamma$. By symmetry,
\begin{equation*}
l_{\rho,p}(\gamma) \leq C + D\cdot l_{\tilde{\rho},p}(\gamma)\ .
\end{equation*}
Hence,
\begin{equation*}
\frac{1}{D} \leq \liminf_{\left|\gamma\right|\rightarrow\infty}\frac{l_{\tilde{\rho},p}(\gamma)}{l_{\rho,p}(\gamma)}
\leq \limsup_{\left|\gamma\right|\rightarrow\infty}\frac{l_{\tilde{\rho},p}(\gamma)}{l_{\rho,p}(\gamma)}
\leq D\ .
\end{equation*}

Consider now the M\"obius map
\begin{equation*}
M(x) := \frac{1-x}{1+x}\ .
\end{equation*}
This map sends the interval $[1/D,D]$ to the interval $[-\tanh(d/2),\tanh(d/2)]$. Thus, since
\begin{equation*}
\frac{\left|l_{\rho,p}(\gamma)-l_{\tilde{\rho},p}(\gamma)\right|}{l_{\rho,p}(\gamma)+l_{\tilde{\rho},p}(\gamma)}
=\left|M\bigg(\frac{l_{\tilde{\rho},p}(\gamma)}{l_{\rho,p}(\gamma)}\bigg)\right| ,
\end{equation*}
the result follows.
\end{proof}

\subsection{Quasi-Fuchsian representations}
\label{ss:QuasiFuchsianRepresentations}
Recall that a representation $\rho:=(\rho_r,\rho_l):\Pi\rightarrow\PSL\times\PSL$ is said to be \emph{quasi-Fuchsian} whenever each of its two components $\rho_r$ and $\rho_l$ is Fuchsian. The fundamental invariant
\begin{equation}
\label{eqn:DefinitionOfKappa}
\kappa_\rho := \tanh\big(d_\Lip(\rho_r,\rho_l)/2\big) < 1
\end{equation}
will play a key role throughout what follows.

Let $\Lambda:=\Lambda_\rho$ denote the limit set of $\rho$, as in Section \ref{ss:EinsteinSpace}. Let $M\subseteq\AdS$ be a strongly convex, $\rho$-invariant subset with non-trivial interior and smooth, spacelike boundary such that $\ideal M=\Lambda$. The representation $\rho$ acts properly discontinuously on $M$, the quotient $M/\rho(\Pi)$ is a GHCC anti-de Sitter spacetime, and in \cite{Mes} (see also \cite{Bar3}), Mess showed that every GHCC anti-de Sitter spacetime arises in this manner.

Recall that the \emph{invisible domain} $\Inv(\Lambda)$ of $\Lambda$ is the set of all points of $\AdS$ that lie in spacelike relation to every point of $\Lambda$. Equivalently, it is the set of all points that cannot be joined to $\Lambda$ by a lightlike segment. Its closure is the dual convex subset of the convex hull $\Conv(\Lambda)$ of $\Lambda$ (see, for example, \cite{FS4,BS2}). The representation $\rho$ acts properly discontinuously on $\Inv(\Lambda)$, the quotient $M/\rho(\Pi)$ is a GHMC anti-de Sitter spacetime, and every GHMC anti-de Sitter spacetime arises in this manner.

The invisible domain can also be characterized as the envelope of planes tangent to $\Ein$ along $\Lambda$. Since every tangent plane of $\Ein$ along $\Lambda$ is also a support tangent of $M$, it follows that $M$ is contained in $\Inv(\Lambda)$. Taking quotients, we see that the GHCC spacetime $M/\rho(\Pi)$ embeds into the GHMC spacetime $\Inv(\Lambda)/\rho(\Pi)$.

We denote by $\partial M$ the boundary of $M$, viewed as a submanifold with boundary of $\AdS$. The topological closure of $M$ in $\AdS\cup\Ein$ is the set $\overline{M}:=M\cup\Lambda$, and the topological boundary of this closure is $\partial M\cup\Lambda$. Note that, since $\overline{M}$ is a convex set with non-trivial interior, its boundary is homeomorphic to the $2$-sphere. Since $\Lambda$ is a Jordan curve, it follows by the Jordan curve theorem that $\partial M$ consists of two connected components, each homeomorphic to the $2$-disk. Let $\Sigma$ be a component of $\partial M$. $\Sigma$ is a $\rho$-invariant, strongly convex, smooth spacelike surface in $\AdS$ and $\partial_\infty\Sigma\subseteq\Lambda$.

We denote by $N$ the outward-pointing unit normal vector field over $\partial M$. We will study the asymptotic behaviour of $N$ by comparing it to certain well-chosen vector fields. We first pick a base point $p_0\in\AdS$ lying in the invisible domain of $\Lambda$, and we consider the \emph{unit radial vector field} $e_r$ pointing away from this point.

Let $\Omega^s:=\Omega^s_{p_0}\subseteq\AdS$ denote the set of all points in spacelike relation to $p_0$, let $\ideal\Omega$ denote its ideal boundary in $\ideal\AdS$ and note that
\begin{equation*}
\ideal M=\ideal\Sigma=\Lambda\subseteq\partial_\infty\Omega\ .
\end{equation*}
In particular, the complements of $\Omega$ in $\Sigma$ and $M$ are compact. Note that, using pseudospherical polar coordinates of $\Omega$, we may view $e_r$ as the first component of an orthonormal frame $(e_r,e_\theta,e_t)$ of $\Omega$. We will make more use of this frame in Section \ref{ss:AutormorphicVectorFields}, below.

The radial component of the normal vector is controlled as follows.
\begin{lemma}
\label{lemma:BoundsOfRadialComponentI}
There exists a compact subset $K\subseteq\Sigma$, and $C>0$ such that, for all $p\in\Sigma\setminus K$,
\begin{equation}
\label{eqn:BoundsOfRadialComponentI}
\frac{1}{C} \leq -\langle N(p),e_r(p)\rangle \leq C\ .
\end{equation}
\end{lemma}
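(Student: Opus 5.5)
Since the base point $p_0$ is at our disposal, we will take it in the interior of $M$; this is allowed because $M\subseteq\Inv(\Lambda)$. The idea is to turn $\langle N,e_r\rangle$ into an ambient inner product in $\mathbb{R}^{2,2}$, and then use cocompactness of the $\rho$-action on $\Sigma$ to reduce to a compactness argument near $\Lambda$.

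\emph{Step 1: a formula for the radial component.} Work in the quadric model, lifting to vectors with $\|p\|^2=\|p_0\|^2=-1$. For $p\in\Sigma\cap\Omega^s$ write $r_a:=r_a(p)$, so that $\langle p,p_0\rangle=\pm\cosh(r_a)$. The unit radial vector at $p$ is then
\begin{equation*}
e_r(p)=\frac{\cosh(r_a)\,p\mp p_0}{\sinh(r_a)}\ .
\end{equation*}
Since $N(p)\in\opT_p\AdS=p^\perp$, we have $\langle N(p),p\rangle=0$, and therefore
\begin{equation*}
-\langle N(p),e_r(p)\rangle=\pm\frac{\langle N(p),p_0\rangle}{\sinh(r_a)}\ .
\end{equation*}
Because $\sinh(r_a)\sim\cosh(r_a)=|\langle p,p_0\rangle|$ as $r_a\to\infty$, it suffices to bound the ratio $\langle N(p),p_0\rangle/|\langle p,p_0\rangle|$ above and below by positive constants for $p$ outside a compact set. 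The sign conventions are fixed by requiring that $p_0$ lie on the inner side of the tangent plane of $\Sigma$ at $p$.

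\emph{Step 2: reduction by equivariance.} Since $\Sigma/\rho(\Pi)$ is compact, fix a compact fundamental domain $K_0\subseteq\Sigma$. Any $p\in\Sigma$ can then be written $p=\rho(\gamma)q$ with $q\in K_0$. By invariance of $\langle\cdot,\cdot\rangle$ and equivariance of $N$,
\begin{equation*}
\frac{\langle N(p),p_0\rangle}{|\langle p,p_0\rangle|}=\frac{\langle N(q),x_\gamma\rangle}{|\langle q,x_\gamma\rangle|}\ ,\qquad x_\gamma:=\rho(\gamma)^{-1}p_0\ .
\end{equation*}
This expression is invariant under rescaling $x_\gamma$. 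As $p$ leaves compact sets, $|\gamma|\to\infty$, and because the orbit of $p_0$ accumulates only on the limit set, the projective classes $[x_\gamma]$ accumulate only on $\Lambda$. It therefore suffices to show the following claim: the function
\begin{equation*}
F(q,\ell):=\frac{\langle N(q),\ell\rangle}{|\langle q,\ell\rangle|}
\end{equation*}
is continuous and strictly positive on $K_0\times\hat\Lambda$, where $\hat\Lambda$ is a compact set of null representatives of $\Lambda$ in $\mathbb{R}^{2,2}$. Continuity and compactness then give two-sided bounds on a neighbourhood, and hence for all $\gamma$ outside a finite set; this determines the compact set $K$ in the statement.

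\emph{Step 3: the claim, which is the main obstacle.} There are two points to check.
\begin{itemize}
\item[(a)] The denominator does not vanish. Indeed $q\in\Sigma\subseteq\Inv(\Lambda)$, so $q$ is in spacelike relation to every point of $\Lambda$, which gives $\langle q,\ell\rangle\neq0$.
\item[(b)] The numerator does not vanish, and has the correct sign. Suppose that $\langle N(q),\ell\rangle=0$. Then $\ell$ lies in the ideal boundary of the tangent (support) plane $P_q$ of $\Sigma$ at $q$. Since $\Lambda\subseteq\overline M$ and $\overline M$ is convex, the geodesic ray from $q$ to $\ell$ lies in $\overline M\cap P_q$. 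This contradicts strong convexity at $q$, which forces $P_q\cap\overline M=\{q\}$ near $q$. So the numerator never vanishes. Since $\ell$ lies on the same side of $P_q$ as $M$ and $p_0$, its sign is the one fixed in Step 1.
\end{itemize}
The delicate part is making the limiting argument rigorous: the normalized vectors $x_\gamma/|\langle q,x_\gamma\rangle|$ must converge, uniformly in $q\in K_0$, to null vectors representing points of $\Lambda$. I would handle this by normalizing instead with a fixed Euclidean norm on $\mathbb{R}^{2,2}$, which makes the set of normalized vectors precompact with all limit points in $\hat\Lambda$. This is legitimate because the ratio in Step 2 is independent of the normalization.
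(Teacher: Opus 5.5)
Your proposal is correct and is essentially the paper's argument. The paper also uses $\rho$-equivariance to move to a compact fundamental domain $\Sigma_0$. It then notes that the transported, negated radial vector converges to a unit direction $\xi$ pointing at a point of $\Lambda$, and uses invisibility of $\Lambda$ together with strong convexity to bound $\langle N(p),\xi\rangle$ between positive constants over $\Sigma_0$. Your ambient identity $-\langle N,e_r\rangle=-\coth(r_a)\,\langle N(p),p_0\rangle/\langle p,p_0\rangle$ is the same computation in $\mathbb{R}^{2,2}$: for $\ell\in\Lambda$, the ratio $-\langle N(q),\ell\rangle/\langle q,\ell\rangle$ equals $\langle N(q),\xi\rangle$, where $\xi$ is the unit direction from $q$ to $\ell$.

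One small tidy-up: use this signed ratio in place of your $F$ with $|\langle q,\ell\rangle|$. Your $F$ changes sign under $\ell\mapsto-\ell$, so it is not positive on all of $\hat\Lambda$. The signed ratio is invariant under every nonzero rescaling, so no sign choices on $\hat\Lambda$, or on which side of $P_q$ contains $p_0$, are needed.
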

\begin{proof} For all $p\in\Sigma$, let $\pi_p^\pa(\Lambda)\subseteq\opU_p\AdS$ denote the radial projection of $\Lambda$ into the space of unit, spacelike tangent vectors at $p$. Since every such point lies in the invisible domain of $\Lambda$, this is always a compact set of unit, spacelike vectors over $p$.

Let $\Sigma_0$ be a fundamental domain of $\Sigma$. For $p\in\Sigma_0$ and $q\in\Lambda$, let $\eta$ denote the unit speed spacelike geodesic from $p$ to $q$ such that $\eta(0)=p$. Its derivative $\xi:=\partial\eta(0)$ is an element of $\pi_p^\pa(\Lambda)$, and every element of $\pi_p^\pa(\Lambda)$ arises in this manner. By strict convexity, $\xi$ is strictly inward-pointing, so that
\begin{equation*}
\langle N(p),\xi\rangle > 0\ .
\end{equation*}
On the other hand, trivially,
\begin{equation*}
\langle N(p),\xi\rangle < \infty\ .
\end{equation*}
It follows by compactness of $\Lambda$ and $\Sigma_0$ that there exists $C>0$ such that, for all $p\in\Sigma_0$, and for all $\xi\in\pi_p^\pa(\Lambda)$,
\begin{equation}
\label{eqn:proof:BoundsOfRadialComponentI}
\frac{2}{C} < \langle N(p),\xi\rangle < \frac{C}{2}\ .
\end{equation}

We now conclude using $\rho$-invariance. Let $(p_m)_\minn$ be a diverging sequence of points of $\Sigma$. For all $m$, let $\gamma_m\in\Pi$ be such that $\tilde{p}_m:=\rho(\gamma_m)\cdot p_m$ is an element of $\Sigma_0$. Suppose furthermore that $(\tilde{p}_m)_\minn$ converges to some limit point $\tilde{p}_\infty$, say. For all $m$, denote $q_m:=\rho(\gamma_m)\cdot p_0$ and $\xi_m:=-D\rho(\gamma_m)(p_m)\cdot e_r(p_m)$. Since $(q_m)_\minn$ diverges, we may suppose that it converges to some point $q_\infty$, say, of $\Lambda$. It then follows that $(\xi_m)_\minn$ converges to some point $\xi_\infty$, say, of $\pi_{\tilde{p}_\infty}^\pa(\Lambda)$. Consequently,
\begin{equation*}
\begin{aligned}
\lim_{m\rightarrow\infty}\langle N(p_m),e_r(p_m)\rangle
&=\lim_{m\rightarrow\infty}\langle D\rho(\gamma_m)(p_m)\cdot N(p_m),D\rho(\gamma_m)(p_m)\cdot e_r(p_m)\rangle\\
&=\lim_{m\rightarrow\infty}-\langle N(\tilde{p}_m),\xi_m\rangle\\
&=-\langle N(\tilde{p}_\infty),\xi_\infty\rangle\ ,
\end{aligned}
\end{equation*}
and the result now follows by \eqref{eqn:proof:BoundsOfRadialComponentI}.
\end{proof}
A similar argument yields the following more general result.
\begin{lemma}
For any $\rho$-invariant vector field $X$ over $M$, there exists a compact subset $K\subseteq M$ and $C>0$ such that, for all $p\in M\setminus K$,
\begin{equation*}
\left|\langle X(p),e_r(p)\rangle\right| < C\ .
\end{equation*}
\end{lemma}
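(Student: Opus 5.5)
We follow the proof of Lemma \ref{lemma:BoundsOfRadialComponentI}, replacing $N$ by $X$ and using only $\rho$-invariance and continuity. Fix a compact fundamental domain $M_0$ for the action of $\rho(\Pi)$ on $M$; it exists because $M/\rho(\Pi)$ is a compact GHCC spacetime. By continuity of $X$ and compactness of $M_0$, the tangent vectors $X(p)$, $p\in M_0$, form a compact subset of $\opT\AdS$. For each $p\in M$, the set $\pi_p^\pa(\Lambda)$ of unit spacelike directions at $p$ pointing towards $\Lambda$ is compact, since $M\subseteq\Inv(\Lambda)$. Moreover, $\bigcup_{p\in M_0}\pi_p^\pa(\Lambda)$ is a compact subset of the unit spacelike tangent bundle, because $\Lambda$ is compact and these directions depend continuously on the base point and on the point of $\Lambda$. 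The pairing $(p,\xi)\mapsto\langle X(p),\xi\rangle$ is continuous, hence bounded on this compact set, say by $C/2$.

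We then argue by contradiction using invariance. Suppose there is a diverging sequence $(p_m)_\minn$ in $M$ with $|\langle X(p_m),e_r(p_m)\rangle|\geq C$; here $p_m$ eventually lies in $\Omega^s_{p_0}$, so $e_r(p_m)$ is defined. Choose $\gamma_m\in\Pi$ with $\tilde p_m:=\rho(\gamma_m)\cdot p_m\in M_0$, and pass to a subsequence along which $\tilde p_m\to\tilde p_\infty$. Since $X$ is $\rho$-invariant, $D\rho(\gamma_m)\cdot X(p_m)=X(\tilde p_m)$. Since $\rho(\gamma_m)$ is an isometry, $D\rho(\gamma_m)\cdot e_r(p_m)$ is the unit radial vector at $\tilde p_m$ pointing away from $q_m:=\rho(\gamma_m)\cdot p_0$. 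Because $p_m$ diverges, the $\gamma_m$ diverge in $\Pi$. By proper discontinuity on $\Inv(\Lambda)$, the points $q_m$ therefore accumulate on $\ideal M=\Lambda$, say $q_m\to q_\infty\in\Lambda$. The unit spacelike vector at $\tilde p_m$ pointing away from $q_m$ then converges to $-\xi_\infty$, where $\xi_\infty\in\pi^\pa_{\tilde p_\infty}(\Lambda)$ is the direction towards $q_\infty$. Hence
\begin{equation*}
\langle X(p_m),e_r(p_m)\rangle=\langle X(\tilde p_m),D\rho(\gamma_m)\cdot e_r(p_m)\rangle\longrightarrow-\langle X(\tilde p_\infty),\xi_\infty\rangle\ ,
\end{equation*}
which has absolute value at most $C/2$. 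This contradicts the assumed lower bound $C$. Taking $K$ to be the complement of a suitable neighbourhood of infinity yields the claim, with $C$ enlarged if necessary.

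The main obstacle is the convergence of the transported radial direction. We must check that the unit tangent at $\tilde p_m$ of the spacelike geodesic from $q_m$ converges to the direction towards $q_\infty$. Two things require care here: $q_m$ must stay in spacelike relation to $\tilde p_m$, and the limiting direction must be spacelike rather than degenerating to a null direction. Both follow because $\tilde p_\infty$ lies in $\Inv(\Lambda)$, an open set, so it is in spacelike relation to $q_\infty$ and to every nearby point of $\overline{\AdS}$. In the projective (Kleinian) model, the direction from a point to a nearby ideal point depends continuously on both, which gives the convergence. A second point to verify is that $q_m$ converges to $\Lambda$ and not to some other part of $\Ein$. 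This holds because the orbit of $p_0$ accumulates exactly on the limit set, by minimality in Lemma \ref{def:limset} together with discreteness.
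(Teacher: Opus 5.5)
Your argument is essentially the paper's proof: the paper obtains this lemma by rerunning the proof of Lemma~\ref{lemma:BoundsOfRadialComponentI} with $N$ replaced by $X$, exactly as you do, and it likewise just asserts that $q_m=\rho(\gamma_m)\cdot p_0$ subconverges to a point of $\Lambda$. Your justification of that step needs repair, though, for two reasons. First, minimality of $\Lambda$ gives the wrong inclusion: every nonempty closed invariant set \emph{contains} $\Lambda$, not the other way round. Second, proper discontinuity on $\Inv(\Lambda)$ does not rule out accumulation on the null boundary of $\Inv(\Lambda)$ inside $\AdS$.

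A correct argument is as follows. Apply a Cartan decomposition to each factor of $\rho(\gamma_m)$. After passing to a subsequence, $\rho(\gamma_m)$ then converges to the constant map $a^+$, uniformly on compact sets away from the lightcone of some ideal point $a^-$. Both $a^\pm$ lie in $\Lambda$, because $\Lambda$ is the graph of the conjugating homeomorphism. Finally, $p_0\in\Inv(\Lambda)$ is not on the lightcone of $a^-$, so $q_m\to a^+\in\Lambda$. If $p_0\in M$ there is a shorter route: $q_m$ is then a divergent sequence in $M$, so it accumulates on $\overline{M}\setminus M=\Lambda$.
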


\subsection{The cylindrical coordinate frame}
\label{ss:TheCylindricalCoordinateFrame}
We now control the rate at which the normal vector field becomes lightlike. To this end, let $\zeta_0\in\geodesics$ be a future oriented, timelike geodesic in $\AdS$ such that $p_0\in\zeta_0$. Cylindrical polar coordinates around this geodesic yield a natural orthonormal frame $(f_r,f_\theta,f_t)$ of $\AdS$ as follows. We denote first by $f_r$ the unit, radial vector field pointing away from $\zeta_0$. We construct the two remaining components of this frame using the stabilizer $\Stab(\zeta_0)$ of $\zeta_0$ in $\PSL\times\PSL$. Indeed, note first that this group is isomorphic to $\mathbb{S}^1\times\mathbb{S}^1$. The subgroup
\begin{equation*}
\Stab(p_0)\subseteq\Stab(\zeta_0)
\end{equation*}
is a copy of $\mathbb{S}^1$ corresponding to \emph{rotations} of $\AdS$ \emph{around} the axis $\zeta_0$. We denote by $f_\theta$ the unit vector field over $\AdS\setminus\zeta_0$ in the direction of these rotations. Note that this is none other than the unit vector field parallel to the field $Y_{\zeta}$ studied in Section \ref{ss:VectorFieldsAroundTimelikeGeodesics}.

Finally, we define
\begin{equation*}
f_t := -f_r\times f_\theta\ .
\end{equation*}
This is the unit vector field in the direction of \emph{translations} of $\AdS$ \emph{along} the axis $\zeta_0$, and is none other than the unit vector field parallel to the field $X_\zeta$, also studied in Section \ref{ss:VectorFieldsAroundTimelikeGeodesics}.

The main estimate of this section will be best expressed in terms of the \emph{pseudoangle} $\angle_a(U_p,V_p)$ between two future oriented, unit timelike vectors in $\opU^+_p\AdS$ which we recall is defined to be the hyperbolic distance between $U_p$ and $V_p$ in $\hyp^2\cong \opU^+_p\AdS$. The rest of this section is devoted to proving the following estimate.
\begin{lemma}
\label{lemma:GrowthRateOfTilt}
For every $\rho$-invariant, unit timelike vector field $\xi$ over $M$ and for all $k>\kappa_\rho$, there exists a compact subset $K$ of $M$ and $C\geq 0$ such that, for all $p\in M\setminus K$,
\begin{equation}
\label{eqn:GrowthRateOfTilt}
\angle_a\big(\xi(p),f_t(p)\big) \leq k\cdot d(p_0,p) + C\ .
\end{equation}
\end{lemma}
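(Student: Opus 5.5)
We reduce to a compact fundamental domain by $\rho$-invariance, and then compare, through the full translation map $\tau$, the timelike geodesic tangent to $\xi(p)$ with the axis $\zeta_0$. Fix a compact fundamental domain $M_0$ for the action of $\rho(\Pi)$ on $M$; this is possible since $M/\rho(\Pi)$ is compact. For $p\in M\setminus K$, write $p=\rho(\gamma)\cdot q$ with $q\in M_0$. Let $\tilde\zeta_q$ be the future oriented timelike geodesic through $q$ tangent to $\xi(q)$. By invariance of $\xi$, the geodesic through $p$ tangent to $\xi(p)$ is $\tilde\zeta:=\rho(\gamma)\cdot\tilde\zeta_q$. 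By the intertwining lemma for $\tau$,
\begin{equation*}
\tau(\tilde\zeta)=\big(\rho_r(\gamma)\cdot\tau^r(\tilde\zeta_q),\rho_l(\gamma)\cdot\tau^l(\tilde\zeta_q)\big)\ .
\end{equation*}
Since $\tilde\zeta_q$ ranges over a compact family, the points $\tau^r(\tilde\zeta_q)$ and $\tau^l(\tilde\zeta_q)$ stay in a compact subset of $\hyp^2$. Set $x_r:=\tau^r(\zeta_0)$ and $x_l:=\tau^l(\zeta_0)$. Then, up to a uniform additive constant,
\begin{equation*}
2\lambda=d\big(\tau^r(\tilde\zeta),x_r\big)=l_{\rho_r,x_r}(\gamma)+O(1)\quad\text{and}\quad 2\mu=d\big(\tau^l(\tilde\zeta),x_l\big)=l_{\rho_l,x_l}(\gamma)+O(1)\ .
\end{equation*}
After conjugating, we may take $x_r=x_l$. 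Lemma \ref{lemma:HowKappaIsReallyUsed} then gives, for $|\gamma|$ large and any $k'\in(\kappa_\rho,k)$,
\begin{equation*}
|\lambda-\mu|\leq k'(\lambda+\mu)+C\ .
\end{equation*}

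Next we compute the pseudoangle in the normal form of Lemma \ref{lemma:ParametrizationOfSecondGeodesic}, with $\zeta=\zeta_0$ and $\tilde\zeta$ as above. At the point $\tilde\zeta(s)=p$, the tangent vector $\partial_s\tilde\zeta(s)$ is unit timelike and parallel to $\xi(p)$. Lemma \ref{lemma:EstimateOfTranslationField} and Lemma \ref{lemma:TangentComponentOfTranslationField} give $f_t$ there: we have $f_t=X_\zeta/\|X_\zeta\|$, with $-\|X_\zeta\|^2=\cosh^2(\lambda-\mu)\cos^2 s+\cosh^2(\lambda+\mu)\sin^2 s$. Hence
\begin{equation*}
\cosh\angle_a\big(\xi(p),f_t(p)\big)=\frac{\cosh(\lambda+\mu)\cosh(\lambda-\mu)}{\big(\cosh^2(\lambda-\mu)\cos^2 s+\cosh^2(\lambda+\mu)\sin^2 s\big)^{1/2}}\ .
\end{equation*}
Provided $s\in[\epsilon,\pi-\epsilon]$, this is at most $C\cosh(\lambda-\mu)$. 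Therefore $\angle_a\leq|\lambda-\mu|+C\leq k'(\lambda+\mu)+C$.

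It remains to relate $\lambda+\mu$ to $d(p_0,p)$ and to guarantee $s\in[\epsilon,\pi-\epsilon]$. By Lemma \ref{lemma:ParametrizationOfSecondGeodesic}, $\lambda+\mu$ is the maximal spacelike distance from $\zeta_0$ to $\tilde\zeta$. By Lemma \ref{lemma:DistanceBetweenPointsOfTimelikeGeodesics}, when $s\in[\epsilon,\pi-\epsilon]$ the distance $d(p_0,p)$ is at least $(\lambda+\mu)-C_2$. This yields $\angle_a\leq k'd(p_0,p)+C\leq k\,d(p_0,p)+C$.

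The main obstacle is the control of $s$. One must show that $p$ is not near the ends $s\approx0,\pi$ of $\tilde\zeta$ relative to the normalization, where $\tilde\zeta$ approaches $\zeta_0$ almost lightlike. I would argue this by exploiting the fact that $p$ lies in $M\subset\Inv(\Lambda)$ and that $p_0$ lies in the invisible domain. The point $s=\pi/2$ is the foot of the ``H'' geodesic $\eta$, and by Lemma \ref{lemma:HLemma} the endpoints of $\eta$ are $(\eta^{r,\pm},\eta^{l,\pm})$. As $\gamma$ diverges these converge towards $\rho(\gamma)^{\pm}$-type points of $\Lambda$. Meanwhile $p$ is within bounded distance, along $\tilde\zeta$, of a point of a compact translate, and it converges to $\Lambda$ while staying spacelike to $p_0$. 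This should force $p$ to stay a bounded timelike distance from the foot of $\eta$, which gives a uniform $\epsilon$. If this fails directly, the fallback is a contradiction argument with sequences $\gamma_m$ and limits in $\Ein$. In that argument, a point with $s_m\to0$ would be forced to converge to a point of $\Ein$ in lightlike relation to $p_0$, contradicting $\Lambda\subset\partial_\infty\Omega^s_{p_0}$.
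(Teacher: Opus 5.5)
Your computational core is correct and is essentially the paper's Lemma \ref{lemma:GrowthRateOfTiltStepI}: the bound $|\lambda-\mu|\leq k'(\lambda+\mu)+C$ from Lemma \ref{lemma:HowKappaIsReallyUsed}, the formula for $\cosh\angle_a$ via $X_{\zeta_0}$, and the comparison of $\lambda+\mu$ with $d(p_0,p)$. Working directly with the geodesic tangent to $\xi(p)$ is a legitimate variant; the paper instead uses boundary normal geodesics $\zeta_p$ together with $M\subseteq\tilde M_1\cup\tilde M_2$. However, the step you call the main obstacle carries the real content of the lemma, and you leave it unproved. Two localizations are needed, not one. Lemma \ref{lemma:DistanceBetweenPointsOfTimelikeGeodesics} requires the parameters of \emph{both} points to lie in $[\epsilon,\pi-\epsilon]$. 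The normalization of Lemma \ref{lemma:ParametrizationOfSecondGeodesic} uses elliptic isometries in $\Stab(\zeta_0)$, which translate along $\zeta_0$, so $p_0=\zeta_0(u)$ with $u$ depending on $\gamma$. The foot of the ``H'' geodesic on $\zeta_0$ is at $u=\pi/2$. At $u=0$ one gets $\cosh d(p_0,p)=\cosh(\lambda-\mu)|\cos s|$, so $d(p_0,p)\leq|\lambda-\mu|$ and your final inequality fails. Your sketch also misfires in two ways. First, ``bounded timelike distance from the foot'' is vacuous, since timelike geodesics of $\AdS$ are closed of length $\pi$; what you need is parameter distance at most $\pi/2-\epsilon$. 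Second, the fallback invokes invisibility of the wrong point. $s_m\to 0$ does not force a limit lightlike to $p_0$: with $u=\pi/2$ and $\sin(s_m)\cosh(\lambda+\mu)\gg\cosh(\lambda-\mu)$, $p_m$ converges projectively to $\sigma_1+\sigma_2$, which is not lightlike-related to $\sigma_1$. Invisibility of $p_0$ controls $u$; it is invisibility of $\rho(\gamma)^{-1}p$ that controls $s$.

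What is missing is the analogue of Lemmas \ref{lemma:ProjectionOntoGeodesicOfLambda} and \ref{lemma:LocationOfFurthestPoint}. In your setting it runs as follows. Pull back by $\rho(\gamma)^{-1}$, so the pair becomes $(\rho(\gamma)^{-1}\zeta_0,\tilde\zeta_q)$. Here $\tilde\zeta_q$ ranges over a compact family, and $\tau(\rho(\gamma)^{-1}\zeta_0)$ subconverges to a point $x\in\Lambda$. By Lemma \ref{lemma:HLemma}, the ideal endpoint of the ``H'' geodesic converges to $x$. Hence its foot on $\tilde\zeta_q$ converges to $\pi^\cyl(x)$ on the limit geodesic $\tilde\zeta_{q_\infty}$. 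Take a Kleinian chart centred at $q_\infty$ with $\tilde\zeta_{q_\infty}$ as the $y_0$-axis. A point of $\Ein$ with coordinates $(y_0,y_1,y_2)$ projects to parameter $\arctan(y_0)$. This is at parameter distance $\pi/2$ from $q_\infty$ exactly when the point leaves the chart, i.e.\ lies on $\ideal q_\infty^*$, i.e.\ is lightlike-related to $q_\infty$. Since $\overline{M_0}\subset\Inv(\Lambda)$ is compact, this gives a uniform $\epsilon$ for $s$. The same argument with the roles exchanged ($\tau(\tilde\zeta)$ converges to a point of $\Lambda$, and $p_0\in\Inv(\Lambda)$) gives the uniform bound on $u$; this is the paper's ``similarly'' step for $\tilde s_0$. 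With both in hand your route closes, and it is leaner than the paper's. Because $p$ sits at parameter $0$ on its own geodesic, you only need invisibility of a compact fundamental domain. You need neither the one-sided interval $[-\pi/2+\epsilon,-\epsilon]$ obtained from $\Sigma^*$ nor the covering $M\subseteq\tilde M_1\cup\tilde M_2$.
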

\begin{remark} In particular, this applies to the case where $\xi=N$ is the unit normal vector field.
\end{remark}

For all $p\in\Sigma$, we denote by $\zeta_p:\R\rightarrow\AdS$ the unit speed parametrized timelike geodesic such that
\begin{equation}
\label{eqn:defn:ZetaP}
\zeta_p(0) = p\qquad\text{and}\qquad \partial_s\zeta_p(0) = N(p)\ ,
\end{equation}
and we denote also by $\zeta_p$ its image in $\AdS$.

For all $p$, we denote by $\pi^\cyl_p(\Lambda)$ the radial projection of $\Lambda$ onto $\zeta_p$ and, by abuse of terminology, we denote also by $\pi^\cyl_p(\Lambda)$ its preimage under the parametrization $\zeta_p$.
\begin{lemma}
\label{lemma:ProjectionOntoGeodesicOfLambda}
There exists $\epsilon>0$ such that, for all $p\in\Sigma$,
\begin{equation*}
\pi_p^\cyl(\Lambda)\subseteq[-\pi/2+\epsilon,-\epsilon]\ .
\end{equation*}
\end{lemma}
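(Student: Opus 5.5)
The plan is to compute the projection explicitly in the linear model, read off the sign of the parameter from convexity, and obtain the uniform $\epsilon$ by compactness and $\rho$-invariance, as in the proof of Lemma \ref{lemma:BoundsOfRadialComponentI}.

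\textbf{Step 1: linear-algebraic description.} Lift to $\SL\subseteq\Mat$. For $p\in\Sigma$, write $\mathbf{P}$ for a lift of $p$ and $\mathbf{N}:=N(p)$. Then $\zeta_p(s)=\cos(s)\mathbf{P}+\sin(s)\mathbf{N}$, and the plane of $\zeta_p$ is $\langle\mathbf{P},\mathbf{N}\rangle$. Let $q\in\Lambda$ be represented by a null matrix $\mathbf{Q}$, defined up to a nonzero real scalar. Since $\mathbf{P}$ and $\mathbf{N}$ are orthonormal and timelike, the orthogonal projection of $\mathbf{Q}$ onto this plane is
\begin{equation*}
\pi_{\zeta_p}(\mathbf{Q})=-\langle\mathbf{Q},\mathbf{P}\rangle_\opmat\mathbf{P}-\langle\mathbf{Q},\mathbf{N}\rangle_\opmat\mathbf{N}\ .
\end{equation*}
Its radial projection is therefore $\zeta_p(s)$ with $\tan(s)=\langle\mathbf{Q},\mathbf{N}\rangle_\opmat/\langle\mathbf{Q},\mathbf{P}\rangle_\opmat$. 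This ratio is independent of the choice of representatives, and $s$ is well defined modulo $\pi$, consistent with projectivization. It thus suffices to show that this ratio is well defined and has a fixed sign, and then that it is bounded away from $0$ and $\infty$ uniformly in $p$ and $q$.

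\textbf{Step 2: nondegeneracy and sign.} First, $\langle\mathbf{Q},\mathbf{P}\rangle_\opmat\neq0$, because $p\in M\subseteq\Inv(\Lambda)$ cannot be joined to $q$ by a lightlike segment. Second, the totally geodesic plane tangent to $\Sigma$ at $p$ is $\{\mathbf{X}:\langle\mathbf{X},\mathbf{N}\rangle_\opmat=0\}$. It is a support plane of the convex set $\overline{M}=M\cup\Lambda$, so the sign of $\langle\mathbf{Q},\mathbf{N}\rangle_\opmat\langle\mathbf{Q},\mathbf{P}\rangle_\opmat$ is constant in $q$; this is the sign of the side containing $M$, with $N$ outward.

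It remains to rule out $\langle\mathbf{Q},\mathbf{N}\rangle_\opmat=0$. Suppose the support plane contained some $q\in\Lambda$. Its intersection with $\overline{M}$ is convex, so it would contain the spacelike segment from $p$ to $q$. This contradicts strong convexity of $\Sigma$ at $p$, since the second fundamental form is positive definite there, so the tangent plane meets $\overline{M}$ only at $p$ near $p$. Hence $s$ lies in one open quarter-period. Fixing the orientation and time-orientation conventions of the paper, namely $N$ outward-pointing and $\zeta_p$ future oriented, identifies this quarter-period as $]-\pi/2,0[$. I expect that only this sign bookkeeping needs care; for the other component of $\partial M$ the roles are exchanged by reversing the time orientation.

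\textbf{Step 3: uniformity.} The function
\begin{equation*}
F(p,q):=\langle\mathbf{Q},\mathbf{N}\rangle_\opmat/\langle\mathbf{Q},\mathbf{P}\rangle_\opmat
\end{equation*}
is continuous on $\Sigma\times\Lambda$ and invariant under the diagonal action of $\rho(\Pi)$, because the isometries $\rho(\gamma)$ preserve $\Sigma$, $N$ and $\Lambda$. Restrict $p$ to a compact fundamental domain $\Sigma_0$. By Step 2, $F$ is finite and nonzero of fixed sign on the compact set $\Sigma_0\times\Lambda$. It is therefore bounded away from $0$ and $\pm\infty$, which gives $\epsilon>0$ with $s\in[-\pi/2+\epsilon,-\epsilon]$ for all $p\in\Sigma_0$. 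By invariance, the same $\epsilon$ works for all $p\in\Sigma$.

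The main obstacle is Step 2: the strict exclusion of ideal points from the support plane. This is the only place where \emph{strong} convexity is used globally rather than infinitesimally. The sign identification of the quarter-period is a secondary point of care.
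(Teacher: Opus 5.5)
Your route differs from the paper's. The paper places $p$ at $(1,0,0)^t$ in a Kleinian chart with $\zeta_p$ the $x_0$-axis, and uses the dual surface $\Sigma^*$. The convex set $\tilde M$ bounded by $\Sigma$ and $\Sigma^*$ has support planes $T_p\Sigma=\{x_0=1\}$ and $p^*=\{x_0=-1\}$; the latter is tangent to $\Sigma^*$ at the point dual to $T_p\Sigma$. So $\tilde M$ lies in the slab between these two planes, and strong convexity of $\Sigma$ and $\Sigma^*$ pushes $\Lambda$ into a strictly narrower slab. Compactness and $\rho$-invariance then give uniformity. Confinement to that slab is what pins down the quarter-period $(-\pi/2,0)$. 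You instead obtain the two strict non-incidences separately: $\Lambda\cap p^*=\emptyset$ from $p\in\Inv(\Lambda)$, and $\Lambda\cap T_p\Sigma=\emptyset$ from strong convexity at $p$. This avoids duality altogether. These two facts, and your Steps 1 and 3, are correct.

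The gap is the identification of the quarter-period, which you defer as bookkeeping. The justification you indicate does not work as stated. The product $\langle X,\mathbf N\rangle_\opmat\langle X,\mathbf P\rangle_\opmat$ does not detect the side of $T_p\Sigma$ on $M$, because $p^*=\mathbf P^\perp$ can cut through $M$. Take the Fuchsian case, with $M$ the region between the surfaces at timelike distance $a$ below and $b$ above the invariant plane, where $a,b<\pi/2$ and $a+b>\pi/2$. For $p$ on the upper boundary, the point $\cos(t)\mathbf P-\sin(t)\mathbf N$ lies in $M$ for $t$ slightly larger than $\pi/2$, and there the product equals $-\sin(t)\cos(t)>0$. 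So there is no ``sign of the side containing $M$'' for this product; this is exactly what the paper's use of $\tilde M$ sidesteps.

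The repair is short:
\begin{itemize}
\item Constancy of the sign in $q$ already follows from connectedness of $\Lambda$, once you know both factors are nonvanishing.
\item To find the sign, let $\mathbf V$ be the unit tangent at $p$ of the spacelike geodesic $\cosh(r)\mathbf P+\sinh(r)\mathbf V$ from $p$ to $q$. Then $\mathbf Q=\mathbf P+\mathbf V$ represents $q$, and $\tan(s)=-\langle\mathbf V,\mathbf N\rangle_\opmat$.
\item This ray lies in $\overline M$ by convexity, and your Step 2 rules out $\mathbf V\in T_p\Sigma$. So $\mathbf V$ is strictly inward, $\langle\mathbf V,\mathbf N\rangle_\opmat>0$, and $s\in(-\pi/2,0)$.
\item The bound $2/C<\langle N(p),\xi\rangle<C/2$ from the proof of Lemma \ref{lemma:BoundsOfRadialComponentI} then gives the uniform $\epsilon$ immediately.
\end{itemize}

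One further correction: since $\zeta_p$ is defined by $\partial_s\zeta_p(0)=N(p)$ with $N$ outward, $\zeta_p$ is past oriented on the past component. So your convention ``$N$ outward and $\zeta_p$ future oriented'' cannot hold on both components. No time reversal is needed, because the computation above is identical on both.
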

\begin{proof} Note that the problem is $\rho$-invariant. Let $\Sigma_0$ be a fundamental domain of $\Sigma$ and choose $p\in\Sigma_0$. Choose a Kleinian chart such that $p=(1,0,0)^t$ and $\zeta_p$ is the upward-pointing $x_0$-axis. In this chart, the outward-pointing Minkowski unit normal to $\Sigma$ at $p$ is $N:=(1,0,0)^t$. Let $\Sigma^*$ denote the dual convex surface of $\Sigma$, and observe that $\Sigma^*$ contains the point $(-1,0,0)^t$ with outward-pointing Minkowski normal $(-1,0,0)^t$ at this point. Thus, if $\tilde{M}$ denotes the convex set bounded by $\Sigma$ and $\Sigma^*$, then $\tilde{M}$ is contained in the slab
\begin{equation*}
S:=\{(x_0,x_1,x_2)\ |\ -1\leq x_0\leq 1\}\ .
\end{equation*}
Since $\Sigma$ and $\Sigma^*$ are strongly convex, and since $\Lambda\subseteq\ideal\tilde{M}$, it follows that $\Lambda$ is contained in the slightly narrower slab
\begin{equation*}
S':=\{(x_0,x_1,x_2)\ |\ -1+\epsilon_1\leq x_0\leq 1-\epsilon_1\}\ ,
\end{equation*}
for some $\epsilon_1>0$, which can be chosen uniformly for all $p \in \Sigma_0$. The orthogonal projection of $\Lambda$ onto the $x_0$-axis is thus contained in the interval $[-1+\epsilon_1,1-\epsilon_1]$, and the preimage under $\zeta_p$ of this interval is contained in $[-\pi/2+\epsilon,-\epsilon]$, for some $\epsilon>0$, which may likewise be chosen uniformly for all $p \in \Sigma_0$. The result now follows by $\rho$-invariance.
\end{proof}

\begin{lemma}
\label{lemma:LocationOfFurthestPoint}
There exists a compact subset $K$ of $\Sigma$ and $\epsilon>0$ such that, for all $p\in\Sigma\setminus K$, the furthest point of $\zeta_p$ from $\zeta_0$ is an element of $\zeta_p([-\pi/2+\epsilon,-\epsilon])$.
\end{lemma}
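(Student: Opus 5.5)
We reduce the statement to Lemma~\ref{lemma:ParametrizationOfSecondGeodesic} and Lemma~\ref{lemma:ProjectionOntoGeodesicOfLambda}, with the control coming from the ``H'' Lemma. Fix $p\in\Sigma$ and apply an isometry normalizing $\zeta:=\zeta_0$ and $\tilde\zeta:=\zeta_p$ as in \eqref{eqn:NormalizedRightAndLeftProjectionsOfZeta}, with parameters $\lambda=\lambda(p)$ and $\mu=\mu(p)$. By Lemma~\ref{lemma:ParametrizationOfSecondGeodesic}, the furthest point of $\tilde\zeta$ from $\zeta$ is $\tilde\zeta(\pi/2)$ in that parametrization, up to sign. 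Here $\tilde\zeta(s)=\mathbf{M}_0\zeta(s)\mathbf{N}_0^{-1}$. Thus, up to a shift, the furthest point corresponds to the unique parameter value $s_*(p)$ at which $\pi_\zeta(\zeta_p(s))$ hits the end of the major axis. The claim is that, after reparametrizing by $\zeta_p(0)=p$, this value lies in $[-\pi/2+\epsilon,-\epsilon]$ modulo $\pi$.

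The key idea is to compare $s_*(p)$ with the set $\pi_p^\cyl(\Lambda)$. Lemma~\ref{lemma:ProjectionOntoGeodesicOfLambda} places $\pi_p^\cyl(\Lambda)$ uniformly inside $[-\pi/2+\epsilon,-\epsilon]$, so it suffices to show that, as $p$ diverges, the furthest point $\zeta_p(s_*(p))$ lies within distance $o(1)$ in the parameter of the radial projection onto $\zeta_p$ of some point of $\Lambda$.

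To produce that point of $\Lambda$, use the ``H'' Lemma. Let $\eta_p$ be the spacelike geodesic joining the furthest points of $\zeta_0$ and $\zeta_p$. Its endpoints are $\tau_\infty^{-1}(\eta^{r,\pm},\eta^{l,\pm})$, where $\eta^r$ and $\eta^l$ are the hyperbolic geodesics from $\tau^r(\zeta_0)$ to $\tau^r(\zeta_p)$ and from $\tau^l(\zeta_0)$ to $\tau^l(\zeta_p)$. As $p$ diverges in $\Sigma$, both $\tau^r(\zeta_p)$ and $\tau^l(\zeta_p)$ diverge in $\hyp^2$. This follows by $\rho$-invariance, $\rho$ acting properly on $\hyp^2\times\hyp^2$ via $\tau$ by quasi-Fuchsianity. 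Moreover, after passing to subsequences and using equivariance with $\rho(\gamma_m)$ bringing $p$ back to a fundamental domain, both diverge towards points $x^r$ and $x^l$ with $(x^r,x^l)\in\tau_\infty(\Lambda)$. This uses that the limit set is the graph $\Lambda_\rho$ and the compatibility \eqref{eqn:defn:EquivarianceOfLimitII}. Hence the forward endpoint $\eta_p^+$ converges to a point of $\Lambda$.

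The distance of the furthest points, $\lambda+\mu$, tends to infinity, so the segment of $\eta_p$ from $\zeta_p$ onward to $\eta_p^+$ becomes asymptotically the geodesic from the furthest point to an ideal point near $\Lambda$. Radially projecting $\eta_p^+$ onto $\zeta_p$ therefore returns essentially the foot point, since $\eta_p$ is orthogonal to $\zeta_p$ there. Arguing by contradiction with a diverging sequence $p_m$, push everything by $\rho(\gamma_m)$ to a compact set. The geodesics $\zeta_{\tilde p_m}$ then converge, and the transported furthest points converge to the radial projection onto $\zeta_{\tilde p_\infty}$ of a point of $\Lambda$. Their parameters therefore lie in $\pi^\cyl_{\tilde p_\infty}(\Lambda)\subseteq[-\pi/2+\epsilon,-\epsilon]$. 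This yields the statement with $\epsilon$ halved outside a compact $K$.

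The main obstacle is justifying this last limiting step. The transported furthest-point geodesics $\rho(\gamma_m)\eta_{p_m}$ must converge to a geodesic ending on $\Lambda$ and orthogonal to the limit timelike geodesic. This also requires choosing the correct one of the two antipodal furthest points, consistent with the sign ambiguity and the $\pm$ orientation in the ``H'' Lemma. I would handle this with the explicit orthogonality in the corollary of Lemma~\ref{lemma:GeometryOfTimelikeGeodesics} together with continuity of $\tau_\infty$.
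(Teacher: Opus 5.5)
Your outline matches the paper's proof: take a diverging sequence $p_m$, push back by $\rho(\gamma_m)$ so that $\tilde p_m\to\tilde p_\infty$, use the ``H'' Lemma to locate an ideal endpoint of the orthogonal geodesic through the furthest point, and conclude with Lemma~\ref{lemma:ProjectionOntoGeodesicOfLambda}. However, the step you defer as ``the main obstacle'' is the entire content of the lemma, and the argument you actually give does not establish it.

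You apply the ``H'' Lemma in the unpushed frame, to the pair $(\zeta_0,\zeta_{p_m})$, and obtain $\eta_{p_m}^+\to x\in\Lambda$. That information is lost under the diverging isometries $\rho(\gamma_m)$: convergence of $\eta^+_{p_m}$ to $\Lambda$ gives no control of $\rho(\gamma_m)\eta^+_{p_m}$. In the pushed frame, $\rho(\gamma_m)\eta_{p_m}^+$ is the endpoint lying \emph{beyond} $\tilde\zeta_m$ as seen from $\zeta_m:=\rho(\gamma_m)\zeta_0$. By the ``H'' Lemma, its $\tau_\infty$-components converge to the far endpoints of the hyperbolic geodesics from $q^r$ through $\tau^r(\tilde\zeta_\infty)$ and from $q^l$ through $\tau^l(\tilde\zeta_\infty)$, where $(q^r,q^l)$ is a limit of $\tau(\zeta_m)$. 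There is no reason for that pair to lie on $\Lambda$. For the same reason, your ``$o(1)$ in the parameter'' claim cannot be proved in the unpushed frame, since radial projection onto the diverging geodesics $\zeta_{p_m}$ is not uniformly continuous. Continuity of $\tau_\infty$ does not repair this.

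What is needed is to apply the ``H'' Lemma after pushing, to the pair $(\tilde\zeta_m,\zeta_m)$, with $\eta_m:=\rho(\gamma_m)\eta_{p_m}$ oriented from $\tilde\zeta_m$ \emph{towards} $\zeta_m$. Now $\tau(\tilde\zeta_m)\to\tau(\tilde\zeta_\infty)$ stays bounded, while $\tau(\zeta_m)$ diverges and subconverges to $\tau_\infty(q)$ for some $q\in\Lambda$. This is where your limit-set/graph argument belongs. Hence the endpoint of $\eta_m$ on the side of $\zeta_m$ converges to $q$.

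Your sign worry is then vacuous. In a Kleinian chart in which a timelike geodesic is the $y_0$-axis, the spacelike geodesics orthogonal to it at $(h,0,0)$ are lines in the horizontal plane $\{y_0=h\}$. So \emph{both} ideal endpoints of such a geodesic project radially to its foot. Hence $\tilde\zeta_m(t_m)$ is exactly the radial projection of an endpoint converging to $q$, giving $t_\infty\in\pi^\cyl_{\tilde p_\infty}(\Lambda)\subseteq[-\pi/2+\epsilon,-\epsilon]$. This is precisely the paper's proof, and your preliminary normalization via Lemma~\ref{lemma:ParametrizationOfSecondGeodesic} is unnecessary.
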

\begin{proof}
Let $(p_m)_\minn$ be a divergent sequence in $\Sigma$. For all sufficiently large $m$, let $t_m\in[-\pi/2,\pi/2]$ be such that $\zeta_{p_m}(t_m)$ is the furthest point in $\zeta_{p_m}$ from $\zeta_0$, and suppose that $(t_m)_\minn$ converges to some limit $t_\infty$, say. It will suffice to show that $t_\infty$ is an element of the open subinterval $]-\pi/2,0[$.

Let $\Sigma_0$ be a fundamental domain of $\Sigma$ and, for all $m$, let $\gamma_m\in\Pi$ be such that $\tilde{p}_m:=\rho(\gamma_m)\cdot p_m$ is an element of $\Sigma_0$. We may suppose that $(\tilde{p}_m)_\minn$ converges to some limit $\tilde{p}_\infty$, say. For all $m$, denote $\zeta_m:=\rho(\gamma_m)\cdot\zeta_0$ and $\tilde{\zeta}_m:=\rho(\gamma_m)\circ\zeta_{p_m}$, and note that $(\tilde{\zeta}_m)_\minn$ converges to $\tilde{\zeta}_\infty:=\zeta_{\tilde{p}_\infty}$. For all $m$, let $\eta_m$ denote the unit speed parametrized spacelike geodesic in $\AdS$ joining the furthest points of $\tilde{\zeta}_m$ and $\zeta_m$ such that $\eta_m$ points towards $\zeta_m$, and $\eta_m(0)\in\tilde{\zeta}_m$. In particular, for all $m$, $\eta_m(0)=\tilde{\zeta}_m(t_m)$ and $\partial_s\eta_m(0)$ is orthogonal to $\tilde{\zeta}_m$ at this point. For all $m$, let $\eta_m(\infty)$ denote the end-point of $\eta_m$ in $\ideal\AdS$.

Recall now the full translation map $\tau$ defined in Section \ref{sss:AntideSitterSpaceInMatrixModel}. Since $(\tau(\zeta_m))_\minn$ diverges in $\hyp^2\times\hyp^2$, we may suppose that it converges to some point $q$, say, of the limit set $\Lambda$. On the other hand $(\tau(\tilde{\zeta}_m))_\minn$ converges to $\tau(\tilde\zeta_\infty)$, which is a finite point of $\hyp^2\times\hyp^2$. It follows by the ``H''-Lemma (Lemma \ref{lemma:HLemma}) that $(\eta_m(\infty))_\minn$ also converges to $q$. The sequence $(t_m)_\minn$ therefore converges to a point of $\pi^\cyl_{\tilde{p}_\infty}(\Lambda)$, and the result now follows by Lemma \ref{lemma:ProjectionOntoGeodesicOfLambda}.
\end{proof}

\begin{lemma}
\label{lemma:GrowthRateOfTiltStepI}
For all $k>\kappa_\rho$, there exists a compact subset $K$ of $\Sigma$ and $C>0$ such that, for all $p\in\Sigma\setminus K$, and for all $s,\tilde{s}\in[-\pi/2,0]$,
\begin{equation}
\label{eqn:GrowthRateOfTiltStepI}
\angle_a\big(\partial_s\zeta_p(s),f_t(\zeta_p(s))\big) \leq k\cdot d\big(\zeta_0(\tilde{s}),\zeta_p(s)\big) + C\ .
\end{equation}
\end{lemma}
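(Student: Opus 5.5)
We reduce \eqref{eqn:GrowthRateOfTiltStepI} to a computation in the normalized picture of Section \ref{ss:TimelikeGeodesicsInTheMatrixModel}, with $\zeta:=\zeta_0$ and $\tilde{\zeta}:=\zeta_p$, and then use $\kappa_\rho$ to compare $\lambda$ and $\mu$. Concretely, apply an isometry so that $\tau^r(\zeta_0)=\tau^l(\zeta_0)=\sigma_1$ and $\tau(\zeta_p)$ is as in \eqref{eqn:NormalizedRightAndLeftProjectionsOfZeta}, with $\lambda=\lambda_p$, $\mu=\mu_p$ half the hyperbolic distances in each factor. By Lemma \ref{lemma:LocationOfFurthestPoint}, outside a compact set the furthest point of $\zeta_p$ from $\zeta_0$ lies at parameter in $[-\pi/2+\epsilon,-\epsilon]$, so after shifting parameters (the shift is uniformly bounded away from the endpoints) the interval $[-\pi/2,0]$ corresponds to a subinterval of the parametrization \eqref{eqn:ParametrizationOfSecondGeodesic} sitting inside $[\epsilon',\pi-\epsilon']$ in the variable $s$, and similarly for $\tilde{s}$ on $\zeta_0$, up to uniform constants.

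\textbf{Step 1 (the angle).} The vector $f_t(\zeta_p(s))$ is the unit vector along $X_{\zeta_0}$. By Lemmas \ref{lemma:EstimateOfTranslationField} and \ref{lemma:TangentComponentOfTranslationField}, $\cosh\angle_a=|\langle X,\partial_s\zeta_p\rangle|/\|X\|\leq \cosh(\lambda+\mu)\cosh(\lambda-\mu)/(C\cosh(\lambda+\mu))$ on $[\epsilon',\pi-\epsilon']$, giving $\angle_a(\partial_s\zeta_p,f_t)\leq|\lambda-\mu|+C$. (Possible sign/reparametrization issues are harmless since we only need upper bounds.)

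\textbf{Step 2 (the distance).} By Lemma \ref{lemma:DistanceBetweenPointsOfTimelikeGeodesics}, once $\lambda,\mu>C_1$, $d(\zeta_0(\tilde{s}),\zeta_p(s))\geq\lambda+\mu-C_2$ for the relevant parameters. So it suffices to show $|\lambda-\mu|\leq k(\lambda+\mu)+C$ for $p$ outside a compact set, and that $\lambda,\mu\to\infty$ as $p$ diverges.

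\textbf{Step 3 (the key comparison, main obstacle).} Write $p=\rho(\gamma)^{-1}\tilde p$ with $\tilde p$ in a fundamental domain $\Sigma_0$. Then $\tau(\zeta_p)=(\rho_r(\gamma)^{-1}\tau^r(\zeta_{\tilde p}),\rho_l(\gamma)^{-1}\tau^l(\zeta_{\tilde p}))$ by equivariance of $\tau$, and $\tau(\zeta_{\tilde p})$ ranges in a compact set. Hence $2\lambda=l_{\rho_r,x}(\gamma)+O(1)$ and $2\mu=l_{\rho_l,x}(\gamma)+O(1)$ with $x=\sigma_1$, uniformly. As $p$ diverges, $|\gamma|\to\infty$, so $\lambda,\mu\to\infty$ by discreteness, and Lemma \ref{lemma:HowKappaIsReallyUsed} gives, for any $k>\kappa_\rho$, $|l_{\rho_r,x}(\gamma)-l_{\rho_l,x}(\gamma)|\leq k(l_{\rho_r,x}(\gamma)+l_{\rho_l,x}(\gamma))$ for $|\gamma|$ large, whence $|\lambda-\mu|\leq k(\lambda+\mu)+C$. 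Combining Steps 1–3 yields \eqref{eqn:GrowthRateOfTiltStepI}, absorbing constants (note $k<1$, so $kC_2$ is absorbed into $C$). The delicate point is keeping the parameter windows uniform, which is exactly what Lemma \ref{lemma:LocationOfFurthestPoint} (via the ``H'' Lemma) provides; finitely many $\gamma$ of small word length correspond to the compact set $K$.
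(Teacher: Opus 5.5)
Your proposal is correct and follows essentially the same route as the paper: bound $\cosh\angle_a$ by a multiple of $\cosh(\lambda-\mu)$ via Lemmas \ref{lemma:EstimateOfTranslationField} and \ref{lemma:TangentComponentOfTranslationField} on the parameter window supplied by Lemma \ref{lemma:LocationOfFurthestPoint}, obtain $|\lambda-\mu|\leq k(\lambda+\mu)+C$ from Lemma \ref{lemma:HowKappaIsReallyUsed}, and conclude with Lemma \ref{lemma:DistanceBetweenPointsOfTimelikeGeodesics}. Your explicit treatment of the fundamental-domain offset and of $\lambda,\mu\to\infty$ fills in what the paper compresses into ``without loss of generality $\zeta_p=\rho(\gamma)\circ\zeta_0$''. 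As in the paper, the $\tilde{s}$-window on $\zeta_0$ implicitly assumes $\zeta_0$ is parametrized as one of the geodesics $\zeta_{p_*}$ with $p_*\in\Sigma$, so that Lemma \ref{lemma:LocationOfFurthestPoint} can be applied symmetrically.
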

\begin{proof} Let $\Sigma_0$ be a fundamental domain of $\Sigma$ and suppose that $\zeta_0$ passes through $\Sigma_0$. Let $\tau$ denote the full translation map defined in Section \ref{sss:AntideSitterSpaceInMatrixModel} and denote $(q_r,q_l):=\tau(\zeta_0)$. Without loss of generality, we may suppose that there exists $\gamma\in\Pi$ such that $\zeta_p = \rho(\gamma)\circ\zeta_0$. As in Section \ref{ss:TimelikeGeodesicsInTheMatrixModel}, denote
\begin{equation*}
\lambda:=\frac{1}{2}d\big(q_r,\tau_r(\zeta_p)\big)\qquad\text{and}\qquad\mu:=\frac{1}{2}d\big(q_l,\tau_l(\zeta_p)\big)\ ,
\end{equation*}
and note that both $\lambda$ and $\mu$ are non-zero.

By Lemma \ref{lemma:LocationOfFurthestPoint}, the furthest point $s_0$ of $\zeta_p$ from $\zeta_0$ is an element of $[-\pi/2+\epsilon,-\epsilon]$, so that
\begin{equation}
\label{lemma:GrowthRateOfTiltStepI:I}
\left|s-s_0\right|\leq\frac{\pi}{2}-\epsilon\ .
\end{equation}
Similarly, if $\tilde{s}_0$ denotes the furthest point of $\zeta_0$ from $\zeta_p$, then
\begin{equation}
\label{lemma:GrowthRateOfTiltStepI:II}
\left|\tilde{s}-\tilde{s}_0\right|\leq\frac{\pi}{2}-\epsilon\ .
\end{equation}

By \eqref{lemma:GrowthRateOfTiltStepI:I}, since $f_t$ is the unit vector field in the direction of $X_{\zeta_0}$, \eqref{eqn:EstimateOfTranslationField} and \eqref{eqn:TangentComponentOfTranslationField} yield
\begin{equation*}
\left|\langle \partial_s\zeta_p(s),f_t(\zeta_p(s))\rangle\right| \leq \frac{1}{C_1}\cosh(\lambda-\mu)\ .
\end{equation*}
However, by \eqref{eqn:HowKappaIsReallyUsed}, for $q$ outside a sufficiently large compact set,
\begin{equation*}
\cosh(\lambda-\mu) \leq \cosh\big(k\cdot(\lambda+\mu)\big)\ ,
\end{equation*}
so that
\begin{equation*}
\left|\langle \partial_s\zeta_p(s),f_t(\zeta_p(s))\rangle\right| \leq \frac{1}{C_2}\cosh\big(k\cdot(\lambda+\mu)\big)\ .
\end{equation*}
Taking logarithms yields
\begin{equation*}
\angle_a\big(\partial_s\zeta_p(s),f_t(\zeta_p(s))\big) \leq k\cdot(\lambda+\mu) + C_3\ .
\end{equation*}

By \eqref{lemma:GrowthRateOfTiltStepI:II}, \eqref{eqn:DistanceBetweenPointsOfTimelikeGeodesics} yields
\begin{equation*}
\angle_a\big(\partial_s\zeta_p(s),f_t(\zeta_p(s))\big) \leq k\cdot d\big(\zeta_0(\tilde{s}),\zeta_p(s)\big) + C_3\ ,
\end{equation*}
and the result follows.
\end{proof}

To conclude, we will require the following technical result. Let $\Sigma_1$ and $\Sigma_2$ denote the two boundary components of $M$. Let $\tilde{M}_1$ (resp. $\tilde{M}_2$) denote the convex subset of $\AdS$ bounded by $\Sigma_1$ (resp. $\Sigma_2$) and its dual $\Sigma_1^*$ (resp. $\Sigma_2^*$).
\begin{lemma}
$M$ is contained in the union of $\tilde{M}_1$ and $\tilde{M}_2$.
\end{lemma}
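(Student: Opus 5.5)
The plan is a duality and separation argument inside the invisible domain $\Inv(\Lambda)$. We work in the affine chart in which $\Inv(\Lambda)$ is a bounded convex set, as in the proof of Lemma \ref{lemma:ProjectionOntoGeodesicOfLambda}. Let $\Sigma_1$ be the past boundary component and $\Sigma_2$ the future one. Then $\Sigma_1$ is future-convex and $\Sigma_1^*$ is past-convex, with $\Sigma_1^*$ lying to the future of $\Sigma_1$. Thus $\tilde M_1$ is the region of $\Inv(\Lambda)$ lying to the future of $\Sigma_1$ and to the past of $\Sigma_1^*$, and symmetrically for $\tilde M_2$, which lies between $\Sigma_2^*$ (in its past) and $\Sigma_2$.

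\textbf{Step 1.} Since $M$ is the region of $\Inv(\Lambda)$ between $\Sigma_1$ and $\Sigma_2$, a point $x\in M$ that lies outside $\tilde M_1$ must lie strictly to the future of $\Sigma_1^*$. Likewise, a point outside $\tilde M_2$ lies strictly to the past of $\Sigma_2^*$. It therefore suffices to show that no point of $M$ lies simultaneously in the future of $\Sigma_1^*$ and in the past of $\Sigma_2^*$. Equivalently, the region $I^+(\Sigma_1^*)\cap I^-(\Sigma_2^*)$ must be empty.

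\textbf{Step 2.} To compare $\Sigma_1^*$ and $\Sigma_2^*$, we use that the duality $p\mapsto p^*$ reverses inclusions of convex sets. Both $\Sigma_1$ and $\Sigma_2$ bound $M$, and $M$ contains the convex hull $\Conv(\Lambda)$. Hence $\Sigma_1$ lies in the past of the past boundary of $\Conv(\Lambda)$, and $\Sigma_2$ lies in the future of its future boundary. Dualizing, $\Sigma_1^*$ lies in the future of the future boundary of $\Conv(\Lambda)$. This uses the fact, recalled above, that $\Conv(\Lambda)$ and the closure of $\Inv(\Lambda)$ are dual. Similarly, $\Sigma_2^*$ lies in the past of the past boundary of $\Conv(\Lambda)$.

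\textbf{Step 3.} Consequently $\Sigma_2^*$ lies entirely in the past of $\Sigma_1^*$, with $\Conv(\Lambda)$ between them. Concretely, for every $x$ there is a causal relation $\Sigma_2^*\le\partial_-\Conv(\Lambda)\le\partial_+\Conv(\Lambda)\le\Sigma_1^*$ along any timelike line through $x$. So a point in the future of $\Sigma_1^*$ lies in the future of $\Sigma_2^*$, and cannot also lie in its past. This contradiction proves the lemma.

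\textbf{Main obstacle.} I expect the main difficulty to be making Step 2 rigorous: showing that the dual of a future-convex surface lying below the past boundary of $\Conv(\Lambda)$ lies above its future boundary. I would argue this through support planes. Each support plane $P$ of $\Sigma_1$ is spacelike and has $\Conv(\Lambda)$, and hence $\Lambda$, strictly in its future. Its dual point $P^*$ therefore lies in spacelike relation to all of $\Lambda$, and on the future side of every support plane of the future boundary of $\Conv(\Lambda)$. This places $P^*$ between $\partial_+\Conv(\Lambda)$ and $\partial_+\Inv(\Lambda)$.

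The delicate point is the orientation and sign bookkeeping in the projective model. Here I would normalize by a Kleinian chart centred at a point of $\Conv(\Lambda)$, in which duality becomes polarity with respect to the quadric $\partial\Omega$. Polarity with respect to this quadric reverses inclusion of convex sets, which should give Step 2 directly.
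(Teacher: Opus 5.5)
Your argument is correct, but it is organized differently from the paper's.

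\textbf{The paper's route.} The paper argues directly. First, $\Conv(\Lambda)$ is contained in $\tilde{M}_1$, $\tilde{M}_2$ and $M$, because it is the smallest convex set whose closure contains $\Lambda$. Then $M\setminus\Conv(\Lambda)$ splits into two collars. The collar adjacent to $\Sigma_i$ lies in $\tilde{M}_i$, because $\tilde{M}_i$ is convex and contains both $\Sigma_i$ and $\Conv(\Lambda)$.

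\textbf{Your route.} You argue by complements. You show that $M\setminus\tilde{M}_1$ lies in the future of $\Sigma_1^*$ and that $M\setminus\tilde{M}_2$ lies in the past of $\Sigma_2^*$. These two regions are disjoint because $\Sigma_2^*\le\partial_-\Conv(\Lambda)\le\partial_+\Conv(\Lambda)\le\Sigma_1^*$. This replaces the paper's ``by convexity, the whole collar lies in $\tilde{M}_i$'' step with the fact that these achronal surfaces separate $\Inv(\Lambda)$ into a past and a future.

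\textbf{The polarity detour is unnecessary.} Both proofs rest on the same geometric input: $\Conv(\Lambda)$ lies between each $\Sigma_i$ and its dual. You obtain this through polarity, which is exactly the part you flagged as delicate. You can drop that detour. Since $\tilde{M}_1$ is convex with $\Lambda\subseteq\ideal\tilde{M}_1$, minimality of the hull gives $\Conv(\Lambda)\subseteq\tilde{M}_1$. As $\tilde{M}_1$ lies in the past of $\Sigma_1^*$, this already yields $\partial_+\Conv(\Lambda)\le\Sigma_1^*$, and symmetrically $\Sigma_2^*\le\partial_-\Conv(\Lambda)$, with no sign bookkeeping.

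\textbf{A minor point.} You never actually use that $\Inv(\Lambda)$ is bounded in some affine chart. That is fortunate, since the claim fails when $\rho$ is Fuchsian. In that case $\overline{\Inv(\Lambda)}$ contains entire projective lines through the point dual to the totally geodesic plane bounded by $\Lambda$, so it lies in no affine chart.
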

\begin{proof} Let $\Conv(\Lambda)$ denote the convex hull of $\Lambda$ in $\AdS$. Since $\Conv(\Lambda)$ is the intersection of all convex sets containing $\Lambda$, in particular,
\begin{equation*}
\Conv(\Lambda) \subseteq \tilde{M}_1,\tilde{M}_2,M\ .
\end{equation*}
Note that $M\setminus\Conv(\Lambda)$ consists of two connected components, one corresponding to each boundary component of $M$. Since $\tilde{M}_1$ contains $\Sigma_1$ and $\Conv(\Lambda)$, by convexity, it contains the entire connected component of $M\setminus\Conv(\Lambda)$ containing $\Sigma_1$. Similarly, $\tilde{M}_2$ contains the entire connected component of $M\setminus\Conv(\Lambda)$ containing $\Sigma_2$. It follows that $M\subseteq\tilde{M}_1\cup\tilde{M}_2$, as desired.
\end{proof}

We are now ready to prove Lemma \ref{lemma:GrowthRateOfTilt}.
\begin{proof}[Proof of Lemma \ref{lemma:GrowthRateOfTilt}.]
Choose $q\in M$. Without loss of generality, we may assume that $q\in\tilde{M}_1$. Let $p\in\Sigma$ denote the point in $\Sigma$ maximizing timelike distance to $q$ and note that $q=\zeta_p(s)$ for some $s\in [-\pi/2, 0]$. By $\rho$-invariance and cocompactness,
\begin{equation*}
\angle_a\big(\xi(p),\partial_s\zeta_p(s)\big) \leq C\ ,
\end{equation*}
for some $C$ that is independent of $p$. The result follows by \eqref{eqn:GrowthRateOfTiltStepI} and the triangle inequality.
\end{proof} 
\section{Automorphic vector fields}
\label{s:AutomorphicVectorFields}
Let $M\subseteq\AdS$ be a $\rho$-invariant subset with smooth, strongly convex, spacelike boundary, as studied in Section \ref{ss:QuasiFuchsianRepresentations}. Let $\alpha$ be a $\rho$-cocycle, and consider a $(\rho,\alpha)$-automorphic vector field $\xi$ over $M$. Working in the Kleinian model of $\AdS$ as in Section \ref{ss:PreliminariesKleinianModel}, we view $\xi$ as a vector field over a subset of $\R^{2,1}$. The main result of this section is Lemma \ref{lemma:ContinuousExtension}, which states that $\xi$ extends continuously to $\Lambda$.

\label{ss:AutormorphicVectorFields}
\subsection{The Kleinian chart}
Let $p_0$ be a point of $\Conv(\Lambda)$, and let $\zeta_0$ be an oriented timelike geodesic through $p_0$. We will henceforth suppose that the Kleinian chart has been chosen such that $p_0$ identifies with the origin, and $\zeta_0$ identifies with the upward-pointing $x_0$-axis.

As in Section \ref{sss:AntideSitterandEinstein}, we denote by $\Omega$ the image of the Kleinian chart in $\mathbb{R}^{2,1}$ and by $\Omega^s$ the set of all points of $\R^{2,1}$ which are in spacelike relation to the origin. We recall the pseudospherical polar coordinates defined over this domain by \eqref{eqn:defn:PseudoSphericalCoordinates}, and we denote the corresponding coordinate frame by $(\partial_r,\partial_\theta,\partial_t)$. That is, $\partial_r$ is the radial vector field, $\partial_\theta$ is the vector field of rotations about the $x_0$-axis, and, for any fixed $\theta$, $\partial_t$ is the vector field of boosts around $(0,\cos(\theta),\sin(\theta))^t$.

The above coordinate frame is orthogonal, but not orthonormal, with respect to both the Minkowski and anti-de Sitter metrics. We denote by $(e_r^m,e_\theta^m,e_t^m)$ the \emph{Minkowski orthonormal frame} parallel $(\partial_r,\partial_\theta,\partial_t)$, and we readily verify that
\begin{equation}
\label{eqn:defn:MinkowskiOrthonormalFrame}
e_r^m = \partial_r\ ,\qquad e_\theta^m = \frac{1}{r\cosh(t)}\partial_\theta\qquad\text{and}\qquad e_t^m = \frac{1}{r}\partial_t\ .
\end{equation}
We likewise denote by $(e_r^a,e_\theta^a,e_t^a)$ the \emph{anti de Sitter orthonormal frame} parallel to $(\partial_r,\partial_\theta,\partial_t)$, and we verify that
\begin{equation}
\label{eqn:defn:AntiDeSitterOrthonormalFrame}
e_r^a = \delta(\mathbf{y})\partial_r\ ,\qquad e_\theta^a = \frac{\sqrt{\delta(\mathbf{y})}}{r\cosh(t)}\partial_\theta\qquad\text{and}\qquad e_t^a = \frac{\sqrt{\delta(\mathbf{y})}}{r}\partial_t\ ,
\end{equation}
where $\delta$ is as in \eqref{eqn:DefinitioOfDelta}.

We recall also the frame $(f_r^a,f_\theta^a,f_t^a)$ defined in Section \ref{ss:VectorFieldsAroundTimelikeGeodesics}. That is, $f^a_r$ is the anti-de Sitter unit radial vector field pointing outwards from $\zeta_0$, $f_\theta^a$ is the anti-de Sitter unit vector field of rotations around $\zeta_0$, and $f_t^a$ is the anti-de Sitter unit vector field of translations along $\zeta_0$. In particular, note that
\begin{equation*}
f_\theta^a = e_\theta^a\ .
\end{equation*}
We recall the pseudoangle defined in Section \ref{ss:TheCylindricalCoordinateFrame}.
\begin{lemma}
\label{lemma:FramesApproachExponentially}
There exists a compact subset $K$ of $M$ and $C>0$ such that, for all $\mathbf{y}\in M\setminus K$,
\begin{equation}
\label{eqn:FramesApproachExponentiallyI}
\angle_a\big(f_t^a(\mathbf{y}),e_t^a(\mathbf{y})\big)\leq Ce^{-r_a(\mathbf{y})}\ ,
\end{equation}
where $r_a(\mathbf{y})$ denotes the anti-de Sitter distance to $p_0$, as defined in \eqref{eqn:AntiDeSitterDistance}. In particular,
\begin{equation}
\label{eqn:FramesApproachExponentiallyII}
\lim_{r_a\rightarrow+\infty}\angle_a(f_t^a,e_t^a) = 0\ .
\end{equation}
\end{lemma}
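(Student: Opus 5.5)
The plan is to compute both vector fields explicitly in the Kleinian chart, measure their pseudoangle, and then use the fact that $M\setminus K$ lies in a region where points are far from the origin in the anti-de Sitter metric. Since $\ideal M=\Lambda$ and $p_0\in\Conv(\Lambda)$ lies in the invisible domain, every point of $\Lambda$ is in spacelike relation to the origin. Hence, outside a compact set $K$, $M$ lies in $\Omega^s$, and in pseudospherical coordinates $(r,\theta,t)$ the coordinate $t$ is uniformly bounded there, say $|t|\leq T$. The reason is that $\Lambda$ is a compact subset of $\partial\Omega$, so $t$ is bounded on $\Lambda$, and by continuity also on a neighbourhood of it.

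The vector $e_t^a$ is parallel to $\partial_t$, which is the Minkowski (equivalently AdS, since both metrics are $\mathrm{O}(2,1)$-invariant) boost field about the spacelike line through $(0,\cos\theta,\sin\theta)$. The translation field $f_t^a$ along the $x_0$-axis is the Killing field of $\AdS$ generated by the $\sigma_0\sigma_1$-plane rotation in $\R^{2,2}$, which mixes $x_0$ with the homogenizing coordinate. In the Kleinian chart it is projective, namely
\begin{equation*}
X(\mathbf{y}) = \big(1+y_0^2,\ y_0y_1,\ y_0y_2\big)\ .
\end{equation*}
Both fields are orthogonal to $\partial_\theta$. I would therefore decompose $X=a\,e^a_r+b\,e^a_t$ in the AdS frame and note that the pseudoangle satisfies $\sinh\angle_a = |a|/|b|$.

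Using $\mathbf{y}=r(\sinh t,\cos\theta\cosh t,\sin\theta\cosh t)$, I would compute the Minkowski components:
\begin{equation*}
\langle X,\partial_r\rangle_m = -\sinh t\,,\qquad \langle X,\partial_t\rangle_m = -r\cosh t\ .
\end{equation*}
Here $\partial_t=r(\cosh t,\cos\theta\sinh t,\sin\theta\sinh t)$. Converting to the AdS orthonormal frame via \eqref{eqn:defn:AntiDeSitterOrthonormalFrame} and \eqref{eqn:MetricsInPseudosphericalCoordinates}, the $e^a_r$-component is of order $|\sinh t|/\delta$ and the $e^a_t$-component is of order $\cosh t/\sqrt\delta$. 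At first sight this gives a ratio of order $\delta^{-1/2}$, which goes the wrong way. The correct thing is to take AdS inner products, which reverses the powers: the $e^a_r$-component is $\langle X,e_r^a\rangle_a=\delta^{-1}\langle X,\partial_r\rangle_m\cdot\delta^{-1}\cdot\delta$, and so on. I expect this bookkeeping to be the only delicate point, and I would fix it by computing the norms directly. The outcome should be
\begin{equation*}
\sinh\angle_a \lesssim |\tanh t|\,\sqrt{\delta}\ .
\end{equation*}
Finally, since $\delta=1-r^2=1/\cosh^2 r_a$ by \eqref{eqn:AntiDeSitterDistance}, we have $\sqrt\delta\leq 2e^{-r_a}$. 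With $|t|\leq T$ this yields \eqref{eqn:FramesApproachExponentiallyI}, and \eqref{eqn:FramesApproachExponentiallyII} is immediate.

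The main obstacle I anticipate is making sure that $X$ indeed corresponds to the translation (and not the rotation) about $\zeta_0$, and that the power of $\delta$ comes out with the favourable sign. If the naive computation gives a divergent ratio, I would recheck it with the explicit anti-de Sitter norms in coordinates. One cross-check is available: on the plane $t=0$ the translation is exactly along $\partial_t$, which forces $a$ to vanish at $t=0$, consistent with the factor $\tanh t$ above.
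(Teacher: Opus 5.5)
Your strategy is sound and differs only mildly from the paper's. The paper never writes down the translation field. Instead it computes $f^a_r$ directly, using that $s\mapsto(y_0,s,0)^t$ is the anti-de Sitter geodesic orthogonal to $\zeta_0$, and then measures the hyperbolic angle between $e^a_r$ and $f^a_r$, which equals the angle between $e^a_t$ and $f^a_t$. Your Killing field $X=(1+y_0^2,y_0y_1,y_0y_2)$ is correct and is indeed parallel to $f^a_t$.

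The gap is in the one computation that carries the lemma. At $\theta=0$,
$\langle X,\partial_r\rangle_m=-(1+r^2\sinh^2t)\sinh t+r^2\sinh t\cosh^2t=-\delta\sinh t$, not $-\sinh t$. This slip is what produces your divergent ratio of order $\delta^{-1/2}$. The remedy you propose cannot repair it. The frame $(\partial_r,\partial_\theta,\partial_t)$ is orthogonal for both metrics, so components convert directly. In the anti-de Sitter orthonormal frame, the coefficients of $X$ are exactly its anti-de Sitter inner products with the frame vectors, up to the sign on the timelike one, so nothing ``reverses the powers''. As written, the final estimate is asserted rather than derived. A sanity check exposes the slip: $\|X\|_a^2$ must equal $-\cosh^2$ of the distance to $\zeta_0$, namely $-(1+y_0^2)/\delta<0$. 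Your value of the radial component instead makes $X$ spacelike near $\partial\Omega$ whenever $t\neq 0$.

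The correct decomposition is
\begin{equation*}
X=-\delta\sinh t\,\partial_r+\frac{\cosh t}{r}\,\partial_t=-\sinh t\,e^a_r+\frac{\cosh t}{\sqrt{\delta}}\,e^a_t\ .
\end{equation*}
Writing $X=a\,e^a_r+b\,e^a_t$, the ratio $|a|/|b|$ is the $\tanh$ of the pseudoangle, not its $\sinh$. Hence
$\tanh\angle_a(f^a_t,e^a_t)=|\tanh t|\sqrt{\delta}\leq\sqrt{\delta}=1/\cosh r_a\leq 2e^{-r_a}$. This gives the desired estimate once $r_a$ is large enough that the right-hand side is at most $1/2$.

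Note that no bound on $t$ is needed, since $|\tanh t|\leq 1$. The paper's slab hypothesis on $y_0$ is likewise inessential, because its formula depends on $y_0$ only through a bounded factor. With this correction, your route is as short as the paper's and gives an exact formula in pseudospherical coordinates.
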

\begin{proof} Indeed, let $C>0$ be such that $\Lambda$ is contained in the slab
\begin{equation*}
S := \big\{ (y_0,y_1,y_2)^t\ |\ \left|y_0\right| \leq C \big\}\ .
\end{equation*}
Consider now the point $\mathbf{y}:=(y_0,y_1,y_2)=\Phi(r,\theta,t)$. Without loss of generality, we may suppose that $\theta=0$, so that $y_2=0$. Differentiating $\Phi$ in the $r$-direction yields
\begin{equation*}
\partial_r = \frac{1}{r}(y_0,y_1,0)^t\ .
\end{equation*}
Hence, by \eqref{eqn:defn:AntiDeSitterOrthonormalFrame},
\begin{equation*}
e_r^a = \frac{(1-r^2)}{r}(y_0,y_1,0)^t\ .
\end{equation*}

Consider now the curve $\gamma(s) := (y_0,s,0)^t$. Trivially,
\begin{equation*}
\partial_s\gamma(s) = (0,1,0)^t\ .
\end{equation*}
In order to determine $f^a_r$, we determine the anti de Sitter length of this vector as follows. First, the preimage of $\gamma$ in $\mathbb{R}^{2,2}$ is
\begin{equation*}
\hat{\gamma}(s) = \frac{1}{\sqrt{1+y_0^2-s^2}}(1,y_0,s,0)^t\ .
\end{equation*}
Differentiating yields
\begin{equation*}
\partial_s\hat{\gamma}(s) = \frac{s}{\sqrt{1+y_0^2-s^2}^3}(1,y_0,0,0)^t + \frac{(1+y_0^2)}{\sqrt{1+y_0^2-s^2}^3}(0,0,1,0)^t\ .
\end{equation*}
Hence
\begin{equation*}
\|\partial_s\gamma\|_a^2 = \|\partial_s\hat{\gamma}\|^2 = \frac{(1+y_0^2)}{(1+y_0^2-s^2)^2}\ .
\end{equation*}
Thus, noting that $r^2=s^2-y_0^2$,
\begin{equation*}
f^a_r = \frac{(1-r^2)}{\sqrt{1+y_0^2}}(0,1,0)^t\ .
\end{equation*}

We now have
\begin{equation*}
\cosh\big(\angle_a(e_t^a,f_t^a)\big) = \cosh\big(\angle_a(e_r^a,f_r^a)\big) = \left|\langle e_r^a,f_r^a\rangle\right| = \frac{y_1}{r\sqrt{1+y_0^2}}\ .
\end{equation*}
Hence
\begin{equation*}
\sinh\big(\angle_a(e_t^a,f_t^a)\big) = \frac{y_0}{\sqrt{1-r^2}}{r(1+y_0^2)} = \frac{y_0}{r(1+y_0^2)\cosh\big(r_a(\mathbf{y})\big)}\ ,
\end{equation*}
and the result follows.
\end{proof}

\begin{lemma}
\label{lemma:GrowthRateOfTAngle}
For every $\rho$-invariant field $X$ of anti-de Sitter unit, future oriented timelike vectors over $M$, and for all $k>\kappa_\rho$, there exists a compact subset $K$ of $M$, and $C>0$ such that, for all $\mathbf{y}\in M\setminus K$,
\begin{equation}
\label{eqn:GrowthRateOfTAngle}
\angle_a\big(e_t^a(\mathbf{y}),X(\mathbf{y})\big) \leq k\cdot r_a(\mathbf{y}) + C\ .
\end{equation}
\end{lemma}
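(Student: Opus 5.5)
The plan is to combine Lemma \ref{lemma:GrowthRateOfTilt} with Lemma \ref{lemma:FramesApproachExponentially} through the triangle inequality for the pseudoangle. At each point the pseudoangle is the hyperbolic distance in $\opU^+_{\mathbf{y}}\AdS\cong\hyp^2$, so for all $\mathbf{y}\in M$,
\begin{equation*}
\angle_a\big(e_t^a(\mathbf{y}),X(\mathbf{y})\big) \leq \angle_a\big(e_t^a(\mathbf{y}),f_t^a(\mathbf{y})\big) + \angle_a\big(f_t^a(\mathbf{y}),X(\mathbf{y})\big)\ .
\end{equation*}
Before using this, one checks orientations. Both $e_t^a$ and $f_t^a$ are unit timelike, and we may choose signs so that both are future oriented. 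This is consistent because Lemma \ref{lemma:FramesApproachExponentially} shows that their pseudoangle is finite near infinity, so they lie in the same sheet.

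For the first term, Lemma \ref{lemma:FramesApproachExponentially} gives $\angle_a(f_t^a,e_t^a)\leq Ce^{-r_a}$ outside a compact set. In particular this term is bounded by a constant there.

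For the second term, we apply Lemma \ref{lemma:GrowthRateOfTilt} with $\xi:=X$. It yields, for each $k>\kappa_\rho$, a compact $K'\subseteq M$ and $C'$ such that
\begin{equation*}
\angle_a\big(X(\mathbf{y}),f_t^a(\mathbf{y})\big)\leq k\cdot d(p_0,\mathbf{y})+C'
\end{equation*}
on $M\setminus K'$. In the present chart $p_0$ is the origin, so $d(p_0,\mathbf{y})$ is exactly the anti-de Sitter distance $r_a(\mathbf{y})$ of \eqref{eqn:AntiDeSitterDistance}. This uses that $M\setminus K$ lies in $\Omega^s$ for $K$ large, as remarked in Section \ref{ss:QuasiFuchsianRepresentations}.

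One point needs care. Lemma \ref{lemma:GrowthRateOfTilt} was stated with $p_0$ in the invisible domain and $\zeta_0$ a timelike geodesic through it. Here $p_0\in\Conv(\Lambda)\subseteq\Inv(\Lambda)$, so its hypotheses are satisfied. If a base point different from that of Lemma \ref{lemma:GrowthRateOfTilt} were required, changing base point changes $d(p_0,\cdot)$ by a bounded amount, by the triangle inequality. It likewise changes $f_t$ by a bounded pseudoangle outside a compact set, by the same H-lemma and compactness arguments. Either change only affects the constant $C$.

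Taking $K:=K\cup K'$ and summing the two bounds gives \eqref{eqn:GrowthRateOfTAngle} with constant $C+C'$. The only step that is not immediate is this identification of the base point and geodesic with those used in Section \ref{ss:TheCylindricalCoordinateFrame}, together with the orientation consistency. Everything else is the triangle inequality.
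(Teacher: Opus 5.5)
Your proposal is correct and is essentially the paper's proof: apply Lemma \ref{lemma:GrowthRateOfTilt} to get $\angle_a(f_t^a,X)\leq k\cdot r_a+C$ outside a compact set, then conclude with \eqref{eqn:FramesApproachExponentiallyI} and the triangle inequality for the pseudoangle. Your hypothetical discussion of changing base point is unnecessary, since the $p_0\in\Conv(\Lambda)$ and $\zeta_0$ of Section \ref{s:AutomorphicVectorFields} are admissible choices for Section \ref{ss:TheCylindricalCoordinateFrame}; hence $d(p_0,\mathbf{y})=r_a(\mathbf{y})$ directly, as you observed.
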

\begin{proof} Indeed, by Lemma \ref{lemma:GrowthRateOfTilt}, there exists $C>0$ such that, for all $\mathbf{y}$,
\begin{equation*}
\angle_a\big(f_t^a(\mathbf{y}),X(\mathbf{y})\big) \leq k\cdot r_a(\mathbf{y}) + C\ .
\end{equation*}
The result now follows by \eqref{eqn:FramesApproachExponentiallyI} and the triangle inequality.
\end{proof}

Let $\xi$ be an automorphic vector field over $M$, denote $\eta:=-(1/2)\nabla\times^a\xi$, and let $\omega^\xi:=(\omega_1^\xi,\omega_2^\xi)$ be as \eqref{eqn:defn:LieAlgebraValuedOneForm}.
\begin{lemma}
\label{lemma:EstimatesOfOmega}
For all $k>\kappa_\rho$, there exists a compact subset $K$ of $M$, and $C>0$ such that, for all $\mathbf{y}\in M\setminus K$, for each $i$, and for $\eta_1,\eta_2\in\{e_t^m,e_\theta^m\}$,
\begin{equation}
\label{eqn:defn:EstimatesOfOmega}
\begin{aligned}
\left|\langle\omega_i^\xi(e_r^m),e_r^m\rangle_e\right| &\leq C\ ,\\
\left|\langle\omega_i^\xi(\eta_1),e_r^m\rangle_e\right| &\leq C\delta(\mathbf{y})^{(1-k)/2}\ ,\\
\left|\langle\omega_i^\xi(e_r^m),\eta_1\rangle_e\right| &\leq C\delta(\mathbf{y})^{-(1+k)/2}\ ,\ \text{and}\\
\left|\langle\omega_i^\xi(\eta_1),\eta_2\rangle_e\right| &\leq C\delta(\mathbf{y})^{-k}\ .
\end{aligned}
\end{equation}
\end{lemma}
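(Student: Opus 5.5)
We reduce the estimates to two ingredients: a formula for $\omega^\xi$ in terms of the connection form $\Omega$, and the growth bound on the holonomy-equivariant frame from Lemma \ref{lemma:GrowthRateOfTAngle}.

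\emph{Step 1: express $\omega^\xi$ through $\Omega$.} By definition $\omega^\xi=d(\sigma^*(\xi,\eta))=(\sigma^*D)$ applied to $(\xi,\eta)$. By \eqref{eqn:defn:ConnectionFormOfE} this equals
\begin{equation*}
\sigma^*\big(\nabla(\xi,\eta)-\Omega(\xi,\eta)\big).
\end{equation*}
By the equivariance lemma preceding this section, $\omega^\xi$ is $\rho$-equivariant. Also $\sigma^*$ is the composition of a bundle isomorphism with the left/right translations $\tau^r,\tau^l$, which are isometric in the anti-de Sitter metric. We will therefore estimate $\omega^\xi$ by evaluating it on a frame adapted to the holonomy and then converting to the Kleinian frames.

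\emph{Step 2: choose the comparison frame.} Fix a $\rho$-invariant anti-de Sitter orthonormal frame $(X_1,X_2,X_3)$ over $M$, with $X_3$ unit future timelike; for instance, extend $N$ invariantly. For this frame we use three facts:
\begin{itemize}
\item[(a)] By cocompactness and equivariance, the components $\langle\omega_i^\xi(X_a),\sigma^{-1}X_b\rangle_a$ are uniformly bounded.
\item[(b)] $\sigma^*$, combined with $\tau^{r},\tau^l$, carries the anti-de Sitter norm on $\opT\SL$ to the fixed norm on $\slalg$.
\item[(c)] The Euclidean pairing against $e^m_\bullet$ differs from the anti-de Sitter pairing against $e^a_\bullet$ by explicit powers of $\delta$. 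By \eqref{eqn:defn:MinkowskiOrthonormalFrame} and \eqref{eqn:defn:AntiDeSitterOrthonormalFrame}, $e^m_r=\delta^{-1}e^a_r$ and $e^m_{\theta,t}=\delta^{-1/2}e^a_{\theta,t}$. On the target side, the Euclidean and Minkowski products agree up to bounded factors since $\Lambda$ lies in a bounded slab.
\end{itemize}

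\emph{Step 3: transfer between frames.} Lemma \ref{lemma:GrowthRateOfTAngle} gives, for $k>\kappa_\rho$, $\angle_a(e^a_t,X_3)\le k r_a+C$. Since $e^{r_a}\sim\delta^{-1/2}$, the change-of-basis matrix from $(X_a)$ to $(e^a_r,e^a_\theta,e^a_t)$ is a Lorentz boost with entries $O(\delta^{-k/2})$.

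The radial direction needs separate treatment. By Lemma \ref{lemma:BoundsOfRadialComponentI} and the subsequent lemma, $e^a_r$ has bounded components against invariant fields. So in the $e^a_r$ row and column the change of basis is bounded, and only the $(\theta,t)$ block can grow like $\delta^{-k/2}$.

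Expanding $\langle\omega_i^\xi(U),V\rangle$ for $U,V$ among the frame vectors therefore gives anti-de Sitter bounds:
\begin{itemize}
\item $O(1)$ for $(r,r)$;
\item $O(\delta^{-k/2})$ for mixed $(r,\cdot)$ and $(\cdot,r)$;
\item $O(\delta^{-k})$ for the $(\theta,t)$ block.
\end{itemize}
Rescaling by Step 2(c) then converts these to the Euclidean bounds:
\begin{itemize}
\item $\omega(e^m_r)$ gains $\delta^{-1}$ and pairing with $e^m_r$ gains $\delta$, giving $C$;
\item $\omega(\eta_1)$ paired with $e^m_r$ gives $\delta^{-1/2}\cdot\delta\cdot\delta^{-k/2}=\delta^{(1-k)/2}$;
\item $\omega(e^m_r)$ paired with $\eta_1$ gives $\delta^{-1}\delta^{1/2}\delta^{-k/2}=\delta^{-(1+k)/2}$;
\item the $(\eta_1,\eta_2)$ block gives $\delta^{-1/2}\delta^{1/2}\delta^{-k}=\delta^{-k}$.
\end{itemize}
These are exactly the four estimates in \eqref{eqn:defn:EstimatesOfOmega}.

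\emph{Main obstacle.} The hard part is the bookkeeping in Step 2(c): how $\langle\cdot,\cdot\rangle_e$ on $\slalg$-valued outputs, read through $\sigma^*$ and the Kleinian chart, relates to anti-de Sitter norms. In particular we must justify that the radial output component scales by $\delta$ and not by $\delta^{1/2}$. We would settle this first by an explicit computation at a point with $\theta=0$, as in the proof of Lemma \ref{lemma:FramesApproachExponentially}, and then absorb the slowly varying factors into $C$ after enlarging $K$.
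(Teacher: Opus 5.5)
Your proposal is essentially the paper's proof. In both, you bound the components of $\omega^\xi_i$ in a holonomy-adapted frame by cocompactness and equivariance. You then pass to $(e^a_r,e^a_\theta,e^a_t)$ using Lemma \ref{lemma:GrowthRateOfTAngle} together with $e^{r_a}\leq 2\delta^{-1/2}$, keep the radial row bounded, and rescale. Your four exponent computations coincide with the paper's.

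The only difference in route is how the radial direction is handled. The paper uses the compact family of frames $F^{\mathbf{y},\nu}$, with $f_1=\nu$ ranging over a relatively compact neighbourhood of the radial projections of $\Lambda$. This lets it take $f_1=e^a_r$ exactly, leaving a single boost in $(e^a_r)^\perp$ of angle at most $\angle_a(X,e^a_t)$. You instead fix the frame and invoke the lemma following Lemma \ref{lemma:BoundsOfRadialComponentI}. Both rest on the same fact: $e_r$, transported back to a fundamental domain, stays in a compact set of spacelike directions.

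One step needs repair. There is no continuous $\rho$-invariant orthonormal frame $(X_1,X_2,X_3)$ over $M$ with $X_3$ timelike. Along a boundary component $\Sigma$, projection along $X_3$ identifies $X_3^\perp$ with $\opT\Sigma$. Since $\opT(\Sigma/\rho(\Pi))$ has Euler number $2-2g\neq 0$, $X_3^\perp$ has no invariant nowhere-vanishing section.

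This is harmless because all the estimates are pointwise. Keep $X_3$ invariant (e.g.\ an invariant extension of $N$), and let $(X_1,X_2)$ be any orthonormal basis of $X_3^\perp$ at each point. The set of such frames over the closure of a fundamental domain is compact, so your bound (a) holds uniformly for all of them. Moreover $\langle e_r,X_1\rangle^2+\langle e_r,X_2\rangle^2=1+\langle e_r,X_3\rangle^2$, so the radial lemma is only needed for the invariant field $X_3$.

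Finally, the bookkeeping you single out as the main obstacle is immediate. Read $\omega^\xi_i(\cdot)$ as a tangent vector via $\sigma$, as in Lemma \ref{lemma:CovariantDerivativeOfXi}, rather than pairing it with $\sigma^{-1}X_b$. Both coordinate frames are orthogonal for both metrics, so \eqref{eqn:MetricsInPseudosphericalCoordinates} gives, for every vector $W$, $\langle W,e^m_r\rangle_m=\delta\,\langle W,e^a_r\rangle_a$ and $\langle W,e^m_\theta\rangle_m=\delta^{1/2}\langle W,e^a_\theta\rangle_a$, and likewise for $t$. No pointwise computation is required, and your Step 1 plays no role.
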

\begin{proof}Let $X$ be a $\rho$-invariant field of timelike directions over $M$. Let $M_0$ be a fundamental domain of $M$. Let $\opU^{\mathrm{sp}}\AdS$ denote the bundle of unit spacelike tangent vectors over $\AdS$. For $\mathbf{y}\in M$, let $\pi_{\mathbf{y}}^{\mathrm{sp}}(\Lambda)$ denote the radial projection of $\Lambda$ onto the fibre $\opU^{\mathrm{sp}}_{\mathbf{y}}\AdS$, and note that
\begin{equation*}
\Omega_0:=\bigcup_{\mathbf{y}\in M_0}\pi_{\mathbf{y}}^{\mathrm{sp}}(\Lambda)
\end{equation*}
is a compact subset of $\opU^{\mathrm{sp}}\AdS$. Let $\Omega$ be a relatively compact neighbourhood of $\Omega_0$ in $\opU^{\mathrm{sp}}\AdS$. For all $\mathbf{y}\in M_0$, let $\Omega_{\mathbf{y}}$ denote the fibre of $\Omega$ over $\mathbf{y}$, and extend this by $\rho$-invariance to a family $(\Omega_{\mathbf{y}})_{\mathbf{y}\in M}$ over the whole of $M$. For $\mathbf{y}\in M$, and for $\nu\in\Omega_{\mathbf{y}}$, we define the orthonormal frame $F^{\mathbf{y},\nu}:=\big(f_1^{\mathbf{y},\nu},f_2^{\mathbf{y},\nu},f_3^{\mathbf{y},\nu}\big)$ such that $f_1^{\mathbf{y},\nu}=\nu$, $f_2^{\mathbf{y},\nu}$ lies in the orthogonal projection of ${X}$ onto $\langle\nu\rangle^\perp$, and $f_3^{\mathbf{y},\nu}=f_1^{\mathbf{y},\nu}\times^a f_2^{\mathbf{y},\nu}$. Note that
\begin{equation*}
(F^{\mathbf{y},\nu})_{\mathbf{y}\in M_0,\nu\in\overline{\Omega}_\mathbf{y}}
\end{equation*}
is a compact subset of the orthonormal frame bundle of $\AdS$.

By compactness, there exists $C>0$ such that, for all $\mathbf{y}\in M_0$, $\nu\in\Omega_{\mathbf{y}}$, and for all $i$, $j$, $k$,
\begin{equation}
\label{eqn:proof:EstimatesOfOmegaI}
\left|\langle\omega_i^\xi(f_i^{\mathbf{y},\nu}),f_j^{\mathbf{y},\nu}\rangle_a\right| \leq C\ .
\end{equation}
By $\rho$-invariance, \eqref{eqn:proof:EstimatesOfOmegaI} holds for all $\mathbf{y}\in M$.

Choose $\mathbf{y}\in M$ and set $\nu:=e_r^a$. We estimate the distance between the frames $F^{\mathbf{y},\nu}$ and $(e_r^a,e_t^a,e_\theta^a)$. Indeed, by definition,
\begin{equation*}
f_1^{\mathbf{y},\nu} = e_r^a\ .
\end{equation*}
Now, since $e_t^a$ is orthogonal to $e_r^a$,
\begin{equation*}
\angle_a(f_2^{\mathbf{y},\nu},e_t^a) \leq \angle_a(X(\mathbf{y}),e_t^a)\ .
\end{equation*}
Hence, by \eqref{eqn:GrowthRateOfTAngle}, there exists a compact set $K$ and $C_1>0$ such that, over $M\setminus K$,
\begin{equation*}
\angle_a\big(f_2^{\mathbf{y},\nu},e_t^a\big) \leq k\cdot r_a(\mathbf{y}) + C_1\ .
\end{equation*}
Finally,
\begin{equation*}
\angle_a\big(f_3^{\mathbf{y},\nu},e_\theta^a\big)=\angle_a\big(f_2^{\mathbf{y},\nu},e_t^a\big) \leq k\cdot r_a(\mathbf{y}) + C_1\ .
\end{equation*}
This suffices for our purposes to describe the relationship between these two frames.

By the above estimates, for each $i$, and for all $\eta_1^a,\eta_2^a\in\{e_t^a,e_\theta^a\}$,
\begin{equation*}
\begin{aligned}
\left|\langle\omega_i^\xi(e_r^a),e_r^a\rangle_a\right| &\leq& &C_2\ , & &\\
\left|\langle\omega_i^\xi(\eta_1),e_r^a\rangle_a\right| &\leq& &C_2 e^{k\cdot r_a(\mathbf{y})} &\leq &\ C_3 \delta(\mathbf{y})^{-k/2}\ ,\\
\left|\langle\omega_i^\xi(e_r^a),\eta_1\rangle_a\right| &\leq& &C_2 e^{k\cdot r_a(\mathbf{y})} &\leq &\ C_3 \delta(\mathbf{y})^{-k/2}\ ,\ \text{and}\\
\left|\langle\omega_i^\xi(\eta_1),\eta_2\rangle_a\right| &\leq& &C_2 e^{2k\cdot r_a(\mathbf{y})} &\leq &\ C_3 \delta(\mathbf{y})^{-k}\ .
\end{aligned}
\end{equation*}Hence, for each $i$, and for all $\eta_1^m,\eta_2^m\in\{e_t^m,e_\theta^m\}$,
\begin{equation*}
\begin{aligned}
\left|\langle\omega_i^\xi(e_r^m),e_r^m\rangle_m\right| &\leq C_4\ ,\\
\left|\langle\omega_i^\xi(\eta_1),e_r^m\rangle_m\right| &\leq C_4 \delta(\mathbf{y})^{(1-k)/2}\ ,\\
\left|\langle\omega_i^\xi(e_r^m),\eta_1\rangle_m\right| &\leq C_4 \delta(\mathbf{y})^{-(1+k)/2}\ ,\ \text{and}\\
\left|\langle\omega_i^\xi(\eta_1),\eta_2\rangle_m\right| &\leq C_4 \delta(\mathbf{y})^{-k}\ .
\end{aligned}
\end{equation*}
The result now follows upon observing that the Wick rotation of the Minkowski metric about $e_t^m$ is uniformly equivalent to the Euclidian metric.
\end{proof}

\subsection{Continuity of automorphic vector fields}
We now work in pseudospherical polar coordinates of $\AdS$ as in Section \ref{sss:KleinianModelDefinitions}. For any vector field $\xi$, we denote respectively by $\xi^r$ and $\xi^\pa$ its \emph{radial} and \emph{pseudoangular} components, that is, its component in the radial direction and its component orthogonal to the radial direction. The remainder of this section is devoted to proving continuous extendibility of automorphic vector fields.
\begin{lemma}
\label{lemma:ContinuousExtension}
Every automorphic vector field $\xi$ over $M$ extends continuously to a vector field over $M\cup\Lambda$. Furthermore, its restriction to $\Lambda$ is an automorphic vector field tangent to $\partial\Omega=\Ein$.
\end{lemma}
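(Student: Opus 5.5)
The plan is to integrate the equivariant $1$-form $\omega^\xi$ along radial segments in the Kleinian chart, using the estimates of Lemma \ref{lemma:EstimatesOfOmega}. Write $F:=\sigma^*(\xi,\eta)$ for the $\slalg\times\slalg$-valued function on $M$, so that $dF=\omega^\xi$. The field $\xi$ itself is then recovered pointwise as the Killing field $\xi_{F(\mathbf{y})}$ evaluated at $\mathbf{y}$, that is, $\xi(\mathbf{y})=\sigma(F(\mathbf{y}))$ (first component). Killing fields are smooth vector fields on the whole of $\R^{2,1}$ in the Kleinian chart, tangent to $\partial\Omega$ there. It therefore suffices to control $F$ near $\Lambda$, up to errors that vanish after pairing with the Killing-field map. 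The first step is to bound $F$ along radial rays. Since the $e_r^m$--$e_r^m$ component of $\omega^\xi$ is bounded, and the Euclidean length of the radial segment in $M\setminus K$ is finite, the component of $F$ picked out by that pairing converges.

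The estimates of Lemma \ref{lemma:EstimatesOfOmega} are anisotropic, so a cruder treatment is needed. I would decompose $F$ in an $\mathbf{y}$-dependent frame adapted to the Killing-field map. The components of $F$ that produce pseudoangular components of $\xi$ are weighted by a factor comparable to $r\sqrt{\delta}$ relative to the radial ones; this follows from \eqref{eqn:defn:AntiDeSitterOrthonormalFrame}. The bound $\delta^{-(1+k)/2}$ for $\langle\omega(e_r^m),\eta_1\rangle$ then integrates in $r$ to $\delta^{(1-k)/2}$ after this weighting. Since $\kappa_\rho<1$, we may choose $k<1$, and $\delta^{(1-k)/2}\to 0$ is integrable, since $\int\delta^{-(1+k)/2}\,dr$ converges. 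The conclusion is that $\xi^\pa$ and $\xi^r$ both have limits along each radial ray. Moreover, $\xi^r$, the component normal to $\partial\Omega$, tends to $0$ in Euclidean terms, because $e_r^a=\delta\,\partial_r$ kills it. This gives tangency to $\Ein$.

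The second step is uniformity, which yields continuity. For two nearby points $\mathbf{y},\mathbf{y}'$ at the same $\delta$, I would join them by a pseudoangular path of Euclidean length comparable to $|\mathbf{y}-\mathbf{y}'|$. I would then bound the variation of the tangential part of $\xi$ using $\langle\omega(\eta_1),\eta_2\rangle\lesssim\delta^{-k}$, together with the weight $\sqrt{\delta}$ on each side. This gives $\delta^{1-k}$ times the length, which is bounded. Combining this with the radial estimate gives a modulus of continuity of the form $|\mathbf{y}-\mathbf{y}'|^{1-k}$, which is Hölder. Consequently $\xi$ extends continuously to $M\cup\Lambda$.

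Automorphicity of the limit follows by passing to the limit in \eqref{eqn:defn:AutomorphicVectorField}, since $\rho(\gamma)$ and the Killing fields $\xi_{\alpha(\gamma)}$ extend smoothly to $\Ein$. By Lemma \ref{lemma:ExistenceAndUniquenessOfAutomorphicFieldsII}, the restriction then agrees with $\xi_\alpha$ on $\Lambda$. The main obstacle is the bookkeeping in the first step. One must verify that, after translating $F$ into $\xi$ through $\sigma$ in the Minkowski frame, each bad component of $\omega$ is paired with a compensating power of $\delta$. This is where $k<1$, that is, $\kappa_\rho<1$, is essential.
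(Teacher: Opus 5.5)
Your proposal has a genuine gap in its first step. You treat the radial variation of $\xi^r$ and $\xi^\pa$ as controlled by the components of $\omega^\xi(\partial_r)$ alone, up to weights. But $\xi(\mathbf y)$ is the value at $\mathbf y$ of the Killing field $F(\mathbf y)$, and differentiating the evaluation map produces the zeroth-order terms of Lemma \ref{lemma:CovariantDerivativeOfXi},
\[
\nabla_{\partial_r}\xi=\omega_1^\xi(\partial_r)+\partial_r\times^a\eta-\alpha(\partial_r)\xi-\alpha(\xi)\partial_r\ ,
\]
where $\alpha(\partial_r)=r/(1-r^2)$ and $\partial_r\times^a=(1-r^2)^{-1}e_r^m\times^m$. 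These coefficients are of order $1/\delta$, which is not integrable along a ray. Writing $F$ in a $\mathbf y$-dependent frame does not remove them, because they are exactly the derivatives of such a frame. For a Killing field, $\omega^\xi\equiv0$ while $\xi^\pa$ is not constant along rays, so no bound of the form ``variation of $\xi^\pa\lesssim\int|\omega^\xi(\partial_r)|$'' can hold.

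The same problem affects your radial claim. The pairing $\langle\omega_1^\xi(\partial_r),\partial_r\rangle_m$ controls $\partial_r\xi^r+\frac{2r}{1-r^2}\xi^r$, not the derivative of a fixed component of $F$. The integrating factor $1/(1-r^2)$ then only gives $\xi^r=O\big(\delta(1+|\ln\delta|)\big)$ (Lemma \ref{lemma:EstimateOfRadialComponent}). So the AdS radial component is only known to be $O(1+|\ln\delta|)$, and ``$e_r^a=\delta\,\partial_r$ kills it'' relies on a boundedness you do not have.

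The missing idea is the structure of this singular linear system; this is the heart of the proof, not bookkeeping. The paper diagonalizes it. The combination $\nu:=\xi^\pa-e_r^m\times^m\eta^\pa$ satisfies the damped equation
\[
\nabla_{\partial_r}\nu=\big(\omega_1^\xi(\partial_r)^\pa-e_r^m\times^m\omega_2^\xi(\partial_r)^\pa\big)-\frac{1}{1-r}\nu\ ,
\]
which gives $\nu=O(\delta^{(1-k)/2})$. Substituting back yields $\nabla_{\partial_r}\xi^\pa=\omega_1^\xi(\partial_r)^\pa-\frac{\nu}{1-r^2}+\frac{1}{1+r}\xi^\pa$. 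Here the coefficient is bounded and the source is $O(\delta^{-(1+k)/2})$, integrable exactly because $k<1$ (Lemma \ref{lemma:UniformConvergence}).

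In your $F$-language, the correct version is to expand $F$ in a \emph{fixed} basis of $\slalg\times\slalg$ that diagonalizes $\mathrm{ad}$ of the translation along the ray. Only the components contracted by $\mathrm{Ad}$ converge. The others grow like $s$ or $e^{(1+k)s}$ in AdS arclength. They are harmless only because the corresponding Killing fields vanish at the endpoint of the ray, to order $\delta\sim e^{-2s}$.

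Your second step is also flawed. The bound $\langle\omega_i^\xi(\eta_1),\eta_2\rangle_e\le C\delta^{-k}$ of Lemma \ref{lemma:EstimatesOfOmega} is already expressed in the Euclidean frame; the $\sqrt\delta$ factors on input and output cancel in its derivation. So there is no extra factor of $\delta$ to gain, the tangential derivative is only $O(\delta^{-k})$, and the coupling terms reappear. The step is, however, unnecessary. The radial Cauchy bound is uniform over all radial segments of $M\setminus K$, which stay in $M$ by convexity, and this alone gives continuity on $M\cup\Lambda$, as in the paper. Your final step, automorphicity by passing to the limit, is fine.
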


Let $\nabla^a$ denote the $\AdS$ covariant derivative and $\nabla$ denote the Euclidean covariant derivative, which we note coincides with the Minkowski covariant derivative. Since $\nabla^a$ and $\nabla$ are affine equivalent,
\begin{equation}
\label{eqn:AffineConnectionForm}
\nabla^a_X Y = \nabla_X Y + \alpha(X)Y + \alpha(Y)X\ ,
\end{equation}
for some $1$-form $\alpha$.

\begin{lemma}
\label{lemma:ExplicitAffineConnectionForm}
\begin{equation*}
\label{eqn:ExplicitAffineConnectionForm}
\alpha = \frac{r}{(1-r^2)}dr\ .
\end{equation*}
\end{lemma}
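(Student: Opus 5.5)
The plan is to avoid computing Christoffel symbols directly. Instead, I would determine the $1$-form $\alpha$ of \eqref{eqn:AffineConnectionForm} by taking a trace. This exploits the fact that the Kleinian chart is projective: the geodesics of $\AdS$ are intersections with planes through the origin of $\mathbb{R}^{2,2}$, so they map to straight lines. Hence $\nabla^a$ and the flat connection $\nabla$ are projectively equivalent, which is exactly why the difference tensor has the form $\alpha(X)Y+\alpha(Y)X$.

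Work in Cartesian coordinates $(y_0,y_1,y_2)$ on $\Omega$. Here the flat Christoffel symbols vanish, so \eqref{eqn:AffineConnectionForm} reads
\begin{equation*}
\Gamma^{a,k}_{ij} = \alpha_i\delta^k_j + \alpha_j\delta^k_i\ .
\end{equation*}
Contracting over $j=k$ in dimension $3$ gives $\Gamma^{a,k}_{ik} = 4\alpha_i$. On the other hand, for any Levi--Civita connection, semi-Riemannian or not,
\begin{equation*}
\Gamma^{a,k}_{ik} = \partial_i\log\sqrt{\left|\det g_a\right|}\ .
\end{equation*}
Therefore
\begin{equation*}
\alpha = \frac{1}{4}\,d\log\sqrt{\left|\det g_a\right|}\ ,
\end{equation*}
where the determinant is taken in Cartesian coordinates. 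This determinant is a coordinate-independent multiple of the flat density, so it suffices to compute the ratio of the anti-de Sitter and Minkowski volume densities.

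This ratio can be read off directly from \eqref{eqn:MetricsInPseudosphericalCoordinates}. Over $\Omega^s$, the radial coefficient is multiplied by $(1-r^2)^{-2}$ and each of the two pseudoangular directions by $(1-r^2)^{-1}$. Taking square roots of the coefficient ratios, the density ratio is
\begin{equation*}
\frac{\sqrt{\left|\det g_a\right|}}{\sqrt{\left|\det g_m\right|}} = (1-r^2)^{-1}\cdot(1-r^2)^{-1/2}\cdot(1-r^2)^{-1/2} = (1-r^2)^{-2}\ .
\end{equation*}
Since $\det g_m$ is constant in Cartesian coordinates, $\log\sqrt{\left|\det g_a\right|} = -2\log(1-r^2)+\mathrm{const}$. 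Differentiating,
\begin{equation*}
\alpha = \frac{1}{4}\cdot(-2)\cdot\frac{-2r\,dr}{1-r^2} = \frac{r}{(1-r^2)}\,dr\ ,
\end{equation*}
as claimed.

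Note that $r$ is only smooth away from the light cone. Writing $q:=y_1^2+y_2^2-y_0^2$, the same computation gives $\alpha = dq/(2(1-q))$ on all of $\Omega$. This agrees with the stated formula on $\Omega^s$, where $q=r^2$, and it is the region relevant for the subsequent arguments. On $\Omega^t$, the identical argument with \eqref{eqn:MetricsInHyperbolicCoordinates} gives the consistent expression.

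The only step needing real care is justifying the form \eqref{eqn:AffineConnectionForm} itself, namely that the difference tensor is pure trace. This follows from the classical characterization of projectively equivalent torsion-free connections, Weyl's theorem that two torsion-free connections share unparametrized geodesics if and only if their difference is $\alpha\otimes\opId+\opId\otimes\alpha$. It applies here because Kleinian geodesics are straight lines. Once this is granted, which the statement does, the remainder is the short trace computation above.
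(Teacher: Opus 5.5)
Your argument is correct, but it takes a different route from the paper's.

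The paper first uses the invariance of both metrics under the stabilizer of the origin to write $\alpha=\phi\,dr$. It then fixes $\phi$ along the explicit unit-speed radial geodesic $\gamma=\tanh(t)\mathbf{y}$. Since $\partial_r=(1-r^2)^{-1}\partial_t\gamma$ and $\nabla^a_{\partial_t\gamma}\partial_t\gamma=0$, one gets $\nabla^a_{\partial_r}\partial_r=\frac{2r}{1-r^2}\partial_r$, while $\nabla_{\partial_r}\partial_r=0$, so $2\alpha(\partial_r)=\frac{2r}{1-r^2}$.

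You instead contract the difference tensor. Combining $\Gamma^{a,k}_{ik}=4\alpha_i$ with the identity $\Gamma^k_{ik}=\partial_i\log\sqrt{|\det g|}$ gives $\alpha=\frac{1}{4}\,d\log\sqrt{|\det g_a|}$. You then read off the density ratio $(1-r^2)^{-2}$ from \eqref{eqn:MetricsInPseudosphericalCoordinates}. I checked the contraction, the density ratio and the final differentiation, and they are all right.

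What your route buys:
\begin{itemize}
\item It determines every component of $\alpha$ at once, so the paper's symmetry reduction becomes a consequence rather than an input.
\item It gives the global expression $\alpha=dq/(2(1-q))$, valid across the light cone. The paper's computation, which uses unit spacelike $\mathbf{y}$, is carried out only on $\Omega^s$, although that is all that is used later.
\end{itemize}

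What the paper's route buys: it needs neither the contracted Christoffel identity nor the volume density, only one geodesic and one line of differentiation.

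Both arguments take the projective form \eqref{eqn:AffineConnectionForm} as given, and you correctly identify Weyl's characterization of projectively equivalent connections as its justification.
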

\begin{proof}By symmetry, $\alpha=\phi dr$ for some $\phi$. Next observe that, for any unit, spacelike vector $\mathbf{y}$,
\begin{equation*}
\gamma := \tanh(t)\mathbf{y}
\end{equation*}
is a unit speed $\AdS$ geodesic. Denote $r:=\tanh(t)$ and differentiating yields
\begin{equation*}
\partial_t\gamma = \frac{1}{\cosh^2(t)}\partial_r = (1-r^2)\partial_r\ .
\end{equation*}
Hence
\begin{equation*}
\nabla^a_{\partial_r}\partial_r = \nabla^a_{\partial_r}(1-r^2)^{-1}\partial_t\gamma = \frac{2r}{(1-r^2)^2}\partial_t\gamma = \frac{2r}{(1-r^2)}\partial_r\ ,
\end{equation*}
so that
\begin{equation*}
2\alpha(\partial_r)\partial_r = \nabla^a_{\partial_r}\partial_r - \nabla_{\partial_r}\partial_r = \frac{2r}{(1-r^2)}\partial_r\ ,
\end{equation*}
and the result follows.
\end{proof}

Let $\xi$ be an automorphic vector field, denote $\eta :=-(1/2)\nabla^a\times^a\xi$, and let $\omega^\xi:=(\omega_1^\xi,\omega_2^\xi)$ be as in \eqref{eqn:defn:LieAlgebraValuedOneForm}.

\begin{lemma}
\label{lemma:CovariantDerivativeOfXi}
For all $X$,
\begin{equation}
\label{eqn:CovariantDerivativeOfXi}
\begin{aligned}
\nabla_X\xi &= \omega_1^\xi(X) + X\times^a\eta - \alpha(X)\xi - \alpha(\xi)X\ ,\ \text{and}\\
\nabla_X\eta &= \omega_2^\xi(X) + X\times^a\xi - \alpha(X)\eta - \alpha(\eta)X\ .
\end{aligned}
\end{equation}
\end{lemma}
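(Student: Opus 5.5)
The plan is to unwind the definition of $\omega^\xi$ in \eqref{eqn:defn:LieAlgebraValuedOneForm}, using the connection form computed in Lemma \ref{lemma:FormulaForConnectionForm}, and then convert from $\nabla^a$ to the flat connection $\nabla$ via \eqref{eqn:AffineConnectionForm}.

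First, I will regard $\sigma^*(\xi,\eta)$ as a section of the trivial bundle $E$. By definition, $\omega^\xi(X)$ is its flat derivative $D_X$, transported back by $\sigma$. So $\sigma_*\omega^\xi(X)=(\sigma_*D)_X(\xi,\eta)$. By the definition \eqref{eqn:defn:ConnectionFormOfE} of $\Omega$, this equals
\begin{equation*}
\nabla^a_X(\xi,\eta)-\Omega_X(\xi,\eta).
\end{equation*}
Formula \eqref{eqn:FormulaForConnectionFormII} gives $\Omega_X(\xi,\eta)=(-X\times^a\eta,\,-X\times^a\xi)$. Rearranging yields
\begin{equation*}
\nabla^a_X\xi=\omega_1^\xi(X)-X\times^a\eta\qquad\text{and}\qquad\nabla^a_X\eta=\omega_2^\xi(X)-X\times^a\xi,
\end{equation*}
where $\omega_i^\xi(X)$ is read as the tangent vector given by the corresponding component of $\sigma_*\omega^\xi$.

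Second, I substitute \eqref{eqn:AffineConnectionForm}, namely $\nabla^a_X\xi=\nabla_X\xi+\alpha(X)\xi+\alpha(\xi)X$, and likewise for $\eta$. This produces the terms $-\alpha(X)\xi-\alpha(\xi)X$ and $-\alpha(X)\eta-\alpha(\eta)X$ in \eqref{eqn:CovariantDerivativeOfXi}.

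The main obstacle is bookkeeping: the sign in front of the cross-product term and the precise meaning of $\omega_i^\xi(X)$ as a vector field. For the sign, I will recheck the convention in \eqref{eqn:defn:ConnectionFormOfE} and the sign in \eqref{eqn:FormulaForConnectionFormII}. As written, these give $-X\times^a\eta$, while the statement has $+X\times^a\eta$. I expect this to be absorbed either by the convention $\Omega=\sigma_*D-\nabla$ or by the antisymmetry $X\times\eta=-\eta\times X$ implicit in \eqref{eqn:FormulaForConnectionFormII}. To settle it, I will test on a Killing field $\xi=\mathbf{X}^r$, for which $\omega^\xi=0$ and $\eta=\mathbf{X}^r$ by Lemma \ref{lemma:CharacterizationOfKillingFields}. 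Lemma \ref{lemma:LCCovariantDerivativeOfAdS} gives $\nabla^a_X\mathbf{X}^r=-X\times\mathbf{X}^r=\mathbf{X}^r\times X$, which pins down which sign convention the identity needs.

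For the meaning of $\omega_i^\xi(X)$, I interpret it as the $i$-th component of $\sigma_*$ applied to $D_X\sigma^*(\xi,\eta)$, which is exactly what the first step produces.
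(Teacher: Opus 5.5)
Your route is the same as the paper's: write $\sigma(\omega^\xi(X))=(\sigma_*D)_X(\xi,\eta)=\nabla^a_X(\xi,\eta)-\Omega_X(\xi,\eta)$, insert \eqref{eqn:FormulaForConnectionFormII}, then pass to the flat connection with \eqref{eqn:AffineConnectionForm}. Your reading of $\omega_i^\xi(X)$ as the $i$-th component of $\sigma(\omega^\xi(X))$ is the right one.

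The sign question you leave open has a definite answer, and it is not the one you hope for. Nothing absorbs the sign. The paper defines $\Omega$ in \eqref{eqn:defn:ConnectionFormOfE} as $\nabla-\sigma_*D$, not the reverse. Formula \eqref{eqn:FormulaForConnectionFormII} puts the differentiation direction on the left, with explicit minus signs. It therefore yields
\begin{equation*}
\nabla^a_X\xi=\omega_1^\xi(X)-X\times^a\eta\qquad\text{and}\qquad\nabla^a_X\eta=\omega_2^\xi(X)-X\times^a\xi\ .
\end{equation*}
No orientation convention in the Kleinian chart can change this either. The term $X\times^a\eta$ contains two vector products, one of them hidden in $\eta=-\frac12\nabla^a\times^a\xi$, so it does not depend on orientation.

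Your Killing test settles the matter. For $\xi=\mathbf{X}^r$ you get $\eta=\mathbf{X}^r$, $\omega^\xi=0$, and $\nabla^a_X\xi=-X\times\mathbf{X}^r=-X\times\eta$. The displayed formula would instead require $+X\times^a\eta$. So the statement as printed fails for every non-zero Killing field, which is in particular automorphic. The paper's own proof contains the matching slip: it asserts that \eqref{eqn:FormulaForConnectionFormII} gives $\nabla^a_X(\xi,\eta)=D_X(\xi,\eta)+(X\times^a\eta,X\times^a\xi)$, with the wrong sign.

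So do not try to force the $+$ sign. Finish with the minus-sign version, substituting \eqref{eqn:AffineConnectionForm} exactly as you describe; the $\alpha$-terms come out as stated. The correction is harmless for the rest of that section:
\begin{itemize}
\item In Lemma \ref{lemma:EstimateOfRadialComponent}, the cross-product term has no radial component, so the sign never enters.
\item In Lemma \ref{lemma:UniformConvergence}, take $\nu:=\xi^\pa+e_r^m\times^m\eta^\pa$ instead of $\xi^\pa-e_r^m\times^m\eta^\pa$. Using $e_r^m\times^m(e_r^m\times^m v)=v$ for pseudoangular $v$, this gives the same equation $\nabla_{\partial_r}\nu=(\cdots)-\frac{1}{1-r}\nu$. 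It also gives the same expression $\nabla_{\partial_r}\xi^\pa=\omega_1^\xi(\partial_r)^\pa-\frac{1}{1-r^2}\nu+\frac{1}{1+r}\xi^\pa$.
\end{itemize}
The source term becomes $\omega_1^\xi(\partial_r)^\pa+e_r^m\times^m\omega_2^\xi(\partial_r)^\pa$, which obeys the same bounds, so the estimates go through unchanged.
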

\begin{proof} Indeed, by \eqref{eqn:FormulaForConnectionFormII},
\begin{equation*}
\nabla_X^a(\xi,\eta) = D_x(\xi,\eta) + (X\times^a\eta,X\times^a\xi)\ .
\end{equation*}
Thus, bearing in mind \eqref{eqn:AffineConnectionForm},
\begin{equation*}
\begin{aligned}
\nabla_X\xi
&= \nabla_X^a\xi - \alpha(X)\xi - \alpha(\xi)X\\
&= D_X\xi + X\times^a\eta - \alpha(X)\xi - \alpha(\xi)X\\
&= \omega_1^\xi(X) + X\times^a \eta - \alpha(X)\xi - \alpha(\xi)X\ ,
\end{aligned}
\end{equation*}
and the first identity follows. The second identity is proven in a similar manner, and this completes the proof.
\end{proof}

We now address independently the radial and pseudoangular components $\xi^r$ and $\xi^\pa$ of $\xi$ defined above.
\begin{lemma}
\label{lemma:EstimateOfRadialComponent}
There exists $C>0$ such that
\begin{equation}
\label{eqn:EstimateOfRadialComponent}
\|\xi^r\|_e,\|\eta^r\|_e \leq C\delta(\mathbf{y})\big(1+\left|\ln\big(\delta(\mathbf{y})\big)\right|\big)\ .
\end{equation}
\end{lemma}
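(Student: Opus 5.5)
We plan to prove the estimate by integrating the radial component of the system \eqref{eqn:CovariantDerivativeOfXi} along Minkowski radial rays from the origin. Fix an angular direction and write $\mathbf{y}(r)=\Phi^s(r,\theta,t)$ for $r$ in the range where the ray lies in $M$. Set $u(r):=\langle\xi,e_r^m\rangle_m$ and $v(r):=\langle\eta,e_r^m\rangle_m$. Since $e_r^m=\partial_r$ is parallel along the ray for the flat connection $\nabla$, we have $\partial_r u=\langle\nabla_{\partial_r}\xi,\partial_r\rangle_m$. We will take $X=\partial_r$ in Lemma \ref{lemma:CovariantDerivativeOfXi}. The term $\partial_r\times^a\eta$ is orthogonal to $\partial_r$ for the anti-de Sitter metric. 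The coordinate frame is orthogonal for both metrics, so the $\partial_r$-component is the same for $\langle\cdot,\cdot\rangle_a$ and $\langle\cdot,\cdot\rangle_m$, and this term therefore drops out of the radial equation. By Lemma \ref{lemma:ExplicitAffineConnectionForm}, $\alpha(\partial_r)=r/(1-r^2)$ and $\alpha(\xi)=r u/(1-r^2)$. The radial component of $\alpha(\partial_r)\xi+\alpha(\xi)\partial_r$ is then $2ru/(1-r^2)$. We thus obtain the scalar linear ODE
\begin{equation*}
\partial_r u = -\frac{2r}{1-r^2}\,u + \langle\omega_1^\xi(e_r^m),e_r^m\rangle_m\ ,
\end{equation*}
and the analogous equation for $v$ with $\omega_2^\xi$ in place of $\omega_1^\xi$.

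The integrating factor for this equation is $\delta(\mathbf{y})^{-1}=(1-r^2)^{-1}$, since $\partial_r\big((1-r^2)^{-1}\big)=2r(1-r^2)^{-2}$. This gives $\partial_r\big(u/\delta\big)=\delta^{-1}\langle\omega_1^\xi(e_r^m),e_r^m\rangle_m$. By the first line of \eqref{eqn:defn:EstimatesOfOmega}, whose Minkowski version was obtained in the proof of Lemma \ref{lemma:EstimatesOfOmega}, the right-hand side is bounded by $C\delta^{-1}$ outside a compact set $K$. Choose $r_0$ so that the point at radius $r_0$ on the ray lies outside $K$, with $r_0$ uniform over directions meeting $M$ near $\Lambda$. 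This is possible because $M\setminus\Omega^s$ is compact and $\ideal M=\Lambda$. Integrating from $r_0$ to $r$ gives
\begin{equation*}
\Big|\frac{u(r)}{\delta(r)}\Big| \leq \Big|\frac{u(r_0)}{\delta(r_0)}\Big| + C\int_{r_0}^r\frac{ds}{1-s^2} \leq C'\big(1+\left|\ln\delta(r)\right|\big)\ ,
\end{equation*}
since $\int ds/(1-s^2)=\mathrm{atanh}(s)=\tfrac12\ln\big((1+s)/(1-s)\big)$. Multiplying through by $\delta$ yields the bound for $u$, and the same argument gives the bound for $v$.

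To pass to Euclidean norms, we note that $\xi^r=u\,\partial_r$ and that $\|\partial_r\|_e$ is bounded. Indeed, since $\Lambda$ lies in a slab $|y_0|\leq C$ (as in the proof of Lemma \ref{lemma:FramesApproachExponentially}), $t$ stays bounded on the relevant part of $M$. The same boundedness of $t$ controls the difference between Euclidean and Minkowski radial components. The estimate on the compact part of $M$ is trivial by continuity.

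I expect the main obstacle to be uniformity. The initial values $u(r_0)$ must be bounded uniformly over directions, and the segment of each ray from $r_0$ to the boundary must remain in $M\setminus K$ where \eqref{eqn:defn:EstimatesOfOmega} holds; this requires that $M\cap\Omega^s$ be radially star-shaped near $\Lambda$. If the rays leave $M$ through $\partial M$ before reaching $\Lambda$, the integration will be performed only up to the exit point. This still gives the bound there, since $\delta$ is decreasing along the ray.
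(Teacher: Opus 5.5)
Your proposal is correct and is essentially the paper's own proof. Like the paper, you take the $\partial_r$-component of Lemma \ref{lemma:CovariantDerivativeOfXi}, use $\alpha=\frac{r}{1-r^2}dr$ to get $\partial_r u=-\frac{2r}{1-r^2}u+\langle\omega_1^\xi(\partial_r),\partial_r\rangle_m$, multiply by the integrating factor $(1-r^2)^{-1}$, bound the source term by the first line of \eqref{eqn:defn:EstimatesOfOmega}, and integrate along radial lines. Your extra remarks fill in details the paper leaves implicit: why the $\partial_r\times^a\eta$ term drops out, the passage to the Euclidean norm using bounded $t$, and uniformity of the radial integration (the star-shapedness you need follows from convexity of $M$ in the Kleinian chart, with $p_0\in\Conv(\Lambda)\subseteq M$ at the origin).
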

\begin{remark}In particular, this implies that the radial component of $\xi(\mathbf{y})$ converges to zero as $\mathbf{y}$ approaches $\Lambda$. We will see presently that, together with our asymptotic study of the behaviour of the normal vector field, this also implies that the Euclidian orthogonal projection of its Pogorelov transform also converges to zero.
\end{remark}
\begin{proof} Indeed, bearing in mind \eqref{eqn:CovariantDerivativeOfXi},
\begin{equation*}
\begin{aligned}
\partial_r\xi^r &= \langle\nabla_{\partial_r}\xi,\partial_r\rangle_m\\
&=\langle\omega_1^\xi(\partial_r),\partial_r\rangle_m - 2\alpha(\partial_r)\langle\xi,\partial_r\rangle_m\\
&=\langle\omega_1^\xi(\partial_r),\partial_r\rangle_m - \frac{2r}{(1-r^2)}\xi^r\ .
\end{aligned}
\end{equation*}
Thus, bearing in mind \eqref{eqn:defn:EstimatesOfOmega},
\begin{equation*}
\left|\partial_r\bigg(\frac{1}{(1-r^2)}\xi^r\bigg)\right| = \frac{1}{(1-r^2)}\left|\langle\omega_1^\xi(\partial_r),\partial_r\rangle_m\right| \leq \frac{C}{(1-r^2)}\ .
\end{equation*}
Integrating over radial lines yields
\begin{equation*}
\frac{1}{(1-r^2)}\left|\xi^r\right| \leq C\big(1+\left|\ln(1-r)\right|\big)\ ,
\end{equation*}
so that
\begin{equation*}
\left|\xi^r\right| \leq C\delta\big(1+\left|\ln(\delta)\right|\big)\ ,
\end{equation*}
as desired. The estimate for $\eta^r$ is addressed in a similar manner, and this completes the proof.
\end{proof}

\begin{lemma}
\label{lemma:UniformConvergence}
For all $k>\kappa_\rho$, there exists $C>0$ such that, if $\mathbf{y}$ and $\mathbf{y}'$ are colinear radial vectors, then
\begin{equation*}
\|\xi^\pa(\mathbf{y}) - \xi^\pa(\mathbf{y}')\|_e \leq C\max\big(\delta(\mathbf{y})^{\frac{(1-k)}{2}},\delta(\mathbf{y}')^{\frac{(1-k)}{2}}\big)\ .
\end{equation*}
\end{lemma}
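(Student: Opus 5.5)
The plan is to integrate the pseudoangular part of $\nabla_{\partial_r}\xi$ along the radial segment joining $\mathbf{y}$ and $\mathbf{y}'$. We must check that the integrand is $O(\delta^{-(1+k)/2})$ and hence integrable in $r$ up to $r=1$. Write $\mathbf{y}=\Phi^s(r,\theta,t)$ and $\mathbf{y}'=\Phi^s(r',\theta,t)$ with $r\le r'$. Points inside a compact set are handled trivially by continuity, so we may assume both points lie outside the compact set $K$ of Lemma \ref{lemma:EstimatesOfOmega}. The frame vectors $e_\theta^m$ and $e_t^m$ have Euclidean length bounded along the ray: for points of $M$ near $\Lambda$ the parameter $t$ is bounded, since $\Lambda$ lies in a slab. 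Moreover, they are parallel for the flat connection along a radial line, because $\partial_\theta/r$ and $\partial_t/r$ are constant along rays. It therefore suffices to bound
\begin{equation*}
\partial_r\langle\xi,\eta_1\rangle_m=\langle\nabla_{\partial_r}\xi,\eta_1\rangle_m,\qquad \eta_1\in\{e_\theta^m,e_t^m\}.
\end{equation*}

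Apply the first identity of Lemma \ref{lemma:CovariantDerivativeOfXi} with $X=\partial_r$:
\begin{equation*}
\nabla_{\partial_r}\xi=\omega_1^\xi(\partial_r)+\partial_r\times^a\eta-\alpha(\partial_r)\xi-\alpha(\xi)\partial_r .
\end{equation*}
The last term is radial and drops out of the pairing with $\eta_1$. By Lemma \ref{lemma:EstimatesOfOmega}, the first term contributes at most $C\delta^{-(1+k)/2}$.

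The term $\alpha(\partial_r)\xi$ has coefficient $r/(1-r^2)$, which is not integrable. To absorb it, we work instead with the rescaled quantity. The pseudoangular Minkowski length and the anti-de Sitter length differ by the factor $\sqrt{\delta}$ from \eqref{eqn:defn:AntiDeSitterOrthonormalFrame}. The natural choice is therefore to consider $\delta^{-1/2}\langle\xi,\eta_1\rangle_m$, or a suitable weight $w(r)$ with $w'/w=r/(1-r^2)$, namely $w=\delta^{-1/2}$. Its derivative then involves only $\omega_1^\xi$ and the cross-product term, up to constants. This is the same integrating-factor trick used in Lemma \ref{lemma:EstimateOfRadialComponent}.

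The cross-product term $\partial_r\times^a\eta$ is where the coupling with $\eta$ enters. One computes that $\partial_r\times^a$ maps the pseudoangular plane to itself, with Minkowski size comparable to $\delta^{-1/2}$ times a rotation, so it couples $\xi^\pa$ and $\eta^\pa$. I would treat the pair $(\xi^\pa,\eta^\pa)$ together as a linear ODE system along the ray. Using the second identity of Lemma \ref{lemma:CovariantDerivativeOfXi}, the system has the form
\begin{equation*}
\partial_r\begin{pmatrix}\xi^\pa\\ \eta^\pa\end{pmatrix}
=\frac{c(r)}{1-r^2}\begin{pmatrix}-r & J\\ J & -r\end{pmatrix}\begin{pmatrix}\xi^\pa\\ \eta^\pa\end{pmatrix}
+\begin{pmatrix}\omega_1^\xi(\partial_r)^\pa\\ \omega_2^\xi(\partial_r)^\pa\end{pmatrix}+\text{radial corrections}.
\end{equation*}
The radial corrections are controlled by Lemma \ref{lemma:EstimateOfRadialComponent}. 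The principal step is to diagonalise the homogeneous part explicitly, using the combinations $\xi\pm\eta$ in the spirit of the bundle map $\sigma$. The aim is to show that its fundamental solution is uniformly bounded from $r$ to $r'$. Heuristically, this is because $\sigma^*(\xi,\eta)$ would be constant for a Killing field, and Killing fields are bounded in the Kleinian chart.

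Granting that, Duhamel's formula gives
\begin{equation*}
\|\xi^\pa(\mathbf{y})-\xi^\pa(\mathbf{y}')\|_e\le C\int_r^{r'}\delta(s)^{-(1+k)/2}\,ds+C\int_r^{r'}\delta(1+|\ln\delta|)\,ds .
\end{equation*}
Since $\delta\sim 2(1-s)$ and $(1+k)/2<1$, the first integral is $\le C(1-r)^{(1-k)/2}\le C\max\big(\delta(\mathbf{y})^{(1-k)/2},\delta(\mathbf{y}')^{(1-k)/2}\big)$. The second integral is smaller.

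The main obstacle I expect is precisely the boundedness of the propagator of the homogeneous $(\xi,\eta)$ system, that is, the cancellation of the non-integrable $1/(1-r^2)$ coefficients. I would verify it by comparing with the explicit Killing fields, whose $(\xi,\eta)$ pairs solve the homogeneous system exactly and stay bounded up to $\partial\Omega$. Since Killing fields span all solutions, this yields the uniform bound on the fundamental matrix.
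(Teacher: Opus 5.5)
Your set-up is the same as the paper's. Along each ray, the pair $(\xi^\pa,\eta^\pa)$ satisfies
\[
\nabla_{\partial_r}\begin{pmatrix}\xi^\pa\\ \eta^\pa\end{pmatrix}=\frac{1}{1-r^2}\begin{pmatrix}-r & J\\ J & -r\end{pmatrix}\begin{pmatrix}\xi^\pa\\ \eta^\pa\end{pmatrix}+\begin{pmatrix}\omega_1^\xi(\partial_r)^\pa\\ \omega_2^\xi(\partial_r)^\pa\end{pmatrix},\qquad J:=e_r^m\times^m .
\]
There are no radial corrections at all, since $\partial_r\times^a\eta=\partial_r\times^a\eta^\pa$ and $\alpha(\xi)\partial_r$ is radial. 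Two smaller corrections: $\partial_r\times^a=(1-r^2)^{-1}J$ has size $\delta^{-1}$, not $\delta^{-1/2}$; and $J^2=\mathrm{Id}$ on the pseudoangular plane, so $J$ is a Lorentzian reflection with null eigenlines, not a rotation.

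The gap is in the step you grant. Your Killing-field argument only shows that $P(r,r_0)$ is bounded for each fixed base point $r_0$. It does not control $P(r',r)=P(r',r_0)P(r,r_0)^{-1}$ uniformly, because $P(r,r_0)^{-1}$ blows up like $(1-r)^{-1}$ on a decaying mode. More seriously, even a uniformly bounded propagator does not give your Duhamel inequality. With $w=(\xi^\pa,\eta^\pa)$ and $F$ the $\omega$-forcing, Duhamel between $r$ and $r'$ reads
\[
w(r')-w(r)=\big(P(r',r)-\mathrm{Id}\big)w(r)+\int_r^{r'}P(r',s)F(s)\,ds ,
\]
and you have dropped the first term.

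Carrying out the diagonalisation you allude to makes this concrete. Set $u:=\xi^\pa-J\eta^\pa$ (this is the paper's $\nu$) and $v:=\xi^\pa+J\eta^\pa$. Then
\[
\nabla_{\partial_r}u=-\frac{u}{1-r}+\big(\omega_1^\xi(\partial_r)^\pa-J\omega_2^\xi(\partial_r)^\pa\big),\qquad
\nabla_{\partial_r}v=\frac{v}{1+r}+\big(\omega_1^\xi(\partial_r)^\pa+J\omega_2^\xi(\partial_r)^\pa\big),
\]
so $P(r',r)$ acts by $(1-r')/(1-r)$ on $u$ and by $(1+r')/(1+r)$ on $v$. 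On $v$ the dropped term is $O(r'-r)$ and harmless. On $u$ it equals $-\frac{r'-r}{1-r}u(r)$, which for $r'$ close to $1$ is essentially $-u(r)$. It is small only if $u$ itself decays.

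That decay is the missing idea, and it is the first step of the paper's proof. Integrate
\[
\nabla_{\partial_r}\big(u/(1-r)\big)=(1-r)^{-1}\big(\omega_1^\xi(\partial_r)^\pa-J\omega_2^\xi(\partial_r)^\pa\big)
\]
outward from a base point in the compact region, using the $\delta^{-(1+k)/2}$ bound of Lemma \ref{lemma:EstimatesOfOmega}. This gives $\|u\|\le C(1-r)^{(1-k)/2}$. The paper then substitutes $J\eta^\pa=\xi^\pa-u$ back into the $\xi^\pa$-equation to obtain
\[
\nabla_{\partial_r}\big(\xi^\pa/(1+r)\big)=(1+r)^{-1}\big(\omega_1^\xi(\partial_r)^\pa-u/(1-r^2)\big)=O\big((1-r)^{-(1+k)/2}\big).
\]
This is integrable and yields the stated estimate. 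Equivalently, you may treat $u$ and $v$ separately and use $\xi^\pa=(u+v)/2$. Without the decay of $u$, your argument does not close.
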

\begin{proof} Let $\pi$ denote the orthogonal projection along $\partial_r$, and observe that
\begin{equation*}
\nabla_{\partial_r}\pi = 0\ .
\end{equation*}
In particular,
\begin{equation*}
\nabla_{\partial_r}\xi^\pa = \nabla_{\partial_r}\pi(\xi) = \pi\big(\nabla_{\partial_r}\xi\big)\ ,
\end{equation*}
and
\begin{equation*}
\nabla_{\partial_r}\eta^\pa = \nabla_{\partial_r}\pi(\eta) = \pi\big(\nabla_{\partial_r}\eta\big)\ .
\end{equation*}
Thus, by \eqref{eqn:CovariantDerivativeOfXi},
\begin{equation*}
\nabla_{\partial_r}\xi^\pa = \omega_1^\xi(\partial_r)^\pa + \partial_r\times^a\eta - \alpha(\partial_r)\xi\ ,
\end{equation*}
and
\begin{equation*}
\nabla_{\partial_r}\eta^\pa = \omega_2^\xi(\partial_r)^\pa + \partial_r\times^a\xi - \alpha(\partial_r)\eta\ .
\end{equation*}
Note now that, by \eqref{eqn:defn:MinkowskiOrthonormalFrame} and \eqref{eqn:defn:AntiDeSitterOrthonormalFrame},
\begin{equation*}
\partial_r\times^a = e_r^m\times^a = \frac{1}{(1-r^2)}e_r^a\times^a = \frac{1}{(1-r^2)}e_r^m\times^m\ ,
\end{equation*}
so that
\begin{equation}
\label{eqn:DerivativeOfXiEta}
\begin{aligned}
\nabla_{\partial_r}\big(\xi^\pa,\eta^\pa)
&=\big(\omega_1^\xi(\partial_r)^\pa,\omega_2^\xi(\partial_r)^\pa\big)\\
&\qquad\qquad + \frac{1}{(1-r^2)}\big(e_r^m\times^m\eta^\pa,e_r^m\times^m\xi^\pa\big)
 - \frac{r}{(1-r^2)}\big(\xi^\pa,\eta^\pa\big)\ .
\end{aligned}
\end{equation}

Consider now the vector field $\nu := \xi^\pa - e_r^m\times^m\eta^\pa$. The preceding identity yields
\begin{equation*}
\nabla_{\partial_r}\nu = \big(\omega_1^\xi(\partial_r)^\pa - e_r^m\times^m\omega_2^\xi(\partial_r)^\pa\big) - \frac{1}{(1-r)}\nu\ .
\end{equation*}
Hence
\begin{equation*}
\nabla_{\partial_r}\frac{1}{(1-r)}\nu = \frac{1}{(1-r)}\big(\omega_1^\xi(\partial_r)^\pa - e_r^m\times^m\omega_2^\xi(\partial_r)^\pa\big)\ ,
\end{equation*}
so that, by \eqref{eqn:defn:EstimatesOfOmega},
\begin{equation*}
\big\|\nabla_{\partial_r}\frac{1}{(1-r)}\nu\big\|_e \leq C_1(1-r)^{-\frac{(3+k)}{2}}\ .
\end{equation*}
Integrating over the radial line passing through $\mathbf{y}$ and $\mathbf{y}'$ yields
\begin{equation*}
\|\frac{1}{(1-r)}\nu\| \leq C_1\big(1 + (1-r)^{-\frac{(1+k)}{2}}\big)\ ,
\end{equation*}
so that
\begin{equation*}
\label{eqn:EstimateForNu}
\|\nu\| \leq C_1(1-r)^{\frac{(1-k)}{2}}\ .
\end{equation*}

Substiting $\nu$ into the first component of \eqref{eqn:DerivativeOfXiEta} now yields
\begin{equation*}
\nabla_{\partial_r}\xi^\pa = \omega_1^\xi(\partial_r)^\pa - \frac{1}{(1-r^2)}\nu + \frac{1}{(1+r)}\xi^\pa\ ,
\end{equation*}
so that, by \eqref{eqn:defn:EstimatesOfOmega} again
\begin{equation*}
\big\|\nabla_{\partial_r}\frac{1}{(1+r)}\xi^\pa\big\|_e \leq C_2 (1-r)^{-\frac{(1-k)}{2}}\ ,
\end{equation*}
and the result follows upon integrating this identity along the radial segment joining $\mathbf{y}$ and $\mathbf{y}'$.
\end{proof}

We are now ready to prove Lemma \ref{lemma:ContinuousExtension}.
\begin{proof}[Proof of Lemma \ref{lemma:ContinuousExtension}.] Let $\xi^r$ and $\xi^\pa$ denote respectively the radial and pseudoangular components of $\xi$. By Lemma \ref{lemma:EstimateOfRadialComponent}, $\xi^r$ extends continuously by $0$ over $\Lambda$. Likewise, since the uniform limit of continuous functions is continuous, it follows by Lemma \ref{lemma:UniformConvergence} that $\xi^\pa$ also extends continuously to $\Lambda$. The vector field $\xi$ thus extends continuously to $\Lambda$. The restriction of $\xi$ to $\Lambda$ has vanishing radial component and is thus tangent to $\partial\Omega=\Ein$. By continuity, the restriction of $\xi$ to $\Lambda$ is automorphic, and this completes this proof.
\end{proof} 
\section{Rigidity}
\label{sec:rigidity}
We now prove the main results of this paper, namely infinitesimal and global rigidity. The main novelty is the infinitesimal rigidity result of Theorem \ref{thm:InfinitesimalRigidity}. Following the overall strategy of \cite{Sch}, we use the Pogorelov transform to reduce the problem to one of infinitesimal rigidity for convex subsets in Euclidean space.

We recall that the key challenge lies in establishing sufficient regularity of the first order variation field $\xi$ in order for the technique developed by Vekua in \cite{Vek} to be applied. The main purpose of Section \ref{s:AutomorphicVectorFields} was to establish continuity. However, this, in itself, is insufficient since, whilst Vekua's techniques require $W^{1,\infty}$-regularity, it seems unlikely that we can hope for better than $C^{0,1-k}$-regularity, for $k>\kappa_\rho$, with $\kappa_\rho$ as in \eqref{eqn:DefinitionOfKappa}.

This problem is bypassed in Section \ref{ss:InfinitesimalRigidity} with two key observations. First, we show (c.f. Lemma \ref{lemma:LocalGuageEquivalence}) how Vekua's argument reduces to an application of the Poincar\'e--Hopf theorem and, second, using Lemma \ref{lemma:NowhereVanishingProperty}, we show that the restriction of $\xi$ to the boundary curve $\Lambda$ is generically non-vanishing, and thus has zero contribution to the Poincar\'e--Hopf degree. Since it is only along $\Lambda$ that regularity poses a problem, infinitesimal rigidity then follows upon applying these properties in the correct order.

\subsection{Pogorelov transforms}
\label{sss:PogorelovTransforms}
We will use Pogorelov transforms in the Kleinian model to map first order variations of immersed surfaces which are isometric with respect to the anti-de Sitter metric to first order variations which are isometric with respect to the Euclidean metric. In what follows, we will make considerable use of the framework described in Section \ref{sss:KleinianModelDefinitions}.

Consider first a vector field $\xi$ over $\Omega$. Over $\Omega^s$ (resp $\Omega^t$) we denote by
\begin{equation*}
\xi := \xi^r + \xi^\pa\qquad\big(\mathrm{resp.}\ \xi:=\xi^r + \xi^{\mathrm{hyp}}\big)
\end{equation*}
its decomposition into radial and pseudoangular (resp. hyperbolic) components, and we define the \emph{first Pogorelov transform} of $\xi$ by
\begin{equation*}
P_1\xi := \lambda \xi^r + \xi^\pa\qquad\big(\mathrm{resp.}\ P_1\xi:=\lambda\xi^r + \xi^{\mathrm{hyp}}\big)\ ,
\end{equation*}
where $\lambda:\Omega\rightarrow(0,\infty)$ is given by
\begin{equation*}
\lambda(\mathbf{y}) := \frac{1}{1 + y_0^2 - y_1^2 - y_2^2}\ ,
\end{equation*}
Note that, over $\Omega^s$,
\begin{equation*}
\lambda(\mathbf{y}) = \frac{1}{1-r^2}\ ,
\end{equation*}
and, over $\Omega^t$\ ,
\begin{equation*}
\lambda(\mathbf{y}) = \frac{1}{1+r^2}\ .
\end{equation*}
The difference in sign is explained by the fact that in pseudospherical polar coordinates, the radial direction is positive, whilst in hyperbolic polar coordinates, it is negative. Over $\Omega^s$ (resp. $\Omega^t$), the first Pogorelov transform is geometrically characterized by the property that, for all $\xi$,
\begin{equation*}
(P_1\xi)^\pa = \xi^\pa\qquad\big(\text{resp.}\ (P_1\xi)^{\mathrm{hyp}}=\xi^{\mathrm{hyp}}\big)\qquad\text{and}\qquad
\|(P_1\xi)^r\|_m^2 = \|\xi^r\|_a^2\ .
\end{equation*}
By the manner in which we have defined it, it is not immediately clear that $P_1$ extends smoothly over the whole of $\Omega$. This is easily addressed.
\begin{lemma}
For every smooth vector field $\xi$ over $\AdS$, $P_1\xi$ is also smooth.
\end{lemma}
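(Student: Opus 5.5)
The plan is to write $P_1$ in Cartesian coordinates of $\R^{2,1}$ and see that the resulting formula is visibly smooth on all of $\Omega$, including across the light cone $\{r=0\}$ where the two polar decompositions degenerate. Let $\mathbf{y}$ denote the position vector field and $q(\mathbf{y}):=\langle\mathbf{y},\mathbf{y}\rangle_m=y_1^2+y_2^2-y_0^2$. Over $\Omega^s$ the radial component of $\xi$ is the Minkowski orthogonal projection onto $\langle\mathbf{y}\rangle$, namely
\begin{equation*}
\xi^r=\frac{\langle\xi,\mathbf{y}\rangle_m}{q(\mathbf{y})}\,\mathbf{y}\ .
\end{equation*}
Over $\Omega^t$ the same formula holds, since the radial direction is again spanned by $\mathbf{y}$ and the sign of $q$ is absorbed by the projection formula. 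Hence on $\Omega^s\cup\Omega^t$ we have
\begin{equation*}
P_1\xi=\xi+(\lambda-1)\xi^r=\xi+\frac{\lambda(\mathbf{y})-1}{q(\mathbf{y})}\langle\xi,\mathbf{y}\rangle_m\,\mathbf{y}\ ,
\end{equation*}
with $\lambda(\mathbf{y})=1/(1-q(\mathbf{y}))$ in both regions. This matches the two given expressions $1/(1\mp r^2)$, because $q=r^2$ on $\Omega^s$ and $q=-r^2$ on $\Omega^t$.

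The key computation is
\begin{equation*}
\frac{\lambda-1}{q}=\frac{1}{q}\Big(\frac{1}{1-q}-1\Big)=\frac{1}{1-q}=\lambda\ .
\end{equation*}
The apparent singularity at $q=0$ therefore cancels, and we obtain
\begin{equation*}
P_1\xi=\xi+\lambda(\mathbf{y})\langle\xi,\mathbf{y}\rangle_m\,\mathbf{y}\ .
\end{equation*}
This right-hand side is smooth wherever $\xi$ is, since $\lambda$ is smooth and positive on $\Omega=\{q<1\}$. It therefore defines the unique continuous extension of $P_1\xi$ across the light cone, which is a closed set with empty interior, and this extension is smooth.

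The only point requiring care is to confirm that the radial projection is given by the same Minkowski formula on the timelike side; this is immediate from orthogonality of the hyperbolic polar frame with respect to $\langle\cdot,\cdot\rangle_m$. Nothing else is an obstacle. Smoothness of $\xi$ over $\AdS$ is used only through its Kleinian-chart expression on $\Omega$.
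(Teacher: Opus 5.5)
Your proof is correct and is essentially the paper's argument. The paper also writes $P_1$ explicitly in Cartesian coordinates. It first reduces by rotational symmetry to the $(y_0,y_1)$-plane and obtains the matrix $\frac{1}{1+y_0^2-y_1^2}\begin{pmatrix}1-y_1^2 & y_0y_1\\ -y_0y_1 & 1+y_0^2\end{pmatrix}$, which is exactly your formula $P_1\xi=\xi+\lambda\langle\xi,\mathbf{y}\rangle_m\,\mathbf{y}$ restricted to that plane. Your coordinate-free version, via the cancellation $(\lambda-1)/q=\lambda$, simply avoids the symmetry reduction and the $2\times 2$ matrix bookkeeping.
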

\begin{proof}It suffices to prove that $P_1\xi$ extends smoothly over the light cone. By rotational symmetry about the $y_0$ axis, it also suffices to address the analogous result over $\mathrm{AdS}^{1,1}$. Let $\partial_r$ and $\partial_t$ denote respectively the radial and pseudoangular vector fields of pseudospherical coordinates in $\Omega^s$. By \eqref{eqn:defn:PseudoSphericalCoordinates},
\begin{equation*}
\partial_r = \frac{y_0}{r}\partial_{y_0} + \frac{y_1}{r}\partial_{y_1}\qquad\text{and}\qquad
\partial_t = y_1\partial_{y_0} + y_0\partial_{y_1}\ .
\end{equation*}
In terms of the basis $(\partial_{y_0},\partial_{y_1})$, the first Pogorelov transform is given at every point $(y_0,y_1)$ of $\Omega^s$ by the matrix
\begin{equation*}
P_1 = \frac{-1}{r}
\begin{pmatrix}y_0/r\hfill& y_1\hfill\\ y_1/r\hfill& y_0\hfill\end{pmatrix}
\cdot\begin{pmatrix}1/(1-r^2)\hfill& 0\hfill\\ 0\hfill& 1\hfill\end{pmatrix}
\cdot\begin{pmatrix}y_0\hfill& -y_1\hfill\\ -y_1/r\hfill& y_0/r\hfill\end{pmatrix}\ .
\end{equation*}
Since $r^2=y_1^2-y_0^2$,
\begin{equation*}
P_1 = \frac{1}{1+y_0^2-y_1^2}\begin{pmatrix}1-y_1^2\hfill& y_0y_1\hfill\\ -y_0y_1\hfill& 1+ y_0^2\hfill\end{pmatrix}\ .
\end{equation*}
We likewise obtain the same formula over $\Omega^t$, and smoothness of $P_1\xi$ follows.
\end{proof}

The main property of the first Pogorelov transform is the following result, which is proven exactly as in \cite[Lemma 1.10]{Sch}.
\begin{lemma}
Let $\Sigma\subseteq\Omega$ be an immersed surface. A smooth vector field $\xi$ is a first order isometric variation of $\Sigma$ with respect to $\langle\cdot,\cdot\rangle_a$ if and only if $P_1\xi$ is a first order isometric variation of $\Sigma$ with respect to $\langle\cdot,\cdot\rangle_m$.
\end{lemma}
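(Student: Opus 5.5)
The plan is to reduce the statement to a pointwise algebraic identity relating the two metrics along the surface, following \cite[Lemma 1.10]{Sch}. Recall that $\xi$ is a first order isometric variation of $\Sigma$ with respect to a metric $g$ with Levi--Civita connection $\nabla^g$ precisely when, for all $X$ tangent to $\Sigma$,
\begin{equation*}
g(\nabla^g_X\xi,X)=0\ .
\end{equation*}
Equivalently, $\mathcal{L}_\xi g$ restricts to zero on $\opT\Sigma$. Both conditions are local and pointwise in the tangent plane. Since both sides of the claimed equivalence are linear in $\xi$, and $P_1$ is a smooth bundle isomorphism by the preceding lemma, it suffices to verify the identity on the dense open set $\Omega^s\cup\Omega^t$ and then conclude by continuity.

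First, I would use the conformal/projective structure. The metrics $\langle\cdot,\cdot\rangle_a$ and $\langle\cdot,\cdot\rangle_m$ share geodesics, since $\mathcal{K}$ is projective. Over $\Omega^s$, \eqref{eqn:MetricsInPseudosphericalCoordinates} gives
\begin{equation*}
\langle\cdot,\cdot\rangle_a=\lambda^2dr^2+\lambda r^2d\Theta^2\ ,
\end{equation*}
with $\lambda=1/(1-r^2)$. Comparing with $\langle\cdot,\cdot\rangle_m=dr^2+r^2d\Theta^2$, we get
\begin{equation*}
\langle U,V\rangle_a=\lambda\langle U,V\rangle_m+(\lambda^2-\lambda)\,dr(U)\,dr(V)\ .
\end{equation*}
Next, I would compute $\mathcal{L}_\xi\langle\cdot,\cdot\rangle_a(X,X)$ for $X$ tangent to $\Sigma$. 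Write $\xi=\xi^r+\xi^\pa$ with $\xi^r=f\partial_r$, and expand. The terms involving $\xi^\pa$ should contribute $\lambda\,\mathcal{L}_{\xi^\pa}\langle\cdot,\cdot\rangle_m(X,X)$ plus correction terms of the form $(\xi^\pa\lambda)(\ldots)$. Those corrections vanish because $\lambda$ and $r$ are radial functions, and $\xi^\pa$ is tangent to their level sets. The radial part should produce
\begin{equation*}
\lambda\Big(\mathcal{L}_{\lambda f\partial_r}\langle\cdot,\cdot\rangle_m(X,X)\Big)
\end{equation*}
after using $\partial_r\lambda=2r\lambda^2$ and the identity $\mathcal{L}_{f\partial_r}(dr^2)=2\,df\,dr$. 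The aim is the identity
\begin{equation*}
\mathcal{L}_\xi\langle\cdot,\cdot\rangle_a(X,X)=\lambda\,\mathcal{L}_{P_1\xi}\langle\cdot,\cdot\rangle_m(X,X)\ ,
\end{equation*}
valid for all vectors $X$, not only those tangent to $\Sigma$. Since $\lambda>0$, it gives the equivalence at once.

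An alternative, cleaner route is to work in the homogeneous lift $\mathbb{R}^{2,2}$, as in Schlenker. There an infinitesimal AdS isometry of $\Sigma$ corresponds to a field on the cone $\mathbb{R}_{>0}\cdot\hat\Sigma$, and the Minkowski picture corresponds to the affine chart $x_0=1$. The Pogorelov map then appears as the derivative of the change of section from the quadric $\|\mathbf{x}\|^2=-1$ to the chart. I would fall back on this route if the direct computation becomes messy. Over $\Omega^t$ the computation is identical, with $\lambda=1/(1+r^2)$ and the sign of $dr^2$ reversed.

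The main obstacle I expect is the bookkeeping in the Lie derivative of the radial component. One must check that the cross terms $df\otimes dr$ rescale exactly by $\lambda$ to match those from $\lambda f$, that is, that $\partial_r\lambda$ cancels precisely against the derivative of the coefficient $\lambda^2-\lambda$. This is the single place where the specific form of $\lambda$ enters, and I would carry it out first in the reduced $\mathrm{AdS}^{1,1}$ radial-line model before assembling the full statement.
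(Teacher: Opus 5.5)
Your proposal is correct and supplies what the paper leaves implicit: the paper's only proof is the pointer to \cite[Lemma 1.10]{Sch}, and your identity $\mathcal{L}_\xi\langle\cdot,\cdot\rangle_a=\lambda\,\mathcal{L}_{P_1\xi}\langle\cdot,\cdot\rangle_m$ is exactly the needed verification. You check it on $\Omega^s\cup\Omega^t$ and extend it across the light cone by continuity; for that you must first extend $\xi$ locally off $\Sigma$, because $\Sigma$ may meet, or even lie in, that cone.

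One correction to your anticipated bookkeeping: the $df\otimes dr$ cross terms match for any radial function $\lambda$. The specific form of $\lambda$ enters only through the $d\Theta^2$ coefficient, via $(r^2\lambda)'=2r\lambda^2$, i.e.\ $r\,\partial_r\lambda=2(\lambda^2-\lambda)$. This holds both for $1/(1-r^2)$ on $\Omega^s$ and for $1/(1+r^2)$ on $\Omega^t$.
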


The \emph{second Pogorelov transform} $P_2$ is more straightforward, and is given by multiplication by the constant matrix
\begin{equation*}
P_2 := \begin{pmatrix}-1\hfill& & \\ &1\hfill& \\ & & 1\hfill\end{pmatrix}\ .
\end{equation*}
The main property of the second Pogorelov transform is the following result, which is likewise proven exactly as in \cite[Lemma 1.10]{Sch}.
\begin{lemma}
Let $\Sigma\subseteq\Omega$ be an immersed surface. A smooth vector field $\xi$ is a first order isometric variation of $\Sigma$ with respect to $\langle\cdot,\cdot\rangle_m$ if and only if $P_2\xi$ is a first order isometric variation of $\Sigma$ with respect to $\langle\cdot,\cdot\rangle_e$.
\end{lemma}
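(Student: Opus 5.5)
The plan is to mirror the Minkowski computation used for the first transform, but in the flat setting. A vector field $\xi$ along $\Sigma$ is a first order isometric variation for a constant metric $g$ on $\R^3$ exactly when, for every tangent vector $X$ to $\Sigma$,
\begin{equation*}
g(D_X\xi,X)=0\ ,
\end{equation*}
where $D$ is the flat derivative of $\R^3$. This is the variation formula $\frac{d}{dt}g(X+tD_X\xi,X+tD_X\xi)|_{t=0}=2g(D_X\xi,X)$. Since both $\langle\cdot,\cdot\rangle_m$ and $\langle\cdot,\cdot\rangle_e$ are constant-coefficient metrics on the same affine space, they share the same Levi-Civita connection $D$. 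Working in the coordinates $(y_0,y_1,y_2)$, the task is therefore to compare the two conditions $\langle D_X\xi,X\rangle_m=0$ and $\langle D_X(P_2\xi),X\rangle_e=0$ directly.

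The key observation is that the constant matrix $P_2=\mathrm{diag}(-1,1,1)$ intertwines the two forms:
\begin{equation*}
\langle \mathbf{u},\mathbf{v}\rangle_m=\langle P_2\mathbf{u},\mathbf{v}\rangle_e \qquad\text{for all } \mathbf{u},\mathbf{v}\in\R^{2,1}\ .
\end{equation*}
This holds because $\langle\cdot,\cdot\rangle_m=-u_0v_0+u_1v_1+u_2v_2$. Since $P_2$ is constant, $D_X(P_2\xi)=P_2D_X\xi$. Hence, for every $X$ tangent to $\Sigma$,
\begin{equation*}
\langle D_X(P_2\xi),X\rangle_e=\langle P_2D_X\xi,X\rangle_e=\langle D_X\xi,X\rangle_m\ .
\end{equation*}
So one expression vanishes for all such $X$ if and only if the other does, which is the claimed equivalence. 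It is worth noting that the surface $\Sigma$ itself is not moved; only the variation field is transformed. This is exactly the framing in \cite[Lemma 1.10]{Sch}, where the same identity with $P_2$ is used.

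There is no real obstacle here. The only points to check are the following.
\begin{itemize}
\item The variation condition is stated with the flat derivative. This is legitimate because both metrics are flat with the same affine connection.
\item Smoothness of $P_2\xi$ is automatic, since $P_2$ is linear and constant.
\end{itemize}
If one prefers to phrase first order isometric variations via the Lie derivative, the condition $(\mathcal{L}_\xi g)|_{T\Sigma}=0$, the same computation applies. For constant $g$ one has $(\mathcal{L}_\xi g)(X,X)=2g(D_X\xi,X)$.
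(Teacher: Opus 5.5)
Your proof is correct and is essentially the paper's argument. The paper only refers to \cite[Lemma 1.10]{Sch}, and in this flat Minkowski-to-Euclidean setting that argument reduces exactly to your pointwise identity $\langle D_X(P_2\xi),X\rangle_e=\langle P_2D_X\xi,X\rangle_e=\langle D_X\xi,X\rangle_m$, which uses only that $P_2$ is constant and intertwines the two forms. (One small caveat: \cite[Lemma 1.10]{Sch} treats the hyperbolic-to-Euclidean map, where the corresponding identity requires a genuine computation, so describing it as using ``the same identity with $P_2$'' is a slight misattribution; this does not affect your proof.)
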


\subsection{Infinitesimal rigidity}
\label{ss:InfinitesimalRigidity}
We now prove infinitesimal rigidity.
\begin{theorem}
\label{thm:InfinitesimalRigidity}
If $\xi:\partial M\rightarrow\opT\AdS$ is a $(\rho,\alpha)$-automorphic first order isometric variation, then $\xi$ is the restriction of a Killing vector field of $\AdS$.
\end{theorem}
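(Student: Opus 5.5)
We will follow Schlenker's strategy in \cite{Sch}, with the regularity issue at $\Lambda$ handled topologically as announced in the introduction. Let $\xi$ be a $(\rho,\alpha)$-automorphic first order isometric variation of $\partial M$. First extend $\xi$ to a $(\rho,\alpha)$-automorphic vector field over $M$; for instance, interpolate between the two boundary components using a $\rho$-equivariant partition of unity, adding Killing-field corrections so that the automorphy relation \eqref{eqn:defn:AutomorphicVectorField} is preserved. By Lemma \ref{lemma:ContinuousExtension}, $\xi$ then extends continuously to $\overline{M}=M\cup\Lambda$, and its restriction to $\Lambda$ is the automorphic vector field $\xi_\alpha$ of Lemma and Definition \ref{lemma:ExistenceAndUniquenessOfAutomorphicFieldsII}, tangent to $\Ein$. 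Next apply Lemma \ref{lemma:NowhereVanishingProperty}: replacing $\xi$ by $\xi-\xi_{\mathbf{A}}$ for a suitable small $\mathbf{A}$, which is still an isometric variation, we may assume that $\xi$ vanishes nowhere on $\Lambda$. It suffices to prove that this new $\xi$ is Killing. Since the new $\xi$ vanishes nowhere on $\Lambda$, we aim for a contradiction, or for the conclusion $\xi\equiv 0$ on $\partial M$.

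Now pass to Euclidean space. In the Kleinian chart, the closure $\overline{M}$ is a compact convex body in $\R^3$ with boundary sphere $\partial M\cup\Lambda$. The field $P_2P_1\xi$ is a Euclidean first order isometric variation of the smooth strictly convex disks $\Sigma_1,\Sigma_2$. By Lemma \ref{lemma:EstimateOfRadialComponent}, $\lambda\xi^r$ is only $O(1+|\ln\delta|)$. We therefore expect to need Lemma \ref{lemma:EstimateOfRadialComponent} together with Lemma \ref{lemma:BoundsOfRadialComponentI}, which says that $N$ stays transverse to $e_r$, to show that the Euclidean normal component of $P_2P_1\xi$ relative to the boundary sphere tends to zero, or at least stays bounded and continuous, at $\Lambda$. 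The tangential part is controlled by Lemma \ref{lemma:UniformConvergence}. We also need to check that Killing fields of $\AdS$ correspond under $P_2P_1$ to Euclidean Killing fields, so that nothing is lost in the reduction.

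The core step is a topological version of Vekua's argument (the Lemma \ref{lemma:LocalGuageEquivalence} alluded to earlier). On a smooth strictly convex surface, a Euclidean infinitesimal isometry $Z$ has a rotation field $W$ with $dZ=W\times dx$. The Cohn-Vossen/Vekua argument shows that the tangential part of $W$, suitably rotated, has only isolated zeros of negative index unless $W$ is constant. Over each disk $\Sigma_i$ away from $\Lambda$ everything is smooth, so the local index statement holds there. We must then relate the field used on the whole sphere $\partial M\cup\Lambda$ to $\xi$ near $\Lambda$. Since $\xi\neq 0$ on $\Lambda$, the field (either $\xi$ or an associated field on the sphere that is continuous up to $\Lambda$) is nonvanishing in a neighbourhood of $\Lambda$. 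All its zeros therefore lie in the smooth part, each with index $\leq 0$ by the local Vekua argument. Poincar\'e--Hopf on $S^2$ requires total index $2$, so we get a contradiction unless the local analysis degenerates. Degeneration means $W$ is locally constant, hence $P_2P_1\xi$ is Euclidean Killing on each $\Sigma_i$. Pulled back, $\xi$ is then $\AdS$-Killing on each component, and since $\xi|_\Lambda$ is shared, Lemma \ref{lemma:VanishingConditionOfKillingFieldsII} forces the two Killing fields to agree.

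The main obstacle is choosing the vector field to which Poincar\'e--Hopf is applied. It must be defined by first-order data in the interior, where Vekua's index bound is available, yet extend continuously and nonvanishingly across $\Lambda$, where we have only $C^0$ control of $\xi$ and no control of its derivatives. Lemma \ref{lemma:EstimatesOfOmega} shows that the rotation field $\eta$ may blow up like $\delta^{-(1+k)/2}$. We would therefore first try the normalized tangential projection of $P_2P_1\xi$ itself, locally gauge-equivalent to the Vekua field via $dZ=W\times dx$, rather than $W$ itself. We would verify that near $\Lambda$ this field is homotopic through nonvanishing fields to $\xi|_\Lambda$.
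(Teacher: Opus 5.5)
Your overall architecture matches the paper's. You extend $\xi$ continuously to $\Lambda$, pass to Euclidean space by the Pogorelov transforms, use Vekua's local structure and Poincar\'e--Hopf on $\partial\overline{M}$, and glue the two components via Lemma~\ref{lemma:VanishingConditionOfKillingFieldsII}. The field you finally settle on, $\tilde{\xi}:=\pi^e(P_2P_1\xi)$, is also the paper's.

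\textbf{Main gap: the sign of the index.} The zeros of $\tilde{\xi}$ have \emph{positive} index, not index $\leq 0$. In the notation of Lemma~\ref{lemma:LocalGuageEquivalence}, the bending equations read $u_x=z_{xx}\zeta$, $v_y=z_{yy}\zeta$, $u_y+v_x=2z_{xy}\zeta$. At a point where $z_{xx}=z_{yy}=1$ and $z_{xy}=0$ this is exactly the Cauchy--Riemann system for $u+iv$. As a check, for a translation the tangential projection is the gradient of a height function, which has two zeros of index $+1$. Your claim about the tangential part of the rotation field $W$ fails for the same reason: when $W$ is close to a nonzero constant, its tangential part can have zeros of index $+1$.

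With every zero of index $\geq 1$, Poincar\'e--Hopf gives no contradiction by itself, since a non-Killing variation could a priori have exactly two simple zeros.

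\textbf{Missing idea: force three zeros.} The $6$-dimensional space of Killing fields should be spent to impose three zeros. By Lemmas~\ref{lemma:VanishingConditionOfKillingFieldsII} and~\ref{lemma:BundleIsomorphism}, the map $\chi\mapsto\big(\pi^e(P_2P_1\chi(p_i))\big)_{i=1,2,3}$ is an isomorphism from Killing fields onto $\bigoplus_i\opT_{p_i}\partial\overline{M}$ for distinct $p_i\in\Lambda$. It therefore remains an isomorphism for nearby $p_i\in\Sigma$, so after subtracting a Killing field, $\tilde{\xi}$ vanishes at three points of $\Sigma$.

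If $\xi$ is not Killing on $\Sigma$, then $\tilde{\xi}\not\equiv 0$ there, by $2\langle\mu^{\mathrm{n}},N\rangle\mathrm{II}=-\mathcal{L}_{\mu^{\mathrm{t}}}g$ and strict convexity. These three zeros are therefore isolated, of positive index, and stable under small perturbations. Only \emph{then} do you apply the small perturbation of Lemma~\ref{lemma:NowhereVanishingProperty} to make $\tilde{\xi}$ nonvanishing along $\Lambda$. Three zeros of index $\geq 1$ then contradict total degree $2$.

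Your order (perturb first, no normalization) cannot be repaired afterwards. A Killing correction that is not small destroys the non-vanishing along $\Lambda$.

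\textbf{Second gap: control at $\Lambda$.} What must tend to zero is the \emph{tangential} projection $\pi^e(P_2P_1\xi^r)$ of the radial part. That radial part is only $\opO(1+|\ln\delta|)$ by Lemma~\ref{lemma:EstimateOfRadialComponent}, so its normal component need not stay bounded, and it does not need to. In addition, Poincar\'e--Hopf requires the tangent planes of $\partial\overline{M}$ to extend continuously across $\Lambda$.

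Both facts need the asymptotics $N^m=(1+\opO(\delta))e^m_r+\opO(\delta^{(1-k)/2})$ of Lemma~\ref{lemma:AsymptoticsOfNormal}. These rest on the tilt estimate of Lemma~\ref{lemma:GrowthRateOfTAngle} with $\kappa_\rho<k<1$, which is the genuinely hard input of the paper.

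Lemma~\ref{lemma:BoundsOfRadialComponentI} does not suffice. It bounds the radial component $\alpha$ of $N^a$, but the pseudoangular components $\beta,\gamma$ are then only constrained by $\gamma^2-\beta^2=1+\alpha^2$ and may be unbounded. Meanwhile $N^m$ deviates from $e^m_r$ by roughly $\sqrt{\delta}\,(|\beta|+|\gamma|)$. You therefore need $|\beta|,|\gamma|=\opO(\delta^{-k/2})$ with $k<1$, so that this deviation, times $|\ln\delta|$, still tends to zero.
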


The proof of Theorem \ref{thm:InfinitesimalRigidity} will require three more technical results. The first is a local, topological reformulation of Vekua's analysis of Euclidian first order isometric variation fields. Recall first that, if $U\subseteq\mathbf{R}^2$ is a neighbourhood of the origin, and if $\xi:U\rightarrow TU$ is a continuous vector field with an isolated zero at this point, then we define its \emph{winding number} at this point to be the number of times the image of a small oriented circle about this point passes around the origin. This quantity is unchanged both by orientation-preserving homeomorphisms of the domain and by continuous orientation-preserving gauge transformations of the codomain, and is thus also well-defined for isolated zeroes of continuous tangent vector fields over $\partial M$.
\begin{lemma}
\label{lemma:LocalGuageEquivalence}
Let $\Sigma\subseteq\mathbb{R}^3$ be a smooth, strongly convex immersed surface, and let $\pi^e:\mathbb{R}^3\rightarrow\opT\Sigma$ denote the Euclidian orthogonal projection. If $\xi:\Sigma\rightarrow\mathbb{R}^3$ is a Euclidian first order isometric variation of $\Sigma$, then $\pi^e\circ\xi$ is everywhere locally gauge equivalent to a holomorphic function.

That is, for all $p\in\Sigma$, there exists a neighbourhood $\Omega$ of $p$, a quasiconformal homeomorphism $\phi:\mathbb{D}\rightarrow\Omega$, and a continuous orientation preserving trivialization $\psi:\opT\Omega\rightarrow\mathbb{C}$ such that the composition $\big(\psi\circ\pi^e\circ\xi\circ\phi\big)$ is holomorphic.

In particular, if $\pi^e\circ\xi$ is non-vanishing, then every zero is isolated and has positive integer-valued winding number
\end{lemma}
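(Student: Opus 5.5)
We follow the classical argument of Vekua and Cohn-Vossen for infinitesimal bendings of convex surfaces, and recast its conclusion in topological terms. Fix $p\in\Sigma$ and a small neighbourhood $\Omega$ of $p$, and write $\xi=\xi^T+fN$. Here $N$ is the Euclidean unit normal, $\xi^T=\pi^e\circ\xi$, and $f=\langle\xi,N\rangle_e$. The first order isometry condition $\langle d\xi,d\Sigma\rangle_e=0$ says that the symmetric part of $\nabla\xi^T$ equals $-f\,\sff$, where $\sff$ is the (positive definite) second fundamental form. Equivalently, the $1$-form $\beta:=(\xi^T)^\flat$ satisfies
\begin{equation*}
\mathcal{L}_{\xi^T}\fff = -2f\,\sff\ ,
\end{equation*}
so that $\mathcal{L}_{\xi^T}\fff$ is pointwise proportional to $\sff$. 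Eliminating $f$, we obtain that the trace-free part of $\mathcal{L}_{\xi^T}\fff$ with respect to the conformal structure $[\sff]$ vanishes. That is, $\xi^T$ is a conformal Killing-type field for the pair $(\fff,\sff)$. This is a first order, linear, elliptic system of Beltrami type in $\xi^T$, with smooth coefficients since $\Sigma$ is smooth and strongly convex.

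Next we choose isothermal coordinates $z=x+iy$ on $\Omega$ for the smooth Riemannian metric $\sff$; this is possible by smoothness and positivity of $\sff$. We take $\phi:\mathbb{D}\to\Omega$ to be the inverse of this chart, composed if necessary with a further quasiconformal map introduced below. In these coordinates the system for $w:=\xi^T$, written as a complex function via the trivialization induced by $z$, takes the form of a generalized analytic (Carleman--Bers--Vekua) equation
\begin{equation*}
\partial_{\bar z}w = \mu\,\partial_z w + \nu\,\overline{\partial_z w} + a\,w + b\,\bar w\ ,
\end{equation*}
with smooth coefficients and $|\mu|+|\nu|<1$, by uniform ellipticity coming from strong convexity. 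Two standard steps then reduce this to a holomorphic function. First, a quasiconformal change of variable, given by solving the Beltrami equation for the principal part, removes the $\mu,\nu$ terms; this is why $\phi$ is quasiconformal rather than conformal. Second, the Bers--Vekua similarity principle writes any solution of $\partial_{\bar z}w=Aw+B\bar w$ as $w=e^{s}h$, with $s$ continuous (indeed H\"older) and $h$ holomorphic. The factor $e^{-s}$, together with the chosen chart, gives the continuous orientation preserving trivialization $\psi$. Since the paper allows both $\phi$ and $\psi$ to be merely quasiconformal/continuous, we only need the H\"older regularity that the similarity principle provides.

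The final assertion is then immediate. If $\pi^e\circ\xi$ is not identically zero near $p$, then $h$ is a nonzero holomorphic function. Hence its zeros are isolated, and each has winding number equal to its order, a positive integer. The winding number is invariant under the orientation-preserving homeomorphism $\phi$ and the orientation-preserving gauge $\psi$, so the same holds for $\pi^e\circ\xi$. If $\pi^e\circ\xi$ vanished on an open set, unique continuation for the similarity principle would force $h\equiv 0$ on the connected domain, which is excluded by the hypothesis.

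The main obstacle is the middle step: checking carefully that the isometry condition really yields an \emph{elliptic} Beltrami system in $\xi^T$ alone, with the correct orientation so that winding numbers are positive rather than negative. The cleanest route seems to be to diagonalize $\fff$ and $\sff$ simultaneously at each point and verify the sign using $\det\sff>0$. Orientation conventions, namely which of $\partial_z$ or $\partial_{\bar z}$ appears, must be tracked so that the end result is holomorphic and not antiholomorphic.
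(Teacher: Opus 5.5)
Your outline is sound and uses the same input as the paper: Vekua's reduction of infinitesimal bendings of a strictly convex surface to a Beltrami-type equation, followed by a representation of its solutions as holomorphic functions of a quasiconformal variable. What differs is the set-up.

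The paper works extrinsically. It writes $\Sigma$ locally as a graph $z=z(x,y)$ and the variation as $(\xi,\eta,\zeta)$. The tangential components $u=\xi+z_x\zeta$, $v=\eta+z_y\zeta$ then satisfy $u_x=z_{xx}\zeta$, $u_y+v_x=2z_{xy}\zeta$, $v_y=z_{yy}\zeta$, with flat derivatives and no zeroth-order terms. So $w=u+iv$ solves a homogeneous equation $w_{\bar z}=q_1w_z+q_2\overline{w_z}$ (Vekua V.(2.9)). Vekua's Theorem 3.30 then gives $w=f\circ W$ directly, and the gauge is simply $uf_x+vf_y\mapsto u+iv$.

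Your intrinsic formulation $\mathcal{L}_{\xi^T}\fff=-2f\sff$ brings in Christoffel symbols and hence zeroth-order terms. That is why you need the similarity principle and a factor $e^{-s}$ in $\psi$. In exchange, ellipticity and orientation become transparent.

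One step is wrong as written. No change of the independent variable, quasiconformal or not, can remove a term $\nu\overline{w_z}$. Under $\zeta=\chi(z)$ it reappears as $\nu\overline{\chi_z}\,\overline{\tilde w_\zeta}$, and nothing cancels it. In your set-up this term is present. Take $w$ to be the components of $\xi^T$ in the frame $(\partial_x,\partial_y)$ of an $\sff$-isothermal chart. The principal part is then $\alpha w_{\bar z}+\gamma\overline{w_z}$, with $\gamma\neq 0$ wherever $\fff$ is not conformal to $\sff$. There are two fixes.

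\begin{itemize}
\item Use Vekua's solution-dependent coefficient $\mu+\nu\overline{w_z}/w_z$. The transformed lower-order coefficients are then only in $L^p$ for some $p>2$, which the similarity principle tolerates.
\item More simply, change the unknown. Since $\sff$ is conformal to $|dz|^2$, your condition reads $(\mathcal{L}_{\xi^T}\fff)(\partial_z,\partial_z)=0$, that is, $\partial_z\beta_z=\Gamma^z_{zz}\beta_z+\Gamma^{\bar z}_{zz}\overline{\beta_z}$ with $\beta=\fff(\xi^T,\cdot)$. So $c:=\overline{\beta(\partial_z)}$ satisfies $c_{\bar z}=Ac+B\bar c$ with smooth coefficients, and no quasiconformal change is needed at all.
\end{itemize}

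The second fix also settles the orientation question you flagged. Because $\fff$ is positive definite, the map $\xi^T\mapsto c=\tfrac12(\beta_x+i\beta_y)$ is orientation preserving. The equation is therefore of holomorphic type and the winding numbers are positive. The similarity principle gives $c=e^{s}h$ with $h$ holomorphic. You may then take $\phi$ to be the inverse chart restricted to a disc and $\psi=e^{-s}\,\overline{\fff(\cdot,\partial_z)}$.
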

\begin{proof} We use the notation of \cite[Section V.2]{Vek}, and it will thus be preferable to denote the variation vector field provisionally by $X=:(\xi,\eta,\zeta)$. Upon rotating if necessary, we may suppose that, near $p$, $\Sigma$ is the graph of a function $z(x,y)$. We define the tangent frame $(e_x,e_y)$ near $p$ by
\begin{equation*}
e_x:=\partial_x + z_x\partial_z\qquad\text{and}\qquad e_y:=\partial_y+z_y\partial_z\ ,
\end{equation*}
and we denote by $(f_x,f_y)$ its dual frame with respect to the Euclidian metric. If $\pi(X)$ denotes the Euclidian orthogonal projection onto $T\Sigma$, then
\begin{equation*}
\pi(X) = u\cdot f_x + v\cdot f_y\ ,
\end{equation*}
where
\begin{equation*}
u := \xi + z_x\zeta\qquad\text{and}\qquad v:=\eta+z_y\zeta\ .
\end{equation*}
By $(2.9)$ of \cite[Section V.2]{Vek}, the function $w:=u+iv$ satisfies an equation of Beltrami type. By \cite[Theorem $3.30$]{Vek}, there therefore exists a quasiconformal local homeomorphism $W$ and a smooth holomorphic function $f$, such that, near $p$,
\begin{equation*}
w(x+iy) = f\big(W(x+iy)\big)\ ,
\end{equation*}
and the result follows.
\end{proof}

Let $N^m:\partial M\rightarrow\mathbb{R}^{2,1}$ denote the outward pointing Minkowski unit normal over $\partial M$. Note that this is not necessarily well-defined at points where $\opT\partial M$ is null. However, the following estimate shows in particular that it is well-defined in a neighbourhood of $\Lambda$.
\begin{lemma}
\label{lemma:AsymptoticsOfNormal}
For all $k>\kappa_\rho$, over $\Omega$,
\begin{equation}
\label{eqn:AsymptoticsOfNormal}
N^m = \big(1 + \opO(\delta)\big)e^m_r + \opO\big(\delta^{\frac{1-k}{2}}\big) e^m_\theta + \opO\big(\delta^{\frac{1-k}{2}}\big) e^m_t\ ,
\end{equation}
where $\delta$ is as in \eqref{eqn:DefinitioOfDelta}. In particular $N^m$ extends continuously to $\Lambda$.
\end{lemma}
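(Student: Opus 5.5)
The plan is to convert the anti-de Sitter information we already have about the unit normal $N$ into Minkowski information in the Kleinian chart. The two inputs are Lemma \ref{lemma:BoundsOfRadialComponentI}, which controls the radial component of $N$, and Lemma \ref{lemma:GrowthRateOfTAngle}, which controls how fast $N$ tilts relative to the frame.

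\textbf{Step 1: the normal in the AdS frame.} Since $\partial M$ is spacelike, $N$ is a unit timelike vector field for $\langle\cdot,\cdot\rangle_a$. Write
\[
N=a_r e_r^a+a_\theta e_\theta^a+a_t e_t^a .
\]
By Lemma \ref{lemma:BoundsOfRadialComponentI}, $a_r=-\langle N,e_r^a\rangle_a$ is bounded above and below by positive constants outside a compact set. Now apply Lemma \ref{lemma:GrowthRateOfTAngle} with $X=N$, extended to a $\rho$-invariant timelike field near $\partial M$. This gives, for every $k'>\kappa_\rho$,
\[
\angle_a(e_t^a,N)\le k' r_a+C .
\]
The component of $N$ orthogonal to $e_t^a$ has norm $\sinh\angle_a$ in the timelike-plane sense. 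Hence
\[
|a_\theta|,\ |a_t|\le Ce^{k'r_a}\le C\delta^{-k'/2},
\]
using $\cosh r_a=\delta^{-1/2}$ over $\Omega^s$.

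\textbf{Step 2: pass to the Minkowski frame.} By \eqref{eqn:defn:MinkowskiOrthonormalFrame} and \eqref{eqn:defn:AntiDeSitterOrthonormalFrame},
\[
e_r^a=\delta e_r^m,\qquad e_\theta^a=\sqrt{\delta}\,e_\theta^m,\qquad e_t^a=\sqrt{\delta}\,e_t^m .
\]
So $N$ is Minkowski-parallel to
\[
\delta a_r e_r^m+\sqrt{\delta}\,a_\theta e_\theta^m+\sqrt{\delta}\,a_t e_t^m .
\]
Divide by $\delta a_r$; this only rescales, and the normal direction is conformally invariant for orthogonality, since both metrics are diagonal in the same frame. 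The result is
\[
e_r^m+O(\delta^{-(1+k')/2})\,e_\theta^m+\dots
\]
This is too weak: it has the wrong sign of exponent. The resolution is that orthogonality must be transported, not the vector itself. The Minkowski normal $N^m$ is the vector Minkowski-orthogonal to $T\partial M$. A tangent vector $v=v_re_r^a+v_\theta e_\theta^a+v_te_t^a$ satisfies
\[
a_rv_r=a_\theta v_\theta-a_tv_t
\]
with the appropriate signs. In Minkowski coordinates, $v=\delta v_re_r^m+\sqrt\delta(v_\theta e_\theta^m+v_te_t^m)$. One then checks that $N^m$ is parallel to
\[
a_r e_r^m+\delta^{1/2}(a_\theta e_\theta^m\pm a_te_t^m),
\]
after the sign flip induced by the $e_r$ signature.

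\textbf{Step 3: normalise.} From Step 2, $N^m$ is parallel to $e_r^m+O(\delta^{1/2-k'/2})(e_\theta^m,e_t^m)$. Normalising in Minkowski norm, with $e_r^m$ spacelike, changes the radial coefficient by $1+O(\delta^{1-k'})$. Choosing $k'\in(\kappa_\rho,k)$ then gives \eqref{eqn:AsymptoticsOfNormal} with the pseudoangular errors $O(\delta^{(1-k)/2})$. For the radial coefficient, a precise $O(\delta)$ needs a slightly better bound on the cross terms. If that bound is not available, I would accept $1+O(\delta^{1-k})$ and absorb it, since only continuity at $\Lambda$ is used later.

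\textbf{Continuity.} Since $1-k>0$, the pseudoangular terms tend to $0$ as $\delta\to0$. Also, $e_r^m$ is continuous up to $\partial\Omega$ near $\Lambda$, and $\Lambda$ lies in $\Omega^s$ away from the light cone. So $N^m$ extends continuously by $e_r^m$ on $\Lambda$.

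The main obstacle is Step 2: correctly relating the AdS normal to the Minkowski normal, where the radial and pseudoangular rescalings by $\delta$ and $\sqrt\delta$ must turn the growth $\delta^{-k/2}$ into decay $\delta^{(1-k)/2}$. I would verify this by writing both normals as duals of the same tangent plane, through their respective metrics, in the common orthogonal frame $(\partial_r,\partial_\theta,\partial_t)$.
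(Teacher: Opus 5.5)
Your route is the paper's. You use the same two inputs, Lemma \ref{lemma:BoundsOfRadialComponentI} and Lemma \ref{lemma:GrowthRateOfTAngle}, and, once the false start is discarded, your Step 2 vector $a_r e^m_r+\sqrt{\delta}\,(a_\theta e^m_\theta+a_t e^m_t)$ is exactly $\delta$ times the paper's unnormalized Minkowski normal $\hat N^m=\frac{\alpha}{\delta}e^m_r+\frac{\beta}{\sqrt{\delta}}e^m_\theta+\frac{\gamma}{\sqrt{\delta}}e^m_t$. The one place you fall short of the statement is the radial coefficient. You only obtain $1+\opO(\delta^{1-k})$ and propose to settle for it, whereas the lemma asserts $1+\opO(\delta)$.

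The missing ingredient is the unit normalization of $N$, which you never use: $a_r^2+a_\theta^2-a_t^2=-1$. Bounding $\delta a_\theta^2$ and $\delta a_t^2$ separately by $\delta^{1-k'}$ throws away the cancellation between them. With the identity,
\[ \big\|a_r e^m_r+\sqrt{\delta}\,(a_\theta e^m_\theta+a_t e^m_t)\big\|_m^2=a_r^2+\delta\,(a_\theta^2-a_t^2)=a_r^2-\delta\,(1+a_r^2)\ . \]
Since $|a_r|$ is bounded above and below, this vector is spacelike near $\Lambda$ and has length $|a_r|\,(1+\opO(\delta))$. Dividing by this length gives a radial coefficient $1+\opO(\delta)$, up to the orientation sign. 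The pseudoangular coefficients are $\sqrt{\delta}\,a_\theta$ and $\sqrt{\delta}\,a_t$ times $\opO(1)$, hence $\opO(\delta^{(1-k)/2})$ directly from Step 1, so no auxiliary $k'$ is needed. This is precisely the paper's computation.

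Separately, delete the first half of Step 2. The two metrics are not conformal, so the claim that the normal direction is preserved is false, as you noticed yourself. There is also no sign flip. Both metrics are diagonal with the same signature in the frame $(\partial_r,\partial_\theta,\partial_t)$, so the tangency condition reads $a_rv_r+a_\theta v_\theta-a_tv_t=0$. The Minkowski normal is then $\mathrm{diag}(\delta^{-2},\delta^{-1},\delta^{-1})\,N$ in that frame, with plus signs throughout.
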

\begin{remark} It follows that the boundary of $\overline{M}:=M\cup\Lambda$, viewed as a convex subset of in $\mathbb{R}^{2,1}$, has a unique supporting tangent at every point, including along $\Lambda$. Since $M$ is convex, the tangent bundle of $\partial M$ is thus continuous and homeomorphic to $\opT\mathbb{S}^2$. In particular, it has a well-defined Poincar\'e--Hopf degree equal to $2$. Theorem \ref{thm:InfinitesimalRigidity} will follow from this fact together with the topological properties of first order isometric variations established in Lemma \ref{lemma:LocalGuageEquivalence}.
\end{remark}
\begin{proof}
Indeed, let $N^a$ denote the anti de Sitter unit normal vector field over $\partial M$. For $p\in\Sigma\cap\Omega$, denote
\begin{equation*}
N^a(p) = \alpha e^a_r + \beta e^a_\theta + \gamma e^a_t\ ,
\end{equation*}
and note that
\begin{equation*}
\label{eqn:EstimateOfNormalProofI}
\alpha^2 + \beta^2 - \gamma^2  = -1\ .
\end{equation*}
By \eqref{eqn:BoundsOfRadialComponentI}, we may suppose that there exists $C>0$ such that
\begin{equation}
\label{eqn:EstimateOfNormalProofII}
\frac{1}{C} \leq \alpha \leq C\ .
\end{equation}
By \eqref{eqn:GrowthRateOfTAngle}, we may suppose furthermore that
\begin{equation}
\label{eqn:EstimateOfNormalProofIII}
\left|\gamma\right| \leq C\delta^{-\frac{k}{2}}\ ,
\end{equation}
and hence, by \eqref{eqn:EstimateOfNormalProofI}
\begin{equation}
\label{eqn:EstimateOfNormalProofIV}
\left|\beta\right| \leq C\delta^{-\frac{k}{2}}\ .
\end{equation}

Define $\hat{N}^m$ by
\begin{equation*}
\hat{N}^m := \frac{\alpha}{\delta} e^m_r + \frac{\beta}{\sqrt{\delta}}e^m_\theta + \frac{\gamma}{\sqrt{\delta}}e^m_t\ .
\end{equation*}
By \eqref{eqn:defn:MinkowskiOrthonormalFrame} and \eqref{eqn:defn:AntiDeSitterOrthonormalFrame}, $\hat{N}_m$ is a (non-unit) Minkowski normal of $\partial M$ at $p$. Using \eqref{eqn:EstimateOfNormalProofI}, we find that its Minkowski norm-squared satisfies
\begin{equation*}
\|\hat{N}^m\|_m^2 = \frac{\alpha^2}{\delta^2} + \frac{\beta^2}{\delta} - \frac{\gamma^2}{\delta} = \frac{\alpha^2}{\delta^2} - \frac{(\alpha^2+1)}{\delta}\ .
\end{equation*}
In particular, by \eqref{eqn:EstimateOfNormalProofII}, we may suppose that $\hat{N}^m$ is spacelike and that its length $\lambda$ satisfies
\begin{equation*}
\lambda = \frac{\alpha}{\delta}\cdot\big(1 + \opO(\delta)\big)\ .
\end{equation*}
The unit Minkowski normal of $\partial M$ at $p$ is thus
\begin{equation*}
N^m = \frac{1}{\lambda}\hat{N}^m = \big(1+\opO(\delta)\big)\cdot e^m_r + \opO\big(\delta^{\frac{1}{2}}\big)\cdot\beta\cdot e^m_\theta + \opO\big(\delta^{\frac{1}{2}}\big)\cdot\gamma\cdot e^m_t\ ,
\end{equation*}
and the result now follows by \eqref{eqn:EstimateOfNormalProofIII} and \eqref{eqn:EstimateOfNormalProofIV}.
\end{proof}

Finally, denote
\begin{equation*}
Y := \partial\Omega = \big\{ \big(y_0,y_1,y_2)\ \big|\ y_1^2+y_2^2 - y_0^2 = 1 \big\}\ .
\end{equation*}
We recall the second Pogorelov transform $P_2$, defined in Section \ref{sss:PogorelovTransforms}.
\begin{lemma}
\label{lemma:BundleIsomorphism}
Let $\pi^e:\mathbb{R}^3\rightarrow TY$ denote the Euclidian orthogonal projection. The composition $(\pi^e\circ P_2)$ defines a bundle isomorphism of $TY$.
\end{lemma}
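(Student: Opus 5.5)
The claim is pointwise linear algebra on the fibres. At each point $\mathbf{y}\in Y$ I will show that the linear map $\pi^e\circ P_2:\opT_{\mathbf{y}}Y\to\opT_{\mathbf{y}}Y$ is injective. Since both spaces are $2$-dimensional and the map depends smoothly on $\mathbf{y}$, it is then a bundle isomorphism.

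First I identify the tangent spaces. The Minkowski normal to $Y$ at $\mathbf{y}=(y_0,y_1,y_2)$ is the position vector $\mathbf{y}$ itself, since $Y$ is the unit pseudosphere $\{\|\mathbf{y}\|_m^2=1\}$. Thus $\opT_{\mathbf{y}}Y=\{v\ |\ \langle v,\mathbf{y}\rangle_m=0\}$. Writing $\langle v,\mathbf{y}\rangle_m=\langle P_2v,\mathbf{y}\rangle_e$, this is also $\{v\ |\ \langle v,P_2\mathbf{y}\rangle_e=0\}$. In words, the Euclidean normal to $Y$ at $\mathbf{y}$ is $P_2\mathbf{y}=(-y_0,y_1,y_2)$, which is just the Euclidean gradient of $y_1^2+y_2^2-y_0^2$ up to a factor.

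Next I compute the kernel. Let $v\in\opT_{\mathbf{y}}Y$ with $\pi^e(P_2v)=0$. Then $P_2v$ is Euclidean-parallel to the Euclidean normal $P_2\mathbf{y}$. Since $P_2$ is invertible, this gives $v=c\,\mathbf{y}$ for some $c\in\R$. Now impose tangency: $0=\langle v,\mathbf{y}\rangle_m=c\|\mathbf{y}\|_m^2=c$, so $v=0$ and the map is injective on each fibre.

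I do not expect a real obstacle. The one point that needs care is that the identification of the Euclidean normal with $P_2\mathbf{y}$ uses only the identity $\langle v,w\rangle_m=\langle P_2v,w\rangle_e$. This is immediate from the definition of $P_2$, because $P_2$ is exactly the matrix of the Minkowski form relative to the Euclidean one. For the global version of the statement, I would also note that orientation is preserved. The fibrewise map depends continuously on $\mathbf{y}$ and is never degenerate, so its determinant with respect to a fixed orientation of $TY$ has constant sign on the connected surface $Y$. Evaluating at $\mathbf{y}=(0,1,0)$, where the tangent plane is spanned by $\partial_{y_0}$ and $\partial_{y_2}$ and the map is $\mathrm{diag}(-1,1)$, gives determinant $-1$. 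So the map is orientation-reversing. Composing with a reflection fixes this if orientation matters later in the Poincar\'e--Hopf argument.
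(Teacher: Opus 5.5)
Your proof is correct, but it takes a different route from the paper's. The paper reduces by rotational symmetry to the hyperbola $y_1^2-y_0^2=1$ and parametrizes it by $(\sinh t,\cosh t)$. It then computes explicitly that $(\pi^e\circ P_2)(T)=-T/\cosh(2t)$ for the tangent vector $T=(\cosh t,\sinh t)$. The rotational direction is horizontal, hence fixed by $P_2$, and the paper leaves it implicit; in the frame $(T,\partial_\theta)$ the map is really $\mathrm{diag}(-1/\cosh(2t),1)$.

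You argue without coordinates instead. The identity $\langle v,w\rangle_m=\langle P_2v,w\rangle_e$ makes $P_2\mathbf{y}$ the Euclidean normal of $Y$ at $\mathbf{y}$. So the kernel of $\pi^e\circ P_2$ on $\opT_{\mathbf{y}}Y$ consists of tangent multiples of $\mathbf{y}$, and these are zero because $\|\mathbf{y}\|_m^2=1\neq 0$.

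The paper's computation gives the explicit matrix. For instance, it shows the inverse is not uniformly bounded as $|t|\to\infty$, which is harmless on the compact set $\Lambda$. Your argument gives generality and explains why the lemma holds. The same reasoning shows that, for any surface in $\mathbb{R}^{2,1}$, $\pi^e\circ P_2$ is an isomorphism of a tangent plane exactly when that plane is Minkowski non-degenerate. Otherwise the kernel is spanned by the null Minkowski normal. This is the same fact as the colinearity of $P_2N^m$ with the Euclidean normal used in the proof of Theorem~\ref{thm:InfinitesimalRigidity}.

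Your orientation remark agrees with the paper's formula, since the determinant is $-1/\cosh(2t)<0$, but it is not needed. The lemma is only used to transfer non-vanishing along $\Lambda$ and the injectivity of the evaluation map $\Phi$, and neither depends on orientation.
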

\begin{proof}By rotational symmetry about the $y_0$-axis, it suffices to consider the restriction to the $(y_0,y_1)$-plane. Denote
\begin{equation*}
Y' := \big\{ (y_0,y_1)\ \big|\ y_1^2-y_0^2 = 1\big\}\ .
\end{equation*}
This curve is parametrized by
\begin{equation*}
\mathbf{y}(t) := \big(\sinh(t),\cosh(t)\big)\ .
\end{equation*}
The tangent vector of this parametrization at $t$ is
\begin{equation*}
T(t) := \big(\cosh(t),\sinh(t)\big)\ ,
\end{equation*}
and, trivially,
\begin{equation*}
P_2T(t) = \big(-\cosh(t),\sinh(t)\big)\ .
\end{equation*}
Hence,
\begin{equation*}
(\pi^e\circ P_2)\big(T(t)\big) = \frac{1}{\|T(t)\|_e^2}\langle T(t),P_2T(t)\rangle_e\cdot T(t) = -\frac{T(t)}{\cosh(2t)}\neq 0\ .
\end{equation*}
It follows that $(\pi^e\circ P_2)$ restricts to a bundle isomorphism over $T\partial\Omega$, as desired.
\end{proof}

With these prerequisites in hand, we are now ready to prove Theorem \ref{thm:InfinitesimalRigidity}.
\begin{proof}[Proof of Theorem \ref{thm:InfinitesimalRigidity}.] We first show that $\tilde{\xi}:=\pi^e(P_2P_1\xi)$ extends continuously to $\Lambda$. Indeed, over $\Omega$, we denote respectively by $\xi^r$ and $\xi^\pa$ the radial and pseudoangular components of $\xi$. By Lemma \ref{lemma:ContinuousExtension}, $\xi$ extends continuously to $\Lambda$, and its restriction to this set is an automorphic vector field tangent to $\Ein$. By Lemma \ref{lemma:UniformConvergence}, $\xi^\pa$ also extends continuously to $\Lambda$, and its restriction to this set coincides with that of $\xi$ and hence is also an automorphic vector field tangent to $\Ein$.

By Lemma \ref{lemma:EstimateOfRadialComponent}, and by definition of the first Pogorolov transform $P_1$, there exists $C_1>0$ such that, near $\Lambda$,
\begin{equation*}
\|P_1\xi^r\|_e \leq C_1\big(1+\left|\ln(\delta(\mathbf{y}))\right|\big)\ .
\end{equation*}
Note now that, for any Minkowski timelike immersed surface, the second Pogorelov transform of its Minkowski normal is colinear with its Euclidean normal. Thus, by \eqref{eqn:AsymptoticsOfNormal}, for all $k>\kappa_\rho$, there exists $C_2>0$ such that, near $\Lambda$,
\begin{equation*}
\|\pi^e(P_2P_1\xi^r)\|_e \leq C_2\delta(\mathbf{y})^{\frac{1-k}{2}}\cdot\big(1+\left|\ln(\delta(\mathbf{y}))\right|\big)\ .
\end{equation*}
Since this converges to $0$ near $\Lambda$, the vector field
\begin{equation*}
\tilde{\xi} = \pi^e\big(P_2P_1\xi^r\big) + \pi^e\big(P_2P_1\xi^\pa\big)
\end{equation*}
extends continuously to $\Lambda$, and its restriction to this set coincides with that of $\pi^e(P_2P_1\xi^\pa)$.

Now let $\Sigma$ and $\Sigma'$ denote the two boundary components of $M$. We claim that, over $\Sigma$, $\xi$ is the restriction of some Killing vector field of $\AdS$. Indeed, suppose the contrary. We will first show that, upon adding a Killing vector field to $\xi$ if necessary, we may suppose that $\tilde{\xi}$ vanishes at least $3$ distinct points of $\Sigma$. Indeed, let $E$ denote the $6$-dimensional space of Killing vector fields of $\AdS$. For all $p\in\partial \overline{M}$, define $\Phi_p:E\rightarrow T_p\partial\overline{M}$ by
\begin{equation*}
\Phi_p(\eta) := \pi^e\big(P_2P_1\eta(p)\big)\ .
\end{equation*}
By Lemmas \ref{lemma:VanishingConditionOfKillingFieldsII} and \ref{lemma:BundleIsomorphism}, for any $3$ distinct points $p_1,p_2,p_3\in\Lambda$, the linear map $\Phi:=\big(\Phi_{p_1},\Phi_{p_2},\Phi_{p_3}\big)$ has trivial kernel, and is therefore a linear isomorphism. Since $T\partial\overline{M}$ is continuous, we may perturb $(p_1,p_2,p_3)$ to $3$ distinct points of $\Sigma$. Upon replacing $\xi$ with $\xi-\eta$ if necessary, for some Killing vector field $\eta$, we may suppose that $\tilde{\xi}$ vanishes at $p_1$, $p_2$ and $p_3$, as desired.

We now show that these zeroes are stable under small perturbations. To this end, we show that $\tilde{\xi}$ does not vanish identically over $\Sigma$. Note first that, if $\tilde{\xi}$ vanishes identically over $\Sigma$, then so too does $P_2P_1\xi$. Indeed, for any first order isometric variation field $\mu$, with tangential component $\mu^{\mathrm{t}}$ and normal component $\mu^{\mathrm{n}}$,
\begin{equation*}
2\langle\mu^{\mathrm{n}},N\rangle\mathrm{II} = -\mathcal{L}_{\mu^{\mathrm{t}}}g\ ,
\end{equation*}
where $N$, $\mathrm{II}$, and $g$ denote respectively the unit normal vector field, the second fundamental form, and the induced metric. Since $\Sigma$ is locally strongly convex, $\mathrm{II}$ is nowhere vanishing, and it follows that $\mu^{\mathrm{n}}$, and therefore also $\mu$, vanishes whenever $\mu^{\mathrm{t}}$ does. Finally, since $\xi$ is not the restriction of an anti-de Sitter Killing vector field, $P_2P_1\xi$ is not the restriction of a Euclidean Killing vector field, and, in particular, does not vanish identically over $\Sigma$. It follows that $\tilde{\xi}$ does not vanish identically over $\Sigma$ either. Thus, by Lemma \ref{lemma:LocalGuageEquivalence}, the zeroes of $\tilde{\xi}$ over $\Sigma$ are stable under small perturbations, as desired.

By Lemma \ref{lemma:NowhereVanishingProperty}, upon perturbing by a Killing field if necessary, we may suppose that the continuous extension of $P_1\xi^\pa$ is \emph{nowhere vanishing} over $\Lambda$. By Lemma \ref{lemma:BundleIsomorphism}, we may suppose that the same holds for the continuous extension of $\pi^e(P_2P_1\xi^\pa)$, and therefore also for $\tilde{\xi}$. By the preceding discussion, upon making this perturbation sufficiently small, we may still suppose that $\tilde{\xi}$ vanishes at $3$ distinct points close to $p_1$, $p_2$, and $p_3$.

By Lemma \ref{lemma:LocalGuageEquivalence}, the set of zeroes of $\tilde{\xi}$ over $\Sigma$ is discrete and every zero has positive, integer winding number. Since $\tilde{\xi}$ does not vanish over $\Lambda$, it does not vanish identically over $\Sigma'$ and, by Lemma \ref{lemma:LocalGuageEquivalence}, again, the set of zeros of $\tilde{\xi}$ over $\Sigma'$ is also discrete, and every zero has positive, integer winding number. Since $\opT\partial\overline{M}$ is homeomorphic to $\opT\mathbb{S}^2$, it has total degree $2$, and $\tilde{\xi}$ therefore has at most $2$ distinct zeroes over $\Sigma\cup\Sigma'$. This is absurd, and it follows that over $\Sigma$, $\xi$ is the restriction of some Killing vector field $\eta$, say, of $\AdS$, as asserted.

By the same argument, over $\Sigma'$, $\xi$ is the restriction of some Killing vector field $\eta'$, say, of $\AdS$. Since $\eta$ and $\eta'$ coincide over $\Lambda$, they are equal. The vector field $\xi$ is thus the restriction to $\partial M$ of a Killing vector field of $\AdS$, and this completes the proof.
\end{proof}

\subsection{Global rigidity}
\label{ss:GlobalRigidity}
The rest of the proof of Theorem \ref{rigiditytheorem} now follows by a fairly standard application of the classical continuity method. We first describe the framework. Let $\Pi$ be the fundamental group of a compact surface. Let $\hat{\teich}:=\hat{\teich}_{\mathrm{rep}}(\Pi)$ denote the space of Fuchsian representations of $\Pi$ in $\PSL$, and let $\teich:=\teich_{\mathrm{rep}}(\Pi)$ denote the Teichm\"uller space of conjugacy classes of elements of $\hat{\teich}$. Let $\mathbb{D}$ denote the Poincar\'e disk, and let $h$ denote its hyperbolic metric.

Given a metric $g$ on $\mathbb{D}$ with Levi--Civita covariant derivative $\nabla$, a \emph{Codazzi field} of $g$ is defined to be a symmetric endomorphism field $A$ such that, for all $X$, $Y$,
\begin{equation*}
(\nabla_X A)Y = (\nabla_Y A)X\ .
\end{equation*}
Given a smooth, positive function $\phi$, we define a $\phi$-\emph{Labourie field} of $g$ to be a Codazzi field $A$ such that
\begin{equation*}
\det(A) = \phi\ .
\end{equation*}
Labourie fields for constant $\phi$ are discussed, for example, in \cite[Section 2.6]{FS4}. In particular, for any Fuchsian representation $\sigma:\Pi\rightarrow\PSL$, the space of $\sigma$-invariant $\phi$-Labourie fields of $g$ is a smooth manifold of real dimension $(6g-6)$, where $g$ denotes the genus of $\Pi$.

For any smooth function $u:\mathbb{D}\rightarrow\mathbb{R}$, we define the conformal metric $g[u]$ by
\begin{equation*}
g[u] := e^{2u}h\ .
\end{equation*}
Its curvature is given by
\begin{equation*}
\kappa[u] = -e^{-2u}\big(\nabla^h u + 1\big)\ .
\end{equation*}
We recall (see, for example, \cite{AvilesMcOwen1985}) that, for all $\phi\in C^{0,\alpha}(\mathbb{D})$ such that $\phi\geq\epsilon$ for some $\epsilon>0$, there exists a unique $u\in C^{2,\alpha}(\mathbb{D})$ such that $\kappa[u]=-\phi$. That is, for all such $\phi$, there exists a unique, complete conformal metric $g[u]$ with curvature $-\phi$.

By the fundamental theorem of surface theory (see, for example, \cite[Section 7]{BarGauduchonMoroianu2005}), if $\kappa[u]=-\phi<-1$, and if $A$ is a $(\phi-1)$-Labourie field of $g[u]$, then there exists a strongly future convex smooth immersion $e^+[u,A]:\mathbb{D}\rightarrow\AdS$ with induced metric $g[u]$ and shape operator $A$. Furthermore $e^+[u,A]$ is unique up to post-composition by elements of $\PSL\times\PSL$. In particular, if $u$ and $A$ are $\sigma$-invariant with respect to some Fuchsian representation $\sigma:\Pi\rightarrow\PSL$, then $e^+[u,A]$ is $(\sigma,\hat{\rho}^+)$-equivariant with respect to some quasi-Fuchsian representation $\hat{\rho}^+:=\hat{\rho}^+[u,A]:\Pi\rightarrow\PSL\times\PSL$ which we call the \emph{holonomy} of $e^+[u,A]$. The representation $\hat{\rho}^+[u,A]$ depends smoothly on both $u$ and $A$ and defines a Teichm\"uller class in $\teich\times\teich$ which we henceforth denote by $\rho^+[u,A]$. In the same way, we likewise obtain a strongly past convex smooth immersion $e^-[u,A]$, unique up to post-composition by elements of $\PSL\times\PSL$, which in turn yields a Teichm\"uller class $\rho^-[u,A]\in\teich\times\teich$.

The remainder of this section will be devoted to proving the following result.
\begin{theorem}
\label{thm:ExistenceAndUniquenessForLabourieFields}
Choose $\sigma_1,\sigma_2\in\hat{\teich}$, for each $i\in\{1,2\}$, let $\phi_i$ be a $\sigma_i$-invariant smooth function such that $\phi_i>1$, and let $u_i$ denote the unique smooth, $\sigma_i$-invariant function such that $\kappa[u_i]=-\phi_i$. There exists a unique pair $(A_1,A_2)$ such that, for each $i$, $A_i$ is a $\sigma_i$-invariant $(\phi_i-1)$-Labourie field, and
\begin{equation}
\label{eqn:EqualityOfRepresentations}
\rho^+[u_1,A_1] = \rho^-[u_2,A_2]\ .
\end{equation}
\end{theorem}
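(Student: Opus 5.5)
We prove Theorem \ref{thm:ExistenceAndUniquenessForLabourieFields} by the continuity method, applied to a map between manifolds of equal dimension. For each $i$, let $\mathcal{L}_i$ denote the space of $\sigma_i$-invariant $(\phi_i-1)$-Labourie fields of $g[u_i]$. This is a smooth manifold of dimension $6g-6$. Consider the map
\begin{equation*}
F:\mathcal{L}_1\times\mathcal{L}_2\rightarrow\teich\times\teich\times\teich\times\teich\ ,\qquad F(A_1,A_2):=\big(\rho^+[u_1,A_1],\rho^-[u_2,A_2]\big)\ .
\end{equation*}
The solutions of \eqref{eqn:EqualityOfRepresentations} are exactly the preimages of the diagonal $\Delta$. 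An equivalent formulation is more convenient. Let $\mathcal{Q}$ be the space of quasi-Fuchsian classes, of dimension $12g-12$. Let $\mathcal{P}\subseteq\mathcal{L}_1\times\mathcal{L}_2$ denote the set of pairs with $\rho^+[u_1,A_1]=\rho^-[u_2,A_2]$. By Mess's theory, each such pair corresponds to a GHCC spacetime $X$ in the holonomy class $\rho$ whose past and future boundaries carry the given metrics. One should check that the future-convex immersion and the past-convex immersion with the same holonomy bound a convex set. This follows because both lie in $\Inv(\Lambda_\rho)$ and lie on the appropriate sides of the convex core. The theorem is thus the statement that, for each fixed pair of data, $\mathcal{P}$ is a single point.

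\textbf{Step 1 (local structure).} The two maps $\rho^\pm:\mathcal{L}_i\rightarrow\mathcal{Q}$ each go from a $(6g-6)$-manifold to a $(12g-12)$-manifold. Transversality of their images at an intersection point is precisely infinitesimal rigidity. Indeed, an infinitesimal deformation $(\dot A_1,\dot A_2)$ tangent to $\mathcal{P}$ yields, via the fundamental theorem of surface theory, $(\rho,\alpha)$-automorphic first order isometric variations $\xi$ of both boundary components with a common cocycle $\alpha$. By Theorem \ref{thm:InfinitesimalRigidity}, $\xi$ is the restriction of a Killing field, so $\alpha$ is a coboundary and $(\dot A_1,\dot A_2)=0$. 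Hence $d\rho^+(T\mathcal{L}_1)\cap d\rho^-(T\mathcal{L}_2)=0$, and the two images are transverse by dimension count. So $\mathcal{P}$ is a discrete set of transverse intersection points.

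\textbf{Step 2 (properness and deformation).} Now vary the data $(\sigma_1,\phi_1,\sigma_2,\phi_2)$ along a path to a reference configuration. We take the Fuchsian case with constant curvatures $\phi_1=\phi_2$, where uniqueness is known by Labourie--Schlenker \cite{LS}, and existence by Tamburelli \cite{Tam}. The path lies in the space of smooth invariant functions $>1$, which is connected, and $\teich$ is connected. Over this path, consider the family of solution sets $\mathcal{P}_t$. By Step 1, the implicit function theorem shows that $\bigcup_t\mathcal{P}_t$ is a smooth $1$-manifold projecting locally diffeomorphically onto the parameter interval. If this projection is proper, it is a covering of the interval, so the cardinality $\#\mathcal{P}_t$ is constant and equal to $1$. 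This gives existence and uniqueness simultaneously, and recovers Tamburelli's existence as a by-product.

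\textbf{Main obstacle: properness.} Let $(A_1^m,A_2^m)\in\mathcal{P}_{t_m}$ with $t_m\rightarrow t_\infty$. We must show that a subsequence converges in $\mathcal{L}_1\times\mathcal{L}_2$. The plan has three parts.
\begin{itemize}
\item[(a)] Bound the holonomy $\rho_m$ in $\mathcal{Q}$. The induced metrics converge, and a convex spacelike surface of curvature $<-1$ in a GHMC spacetime has induced metric dominating the hyperbolic metric of the pleated boundary of the convex core. Lengths of curves in both $\rho^r_m$ and $\rho^l_m$ are then bounded by the boundary lengths, since the left and right metrics are controlled by $\fff((E\pm JB)\cdot,(E\pm JB)\cdot)$ with $B$ bounded in terms of the shape operator. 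We would instead use the bound via the convex core and Mess's earthquake description, so that translation lengths stay bounded and Mumford compactness applies.
\item[(b)] Bound the Labourie fields. Since $\det A=\phi-1$ is bounded between two positive constants, it suffices to bound $\opTr(A)$. Here we use the compactness theorem for Codazzi fields of prescribed determinant (Labourie's result: for a sequence of such fields, either the trace stays bounded, or the surfaces degenerate along a geodesic lamination). The degenerate alternative forces the immersions to converge to a pleated or ideal surface, which is incompatible with the bound in (a) and with the curvature strictly below $-1$ of the limit metrics.
\item[(c)] Apply Schauder estimates to the Monge--Ampère type Codazzi equation to upgrade to $C^\infty$ convergence. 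The limit pair then lies in $\mathcal{P}_{t_\infty}$, because $\rho^\pm$ are continuous.
\end{itemize}
We expect part (b), excluding trace blow-up, to be the hardest. We would first try the standard argument of \cite{Sch}/\cite{Lab}: blow-up yields a limiting ruled (hence totally geodesic in one direction) piece of surface, which contradicts strict negativity of curvature below $-1$ in the limit.
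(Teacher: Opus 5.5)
Your architecture is the paper's: openness from Theorem~\ref{thm:InfinitesimalRigidity} via transversality and the implicit function theorem, closedness from compactness, then propagation along a path of data. The problem is the anchor of your counting argument, which is not available. Properness and the local diffeomorphism property only show that $\#\mathcal{P}_t$ is constant along the path. To conclude that it equals $1$, you need uniqueness among \emph{all} quasi-Fuchsian holonomies at the reference data.

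Labourie--Schlenker \cite{LS} do not give this. Their uniqueness holds within the Fuchsian class, that is, for equivariant surfaces whose holonomy preserves a totally geodesic plane. It does not exclude a non-Fuchsian GHCC spacetime carrying the same pair of boundary metrics, and excluding that is exactly the global uniqueness being proved. Existence at your Fuchsian point is immediate from the equidistant surfaces, so citing \cite{Tam} there does not supply what is missing.

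The paper anchors instead at the dual constant curvatures $(\phi_1,\phi_2)=(k+1,1/k+1)$. There \cite[Theorem 1.0.2]{cyclic2} gives existence \emph{and} uniqueness among all spacetimes, for arbitrary $\sigma_1,\sigma_2$. This also lets the paper keep $\sigma_1,\sigma_2$ fixed and deform only $\phi_1,\phi_2$. You can repair your argument by taking such a dual pair as the reference. If you want to keep your Fuchsian picture, use the self-dual case $\phi_1=\phi_2=2$ with $\sigma_1=\sigma_2$.

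Separately, the inequality in (a) is reversed. In anti-de Sitter space, lengths decrease away from the convex core: in the Fuchsian case, the surface at timelike distance $t$ from the core carries $\cos^2(t)$ times the core metric. The correct statement is $\lambda^\pm_m(\gamma)\geq l_{i,m}(\gamma)$ \cite[Lemma B.1]{Pro}.

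This lower bound still suffices. The paper obtains subconvergence of the pleated boundary metrics from it via \cite[Lemma 6.1]{Tam}, and then of $\rho_m$ via \cite[Lemma A.2]{BDMS}. However, your direct bound on the translation lengths of $\rho^r_m,\rho^l_m$ through the shape operator is circular at that stage, because the shape-operator bound is the content of (b).

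For (b), a Labourie-type compactness alternative is a reasonable substitute for the paper's route. The paper uses uniform quasisymmetry of the limit sets $\Lambda_m$, an adaptation of \cite[Lemma 7.11]{BDMS}, and then \cite[Lemma 4.1]{Smith2021Weyl}. As written, though, your (b) is only a sketch.
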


Theorem \ref{thm:ExistenceAndUniquenessForLabourieFields} will be proven using the continuity method, which we recall decomposes into a local perturbation step and a compactness step. We first prove the local perturbation result. Choose $\sigma_1,\sigma_2\in\hat{\teich}$. For each $i$, let $(\phi_{i,t})_{t\in]-\epsilon,\epsilon[}$ be a smooth family of $\sigma_i$-invariant smooth functions over $\mathbb{D}$ such that, for all $t$,
\begin{equation*}
\phi_{i,t} > 1\ .
\end{equation*}
For all $i$, and for all $t$, let $u_{i,t}$ denote the unique smooth, $\sigma_i$-invariant function such that $\kappa[u_{i,t}]=-\phi_{i,t}$.

\begin{lemma}
\label{lemma:ContinuityMethodOpenness}
Suppose that, for each $i$, there exists a $\sigma_i$-invariant $(\phi_{i,0}-1)$-Labourie field $A_i$ of $g[u_{i,0}]$ such that
\begin{equation*}
\rho^+[u_1,A_1] = \rho^-[u_2,A_2]\ .
\end{equation*}
Upon reducing $\epsilon$ if necessary, there exists a unique pair of continuous families $(A_{1,t})_{t\in]-\epsilon,\epsilon[}$ and $(A_{2,t})_{t\in]-\epsilon,\epsilon[}$ such that
\begin{enumerate}
\item $A_{1,0}=A_1$ and $A_{2,0}=A_2$;
\item for each $i$, $A_{i,t}$ is a $\sigma_i$-invariant $(\phi_{i,t}-1)$-Labourie field of $g[u_{i,t}]$; and
\item for all $t$, $\rho^+[u_{1,t},A_{1,t}]=\rho^-[u_{2,t},A_{2,t}]$.
\end{enumerate}
\end{lemma}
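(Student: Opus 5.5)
The plan is to apply the implicit function theorem to the map recording the mismatch of holonomies. For each $i$ and each $t$, let $\mathcal{L}_{i,t}$ denote the space of $\sigma_i$-invariant $(\phi_{i,t}-1)$-Labourie fields of $g[u_{i,t}]$. This is a smooth manifold of dimension $6g-6$. I would first check that these manifolds fit together into a smooth fibre bundle $\mathcal{L}_i\to]-\epsilon,\epsilon[$ after shrinking $\epsilon$. Near $A_i$, a local trivialization comes from the implicit function theorem applied to the elliptic (Monge--Amp\`ere type) equation $\det(A)=\phi_{i,t}-1$ on the space of $\sigma_i$-invariant Codazzi fields of $g[u_{i,t}]$, working in H\"older spaces. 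It is the same argument that gives the manifold structure at fixed $t$, now carried out with $t$ as a parameter; the smooth dependence of $u_{i,t}$ on $t$ follows from the same uniqueness statement and the implicit function theorem. I then consider the smooth map
\begin{equation*}
F:\mathcal{L}_1\times_{]-\epsilon,\epsilon[}\mathcal{L}_2\rightarrow(\teich\times\teich)\times(\teich\times\teich)\ ,\qquad (t,A,B)\mapsto\big(\rho^+[u_{1,t},A],\rho^-[u_{2,t},B]\big)\ .
\end{equation*}
I seek the preimage under $F$ of the diagonal $\Delta$. Dimensions match: the fibre of the domain has dimension $12g-12$, which equals the codimension of $\Delta$, namely $\dim(\teich\times\teich)$.

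The key step is transversality of $F$ to $\Delta$ at $(0,A_1,A_2)$ along the fibre directions. Because the dimensions agree, it suffices to show that no non-zero fibre tangent vector $(\dot A_1,\dot A_2)$ satisfies $d\rho^+(\dot A_1)=d\rho^-(\dot A_2)$. I would argue as follows. The pair $(e^+[u_{1,0},A_1],e^-[u_{2,0},A_2])$ realizes the two boundary components of a GHCC spacetime $M/\rho(\Pi)$ with common holonomy $\rho$.

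1. A variation $(\dot A_1,\dot A_2)$ with equal holonomy variations, modulo conjugation, gives a common $\rho$-cocycle $\alpha$.
2. On each boundary component it gives a $(\rho,\alpha)$-automorphic first order variation of the immersion. After correcting by a coboundary, that is, by a Killing field, this variation is isometric, since the metrics $g[u_{i,0}]$ are fixed.
3. Theorem~\ref{thm:InfinitesimalRigidity} then says the combined field on $\partial M$ is the restriction of a Killing field.
4. A Killing field induces no change of shape operator, so $\dot A_1=\dot A_2=0$.

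Some care is needed with the diffeomorphism normalization. The variation of an immersion with fixed induced metric and varying $A$ is an isometric variation, with no tangential reparametrization needed, so the fundamental theorem of surface theory identifies $\dot A$ with the normal/tangential data of $\xi$. A trivial $\xi$, meaning a Killing field, therefore forces $\dot A=0$.

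With transversality in hand, the implicit function theorem produces a unique smooth, hence continuous, family $(A_{1,t},A_{2,t})$ through $(A_1,A_2)$ with $F(t,A_{1,t},A_{2,t})\in\Delta$. Uniqueness among continuous families follows because the solution set is locally a graph over $t$. I expect the main obstacle to be the bookkeeping in the transversality step: translating $(\dot A_1,\dot A_2)$ with equal holonomy classes into a genuine $(\rho,\alpha)$-automorphic isometric variation on $\partial M$ to which Theorem~\ref{thm:InfinitesimalRigidity} applies, including handling the conjugation ambiguity in $\teich$ and the regularity of the bundle trivialization. The fibre-bundle smoothness in $t$ is routine elliptic theory.
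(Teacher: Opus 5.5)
Your proposal is correct and follows essentially the same route as the paper. You prove transversality of $(\rho^+,\rho^-)$ to the diagonal of $(\teich\times\teich)^2$ via Theorem~\ref{thm:InfinitesimalRigidity}: a variation with equal holonomy gives an automorphic isometric variation of $\partial M$, hence a Killing field, hence $\dot A_1=\dot A_2=0$; you then conclude with the dimension count and the implicit function theorem, exactly as the paper does, while also making explicit two points the paper leaves implicit, namely the smooth dependence on $t$ of the spaces of Labourie fields and the Killing-field correction needed to match the two cocycles.
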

\begin{proof} For each $i$, let $\mathcal{L}_i$ denote the space of $\sigma_i$-invariant $(\phi_{i,0}-1)$-Labourie fields of $g[u_{i,0}]$ and recall that this is a smooth, real $(6g-6)$-dimensional manifold. Define $F_1:\mathcal{L}_1\rightarrow\teich\times\teich$ and $F_2:\mathcal{L}_2\rightarrow\teich\times\teich$ by
\begin{equation*}
F_1(A) := \rho^+[u_{1,0},A]\qquad\text{and}\qquad F_2(A) := \rho^-[u_{2,0},A]\ ,
\end{equation*}
and note that $F_1$ and $F_2$ are trivially smooth.

We claim that $(F_1,F_2)$ is transverse to the diagonal of $(\teich\times\teich)^2$ at $(A_{1,0},A_{2,0})$. Indeed, for each $i$, let $B_i$ be a tangent vector of $\mathcal{L}_i$ at $A_{i,0}$, and suppose that
\begin{equation*}
\xi:=\big(DF_1(A_{1,0})\cdot B_1,DF_2(A_{2,0})\cdot B_2\big)
\end{equation*}
is tangent to the diagonal of $(\teich\times\teich)^2$. It suffices to show that $B_1=B_2=0$. To this end, let $M$ denote the convex subset of $\AdS$ bounded by $e^+[u_{1,0},A_{1,0}]$ and $e^-[u_{2,0},A_{2,0}]$. The vector $\xi$ yields a first order isometric variation of $\partial M$ which we also denote by $\xi$. By Theorem \ref{thm:InfinitesimalRigidity}, $\xi$ is the restriction of some Killing vector field over $\AdS$. Since Killing vector fields yield trivial first order variations of the shape operator, it follows that $B_1=B_2=0$, as asserted. Transversality now follows upon counting dimensions, and the result follows upon applying the implicit function theorem.
\end{proof}

We now prove a compactness result. For each $i$, and for all $m$, let $\phi_{i,m}$ be a smooth $\sigma_i$-invariant function such that $\phi_{i,m}>1$. For each $i$, and for all $m$, let $u_{i,m}$ denote the unique smooth, $\sigma_i$-invariant function such that $\kappa[u_{i,m}]=-\phi_{i,m}$. For each $i$, and for all $m$, let $A_{i,m}$ be a $\sigma_i$-invariant $(\phi_{i,m}-1)$-Labourie field of $g[u_{i,m}]$, and suppose that
\begin{equation*}
\rho^+[u_{1,m},A_{1,m}] = \rho^-[u_{2,m},A_{2,m}] =: \rho_m\ .
\end{equation*}
Suppose that, for each $i$, $(\phi_{i,m})_{m\in\mathbb{N}}$ converges in the $C^\infty$-sense to $\phi_{i,\infty}$-say. In particular, $(u_{i,m})_{m\in\mathbb{N}}$ also converges in the $C^\infty$-sense to $u_{i,\infty}$, say.
\begin{lemma}
\label{lemma:GlobalCompactness}
With the above hypotheses, for each $i$, $(A_{i,m})_{m\in\mathbb{N}}$ subconverges to some limit $A_{i,\infty}$, say.
\end{lemma}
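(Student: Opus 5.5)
The plan is to reduce compactness of the Labourie fields to compactness of the holonomies $\rho_m$ in $\teich\times\teich$. After that, standard a priori estimates for strongly convex surfaces with curvature bounded away from $-1$ control the shape operators.

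\textbf{Step 1: $C^0$ bounds on $A_{i,m}$ from bounds on $\rho_m$.} Suppose first that $(\rho_m)$ stays in a compact subset of $\teich\times\teich$. Normalise the equivariant immersions $e^+[u_{1,m},A_{1,m}]$ by post-composing with elements of $\PSL\times\PSL$ so that $\hat\rho_m$ converges to some quasi-Fuchsian $\hat\rho_\infty$. Each immersed surface is a smooth, $\hat\rho_m$-invariant, strongly convex spacelike surface. Its boundary at infinity is the limit set $\Lambda_{\hat\rho_m}$, and these limit sets converge to $\Lambda_{\hat\rho_\infty}$. Hence the surfaces stay in a compact region of the invisible domain modulo the group action, and they stay a definite distance away from the past and future boundaries of the domain of dependence. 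This uses the fact that the induced curvatures $-\phi_{i,m}$ are uniformly bounded away from $-1$ and from $-\infty$.

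With $\det A_{i,m}=\phi_{i,m}-1$ bounded above and below, I expect the eigenvalues of $A_{i,m}$ to be uniformly bounded. The argument I have in mind is a maximum-principle bound on $\opTr A$ over a fundamental domain, as in Labourie's and Schlenker's work on $K$-surfaces and hyperbolic ends. It uses the Codazzi equation together with the Gauss equation for a conformally perturbed metric. Since $u_{i,m}$ converges in $C^\infty$, the metrics are uniformly controlled.

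\textbf{Step 2: higher regularity.} A Codazzi tensor with prescribed determinant for a fixed background metric satisfies a Monge--Amp\`ere type equation. Locally, writing $A=\mathrm{Hess}\,f+f\cdot\mathrm{Id}$ in a suitable model, this equation is elliptic once the $C^0$ bound and the positive lower bound on $\det A$ are in place. Evans--Krylov followed by Schauder bootstrapping then gives uniform $C^{k}$ bounds for every $k$. Arzel\`a--Ascoli and a diagonal argument over the compact quotient produce a subsequence converging in $C^\infty$ to a $\sigma_i$-invariant $(\phi_{i,\infty}-1)$-Labourie field $A_{i,\infty}$.

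\textbf{Step 3: properness, which I expect to be the main obstacle.} We must show that $\rho_m$ cannot diverge in $\teich\times\teich$. Here the two-sided structure is essential. The surface $e^+[u_{1,m},A_{1,m}]$ and the surface $e^-[u_{2,m},A_{2,m}]$ bound a GHCC spacetime with the same holonomy $\rho_m$.

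The induced metrics $g[u_{i,m}]$ converge, so the marked length spectra of both boundary surfaces are uniformly comparable to fixed hyperbolic metrics. For any closed convex spacelike surface in a GHMC spacetime, the left and right projections $\rho_m^{l},\rho_m^{r}$ satisfy $l_{\rho^{l,r}_m}(\gamma)\le C\,\ell_{g}(\gamma)$. This comes from Mess' description via the maps $\mathrm{Id}\pm J A$, which are Lipschitz with respect to the induced metric once $A$ is bounded. But Step 1 needs the holonomy bounded to bound $A$, so the argument as stated is circular.

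To break the circularity, I would argue on the future-convex side first. The past-convex surface with metric $g[u_{2,m}]$ lies in the future of the convex core, and the future-convex one lies in its past. Then I would compare the induced metrics with the hyperbolic metrics $m_\pm$ on the boundary of the convex core. The nearest-point retraction onto the convex core boundary is $1$-Lipschitz in the appropriate sense, so the lengths of $m_\pm$ are bounded by the lengths for $g[u_{i,m}]$. By Mess, $\rho_m^{l},\rho_m^{r}$ are obtained from $m_+$ (respectively $m_-$) by earthquakes along the bending laminations, and these have bounded lengths as well. This gives a uniform bound on $l_{\rho_m^{l}}$ and $l_{\rho_m^{r}}$ along a filling family of curves. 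A standard lemma then confines $\rho_m$ to a compact subset of $\teich\times\teich$, and Steps 1–2 finish the proof.
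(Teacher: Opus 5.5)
Your overall architecture matches the paper's: compactness of the holonomies first, then a uniform $C^0$ bound on $A_{i,m}$, then elliptic regularity. However, Step 3, which you rightly identify as the heart of the matter, fails.

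The length comparison you use is the hyperbolic one, and in anti-de Sitter space it is reversed: it is the boundary of the convex core whose curves are \emph{longer}. Already for Fuchsian holonomy, the equidistant surface at timelike distance $t$ from the totally geodesic core has induced metric $\cos^2(t)\,h$, where $h$ is the metric of the core. The correct inequality, which the paper takes from \cite[Lemma B.1]{Pro}, is $\lambda^+_m(\gamma)\geq l_{1,m}(\gamma)$ and $\lambda^-_m(\gamma)\geq l_{2,m}(\gamma)$, where $\lambda^\pm_m$ denotes length on the convex core boundary. Convergence of $g[u_{i,m}]$ therefore only bounds the lengths of $m_\pm$ from \emph{below}. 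A further argument is needed to get subconvergence of $m_\pm$ in $\teich$; the paper uses \cite[Lemma 6.1]{Tam} for this.

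Second, even granting bounds on $m_\pm$, your claim that the bending laminations have bounded lengths is unjustified. For fixed $m_+$, Mess' map $(m_+,l_+)\mapsto\big(E_l^{l_+}(m_+),E_r^{l_+}(m_+)\big)$ is onto $\teich\times\teich$. So $m_+$ alone places no constraint on $l_+$, and hence none on $\rho_m$. What is needed is a genuinely two-sided statement: the map sending a GHMC spacetime to the pair $(m_-,m_+)$ of induced metrics on its convex core boundary is proper. The paper quotes this as \cite[Lemma A.2]{BDMS}.

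A smaller point concerns Step 1. The $C^0$ bound on $A_{i,m}$ cannot follow from the Codazzi and Gauss equations and the control of $g[u_{i,m}]$ alone. The reason is that the $\sigma_i$-invariant Labourie fields of a fixed metric with fixed determinant form a non-compact $(6g-6)$-dimensional family, and a uniform bound combined with your Step 2 would make it compact. The confinement coming from the bounded holonomy must therefore enter the maximum-principle argument explicitly. The paper does this through the uniform quasisymmetry of the limit sets and an adaptation of \cite[Lemma 7.11]{BDMS}, before concluding with \cite[Lemma 4.1]{Smith2021Weyl}.
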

\begin{remark}By continuity, for each $i$, $A_{i,\infty}$ is a $\sigma_i$-invariant $(\phi_{i,\infty}-1)$-Labourie field.
\end{remark}

\noindent We first prove compactness of the family of representations $(\rho_m)_{m\in\mathbb{N}}$.
\begin{lemma}
\label{lemma:CompactnessOfRepresentations}
With the above hypotheses, $(\rho_m)_{m\in\mathbb{N}}$ is relatively compact in $\teich\times\teich$.
\end{lemma}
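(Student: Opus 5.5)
To prove Lemma \ref{lemma:CompactnessOfRepresentations}, I would control the translation lengths of $\rho_m=(\rho_m^r,\rho_m^l)$ uniformly from above and below, and then apply Mumford compactness and the properness of length functions on Teichm\"uller space. Each component $\rho_m^{r}$, $\rho_m^l$ is a point of $\teich$, and $\teich$ embeds properly via finitely many length functions. So it suffices to show that, for a fixed finite set of elements $\gamma\in\Pi$ whose lengths determine a proper map on $\teich$, the lengths $l_{\rho^r_m}(\gamma)$ and $l_{\rho^l_m}(\gamma)$ are bounded above uniformly in $m$.

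\textbf{Upper bound.} Let $\Sigma_m$ be the future convex surface $e^+[u_{1,m},A_{1,m}](\mathbb{D})$. Its induced metric $g[u_{1,m}]$ converges smoothly, so for fixed $\gamma$ the $g[u_{1,m}]$-length of a closed curve representing $\gamma$ is bounded independently of $m$. Spacelike curves in a GHMC AdS spacetime have length dominated by the left and right hyperbolic metrics. More precisely, by Mess' construction, the Gauss map of a strongly convex spacelike surface projects, via $\tau^r$ and $\tau^l$, onto $\hyp^2$ equivariantly with respect to $\rho^r$ and $\rho^l$. The resulting metrics are $\fff((E\pm JB)\cdot,(E\pm JB)\cdot)$, where $B$ is the shape operator. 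I would use these to bound $l_{\rho^{r/l}_m}(\gamma)$ by the length of the corresponding curve in these pulled-back metrics.

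This requires a bound on $B=A_{1,m}$. The determinant $\det A_{1,m}=\phi_{1,m}-1$ is bounded, but the trace is not a priori bounded. This is precisely the difficulty, so instead I would use a cruder estimate that avoids the trace. The projection from $\Sigma_m$ to the totally geodesic plane orthogonal to the normal is $1$-Lipschitz onto a hyperbolic-type metric in the appropriate sense. Alternatively, one can use the inequality $\fff\le$ the left and right metrics in the form $\tfrac{1}{2}(g_l+g_r)\ge \fff$. That inequality gives lower bounds on lengths, not upper bounds.

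\textbf{Lower bound and the real obstacle.} A lower bound $l_{\rho^{r/l}_m}(\gamma)\ge c\,\ell_{g[u_{1,m}]}(\gamma)$ does follow from that inequality, and it prevents degeneration by pinching. The genuine danger is divergence. The natural trace-free control comes from using both boundary components simultaneously. The same $\rho_m$ is the holonomy of a past convex surface with bounded metric. A standard argument, after Bonsante--Seppi and Tamburelli, shows that if $\rho_m$ diverges then there are curves $\gamma$ with $l_{\rho^r_m}(\gamma)+l_{\rho^l_m}(\gamma)\to\infty$. Since $\Sigma_m$ lies in the convex set between the two boundaries, the timelike distance between them stays bounded by $\pi/2$. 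However, the spacelike length of $\gamma$ on $\Sigma_m$ is bounded below by its length on the convex core boundary, which in turn is comparable to $\tfrac{1}{2}\big(l_{\rho^r_m}(\gamma)+l_{\rho^l_m}(\gamma)\big)$ up to the bending.

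I expect the key step to be to show that any spacelike closed curve in the class $\gamma$ in the GHMC spacetime has length at least $c\,\max\big(l_{\rho^r}(\gamma),l_{\rho^l}(\gamma)\big)-C$. I would prove this by applying Lemma \ref{lemma:DistanceBetweenPointsOfTimelikeGeodesics} to the timelike geodesics $\zeta$ and $\rho(\gamma)\zeta$, with $\lambda+\mu$ comparable to the sum of the right and left displacements. The bounded metric lengths then force bounded $l_{\rho_m^{r/l}}(\gamma)$, and compactness follows.
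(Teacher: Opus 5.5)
\textbf{There is a genuine gap: your key step is false.} Your upper bound rests entirely on the claim that every spacelike closed curve freely homotopic to $\gamma$ has length at least $c\max(l_{\rho^r}(\gamma),l_{\rho^l}(\gamma))-C$. This fails even if you restrict it to strongly convex surfaces whose curvature lies in a fixed compact subset of $(-\infty,-1)$.

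\emph{The claim as stated.} Take the Fuchsian case $\rho=(\rho_0,\rho_0)$. The surface at timelike distance $t$ from the totally geodesic Cauchy surface has induced metric $\cos^2(t)$ times the hyperbolic one. So $\gamma$ has length $\cos(t)\,l_{\rho_0}(\gamma)$ there, which is arbitrarily small compared with $l_{\rho_0}(\gamma)$.

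\emph{The restricted claim.} Fix $m_+\in\teich$ and a measured lamination $l_0$. Consider the spacetimes whose future convex core boundary has metric $m_+$ and bending $t\,l_0$. By Mess' relations, $\rho^l=E_l^{tl_0}(m_+)$ and $\rho^r=E_r^{tl_0}(m_+)$. Hence the translation lengths of any $\gamma$ crossing $l_0$ tend to infinity with $t$.

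On the other hand, the $(-2)$-surface on that side of the core has $\gamma$-length at most $\ell_{m_+}(\gamma)$, which is fixed. The same holds for any smooth strongly convex surface there. The reason is that in anti-de Sitter space, lengths on a smooth convex surface outside the core are bounded \emph{above} by those on the corresponding convex core boundary component. This is Lemma B.1 of \cite{Pro}, which the paper uses. Your sentence asserting that the length on $\Sigma_m$ is bounded below by the length on the core boundary has this backwards.

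So one boundary surface with controlled geometry can never bound the holonomy. Large bending on one side is excluded only by the other boundary surface. You mention using both components, but your argument never actually combines them.

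\textbf{The proposed mechanism also fails.} Lemma \ref{lemma:DistanceBetweenPointsOfTimelikeGeodesics} bounds the \emph{ambient} spacelike distance between points of $\zeta$ and $\rho(\gamma)\zeta$. In Lorentzian signature this gives no lower bound on the length of a spacelike curve joining them. For convex spacelike surfaces the inequality in fact goes the other way. On the equidistant surface above, two points at intrinsic distance $\cos(t)\,d$ are at ambient spacelike distance $D$ with $\cosh D=\cos^2(t)\cosh d+\sin^2(t)\geq\cosh(\cos(t)\,d)$. The lemma also needs $\lambda,\mu$ both large and parameters in $[\epsilon,\pi-\epsilon]$. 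You have uniform control over neither along a diverging sequence; for instance, $\lambda$ stays bounded if only the left factor diverges.

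\textbf{What the paper does instead.} It works through the convex core.
\begin{enumerate}
\item By Lemma B.1 of \cite{Pro}, the length spectra $\lambda^\pm_m$ of the two convex core boundary components dominate the length spectra $l_{1,m}$ and $l_{2,m}$ of the two smooth boundary surfaces. The metrics of those surfaces converge.
\item Lemma 6.1 of \cite{Tam} turns these lower bounds into precompactness of the convex core boundary metrics $\tau^\pm_m$ in $\teich$.
\item Lemma A.2 of \cite{BDMS} converts precompactness of the pair $(\tau^+_m,\tau^-_m)$ into precompactness of $(\rho_m)$.
\end{enumerate}
The two sides are combined at this last step, and this is precisely what any repair of your argument would need to supply.
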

\begin{proof}For each $i$, for all $m$, and for all $\gamma\in\Pi$, let $l_{i,m}(\gamma)$ denote the length of the unique curve in the free homotopy class of $\sigma_i(\gamma)$ which is a geodesic with respect to the metric $g[u_{i,m}]$. For all $m$, let $\Lambda_m$ denote the limit set of $\rho_m$ in $\Ein$, let $K_m$ denote its convex hull, and let $\partial K_m^+$ (resp. $\partial K_m^-$) denote the future (resp. past) boundary of $K_m$. Recall that, for all $m$, $\partial K_m^\pm$ is intrinsically hyperbolic, and we denote by $\tau^\pm_m$ the point in Teichm\"uller space that it defines. For all $m$, let $\lambda^\pm_m(\gamma)$ denote the length of the unique geodesic in $\partial K_m^\pm$ in the free homotopy class of $\rho_m(\gamma)$. By \cite[Lemma B.1]{Pro}, for all $m$, and for all $\gamma$,
\begin{equation*}
\lambda_m^+(\gamma)\geq l_{1,m}(\gamma)\qquad\text{and}\qquad\lambda_m^-(\gamma)\geq l_{2,m}(\gamma)\ .
\end{equation*}
By \cite[Lemma 6.1]{Tam}, upon extracting a subsequence if necessary, we may suppose that $(\tau_m^\pm)_{m\in\mathbb{N}}$ subconverges in Teichm\"uller space to some limit $\tau_\infty^\pm$, say. By \cite[Lemma A.2]{BDMS}, $(\rho_m)_{m\in\mathbb{N}}$ therefore subconverges to some limit $\rho_\infty$, say, in $\teich\times\teich$, and this completes the proof.
\end{proof}

\noindent We now prove Lemma \ref{lemma:GlobalCompactness}.
\begin{proof}[Proof of Lemma \ref{lemma:GlobalCompactness}.]For all $m\in\mathbb{N}$, we denote the limit set of $\rho_m$ by $\Lambda_m$. We first show that there exists $B_1>0$ such that, for all $m$, $\Lambda_m$ is $B_1$-quasisymmetric. Indeed, by Lemma \ref{lemma:CompactnessOfRepresentations}, there exists $B_1\geq 1$ such that, for all $m\in\mathbb{N}$,
\begin{equation*}
d_{\mathrm{Lip}}(\rho_{m,r},\rho_{m,l})<\frac{1}{2}\ln(B_1)\ .
\end{equation*}
For all $m\in\mathbb{N}$, let $\phi_m:\hyp^2\rightarrow\hyp^2$ be a $\sqrt{B_1}$-bilipschitz, $(\rho_{m,r},\rho_{m,l})$ equivariant homeomorphism, let $\alpha_m:=\ideal\phi_m$ denote its extension to $\ideal\hyp^2$, and note that $\Lambda_m$ coincides with the graph of $\alpha_m$. For all $m$, since $\phi_m$ is $B_1$-quasiconformal, $\Lambda_m$ is $B_1$-quasisymmetric, as asserted.

The proof of \cite[Lemma 7.11]{BDMS}, carried out there in the case of constant curvature surfaces, adapts to the present case yielding a constant $B_2>0$ such that, for all $i$, and for all $m$,
\begin{equation*}
\|A_{i,m}\| \leq B_2\ ,
\end{equation*}
and the result now follows by \cite[Lemma 4.1]{Smith2021Weyl}.
\end{proof}

We are now ready to prove Theorem \ref{thm:ExistenceAndUniquenessForLabourieFields}
\begin{proof}[Proof of Theorem \ref{thm:ExistenceAndUniquenessForLabourieFields}.] Indeed, for each $i$, let $\phi_i'$ be a $\sigma_i$-invariant function such that $\phi_i'>1$, and let $u_i'$ denote the unique smooth, $\sigma_i$-invariant function such that $\kappa[u_i']=-\phi_i'$. For each $i$, let $(\phi_{i,t})_{t\in[0,1]}$ be a smooth family of smooth, $\sigma_i$-invariant functions such that, for all $t$, $\phi_{i,t}>1$ and
\begin{equation*}
\phi_{i,0}=\phi_i\qquad\text{and}\qquad\phi_{i,1}=\phi_i'\ .
\end{equation*}
For each $i$, and for all $t$, let $u_{i,t}$ denote the unique smooth, $\sigma_i$-invariant solution of $\kappa[u_{i,t}]=-\phi_{i,t}$. For each $i$, let $A_i$ be a $\sigma_i$-invariant $(\phi_{i, t}-1)$-Labourie field, and suppose that
\begin{equation*}
\rho^+[u_1,A_1] = \rho^-[u_2,A_2]\ .
\end{equation*}

We claim that $A_1$ and $A_2$ extend to unique continuous families $(A_{1,t})_{t\in[0,1]}$ and $(A_{2,t})_{t\in[0,1]}$ such that
\begin{enumerate}
\item for each $i$, and for all $t$, $A_{i,t}$ is a $\sigma_i$-invariant $(\phi_i-1)$-Labourie field; and
\item for all $t$,
\begin{equation*}
\rho^+[u_{1,t},A_{1,t}] = \rho^-[u_{2,t},A_{2,t}]\ .
\end{equation*}
\end{enumerate}
Indeed, consider the set $\mathcal{T}$ of all $T\in[0,1]$ for which unique extensions satisfying these properties exist over the interval $[0,T]$. By Lemma  \ref{lemma:GlobalCompactness}, this set is closed, and by Lemma \ref{lemma:ContinuityMethodOpenness}, it is open. Since it trivially contains the point $T=0$, it is non-empty, and thus, by connectedness, coincides with the whole of $[0,1]$. This proves the claim.

The result therefore holds for $(\phi_1,\phi_2)$ if and only if it holds for $(\phi_1',\phi_2')$. However, by \cite[Theorem 1.0.2]{cyclic2}, for all constant $k>0$, the result holds for
\begin{equation*}
(\phi_1,\phi_2) = (k+1,1/k+1)\ ,
\end{equation*}
and this completes the proof.
\end{proof}

We are now ready to prove Theorem \ref{rigiditytheorem}.
\begin{proof}[Proof of Theorem \ref{rigiditytheorem}.] For each $i$, let $\sigma_i:\Pi\rightarrow\PSL$ be a Fuchsian representation such that $S_i$ is conformally equivalent to $\mathbb{D}/\sigma_i(\Pi)$. For each $i$, let $u_i:\mathbb{D}\rightarrow\mathbb{R}$ be a $\sigma_i$-invariant function such that $S_i$ is isometric to $\big(\mathbb{D}/\sigma_i(\Pi),g[u_i]\big)$ and denote $\phi_i:=-\kappa[u_i]$.

By Theorem \ref{thm:ExistenceAndUniquenessForLabourieFields}, there exists a unique pair $(A_1,A_2)$ such that, for each $i$, $A_i$ is a $\sigma_i$-invariant $(\phi_i-1)$-Labourie field, and
\begin{equation*}
\rho^+[u_1,A_1]=\rho^-[u_2,A_2]=:\rho\ .
\end{equation*}
Let $\Lambda$ denote the limit set of $\rho$, let $D$ denote its invisible domain, and denote
\begin{equation*}
Y:=D/\rho(\Pi)\ .
\end{equation*}
This is a GHMC anti-de Sitter manifold into which $S_1$ (resp. $S_2$) isometrically embeds as a future convex (resp. past convex) smooth, spacelike immersion. The images of $S_1$ and $S_2$ together bound a convex subset $X$ of $Y$, and this proves existence. Since a different representation $\rho'$ necessarily yields a different pair $(A_1',A_2')$ of Labourie fields, it follows that $\rho$ is unique, and therefore so too is $X$. Finally, uniqueness of $(A_1,A_2)$ also implies uniqueness of the embeddings of $S_1$ and $S_2$ into $Y$. It follows that $X$ is indeed unique up to isometries preserving the marking, and this completes the proof.
\end{proof} 
\section{Measured laminations on the boundary of convex cores}
\label{sc:mess}

\subsection{Proof of Mess' conjecture}
In this section we prove Theorem \ref{tm:mess}, essentially following the strategy of \cite{DS}, with suitable adaptations to the anti-de Sitter case.

As in the introduction, we denote by $\mathcal{X}:=\mathcal{X}_g$ the space of marked $(2,1)$-dimensional GHMC anti-de Sitter spacetimes of genus $g$, up to isometries preserving the time orientation and the marking. We likewise denote by $\cML:=\cML_g$ the space of measured geodesic laminations of surfaces of genus $g$. We define
\begin{equation}
\label{eqn:DefinitionOfL}
\mathcal{L}:\mathcal{X}\rightarrow\cML\times\cML;X\mapsto(l_-,l_+)\ ,
\end{equation}
where, for any GHMC anti-de Sitter spacetime $X$, $l_-$ (resp. $l_+$) denotes the measured geodesic lamination of the past (resp. future) boundary component of its convex core. 

We recall that two measured geodesic laminations $(l_-,l_+)$ are said to \emph{fill} a surface whenever every connected component of the complement of the union of their supports is a bounded polygon. Equivalently, by \cite{Series2012LinesOfMinima}, $(l_-,l_+)$ fill the surface whenever there exists $\epsilon>0$ such that, the sum of the weights with respect to these laminations of any topologically non-trivial closed curve $\gamma$ is at least $\epsilon$. We denote by $\mathcal{FML}\subseteq\cML\times\cML$ the set of all pairs of measured geodesic laminations that fill the surface, and we note that this is an open subset of $\cML\times\cML$.

Theorem \ref{tm:mess} will be a consequence of the following two technical lemmas.

\begin{lemma} \label{lm:contractible}
For all $(l_-, l_+)\in\mc{FML} $, $\mathcal{L}^{-1}(l_-, l_+)$ is contractible.
\end{lemma}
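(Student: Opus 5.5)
We will follow the strategy of Dular--Schlenker \cite{DS}, with Corollary \ref{cr:III} as the key new input. The idea is to realize $\mathcal{L}^{-1}(l_-,l_+)$ as a limit of fibres of $\mc J$, each of which is a single point by Corollary \ref{cr:III}. Concretely, for each $t>0$ we will consider spacetimes containing a pair of strongly convex surfaces whose third fundamental forms approach the ``singular'' metrics determined by the laminations. Rescaling by $t$ and letting $t\to0$, these third fundamental forms converge, after the appropriate normalization, to the transverse measures $l_\mp$.

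\textbf{Step 1: the family of nearby spacetimes.} Fix $X\in\mathcal{L}^{-1}(l_-,l_+)$. Consider the equidistant surfaces $S_\pm^r$ from the convex core at small distance $r>0$. These are $C^{1,1}$ convex surfaces rather than smooth ones, so we will smooth them slightly; they are strongly convex. Their third fundamental forms $\tff^r_\pm$ have curvature $<-1$. As $r\to0$, the rescaled forms satisfy $\tff^r_\pm\to\ $(dual tree metric $l_\pm^*$) in the sense of length spectra. Dually, the past and future singularities are the limits of the dual surfaces.

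\textbf{Step 2: the space of data.} Let $\mathcal{D}_r\subseteq\mathcal S\times\mathcal S$ denote the set of pairs of metrics of curvature $<-1$ whose length spectra are within an appropriate $r$-dependent neighbourhood of the trees $l_-^*,l_+^*$. We will choose these neighbourhoods to be contractible, for instance convex in a suitable affine parametrization such as conformal factor plus Labourie data. Corollary \ref{cr:III} then gives a homeomorphism
\begin{equation*}
\mc J^{-1}(\mathcal{D}_r)\cong\mathcal{D}_r\ ,
\end{equation*}
so $\mc J^{-1}(\mathcal{D}_r)$ is contractible.

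\textbf{Step 3: retraction.} We will then use the filling condition, via a compactness argument in the spirit of Lemma \ref{lemma:CompactnessOfRepresentations} together with \cite{Series2012LinesOfMinima}, to show two things. First, the map sending each spacetime in $\mc J^{-1}(\mathcal{D}_r)$ to its holonomy lies in a compact set of $\teich\times\teich$. Second, as $r\to0$, these sets shrink to $\mathcal{L}^{-1}(l_-,l_+)$ in the Hausdorff sense. The key input here is that the bending data of the convex core is continuous under these limits. A nested-neighbourhood argument then shows that $\mathcal{L}^{-1}(l_-,l_+)$ is an intersection of a decreasing family of contractible sets, each deformation retracting into the next. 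This yields that $\mathcal{L}^{-1}(l_-,l_+)$ is contractible, or at least cell-like, as in \cite{DS}.

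\textbf{Main obstacle.} The difficult step is Step 3. We must show that this limit of contractible sets is contractible and not merely connected. In \cite{DS} this is handled by exhibiting a continuous map from a contractible parameter space onto the fibre with contractible point preimages. We would attempt the same approach here: parametrize by the boundary data $(\tff_-,\tff_+)$ restricted to an explicit cone of metrics dual to the laminations, such as grafting-type metrics.
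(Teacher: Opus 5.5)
Your overall picture is the right one: approximate $\mathcal{L}$ by data coming from convex surfaces near the convex core, with Corollary \ref{cr:III} supplying injectivity. There are, however, two genuine gaps.

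\textbf{First gap: passing to $\mathcal{X}$.} The fibre $\mathcal{L}^{-1}(l_-,l_+)$ lives in the GHMC moduli space $\mathcal{X}$, but your sets $\mathcal{J}^{-1}(\mathcal{D}_r)$ are sets of GHCC spacetimes, that is, of pairs of convex surfaces. The map to $\mathcal{X}$ (maximal extension, or holonomy) has huge fibres, since one GHMC spacetime contains an infinite-dimensional family of such pairs. So contractibility of $\mathcal{D}_r$ says nothing about the images in $\mathcal{X}$ that are supposed to shrink to the fibre. That contractibility is itself not established: length-spectrum neighbourhoods are not convex in any natural parametrization. Your smoothing of equidistant surfaces in Step 1 is also non-canonical, so it does not give a well-defined continuous map on $\mathcal{X}$ either.

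The paper resolves exactly this with a canonical, finite-dimensional choice. For each $k<-1$ it uses the constant-curvature-$k$ leaves $S_k^\pm$ of the Barbot--B\'eguin--Zeghib foliation, so that the forms $\tff_k^\pm$ have constant curvature and define points of Teichm\"uller space. Corollary \ref{cr:III}, together with uniqueness of maximal extensions, makes $\Psi_k:\mathcal{X}\to\cML\times\cML$ injective. Continuity comes from Lemma \ref{lm:trapping} and uniqueness of the leaves. Invariance of domain, with both sides $(12g-12)$-manifolds, then makes $\Psi_k$ an open embedding (Lemma \ref{lm:embedding}). Finally, Lemma \ref{lm:convergence} and the earthquake normalization $u_{k^*}$ give $\Psi_k\to\mathcal{L}$ uniformly on compact sets as $k\to-1$.

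\textbf{Second gap: the topological conclusion.} A decreasing intersection of contractible sets need not be contractible, and ``each deformation retracting into the next'' has no mechanism behind it. What a uniform limit of embeddings between manifolds of equal dimension actually gives (Finney's theorem) is that point preimages of the limit map are \emph{cellular}. Cellular compacta need not be contractible, or even path-connected: the closed topologist's sine curve in $\mathbb{R}^2$ is an example. So ``contractible, or at least cell-like'' is precisely where the argument stops short. The missing ingredient is Lemma \ref{lm:analytic}: the fibre is a compact real-analytic set, hence an ANR by Sullivan's local structure theorem and Hanner's criterion, and a cellular ANR is contractible. Your proposal never uses analyticity. Your fallback under ``Main obstacle'' (a map from a contractible parameter space onto the fibre with contractible point preimages) is not how Dular--Schlenker proceed. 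As stated it is too vague to be checked, and even such a map would need an ANR-type hypothesis on the fibre to yield contractibility.
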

\noindent The proof of this lemma, given in the next section, is similar to that of \cite{DS}, and relies on Corollary \ref{cr:III}.

Note that $\mathcal{X}$ is a real analytic manifold. Recall that any subset of a real analytic manifold is said to be \emph{real analytic} whenever it is everywhere locally modeled on a real analytic submanifold of $\R^n$.
\begin{lemma} \label{lm:analytic}
For all $(l_-, l_+)\in\mc{FML} $, $\mathcal{L}^{-1}(l_-, l_+)$ is real analytic.
\end{lemma}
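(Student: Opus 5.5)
We plan to follow the strategy of Dular--Schlenker \cite{DS}: express $\mathcal{L}^{-1}(l_-,l_+)$ as the zero set, or a fibre, of a real analytic map between real analytic manifolds. The first step is to parametrize $\mathcal{X}$ real analytically by Mess' holonomy coordinates. That is, we identify $\mathcal{X}$ with $\teich\times\teich$ via $X\mapsto(\rho_r,\rho_l)$, which is a real analytic diffeomorphism since $\teich$ is the real analytic character variety component. In these coordinates the past and future boundary components of the convex core admit an earthquake description, following Mess \cite{Mes}. The left and right holonomies are obtained from the hyperbolic metric $m_+$ of the future boundary of the convex core by the left and right earthquakes along $l_+$: $\rho_l=E^l_{l_+}(m_+)$ and $\rho_r=E^r_{l_+}(m_+)$. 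The analogous statement holds for $m_-$, with the roles of left and right exchanged. Consequently the set $\mathcal{L}^{-1}(l_-,l_+)$ corresponds to the set of pairs $(m_-,m_+)\in\teich\times\teich$ satisfying
\begin{equation*}
E^l_{l_+}(m_+)=E^r_{l_-}(m_-)\qquad\text{and}\qquad E^r_{l_+}(m_+)=E^l_{l_-}(m_-)\ .
\end{equation*}
Equivalently, it corresponds to the set of fixed points of $E(l_-)\circ E(l_+)$ after the substitution explained in the statement of Corollary \ref{cr:mess}.

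The second step is to use that, for a fixed measured lamination $l$, the earthquake map $E_l:\teich\rightarrow\teich$ is real analytic. This holds because earthquakes along a fixed lamination are the time-one maps of a real analytic (Hamiltonian) flow, namely the twist flow of the length function $\ell_l$, which is real analytic on $\teich$ for any $l\in\cML$ (Kerckhoff, Wolpert). Hence the map
\begin{equation*}
\Psi:\teich\times\teich\rightarrow\teich\times\teich;\ (m_-,m_+)\mapsto\big(E^l_{l_+}(m_+),E^r_{l_+}(m_+)\big)^{-1}\text{-difference with }\big(E^r_{l_-}(m_-),E^l_{l_-}(m_-)\big)
\end{equation*}
is real analytic, after choosing real analytic local charts. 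More cleanly, $\mathcal{L}^{-1}(l_-,l_+)$ is the preimage of the diagonal under the real analytic map $(m_-,m_+)\mapsto(E_{l_+}^{l}(m_+),E_{l_+}^r(m_+),E_{l_-}^r(m_-),E_{l_-}^l(m_-))$. Since the diagonal is a real analytic submanifold, this preimage is locally the zero set of finitely many real analytic functions. We then intend to conclude that $\mathcal{L}^{-1}(l_-,l_+)$ is a real analytic subset in the sense of the definition given just before the lemma.

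The step I expect to be the main obstacle is verifying that the identification of the bending data is genuinely valid pointwise over the whole fibre. Concretely, one must show that for every point of the analytic zero set the laminations $l_\pm$ are precisely the bending laminations of the convex core, and not merely laminations satisfying the earthquake relations. One must also rule out the Fuchsian case, in which $l_\pm=0$, excluded here since $(l_-,l_+)$ fill. For this step I would invoke Mess' theorem that, for quasi-Fuchsian $\rho$, the earthquake between $\rho_l$ and $\rho_r$ is unique and its lamination is the bending lamination. Filling guarantees, via \cite{BS3}, that the relations determine $(\rho_r,\rho_l)$ as a genuine non-Fuchsian holonomy, so the zero set coincides exactly with $\mathcal{L}^{-1}(l_-,l_+)$.
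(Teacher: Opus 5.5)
Your proposal is correct and is essentially the paper's argument: the paper likewise combines Mess' earthquake relations \eqref{eq:messearthquakes}, the converse statement \cite[Thm 1.5]{BS3} (which you reproduce via uniqueness of left and right earthquakes plus Mess' identification, after noting $E_l^{l_+}\circ E_l^{l_+}=E_l^{2l_+}$), and Kerckhoff's analyticity of earthquakes along a fixed lamination. The only difference is bookkeeping: the paper eliminates $m_\pm$ and writes the fibre as the image of the coincidence set $\{\rho\ |\ E_l^{2l_+}(\rho)=E_r^{2l_-}(\rho)\}\subset\mathcal{T}$ under the analytic graph map $\rho\mapsto(E_l^{2l_+}(\rho),\rho)$; your zero set in the variables $(m_-,m_+)$ is carried to this same set because $m_+=E_l^{l_+}(\rho_r)$ and $m_-=E_r^{l_-}(\rho_r)$ depend analytically on the holonomy, and this is the step, left implicit in your write-up, that places the fibre in $\mathcal{X}$ with its holonomy-defined analytic structure.
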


These results together yield Theorem \ref{tm:mess}.
\begin{proof}[Proof of Theorem \ref{tm:mess}]
Fix $(l_-, l_+)\in \mc{FML} $. By Lemmas \ref{lm:contractible} and \ref{lm:analytic}, $\mathcal{L}^{-1}(l_-, l_+)$ is contractible and real analytic and, by \cite[Prop. 1.8]{BS3}, this set is compact. By \cite{BHa}, for every compact, $d$-dimensional, real analytic variety $V$,
\begin{equation*}
H_d(V,\mathbb{Z}_2)\neq 0\ .
\end{equation*}
In other words, the dimension of any such variety is equal to its homological dimension. Since $\mathcal{L}^{-1}(l_-, l_+)$ is contractible, it follows that $d=0$ and, since, in particular, it is connected, it thus consists of a single point. Since $(l_-,l_+)$ is arbitrary, it follows that $\mathcal{L}^{-1}$ is injective. By \cite{BS3}, this map is surjective, and this completes the proof.
\end{proof}

\subsection{Real analyticity}
\label{ss:Analyticity}

We first recall the Bers-type parametrization of $\mathcal{X}$ given by Mess in \cite{Mes}. As in Section \ref{ss:FuchsianRepresentations}, we denote by $\mathcal{T}:=\mathcal{T}_{\mathrm{rep},g}$ the Teichm\"uller space of Fuchsian representations of a genus $g$ compact surface group $\Pi:=\Pi_g$. We denote
\begin{equation*}
\Rho:\mathcal{X}\rightarrow\mathcal{T}\times\mathcal{T};X\rightarrow(\rho_l,\rho_r)\ ,
\end{equation*}
where $\rho_l$ (resp. $\rho_r$) denotes the left (resp. right) component of the holonomy of $X$ in the decomposition $SO_0(2,2)=PSL(2,\R)\times PSL(2,\R)$. In \cite{Mes}, Mess shows that $\Rho$ is a homeomorphism, and we use this homeomorphism to furnish $\mathcal{X}$ with a real analytic structure.

Given $\lambda\in\mc{ML} $, we denote respectively by $E_l^\lambda, E_r^\lambda:\mc T \to\mc T $ the left and the right earthquakes that it defines, and we recall that $E_r^\lambda=(E_l^\lambda)^{-1}$. By Thurston's Earthquake Theorem (see \cite{ThuE}), these maps are both bijections and, in fact, by \cite{Ker}, they are real analytic diffeomorphisms. By \cite{Mes} and \cite[Thm 1.5]{BS3}, given a non-Fuchsian $X\in\mathcal{X}$, with induced metrics $m_\pm$ and measured bending laminations $l_\pm$ on the future and past boundary components of its convex core,
\begin{equation}\label{eq:messearthquakes}
\rho_l=E_l^{l_+}(m_+)~,\qquad\rho_r=E_r^{l_+}(m_+)~,\qquad
\rho_l=E_r^{l_-}(m_-)~,\qquad\text{and}\qquad \rho_r=E_l^{l_-}(m_-)~,
\end{equation}
so that
\begin{equation}\label{eq:messrelations}
\rho_l \;=\; E_l^{2l_+}(\rho_r) \;=\; E_r^{2l_-}(\rho_r)~.
\end{equation}
Conversely (see \cite[Thm 1.5]{BS3}), given $(\rho_l,\rho_r)\in\mc T \times\mc T $ and $(l_-,l_+)\in\mc{ML} \times\mc{ML} $ satisfying \eqref{eq:messrelations}, the (unique) marked GHMC anti-de Sitter $(2,1)$-spacetime with holonomy data $(\rho_l,\rho_r)$ has measured bending laminations $l_-$ and $l_+$ on the past and future boundary components of its convex core. In other words, for all $(l_-,l_+)\in\mc{FML}$,
\begin{equation}\label{eq:fiber}
\mathcal{L}^{-1}(l_-,l_+) = \left\{ (\rho_l,\rho_r)\in\mc T \times\mc T  ~|~ \rho_l=E_l^{2l_+}(\rho_r) \mbox{ and } \rho_l=E_r^{2l_-}(\rho_r) \right\}~.
\end{equation}

We are now ready to proof Lemma \ref{lm:analytic}.
\begin{proof}[Proof of Lemma \ref{lm:analytic}] Let $F:=\mathcal{L}^{-1}(l_-,l_+)$ denote the fibre over $(l_-,l_+)$. By \eqref{eq:fiber}, $F$ is contained in the graph of the real-analytic map $E_r^{2l_-}$ over the factor $\rho_r$. More precisely, let $\pi_l,\pi_r:\mathcal{T}_{\mathrm{rep}}\times\mathcal{T}_{\mathrm{rep}}\rightarrow\mathcal{T}_{\mathrm{rep}}$ denote the respective projections onto the left and right components, and denote $F_r:=\pi_r(F)$. Then
\begin{equation}\label{eq:coincidence}
F_r \:= \left\{ \rho\in\mc T  ~|~ E_l^{2l_+}(\rho) = E_r^{2l_-}(\rho) \right\}~.
\end{equation}
Let $\Phi:\mc T \to\R^{6g-6}$ be a real analytic global chart (such as, for example, a system of Fenchel--Nielsen coordinates). Then $F_r$ is the zero set of the real analytic map
\begin{equation*}
\Psi(\rho):=\Phi\big(E_l^{2l_+}(\rho)\big) - \Phi\big(E_l^{2l_+}(\rho)\big)\ ,
\end{equation*}
and is thus a real analytic subset of $\mc T$. Since $F$ is the image of $F_r$ under the real analytic embedding $\rho\mapsto\big(E_l^{2l_+}(\rho),\rho\big)$, it is a real analytic subset of $\mc T \times\mc T \cong\mc X $, and this completes the proof.
\end{proof}

\subsection{Contractibility}
In order to prove Lemma \ref{lm:contractible}, we study the interrelationships between various geometric objects defined over the GHMC anti-de Sitter spacetime $X$. We refer the reader to \cite{FS4} for a thorough discussion of the following constructions.

For $X\in\mathcal{X}$, we denote by $C(X)$ its convex core, and we denote by $\Omega^-$ (resp. $\Omega^+$) the past (resp. future) component of $X\setminus C(X)$. In \cite{BBZ}, Barbot--B\'eguin--Zeghib show that $\Omega^\pm$ carries a unique foliation $(S_k^\pm)_{k\in(-\infty,-1)}$ by closed surfaces of constant intrinsic curvature $k$, with $k$ ranging over the interval $(-\infty,-1)$. This yields a smooth family of Bers-type parametrizations of $\mathcal{X}$, analogous to that constructed by Labourie for quasi-Fuchsian manifolds in \cite{Lab}, as follows. We denote by $\mathcal{T}_{\mathrm{hyp}}:=\mathcal{T}_{\mathrm{hyp},g}$ the Teichm\"uller space of marked hyperbolic metrics over a compact surface of genus $g$ and, for all $k^-,k^+\in(-\infty,-1)$, we define the smooth map $\mathcal{G}:=\mathcal{G}_{k^-,k^+}:\mathcal{X}\rightarrow\mathcal{T}_{\mathrm{hyp}}\times\mathcal{T}_{\mathrm{hyp}}$ by
\begin{equation*}
\mathcal{G}(X) := (S_{k^-}^-,S_{k^+}^+)\ ,
\end{equation*}
where here the metrics of $S_{k^-}^-$ and $S_{k^+}^+$ have been suitably rescaled in order to have curvature equal to $-1$. By Theorems \ref{rigiditytheorem} and \ref{infinitesimaltheorem1}, $\mathcal{G}$ is a smooth diffeomorphism, and thus smoothly para\-me\-trizes $\mathcal{X}$.

For all $k$, we denote respectively by $\fff_k^\pm$, $\sff_k^\pm$, and $\tff_k^\pm$ the first, second, and third fundamental form of $S_k^\pm$, and we denote by $B_k^\pm$ its shape operator. By Gauss' equation in mixed signature, for all $k$,
\begin{equation}
\det B_k^\pm \,=\, |k|-1 \,>\, 0\ .
\end{equation}
In particular, every leaf is strongly convex.

To every element $X$ of $\mathcal{X}$, we also associate two types of combinatorial objects as follows. First, for $X\in\mathcal{X}$, we denote
\begin{equation*}
\partial^-C(X) := C(X)\cap\overline{\Omega^-}\qquad\text{and}\qquad \partial^+C(X):= C(X)\cap\overline{\Omega^+}\ .
\end{equation*}
These are, respectively, the past and future boundary components of $C(X)$. Note that these sets coincide if and only if $X$ is Fuchsian. Although $\partial^\pm C(X)$ is not smoothly embedded, it carries the intrinsic structure of a hyperbolic metric, which we denote by $h^\pm$, together with a measured geodesic lamination, which we denote by $l^\pm$. In particular, when $X$ is Fuchsian, the measured geodesic laminations $l^\pm$ are both trivial.

The space $X$ also has a natural compactification $\overline{X}$, with two boundary components, which we denote by $\partial^-X$ and $\partial^+X$, and which we refer to respectively as its \emph{past} and \emph{future} singularities. When $X$ is non-Fuchsian, $\partial^\pm X$ carries the structure of a real tree $t^\pm$ furnished with an action $\rho^\pm$ of $\pi_1(X)$, where $\pi_1(X)$ here denotes the fundamental group of $X$. We recall that measured geodesic laminations and real trees are dual objects, and that, in the present case, for non-Fuchsian $X$, $(t^\pm,\rho^\pm)$ is the dual tree of the lamination $l^\pm$.

We now recall the duality of $\AdS$ discussed in the introduction (see also \cite[Section 4]{FS4}). For any $k$, the surface dual to $S_k^\pm$ is $S^\mp_{k^*}$, where
\begin{equation}
\label{eq:Kstar}
k^* := \frac{k}{\left|k\right|-1}\ .
\end{equation}
Furthermore, there is a natural diffeomorphism $e:S^\pm_k\rightarrow S^\mp_{k^*}$ and, using this diffeomorphism, the first and third fundamental forms of $S^\pm_k$ and $S^\mp_{k^*}$ are related by
\begin{equation*}
\tff^\pm_k = e^*\fff^\mp_{k^*}\qquad\text{and}\qquad\fff^\pm_k = e^*\tff^\mp_{k^*}\ .
\end{equation*}
This duality extends to the boundary components of $\Omega^\pm$, and coincides with the duality between measured geodesic laminations and real trees carrying $\pi_1(X)$ actions already discussed above.

With this framework established, we now review the interrelationships between these objects. First, we define the function $\delta:\Omega^\pm\rightarrow(0,\pi/2)$ such that, for all $x\in\Omega^\pm$, $\delta(x)$ is the length of the longest timelike geodesic from $x$ to $C(X)$. Consider now the function
\begin{equation*}
\Delta:(-\infty,-1)\rightarrow(0,\pi/2);k\mapsto\arctan\big(\sqrt{\left|k\right|-1}\big)\ .
\end{equation*}

\begin{lemma}\label{lm:trapping}
For all $X\in\mathcal{X}$, for all $k\in(-\infty,-1)$, and for all $x\in S_k^\pm$,
\begin{equation*}
\delta(x) \leq \Delta(k)\ .
\end{equation*}
In particular $(S_k^\pm)_{k\in(-\infty,-1)}$ converges in the Hausdorff sense to one of the boundary components of $C(\tilde{X})$ as $k$ tends to $-1$.
\end{lemma}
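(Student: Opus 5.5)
We argue by comparison with the equidistant surfaces from the convex core. Fix $X$, work in $\Omega^+$ (the past case is symmetric), and write $\delta$ for the cosmological-time-type function measuring timelike distance to $C(X)$; it takes values in $(0,\pi/2)$ on $\Omega^+$. For $d\in(0,\pi/2)$ let $E_d:=\{\delta=d\}$ be the level set at distance $d$ from $\partial^+C(X)$. The first step is the standard computation of the geometry of these level sets. Away from the bending locus, $E_d$ is obtained by pushing a totally geodesic piece of $\partial^+C(X)$ along the normal flow. A totally geodesic spacelike plane pushed a timelike distance $d$ becomes an umbilic surface with principal curvatures $\tan d$, hence of intrinsic curvature $-1-\tan^2 d$ by Gauss' equation in mixed signature. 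Near bending lines, $E_d$ contains flat pieces (one principal curvature $0$). Thus $E_d$ is a $C^{1,1}$ convex surface whose curvature, in the sense of Alexandrov or almost everywhere, is $\geq -1-\tan^2 d$, and whose shape operator has determinant $\leq \tan^2 d$ almost everywhere.

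The second step is a maximum principle argument at a point of $S_k^+$ realising the maximum of $\delta$. Since $S_k^+$ is compact in the quotient, $\delta$ attains its maximum $d_0$ at some $x_0\in S_k^+$. Then $S_k^+$ lies in the past of $E_{d_0}$, namely in $\{\delta\leq d_0\}$, and touches $E_{d_0}$ at $x_0$. Both surfaces are future-convex with the same orientation. Comparison of second fundamental forms at a tangency point gives $B_k^+(x_0)\leq B_{E_{d_0}}(x_0)$ as quadratic forms. The direction of this inequality must be checked carefully with the signs of the Lorentzian normals, and this sign check is the delicate point. Since $\det B_k^+=|k|-1$, we obtain
\[
|k|-1\leq \det B_{E_{d_0}}(x_0)\leq \tan^2 d_0 .
\]

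This alone only gives a lower bound on $d_0$, whereas we want an upper bound, so the comparison must be set up the other way. We therefore choose $x_0$ where $\delta$ is maximal on $S_k^+$. Because $E_{d_0}$ touches $S_k^+$ from the future side, $S_k^+$ is \emph{less} curved than $E_{d_0}$ there. However, $E_{d_0}$ may fail to be $C^2$ at $x_0$. To handle this we use a smooth barrier: the umbilic surface at distance $d_0$ from a support plane $P$ of $C(X)$ realising $\delta(x_0)$. Its principal curvatures are exactly $\tan d_0$. This surface lies in the future of $E_{d_0}$ near $x_0$, and hence in the future of $S_k^+$, and it touches $S_k^+$ at $x_0$. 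The comparison now reads $B_k^+(x_0)\geq \tan d_0\,\mathrm{Id}$, so that
\[
|k|-1=\det B_k^+\geq \tan^2 d_0 ,
\]
which gives $d_0\leq \arctan\sqrt{|k|-1}=\Delta(k)$. Getting this geometric configuration right is the main obstacle: one must determine which side each surface lies on, and verify that the barrier built from a support plane lies on the correct side of $S_k^+$. We would check this on the Fuchsian example first.

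The Hausdorff convergence then follows. As $k\to-1$ we have $\Delta(k)\to0$, so $S_k^+$ lies in the $\Delta(k)$-neighbourhood of $\partial^+C(X)$. Conversely, every point of $\partial^+C(X)$ lies within distance $\Delta(k)$ of $S_k^+$, since the leaves separate $\partial^+C(X)$ from the future singularity along timelike geodesics. Passing to the universal cover $\tilde X$ and using cocompactness yields Hausdorff convergence to the boundary component of $C(\tilde X)$.
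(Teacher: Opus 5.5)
Your skeleton matches the paper's: a maximum principle at the maximum of $\delta$ on $S_k^+$, comparing against the umbilic surface of principal curvatures $\tan d_0$ built from the support plane $P$ orthogonal to the realizing geodesic. Your final inequality $B_k^+(x_0)\geq\tan d_0\,\mathrm{Id}$ is also the right one. However, the step that carries the whole lemma is not justified, and the reasons you give point the wrong way.

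First, the comparison principle is backwards as you state it. If $S_k^+$ lies on the past (convex) side of a surface touching it at $x_0$, then $S_k^+$ is \emph{more} curved there, not less. In the Klein model the anti-de Sitter and Euclidean second fundamental forms at a given point and tangent plane are proportional with a common factor, so this is the usual inner-body principle. Applied consistently, your sentence ``$S_k^+$ is less curved than $E_{d_0}$'' would give $|k|-1\le\tan^2 d_0$ for the barrier as well.

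Second, the claim that $U:=\{\delta_P=d_0\}$ lies in the future of $E_{d_0}$ near $x_0$ is asserted without proof, and it contradicts your own Step 1. If $E_{d_0}$ were $C^{1,1}$ with principal curvatures at most $\tan d_0$, then the umbilic surface $U$ tangent to it at $x_0$ would lie on its \emph{past} side. In fact Step 1 transplants the hyperbolic picture, which fails in anti-de Sitter space. There the future-directed normals from the faces of $\partial^+C$ cross above bending lines: above a single bending line, the level sets of $\delta$ are two umbilic pieces meeting along a convex crease, not a $C^{1,1}$ surface with a flat strip. Moreover, since $\delta$ is a supremum of timelike lengths, the reverse triangle inequality gives $\delta\geq\delta_P$ near $x_0$, because the foot on $P$ of the normal geodesic through $x$ lies in the causal future of $\partial^+C$.

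The inequality $\delta_P\le\delta$ near $x_0$, with equality at $x_0$, is exactly what makes $x_0$ a local maximum of $\delta_P$ on $S_k^+$. It is also what puts $S_k^+$ on the convex side of $U$, so you must prove it. Checking the Fuchsian case cannot detect the issue, since there $E_d=U$.

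The paper sidesteps this comparison entirely. It pushes $P$ by $\epsilon$ along the realizing geodesic to $P_\epsilon$, so that $\partial_\infty P_\epsilon\cap\Lambda=\emptyset$ and $\tilde S\cap\Omega^+(P_\epsilon)$ is compact. It then maximizes the distance $\delta_\epsilon$ to $P_\epsilon$ directly on this set and applies the maximum principle at that maximum $x_0'$, where one-sided tangency with an umbilic level set is automatic. It links back to $\delta$ only through the single-point relation $\delta(x_0)=\delta_\epsilon(x_0)+\epsilon\le\delta_\epsilon(x_0')+\epsilon$.

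Either fix closes your argument. As written, though, the question of which side each surface lies on, and hence the sign of the comparison, is left open, and that is the entire content of the lemma.
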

\begin{proof}
It is convenient to work in the universal cover $\tilde{X}$ of $X$. Let $\Lambda:=\ideal\tilde{X}$ denote its ideal boundary, and let $C(\tilde{X})$ denote its convex core which, we recall, is the convex hull of $\Lambda$. Choose $k\in(-\infty,-1)$, and let $\tilde{S}:=\tilde{S}^+_k$ denote the lift of $S^+_k$. Let $x_0\in\tilde{S}$ maximise $\delta$, let $\zeta$ denote the timelike geodesic through $x_0$ realizing $\delta$, and let $y_0$ denote the point at which $\zeta$ meets $\partial^+C(\tilde{X})$. Note that $\zeta$ is orthogonal to both $\tilde{S}$ and $\partial^+C(\tilde{X})$ at $x_0$ and $y_0$ respectively.

Let $P$ denote the totally geodesic plane orthogonal to $\zeta$ at $y_0$, and observe that $P$ is a support plane of $C(\tilde{X})$ at this point. Choose $\epsilon>0$, and let $P_\epsilon$ denote the parallel transport of $P$ away from $C(\tilde{X})$ by a distance $\epsilon$ along $\zeta$. Note that the ideal boundary $\ideal P_\epsilon$ of $P_\epsilon$ is disjoint from $\Lambda$. Let $P_\epsilon^*$ denote the point dual to $P_\epsilon$, and let $\Omega^+(P_\epsilon)$ denote the set of points lying in the future of $P_\epsilon$ and in the past of $P_\epsilon^*$. Define $\delta_\epsilon:\Omega^+(P_\epsilon)\rightarrow(0,\pi/2)$ such that, for all $x$, $\delta_\epsilon(x)$ is the length of the longest timelike geodesic from $x$ to $P_\epsilon$. Note that, along $\zeta$, $\delta_\epsilon$ coincides with $\delta-\epsilon$.

Since $\ideal P_\epsilon$ is disjoint from $\Lambda:=\ideal\tilde{S}$, $\delta_\epsilon$ attains a maximum at some point $x_0'$, say, of $\tilde{S}\cap\Omega^+(P_\epsilon)$. Note that, for all $\theta$, the level set $\delta_\epsilon^{-1}(\{\theta\})$ has constant intrinsic curvature equal to $\tan^2(\theta)-1$. It follows by the geometric maximum principle that
\begin{equation*}
\delta_\epsilon(x_0') \leq \Delta(k)\ .
\end{equation*}
Hence
\begin{equation*}
\delta(x_0) = \delta_\epsilon(x_0)+\epsilon \leq \delta_\epsilon(x_0') + \epsilon \leq \Delta(k) + \epsilon\ ,
\end{equation*}
and the result follows upon letting $\epsilon$ tend to zero.
\end{proof}

For all $c\in\pi_1(X)$, and for all $k$, we denote by $L_{\tff^\pm_k}(c)$ the length of the unique element of the free homotopy class of $c$ which is a geodesic with respect to the metric $\tff^\pm_k$. Likewise, for all such $c$, we denote by $i(t^\pm,c)$ the length of $c$ in the real tree $t^\pm$, and we denote by $i(l^\pm,c)$ the mass of $c$ with respect to the measured geodesic lamination $l^\pm$. Using the above defined duality, for all $c$, these quantities are related by
\begin{equation*}
i(l^\pm,c) = i(t^\mp,c)\ .
\end{equation*}
The following result is the anti-de Sitter analogue of \cite[Lemma 2.3]{DS}, and is a special case of the result \cite[Theorem 2.5]{Bel} of Belraouti.


\begin{lemma}\label{lm:convergence}
For every closed curve $c$ on $S$,
\begin{equation*}
L_{\tff_k^-}(c) \to i(l^+,c) = i(t^-,c)~,~~ L_{\tff_k^+}(c) \to i(l^-,c)=i(t^+,c)
\end{equation*}
as $k\to -1$, uniformly on compact subsets of $\mc X$.
\end{lemma}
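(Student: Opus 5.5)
The plan is to reduce Lemma \ref{lm:convergence} to duality plus Hausdorff convergence of the constant curvature foliation to the convex core boundary. By the duality recalled above, $\tff^\mp_k = e^*\fff^\pm_{k^*}$, where $k^*=k/(|k|-1)$. As $k\to-1$ we have $k^*\to-\infty$, so $L_{\tff^-_k}(c)$ is the length, for the induced metric, of the geodesic representative of $c$ on the leaf $S^+_{k^*}$. The statement thus becomes: induced lengths of closed curves on $S^+_{k}$ converge to $i(t^+,c)$ as $k\to-\infty$. The identity $i(t^+,c)=i(l^-,c)$ is the lamination/tree duality already stated. The other case is symmetric.

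Next, I would show that $S^+_k$ accumulates on the future singularity as $k\to-\infty$. Applying Lemma \ref{lm:trapping} to the dual family, $S^-_{k^*}$ lies within timelike distance $\Delta(k^*)\to 0$ of $\partial^-C(X)$. Dualizing, every support plane of $S^+_k$ lies close to the dual of a support plane of $\partial^- C$, and these duals are points of the initial tree-like singularity. In cosmological time $T$ of $\Omega^+$, this confines $S^+_k$ to the region $\pi/2-\Delta(k^*)\le T<\pi/2$ measured from $C(X)$. In other words, $S^+_k$ lies between two level sets of the cosmological time that converge to the singularity.

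Then I would compare lengths with the level sets $\{T=s\}$. Their induced metrics are explicit in terms of $(h^-,l^-)$ through the standard anti-de Sitter analogue of the Thurston/Epstein–Marden description: the level set at distance $s$ from $\partial^-C$ is $\cos^2(s)h^-$ plus $\sin^2(s)$ times a "grafted" contribution concentrated along $l^-$. Hence its length for $c$ tends to $i(l^-,c)$ as $s\to\pi/2$. I would then use convexity to squeeze the length on $S^+_k$. For convex spacelike surfaces nested in $\Omega^+$, the closest-point/normal-flow projection from the outer surface to the inner one is $1$-Lipschitz in the reversed-time sense. This gives
\[
L_{T=s_1}(c)\le L_{S^+_k}(c)\le L_{T=s_2}(c)+o(1)
\]
with $s_1,s_2\to\pi/2$. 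The upper bound would be obtained as follows: push the geodesic of the level set $T=s_2$ out along timelike normals onto $S^+_k$ and bound the length change by the timelike gap $\Delta(k^*)$ times the bounded curvature data.

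Uniformity on compact subsets of $\mc X$ follows because every constant in the argument depends continuously on $X$. This includes $\Delta(k^*)$, which is universal, and the level-set formula, which is continuous in $(h^-,l^-)$ and hence in $X$, by Mess' parametrization. The main obstacle I expect is the length comparison, specifically the upper bound, since lengths are not monotone under inclusion in Lorentzian geometry. I would try first to cite Belraouti's \cite[Theorem 2.5]{Bel}, which treats exactly the convergence of the induced length spectrum of a convex Cauchy foliation to the length spectrum of the dual tree. Within that citation, the verification I would need to make is the trapping hypothesis, which is supplied by Lemma \ref{lm:trapping} after dualization.
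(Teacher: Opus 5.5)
Your final route is the paper's. The paper gives no argument beyond noting that the lemma is a special case of \cite[Theorem 2.5]{Bel}. You likewise dualize via $\tff^\pm_k=e^*\fff^\mp_{k^*}$ to the induced length spectra of the leaves $S^\mp_{k^*}$ as $k^*\to-\infty$, and then quote the same theorem. Two points need fixing.

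\textbf{The pairing.} Your reduction gives $L_{\tff^-_k}(c)=L_{\fff^+_{k^*}}(c)\to i(t^+,c)=i(l^-,c)$, and symmetrically $L_{\tff^+_k}(c)\to i(t^-,c)=i(l^+,c)$. This is the \emph{opposite} pairing to the displayed one, $L_{\tff^-_k}(c)\to i(l^+,c)=i(t^-,c)$, and you passed over the discrepancy with ``the statement thus becomes''.

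Your pairing is the consistent one. By Lemma~\ref{lm:trapping}, $S^-_k$ converges to $\partial^-C(X)$. The third fundamental form records the turning of the normal, so its lengths must converge to the bending of $\partial^-C(X)$, as in the hyperbolic analogue \cite[Lemma 2.3]{DS}. It is also how the lemma is used in the proof of Lemma~\ref{lm:contractible}, which needs $L_{\tff^\pm_{k_n}}(c_j)\to i(l_\pm,c_j)$. So the right-hand sides of the displayed statement are swapped. You should say so explicitly rather than silently proving a different statement.

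\textbf{The squeeze.} The squeeze you sketch does not stand on its own, so everything rests on the citation. A small slip first: the duals of support planes of $\partial^-C(X)$ form the future singularity, not the initial one. More seriously, neither inequality in $L_{T=s_1}(c)\le L_{S^+_k}(c)\le L_{T=s_2}(c)+o(1)$ is justified, for three reasons.
\begin{itemize}
\item Projecting a spacelike curve onto a convex spacelike surface along maximizing timelike geodesics can increase its length. Already in Minkowski space, projecting the past-convex hyperboloid $\{x_0=-\sqrt{1+x_1^2+x_2^2}\}$ orthogonally onto the plane $\{x_0=0\}$ does so. There is therefore no general $1$-Lipschitz property of the kind you invoke.
\item The level sets $\{T=s\}$ are not even convex. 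Along an isolated bending line of $\partial^-C(X)$ they contain a piece of the tube at timelike distance $s$ from that geodesic, whose principal curvatures $\tan s$ and $-\cot s$ have opposite signs.
\item No curvature bound is available on $S^+_k$, whose curvature tends to $-\infty$.
\end{itemize}

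\textbf{Uniformity.} You justify uniformity on compact subsets of $\mc X$ by continuity of the constants in this squeeze, so that justification is unsupported. A convergence theorem for a single spacetime does not by itself give local uniformity in $X$. Unless the cited result is uniform in the spacetime, this needs a separate argument. The paper does not discuss this point beyond the citation either.
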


We now recall the earthquake construction of \cite[Section 4.1]{DS}, which provides a continuous family of identifications of the space $\mathcal{T}_{\mathrm{hyp}}$ with $\mc{ML}$ in a way adapted to the degeneration $k\to-\infty$. We fix once and for all $m_0\in\mathcal{T}_{\mathrm{hyp}}$ and we define the \emph{earthquake map}
\begin{equation*}
\mr{eq}:=\mr{eq}_{m_0}:\mc{ML} \to\mathcal{T}_{\mathrm{hyp}};\lambda\mapsto E_l^\lambda(m_0)\ .
\end{equation*}
By Thurston's Earthquake Theorem \cite{ThuE}, this map is a homeomorphism. For $k<1$, we define the homeomorphism $u_k:\mathcal{T}_{\mathrm{hyp}}\rightarrow\mathcal{ML}$ by
\begin{equation*}
u_k(h) := \frac{1}{\sqrt{\left|k\right|}}\;\mr{eq}_{m_0}^{-1}\!\left(h\right)\ .
\end{equation*}
By \cite[Def.~4.1, Lemmas 4.2 and 4.3]{DS}, the following uniform convergence property holds for this family. Let $(k_m)_{m\in\mathbb{N}}$ be a sequence converging to $-\infty$. Let $T$ be a topological space, let $l:T\rightarrow\mathcal{ML}$ be a continuous family of measured geodesic laminations and, for all $m$, let $h_m:T\rightarrow\mathcal{T}_{\mathrm{hyp}}$ be a continuous family of marked hyperbolic metrics. Suppose that, for all $c\in\pi_1(X)$,
\begin{equation*}
\frac{1}{\sqrt{\left|k_m\right|}}L_{h_m(t)}(c) \rightarrow i(l(t),c)\ .
\end{equation*}
uniformly over compact subsets of $T$. Then, for all such $c$,
\begin{equation*}
i\big(u_{k_m}(h_m(t)),c\big) \rightarrow i(l(t),c)\ ,
\end{equation*}
uniformly over compact subsets of $T$. Note that, in \cite{DS}, this property is established for $\kappa=k/(k+1)$ with $k\in(-1,0)$. However, the proofs, which are based on the length estimate \cite[Lemma 4.4]{DS}, which is in turn a statement about earthquakes on surfaces, only use the fact that $(\kappa_m)_{m\in\mathbb{N}}\to-\infty$, and thus apply here with only minor modifications.

For $k<-1$, we define
\begin{equation*}
\Psi_k : \mc X  \;\to\; \mc{ML} \times\mc{ML} ~;\ X\mapsto\big(u_{k^*}(\tff_k^-(X)),u_{k^*}(\tff_k^+(X))\big)\ .
\end{equation*}
where $k^*$ is as in \eqref{eq:Kstar}. Note that $\Psi_k$ is also well-defined along the Fuchsian locus, where the foliations of $\Omega_\pm$ are by surfaces equidistant from some totally geodesic Cauchy surface.

\begin{lemma}\label{lm:embedding}
For all $k<-1$, $\Psi_k$ is a homeomorphism onto an open subset of $\mc{ML}\times\mc{ML}$.
\end{lemma}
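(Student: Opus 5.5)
The map $\Psi_k$ is a composition of three maps, and the plan is to show that each is a homeomorphism onto its image, with open image in the last step.

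The first map is $X\mapsto(\tff_k^-(X),\tff_k^+(X))$ from $\mc X$ to $\mathcal{T}_{\mathrm{hyp}}\times\mathcal{T}_{\mathrm{hyp}}$, where each metric is rescaled to curvature $-1$. The surface $S_k^\pm$ has constant curvature $k$, so $\det B_k^\pm=|k|-1$. By Gauss' equation in the form of the duality recalled above, $\tff_k^\pm=e^*\fff^\mp_{k^*}$, and $\tff_k^\pm$ has constant curvature $k^*$. The leaf $S^\mp_{k^*}$ is exactly the constant curvature $k^*$ leaf of the foliation of Barbot--B\'eguin--Zeghib. Hence, up to the constant rescaling by $|k^*|$, the first map coincides with $\mathcal{G}_{k^*,k^*}$ (with the roles of past and future exchanged). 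We already know that $\mathcal{G}_{k^*,k^*}$ is a smooth diffeomorphism $\mc X\to\mathcal{T}_{\mathrm{hyp}}\times\mathcal{T}_{\mathrm{hyp}}$, by Theorems \ref{rigiditytheorem} and \ref{infinitesimaltheorem1}. One can see this alternatively from Corollary \ref{cr:III}, restricted to constant curvature data: for constant $\phi$ the constant-curvature leaves are exactly the GHCC boundaries with constant curvature metrics, by uniqueness of the foliation.

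The remaining maps act factorwise. The map $u_{k^*}$ is the composition of $\mr{eq}_{m_0}^{-1}$, which is a homeomorphism by Thurston's Earthquake Theorem, with multiplication by the positive constant $1/\sqrt{|k^*|}$. Multiplication by a positive scalar is a homeomorphism of $\mc{ML}$ onto itself. Hence $u_{k^*}\times u_{k^*}:\mathcal{T}_{\mathrm{hyp}}^2\to\mc{ML}^2$ is a homeomorphism.

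It follows that $\Psi_k$ is a homeomorphism of $\mc X$ onto $\mc{ML}\times\mc{ML}$, which is in particular open; one might even expect this to be the full space. If one prefers not to claim surjectivity of the first step, openness of the image follows from invariance of domain instead. Indeed, $\Psi_k$ is a continuous injective map between manifolds of the same dimension $12g-12$; $\mc{ML}$ is a topological manifold of dimension $6g-6$. Continuity of $\Psi_k$ itself follows from smooth dependence of the leaves $S_k^\pm$ on $X$.

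The main point requiring care is the first step. One must check that the correspondence $X\mapsto(S_{k^*}^+,S_{k^*}^-)$ is injective and continuous, including along the Fuchsian locus, where the leaves are equidistant surfaces. Injectivity comes from uniqueness in Theorem \ref{rigiditytheorem}: the pair of constant curvature metrics determines the GHCC region between the two leaves, and hence its maximal extension $X$. Continuity of the inverse uses Theorem \ref{diffeotheorem}, namely that $\mathcal I$ is a homeomorphism, or alternatively invariance of domain.
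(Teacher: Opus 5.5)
Your argument is correct, but it is organized differently from the paper's.

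\textbf{Your route.} You use the duality $\tff_k^\pm=e^*\fff^\mp_{k^*}$ to identify $X\mapsto(\tff_k^-(X),\tff_k^+(X))$ with $\mathcal G_{k^*,k^*}$ followed by the swap of factors. You then invoke the paper's assertion, made when $\mathcal G$ is introduced, that $\mathcal G_{k^-,k^+}$ is a diffeomorphism onto $\mathcal T_{\mathrm{hyp}}\times\mathcal T_{\mathrm{hyp}}$, together with Thurston's earthquake theorem for $u_{k^*}$. This exhibits $\Psi_k$ as a composition of homeomorphisms. It even shows that the image is all of $\mc{ML}\times\mc{ML}$, so invariance of domain is not needed.

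\textbf{The paper's route.} The paper stays with the collar $X_k$ bounded by $S_k^-$ and $S_k^+$ themselves.
\begin{itemize}
\item Injectivity comes from Corollary~\ref{cr:III} applied to $\mc J(X_k)$, together with uniqueness of the maximal extension.
\item Continuity of $X\mapsto\tff_k^\pm(X)$ is proved by compactness. Lemma~\ref{lm:trapping} traps the leaves in a collar of fixed size, bounded geometry gives subconvergence, and uniqueness in Barbot--B\'eguin--Zeghib identifies the limit.
\item Openness of the image follows from invariance of domain.
\end{itemize}

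\textbf{Comparison.} Corollary~\ref{cr:III} is just Theorem~\ref{diffeotheorem} transported by the same duality, so the injectivity content of the two proofs is the same. The real difference is where continuity comes from. Your sentence that continuity of $\Psi_k$ ``follows from smooth dependence of the leaves on $X$'' is exactly the step the paper proves by hand. Your route takes it over wholesale from the claim that $\mathcal G$ is smooth, which the paper asserts but does not prove. The paper needs only this weaker continuity, proved directly, and never needs surjectivity. Your version gets the stronger conclusion at the price of depending on that unproved assertion.

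\textbf{What to add for a self-contained version.} Supply the compactness argument for continuity, or an implicit-function argument for the constant curvature equation. For surjectivity, say explicitly that the boundary of a GHCC region with constant curvature $k^*$ boundary metrics lies outside the convex core of its maximal extension. It therefore coincides with the Barbot--B\'eguin--Zeghib leaves $S^\pm_{k^*}$, by their uniqueness.
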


\begin{proof}
The leaves $S_k^-$ and $S_k^+$ together bound a relatively compact collar neighbourhood of the convex core. This is a $(2,1)$-dimensional GHCC anti-de Sitter spacetime of genus $g$, which we henceforth denote by $X_k$. Its boundary components are smooth, spacelike, and strongly convex, and, by construction,
\begin{equation*}
\mc J(X_k(Y)) = \left(\tff_k^-(Y),\tff_k^+(Y)\right)\ ,
\end{equation*}
where $\mathcal{J}$ is as in the introduction.

\emph{Injectivity.} Suppose $\Psi_k(X)=\Psi_k(X')$. Since $u_{k^*}$ is injective, $\mc J(X_k)=\mc J(X'_k)$ so that, by Corollary \ref{cr:III}, $X_k=X_k'$. By \cite{Mes}, every GHCC anti-de Sitter spacetime admits a unique GHMC extension, from which it follows that $X=X'$, as desired.

\emph{Continuity.} It suffices to prove that $X\mapsto \tff_k^\pm(X)$ is continuous, which follows from the continuous dependence of the leaves of the foliation $(S_k^\pm)_{k\in(-\infty,-1)}$ on the ambient spacetime $X$. However, this follows from the following compactness argument. Given a sequence $(X_m)_{m\in\mathbb{N}}$ converging to $X_\infty$, by \ref{lm:trapping}, the leaves $(S_k^\pm(X_m))_{m\in\mathbb{N}}$ are trapped in collar neighbourhoods of fixed diameter around the convex cores. Since these surfaces have bounded geometry, the sequence $(S_k^\pm(X_m))_{m\in\mathbb{N}}$ subconverges to some limit $S_\infty$, say. By continuity, any such limit is a closed spacelike surface in $\Omega^\pm$ of constant intrinsic curvature equal to $k$. By uniqueness in \cite{BBZ}, the only possible limit point is $\Sigma^\pm_k(X_\infty)$, and continuity follows.

\emph{Openness.} Both $\mc X \cong\mc T \times\mc T $ and $\mc{ML} \times\mc{ML} $ are topological manifolds of dimension $12g-12$. Since $\Psi_k$ is continuous and injective, it follows by invariance of domain that its image is open, and this completes the proof.
\end{proof}

\begin{proof}[Proof of Lemma \ref{lm:contractible}]
Choose $(l_-,l_+)\in\mc{FML} $ and denote $F:=\psi^{-1}(l_-,l_+)$. Since the bending laminations of any Fuchsian spacetime vanish, and since $(l^-,l^+)$ fills $S$, it follows that $F$ contains no Fuchsian structure.

Under the Mess identification $\mc X \cong\mc T_{\mathrm{rep}} \times\mc T_{\mathrm{rep}}$, the map $\psi$ coincides with the map $\Phi$ of \cite[Def.~1.7]{BS3}. It is thus continuous and, by \cite[Prop.~1.8]{BS3}, it is proper. In particular $F$ is compact.

We now choose a finite family of closed curves $c_1,\dots,c_p$ on $S$ such that the function $l\mapsto (i(l,c_1),\dots,i(l,c_p))$ is a topological embedding of $\mc{ML} $ into $\R^p$. As in the proof of \cite[Lemma 4.9]{DS}, we equip $\mc{ML} \times\mc{ML} $ with the distance
\begin{equation*}
D\!\left((l_1,l_2),(l'_1,l'_2)\right) \,=\, \sum_{j=1}^p \left| i(l_1,c_j)-i(l_1',c_j)\right| + \left| i(l_2,c_j)-i(l_2',c_j)\right|\ .
\end{equation*}
Let $(k_n)_{n\in\mb N}\in(-\infty,-1)$ be any sequence converging to $-1$. By Lemma \ref{lm:convergence}, for each $j$, $L_{\tff^\pm_{K_n}(\cdot)}(c_j)\to i(l_\pm(\cdot),c_j)$ uniformly over compact subsets of $\mc X$. Thus, by the uniform convergence property described above,
\begin{equation*}
i\!\left( u_{k_m^*}\!\left(\tff^\pm_{k_m}(X)\right), c_j\right) \;\to\; i\!\left(l_\pm(X),c_j\right)\ ,
\end{equation*}
uniformly over compact subsets of $\mc X$. In other words, $\Psi_{k_n}\to\psi$ uniformly on compact subsets of $\mc X$ for the distance $D$.

By Lemma \ref{lm:embedding}, each $\Psi_{k_n}$ is an embedding of the manifold $\mc X$ into the manifold $\mc{ML} \times\mc{ML} $. It follows by Finney's theorem \cite[Thm~1]{Fin} (see \cite[Thm~4.6]{DS} for the version used here) that $F$ is a cellular subset of $\mc X$.

By Lemma \ref{lm:analytic}, $F$ is a compact real-analytic subset of $\mc X$. By Sullivan's local description \cite{Sul} of real-analytic sets and Hanner's criterion \cite{Han} (see also \cite[Lemma 4.8]{DS}), $F$ is an absolute neighbourhood retract (ANR). Since any cellular subset which is an ANR is contractible (see \cite[Lemma 4.7]{DS}), $F$ is contractible, and this completes the proof.
\end{proof}

\section{An application to landslides}
\label{sec:landslides}

Recall from \cite[Definition~1.5]{cyclic2} the \emph{landslide symmetry}: for $\theta\in(0,\pi)$ and $h_0\in\mc T $, the map $\mathcal S_{e^{i\theta},h_0}:\mc T \to\mc T $ sends $h$ to $\mathcal L^1_{e^{-i\theta}}(h_0,h_0^{\ast})$, where $h_0^{\ast}$ is the unique metric with $\mathcal L^1_{e^{i\theta}}(h_0,h_0^{\ast})=h$ (see \cite[Theorem~1.14]{cyclic}) and $\mathcal L$ is the landslide flow of \cite{cyclic}.

\begin{corollary}\label{cr:landslide}
  For all $\theta_-,\theta_+\in(0,\pi)$ and all $h_-,h_+\in\mc T $, the composition
  $$\mathcal S_{e^{i\theta_+},h_+}\circ\mathcal S_{e^{i\theta_-},h_-}:\mc T \longrightarrow\mc T $$
  has a unique fixed point.
\end{corollary}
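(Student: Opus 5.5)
The approach is to translate fixed points of the composed landslide symmetries into GHMC anti-de Sitter spacetimes whose boundary surfaces are prescribed, and then apply Theorem \ref{rigiditytheorem} (equivalently Corollary \ref{cr:III}). This mirrors the earthquake case of Corollary \ref{cr:mess}, where fixed points of $E(l_-)\circ E(l_+)$ correspond to spacetimes with prescribed bending laminations.

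The dictionary comes from \cite{cyclic,cyclic2}. For $\theta\in(0,\pi)$ and $h_0,h_0^*\in\mc T$, the pair $\big(\mathcal L^1_{e^{i\theta}}(h_0,h_0^*),\mathcal L^1_{e^{-i\theta}}(h_0,h_0^*)\big)$ is the left and right holonomy $(\rho_l,\rho_r)$ of a GHMC anti-de Sitter spacetime. That spacetime contains a strongly convex smooth spacelike surface of constant curvature $k(\theta)<-1$, for instance $k=-1/\cos^2(\theta/2)$, up to the normalization used in \cite{cyclic2}. The surface is future or past convex according to the sign convention, and its induced metric is the rescaling of $h_0$ to curvature $k(\theta)$. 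Conversely, every such surface in a GHMC spacetime arises in this way. In these terms, $\mathcal S_{e^{i\theta},h_0}$ sends $\rho_l$ to $\rho_r$ for the unique spacetime with left holonomy $\rho_l$ containing a surface of this type with induced metric proportional to $h_0$.

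Existence and uniqueness of that spacetime is exactly the content of \cite[Theorem~1.14]{cyclic}, and is also a special case of the Chen--Schlenker result \cite{CS}. I would check the orientation conventions carefully here. In particular, the two symmetries should involve opposite convexity, one surface future convex and one past convex, perhaps after replacing $\theta$ by $\pi-\theta$ or swapping left and right.

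Next, suppose $\rho$ is a fixed point of $\mathcal S_{e^{i\theta_+},h_+}\circ\mathcal S_{e^{i\theta_-},h_-}$, and set $\rho'=\mathcal S_{e^{i\theta_-},h_-}(\rho)$. The first map produces a spacetime $X_-$ with holonomy $(\rho,\rho')$ containing a past convex surface with metric $c_-h_-$. The second produces a spacetime $X_+$ with holonomy $(\rho',\rho)$ containing a future convex surface with metric $c_+h_+$. Using the symmetry of $\AdS$ exchanging left and right factors, which reverses time orientation, these combine into a single GHMC spacetime $X$ with holonomy $(\rho,\rho')$ containing both surfaces. Conversely, any such $X$ yields a fixed point. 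I would then verify that in $X$ the two surfaces are disjoint, with the past-convex one in the future of the future-convex one, so that together they bound a GHCC spacetime. This uses the fact that convex surfaces lie on opposite sides of the convex core, by the maximum-principle argument in the spirit of Lemma \ref{lm:trapping}.

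This gives a bijection between fixed points and GHCC spacetimes with boundary metrics $(c_+h_+,c_-h_-)$, where both metrics have constant curvature below $-1$. Theorem \ref{rigiditytheorem} supplies exactly one such spacetime, so the fixed point exists and is unique. The main obstacle I expect is matching conventions in the dictionary: signs, which factor is left or right, and the curvature constant. The map $\rho\mapsto X$ must really be a bijection onto GHCC spacetimes, rather than producing spacetimes with the surfaces in the wrong causal order. If the direct argument is awkward, I would instead use the duality of Corollary \ref{cr:III} to replace one surface by its dual, whose third fundamental form is prescribed, and place both on the correct sides.
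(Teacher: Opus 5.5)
Your proposal is correct and follows essentially the paper's route. The landslide dictionary of \cite{cyclic}, together with the translation in \cite[Section 9]{cyclic2}, turns fixed points into GHMC spacetimes whose constant curvature $-1/\cos^2(\theta_\pm/2)$ surfaces on either side of the convex core have induced metrics homothetic to $h_\pm$. Theorem \ref{rigiditytheorem}, restricted to constant curvature metrics, then gives exactly one such spacetime.

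The only difference is that the paper cites this dictionary outright. You sketch it yourself (the left--right swap reversing time orientation, the surfaces on opposite sides of the convex core), and your sketch is consistent with what those references provide.
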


The existence of a fixed point, and its uniqueness in the case $\theta_-+\theta_+=\pi$, are proved in \cite[Theorem~1.6]{cyclic2}, while uniqueness for $\theta_-+\theta_+\neq\pi$ is left open there (see the discussion after \cite[Theorem~1.6]{cyclic2}). It follows from Theorem~\ref{diffeotheorem}.

\begin{proof}
Let $K_\pm=-1/\cos^2(\theta_\pm/2)\in(-\infty,-1)$. By the dictionary between the landslide flow and constant curvature surfaces in GHMC anti-de Sitter $(2,1)$-spacetimes (\cite[Lemma 1.9 and Corollary~4.2]{cyclic}, and the translation carried out in \cite[Section 9]{cyclic2}), the fixed points of $\mathcal S_{e^{i\theta_+},h_+}\circ\mathcal S_{e^{i\theta_-},h_-}$ are in bijection with the GHMC anti-de Sitter $(2,1)$-spacetimes $Y$ for which the constant curvature $K_-$ (resp. $K_+$) surface in the past (resp. future) of the convex core has induced metric homothetic to $h_-$ (resp.\ $h_+$). It is therefore enough to show that there is exactly one such $Y$. However this is a direct application of Theorem \ref{rigiditytheorem}, when restricted to metrics of constant curvature.
\end{proof} 

\bibliographystyle{amsalpha}
\bibliography{biblio.bib}	
\end{document}